\newcommand{\opnm}{\operatorname}
\newcommand{\opd}{{\rm d}}
\newcommand{\lp}{\left(}
\newcommand{\rp}{\right)}
\newcommand\bbR{\mathbb R}
\newcommand\bbC{\mathbb C}
\newcommand\bx{\boldsymbol{x}}
\newcommand\by{\boldsymbol{y}}
\newcommand\bz{\boldsymbol{z}}
\newcommand\bn{\boldsymbol n}
\newtheorem{theorem}{\sffamily Theorem}
\newtheorem{remark}{\sffamily Remark}
\newtheorem{definition}{\sffamily Definition}
\newtheorem{assumption}{\sffamily Assumption}
\newtheorem{lemma}{\sffamily Lemma}
\newtheorem{proposition}{\sffamily Proposition}
\newtheorem{corollary}{\sffamily Corollary}[theorem]
\newcommand{\cD}{\mathcal D}
\newcommand{\cS}{\mathcal S}
\newcommand{\cI}{\mathcal I}
\newcommand{\cC}{\mathcal C}
\newcommand{\cK}{\mathcal K}
\newcommand{\cG}{\mathcal G}
\newcommand{\GammaC}{\Gamma_{\bbC}}
\numberwithin{equation}{section}
\newcommand{\tGamma}{\widetilde\Gamma}
\newcommand{\thetaeta}{\theta_\eta}
\newcommand{\bs}{\boldsymbol}
\newcommand{\figref}[1]{\figurename~\hyperref[#1]{\ref{#1}}}
\newcommand{\tableref}[1]{\tablename~\hyperref[#1]{\ref{#1}}}
\renewcommand{\phi}{\varphi}
\newcommand{\imagdecay}[2]{\cC^{#1,#2}}
\newcommand{\realdecay}[1]{\cD^{#1}}
\newcommand{\Kslope}{K_{\opnm{slope}}}
\newcommand{\Sdiff}{B}
\newcommand{\Spdiff}{D}
\newcommand{\Ddiff}{A}
\newcommand{\Dpdiff}{C}
\begin{document}

{\centering {\Large Complex Scaling for the Junction of Semi-infinite Gratings\\}
\vspace{0.1cm}

 \large Fruzsina J. Agocs\footnote{Department of Computer Science, University of Colarado, Boulder, Boulder, CO 80309 USA, fruzsina.agocs@colorado.edu}, Tristan Goodwill\footnote{Department of Statistics and CCAM, University of Chicago,
 Chicago, IL 60637 USA, tgoodwill@uchicago.edu}, and Jeremy Hoskins\footnote{Department of Statistics and CCAM, University of Chicago,
 Chicago, IL 60637 USA, jeremyhoskins@uchicago.edu}\\
}

\begin{abstract}
We present and analyze an integral equation method for the scattering of a non-periodic source from a geometry consisting of two semi-infinite, periodic structures glued together in two dimensions. The two structures may involve a periodic wall, several layers of transmission surfaces with a shared period, or periodic sets of obstacles. This integral equation is posed on the infinite interface between the two periodic structures using kernels built out of the Green's function for each structure. To combat the slow decay of the Green's function, we also show that our integral equation can be analytically continued into the complex plane, where it can be truncated with exponential accuracy. 
A careful analysis of the domain Green's functions far from the periodic structure is then used to prove that the analytically continued equation is Fredholm index zero. Finally, we show that the solution we generate satisfies a radiation condition and demonstrate an efficient and high order solver for this problem.

\end{abstract}
  \noindent {\bfseries Keywords}: 
  Periodic gratings, Fredholm integral equations, complexification, infinite boundaries, outgoing solutions
  
  \noindent {\bfseries AMS subject classifications}: 65N80, 35Q60, 35C15, 35P25,
  45B05, 31A10, 30B40, 65R20

\section{Introduction}

Periodic structures are widely used in the construction of acoustic and electromagnetic devices. Among the first of these were diffraction gratings~\cite{bornwolf,diff_grat}, particularly metallic or semiconductor gratings, which were found to be highly absorbing for certain narrow wavelength bands of incident light. This was exploited to optimize gratings for the filtering or propagation of optical signals, such as for diffraction grating spectrometers~\cite{enoch2012plasmonics}.
Small, periodic holes of sub-wavelength size on a metallic surface have been shown to allow significantly enhanced transmission of light than what would be expected for a single sub-wavelength hole~\cite{ebbesen1998extraordinary}. This phenomenon, called extraordinary optical transmission, is used in scanning electron microscopy and to manufacture structures with sub-wavelength features (sub-wavelength photo-lithography).
More recent technology allows photonic crystals with periodic nanostructures to be designed to achieve specific optical properties~\cite{phot_cryst,zayats2005nano}.
Periodic arrays of scatterers (e.g. phononic crystals) have been used in acoustics for the absorption of phononic waves ranging in frequency from seismic to radio~\cite{acoust_absorp}. Human-scale periodic structures are commonly used in architecture to enhance sound conductivity, e.g. in the design of amphitheaters. 

One particularly useful feature of periodic structures is to support trapped modes that are confined near the periodic boundary and propagate along it. Trapped acoustic or elastic modes are localized solutions of the wave equation without sources. They have been studied since at least the 1940s \cite{fano1941anomalous,jones1953eigenvalues} (see \cite{linton2007embedded} for a comprehensive review), and are exploited in applications involving sensing, filtering, and nondestructive measurements. Open, periodic geometries with troughs support the existence of modes that are exponentially decaying perpendicular to, and propagating along, the scattering surface. These structures therefore act as open waveguides for frequencies at which trapped modes exist. 
Trapped modes in general may exist at low or high frequencies \cite{pagneux2013trapped}. The latter are embedded in a continuous spectrum of radiating waves and are thus called bound states in the continuum (BICs) \cite{hsu2016bound}. They were originally proposed in the context of quantum systems, but have since been observed in acoustic, electromagnetic, and water waves.  
The periodic geometries described above have been shown to support low-frequency trapped modes with a maximum associated frequency, which is comparable to $\pi/d$, where $d$ is the period of the boundary \cite{dhia2007resonances}. 
These low-frequency modes that arise from fluid-solid interactions are of practical importance to e.g.\ floating offshore platforms such as bridges and airports \cite{mciver2000water}. 
For this reason, and for ease of exposition, we focus on wavenumbers below $\pi/d$ for the majority of this paper.

Scattering surfaces consisting of multiple periodic segments (which may be copies rotated with respect to each other or have different unit cells altogether) are also of practical importance and give rise to phenomena that single periodic structures do not exhibit.
Examples of these geometries include crystalline surfaces with multiple domain or inclusions, and randomly rough surfaces (modeled as multiple periodic segments, such as in \cite{enoch2012plasmonics}). Randomly rough surfaces are of particular interest because of Anderson (strong) localization: they can absorb a surface plasmon polariton (generated by a periodic metallic surface) propagating towards them and produce ``hot spots", localized electromagnetic surface modes, on their surface. A commonly studied geometry in this context involves a periodic grating interfacing with a randomly rough surface, with a flat surface separating the two. Other interfaces between periodic structures are also worth studying for the interference effects they produce that may help with the control of optical signals, which motivates our choice of geometries to study.

A large number of numerical techniques have been developed to compute scattering from a single periodic domain. These methods can generally be split into two categories. The first collection of methods compute quasi-periodic fields induced by a plane wave incident on the wall. Some do this using quasi-periodic Green's functions, such as~\cite{desanto1998theoretical,arens2006integral,pinto2021fast,meng2023new}, or a windowed approximation of the quasi-periodic Green's function~\cite{bruno2014rapidly,bruno2017rapidly,perez2018domain}. Other methods enforce the quasi-periodicity as a constraint~\cite{barnett2011new,zhang2021fast,zhang2022fast,gillman2013fast,strauszer2023windowed} or build Rayleigh expansions for the solution \cite{petit1980electromagnetic,millar1973rayleigh}. There has also been extensive work on the high-order perturbation of surfaces method, which builds an expansion for the solution as a perturbation from a flat boundary (see \cite{kehoe2023joint,nicholls2025analyticity} and the references therein). 
The second set of methods look for aperiodic solutions and are usually based on the inverse Floquet--Bloch transform (also known as the array scanning method) and include works such as \cite{munk_plane-wave_1979,rana1981current,capolino2005mode,lechleiter2017convergent,agocs2024trapped,zhang2021numerical}.
Note that the latter set of approaches require expressing the solution of the aperiodic problem in terms of a family of quasiperiodic solutions, whose symmetry may then be exploited to reduce the domain to a single unit cell. Naive truncation of the domain is not possible due to the artificial reflections of trapped modes it would cause, which would result in $\mathcal{O}(1)$ error near the boundary.

In parallel to this work, a number of methods have been developed for understanding the effect of defects in the periodic structure. Such defects are known to introduce large localization effects that change the response of periodic systems considerably \cite{ammari}. One example of a computational work studying periodic systems with defects is~\cite{kirsch2025pml}, which used a Floquet--Bloch transform and treated the defect as a coupling between quasi-periodicities. Each quasi-periodic problem was solved using a hybrid spectral-finite difference method and truncated at a finite height using a perfectly matched layer. In~\cite{watanabe2012accurate}, the authors used used a Floquet--Bloch transform and the recursive transition-matrix algorithm to compute the scattering from a periodic line of circular scatterers with some deletions. Other methods for handling defects include the fictitious supercell methods used in \cite{liu2017electromagnetic} and a method based on matching Bessel (also known as cylindrical) expansions \cite{liu2018fast,liu2018electromagnetic,liu2020electromagnetic}. There has also been much work on developing approximations for the effect of defects. See \cite{virk2021fast} and the references therein.

Recently, there has also been considerable interest in the effect of changes in the periodic structure. These methods are usually based on gluing Poincar\'e--Steklov operators for a semi-infinite periodic half space. In \cite{pierre}, the authors consider the junction of two doubly periodic lossy materials and build a Dirichlet-to-Neumann operator for each half-space. They do this by taking a Floquet--Bloch transform and lifting the resulting quasi-periodic problem to a higher dimensional periodic one. They then construct the Dirichlet-to-Neumann operator for a single unit cell and solve a Riccati equation for the half-space Dirichlet-to-Neumann operator. More recently, \cite{turc2025} considered a semi-finite array of compact scatterers. In that work, the author constructed a Robin-to-Robin map for single unit cell and solved a Riccati equation to build a Robin-to-Robin map for the periodic half-space. Other works based on this approach include~\cite{fliss2009exact,fliss2010exact,amenoagbadji2023wave,fliss2020time,fliss2019wave,klindworth2014numerical}. 

There has also been interest in the coupling of closed periodic waveguides. This problem has been considered in various works including \cite{fliss2021dirichlet}, which used the Poincar\'e--Steklov approach described above, and \cite{qiu2023bifurcation}, which used the domain Green's function for each semi-infinite waveguide to build an integral equation at their junction.

In this work, we adapt the method presented in \cite{epstein2023solvinga,epstein2023solvingb,epstein2024solving} to simulate the junction of two parallel semi-infinite periodic gratings. Our approach is simple and computationally efficient, avoiding the need to solve costly Riccati equation for the half-space Poincar\'e--Steklov operators. It is directly applicable to problems with real wavenumbers and easily extendable to the case of gratings connected by a compact transition region. In this method, we express the scattering problem as a transmission problem connecting the left half to the right half of the infinite domain. We then use the domain Green's function for each periodic problem to convert this transmission problem into an integral equation on the fictitious interface between the half-spaces. We compute these Green's functions using the method presented in \cite{agocs2024trapped}. 

A key feature of our method is the use of complex scaling to mitigate the slow decay of the densities and kernels of this integral equation away from the boundaries. In particular, we adapt the complex scaling approach analyzed in~\cite{goodwill2024numerical,epstein2025complex}, and show that both can be analytically continued to functions that decay exponentially in the complex plane. We then show that the analytically continued operator is Fredholm index zero and that the equation can be solved on a single choice of contour. As the kernels and densities decay exponentially, the resulting integral equation can be truncated with controllable accuracy. This analysis is of independent interest, as it demonstrates how to study the matched complex scaling method when the integral equation contains integral operators on the diagonal and when the range of the integral operators contains a mixture of oscillatory and exponentially decaying functions.

The remainder of the paper is structured as follows. In Section~\ref{sec:transm_prob} we describe the process for converting this problem into an integral equation on a subset of the~$x_2$-axis. In Section~\ref{sec:Green_period_def} we define the domain Green's functions for each half. In Section~\ref{sec:glued_IE_anal} we analyze the glued integral equation and show that it can be analytically continued to an integral equation with an operator that is Fredholm index zero. In Section~\ref{sec:recovered_sol} we show that the solution of the integral equation gives a solution of the PDE that satisfies the Sommerfeld radiation condition in any cone that does not include either grating. We also discuss physically meaningful data for our integral equation. In Section~\ref{sec:numerics} we illustrate the approach with several numerical examples. Finally, in Section~\ref{sec:conclusion} we finish with some concluding remarks and directions for future research.

\section{An integral equation formulation}\label{sec:transm_prob}

Let~$\gamma_{L,R}$ be two two-dimensional, periodic boundaries with periodicities $d_{Lp,Rp}$ and unit ``lattice vector'' $\bs e_1$, i.e.
\begin{equation}
    \forall \bx \in \gamma_{L,R} \Rightarrow \bx + d_{L,R}\bs e_1 \in \gamma_{L,R},
\end{equation}
and let the coordinates be $\bx = (x_1, x_2)$ along the boundaries and perpendicular to them, respectively, so that $ \bs e_1 = (1, 0)$. In the following we let~$\Omega_{L,R}$ denote the regions above~$\gamma_{L,R}$. 
For simplicity, we focus on the case that both~$\gamma_L$ and~$\gamma_R$ only touch the~$x_2$-axis at~$(0,X_2)$. We also assume that both are flat in a neighborhood of that point and that their slopes agree and aren't vertical. The case where one or both of $\gamma_{L,R}$ are not flat at the~$x_2$-axis can be studied using similar techniques, but careful analysis would be required to understand the singularities of the solution at~$(0,X_2)$.

We let 
\begin{equation}
    \Theta = \lp\Omega_L \cap \{x_1\leq 0\}\rp\cup \lp\Omega_R \cap \{x_1\geq 0\}\rp
\end{equation}
be the domain above~$\gamma_L$ and~$\gamma_R$ in the left and right half spaces respectively (see~\figref{fig:prob_setup}). Without loss of generality, we shall suppose that both~$\gamma_L$ and~$\gamma_R$ lie in the region~$x_2\leq 0$.

\begin{figure}
    \centering
    \includegraphics[width=0.5\linewidth]{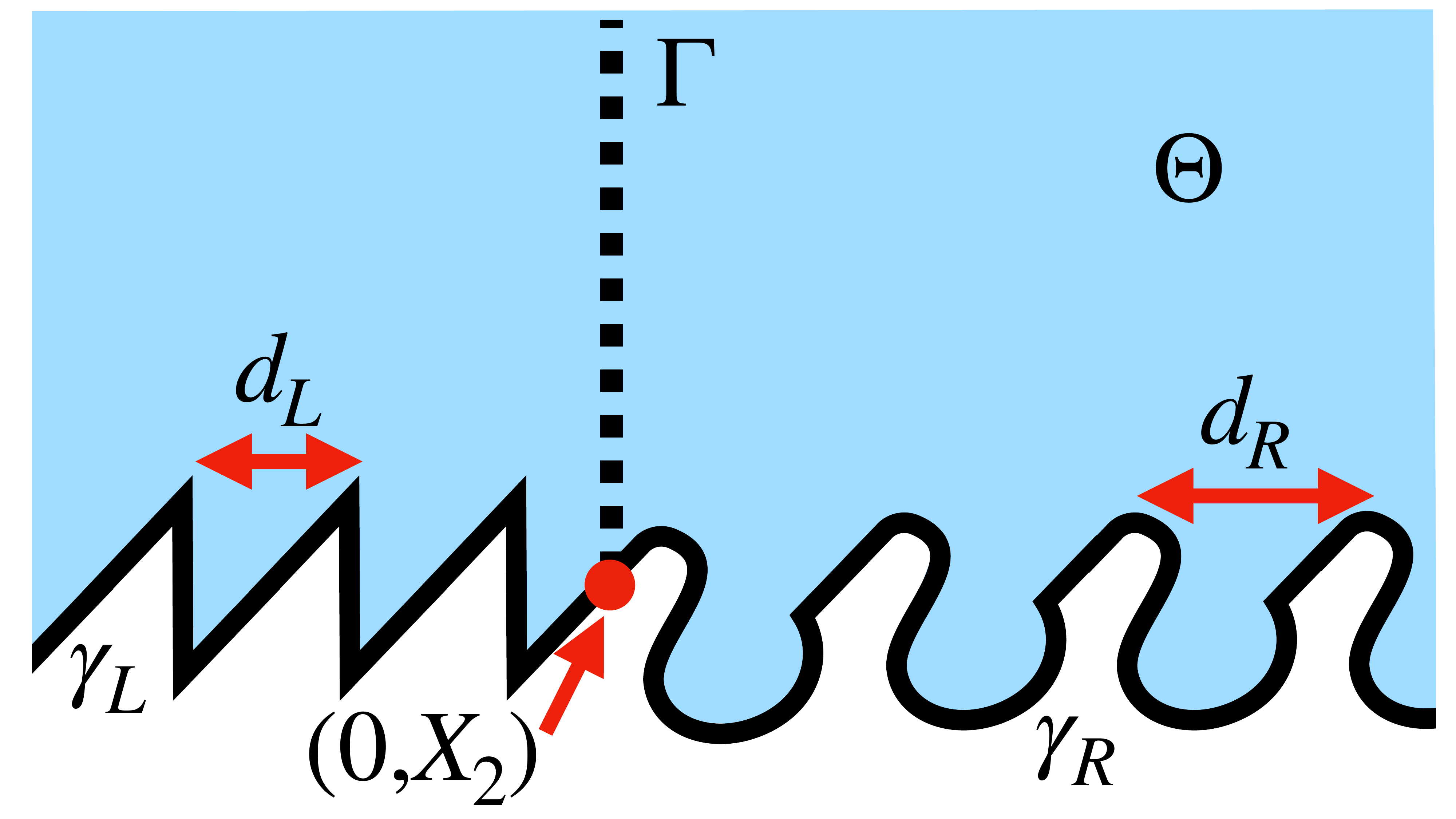}
    \caption{This figure illustrates our problem setup. We have two periodic walls~$\gamma_L$ and~$\gamma_R$ respectively and the fictitious interface~$\Gamma$ that separates the left and right halves of the computational domain. }
    \label{fig:prob_setup}
\end{figure}

We wish to solve the Helmholtz equation subject to Neumann boundary conditions,
\begin{equation}
\label{eq:tot_PDE}
    \begin{cases}
        \Delta u + k^2 u= f & \text{in } \Theta,\\
        \partial_{\bn} u = 0 & \bx \in \gamma_L,\;x_1<0 \\
        \partial_{\bn} u = 0 & \bx \in \gamma_R,\;x_1>0,
    \end{cases} 
\end{equation}
where $ \Delta = \frac{\partial^2}{\partial x_1^2} + \frac{\partial^2}{\partial x_2^2}$, $\partial_{\bn}  = \bn \cdot \nabla $, and~$f$ is a compactly supported source term.

Additionally, to insure solutions are outgoing, in a suitable sense, we require an additional `radiation condition' at infinity. Mathematically, the precise formulation of such conditions which guarantee uniqueness appears to be an open question. In this work, we content ourself with looking for solutions that satisfy the Sommerfeld radiation condition along any ray away from the interface, which will be part of any physically meaningful radiation condition. 

Here, for simplicity and ease of exposition, we focus on the case where the wavenumber $k$ is below $\tfrac{\pi}{d_{L,R}}.$ in principle, our analysis should extend {\it mutatis mutandis} to arbitrary wavenumbers, excluding Wood's anomalies. Extensive numerical evidence suggests that our numerical code extends in a similar way. 

Following the approach developed in~\cite{epstein2023solvinga,epstein2023solvingb,epstein2024solving}, we reformulate \eqref{eq:tot_PDE} as a transmission problem from a left half ($x_1 < 0$) to a right half ($x_1 > 0$) space. If~$\Gamma=\{(0,x_2)\in\bbR^2\;|\; x_2\geq X_2\}$ is the portion of the~$x_2$-axis in $\Theta$, then we wish to find~$u_{L,R}$ such that
\begin{equation}
    \begin{cases}
    \Delta u_{L,R} + k^2 u_{L,R} = 0 & \text{in } \Omega_{L,R}\\
    \partial_{\bn} u_{L,R} = 0 & \text{on } \gamma_{L,R}
    \end{cases} \label{eq:ulr_def}
\end{equation}
and
\begin{equation}
    \begin{cases}
        u_L - u_R = r_D & \text{on }\Gamma\\
        \partial_{x_1}u_L - \partial_{x_1}u_R = r_N & \text{on }\Gamma,
    \end{cases} \label{eq:transmission_prob}
\end{equation}
for some functions~$r_N$ and~$r_D$ that depend on the source~$f$ in a manner discussed in Section~\ref{sec:data}.

The advantage of this formulation is that we can reduce~\eqref{eq:transmission_prob} to an integral equation posed only on~$\Gamma$. To do this, we introduce the domain Green's functions~$G_{\gamma_{L,R}}(\bx, \by)$, which satisfy
\begin{equation}
    \begin{cases} 
    (\Delta+k^2) G_{\gamma_{L,R}}(\bx, \by) = \delta(\bx-\by)  &\text{in } \Omega_{L,R}\\
    \partial_n G_{\gamma_{L,R}}(\bx, \by) =0 & \text{on } \gamma_{L,R}.
    \end{cases}
\end{equation}
Using these Green's functions, we define the layer potentials
\begin{equation}
    \cS_{L,R}[\tau] = \int_{\Gamma} G_{\gamma_{L,R}}(\bx, \by)\tau(\by)\, \opd \by 
    \quad \text{and}\quad \cD_{L,R}[\sigma] = \int_{\Gamma} \partial_{y_1}G_{\gamma_{L,R}}(\bx, \by)\sigma(\by)\,\opd \by.
\end{equation}
We represent the solutions in the left and right half spaces,~$u_{L,R},$ by
\begin{equation}
    u_{L,R} = \cS_{L,R}[\tau]+\cD_{L,R}[\sigma].
\end{equation}
By construction, any~$u_{L,R}$ of this form will satisfy~\eqref{eq:ulr_def}. Since~$G_{\gamma_{L,R}}$ has the same singularity as the free-space Green's functions, we can use the standard jump relations for the Helmholtz layer potentials \cite{colton2013integral} to show that~$u_{L,R}$ solve~\eqref{eq:transmission_prob} if~$\sigma$ and~$\tau$ solve the integral equation
\begin{equation}
    \begin{pmatrix}
        \cI +\Ddiff&  \Sdiff \\ \Dpdiff & \cI + \Spdiff
    \end{pmatrix}\begin{pmatrix}
        \sigma\\\tau
    \end{pmatrix} := \begin{pmatrix}
        \cI +\cD_R-\cD_L& \cS_r-\cS_: \\ -(\cD_R'-\cD_L') & \cI-(\cS_R'-\cS_L') 
    \end{pmatrix}\begin{pmatrix}
        \sigma\\\tau
    \end{pmatrix} = \begin{pmatrix}
        r_D\\ -r_N
    \end{pmatrix}. \label{eq:real_IE}
\end{equation}
We will denote the kernels of~$\Ddiff,\Sdiff,\Dpdiff,$ and~$\Spdiff$ as~$k_\Ddiff, k_\Sdiff, k_\Dpdiff,$ and~$ k_\Spdiff,$ respectively.

As we will see below, the solutions of these integral equations will be oscillatory and decay algebraically, making them impossible to accurately truncate. To address this, we use the complex scaling method, used in
\cite{bonnet2022complex,dhia2024complex,epstein2024coordinate,goodwill2024numerical,epstein2025complex,hoskins2025quadrature}. In short, we analytically deform the contour~$\Gamma$ into the complex plane~$\tilde\Gamma$. If it is chosen correctly, then the kernels and densities will decay exponentially, allowing us to truncate the computational domain. The resulting integral equation can then be solved using standard methods. In the following section, we summarize several useful properties of the domain Green's functions~$G_{L,R}$. Afterwards, we shall show that \eqref{eq:real_IE} can be analytically continued and that the complexified operator is Fredholm index zero.

\begin{remark}\label{rem:other_BC}
    For ease of exposition, we focus our analysis on the case of Neumann boundary conditions. Many of the results easily extend to other boundary conditions, such as Dirichlet or impedance boundary conditions. All that we need is to build an integral representation (like \eqref{eq:w_xi_def}) and integral equation (analogous to \eqref{eq:quasi-IE}). In particular, all that is required is to show that Assumption \ref{ass:boundary} below is satisfied, in which case the proofs follow \textit{mutatis mutandis}. Finally, we remark that our approach also extends to the case of transmission problems, provided that the wavenumbers above and below the interface match. This can also be extended to allow for quasi-periodic dielectric 'leaky' waveguides. The straight waveguide case is analyzed in~\cite{epstein2025complex}.
\end{remark}

\section{Green's functions for periodic domains}\label{sec:Green_period_def}
In this section we summarize relevant properties of the Green's function for a domain~$\Omega$ with periodic boundary~$\gamma,$ which we denote by~$G_{\gamma}$. Though these properties are well-known, we include them here both for completeness and notational consistency.

Following \cite{agocs2024trapped}, we first write~$G_{\gamma}$ as an inverse Floquet--Bloch transform,
\begin{equation}
    G_\gamma(\bx,\by) = \frac{d}{2\pi} \int_{c} G_{\xi,\gamma}(\bx,\by) \opd \xi, \label{eq:green-floq}
\end{equation}
where~$c$ is a contour connecting~$\pm\frac{\pi}d$ lying in the second and fourth quadrants of the complex plane, with the functions~$G_{\xi,\gamma}$ satisfying
\begin{equation}
    \begin{cases}
        (\Delta + k^2)  G_{\xi,\gamma}(\bx,\by) =\delta(\bx-\by)  & \text{in } \Omega,\\
        \partial_n  G_{\xi,\gamma}(\bx,\by) =0 & \text{on } \gamma,\\
         G_{\xi,\gamma}(\bx+d\bs e_1,\by) = e^{i\xi d}G_{\xi,\gamma}(\bx,\by)& \text{in } \Omega,
    \end{cases}\label{eq:quasi_per}
\end{equation}
as well as a standard radiation condition at infinity. This quasi-periodic domain Green's function $G_\gamma$ has a few important properties. First, it is clearly periodic in~$\xi$, i.e.~$G_{\xi+\frac{2\pi}{d},\gamma}=G_{\xi,\gamma}$. Second, it is well-defined for all~$\xi\in \bbC$ away from branch cuts coming from~$\xi=\pm k$ and possibly poles~$\pm \tilde\xi_1,\ldots \pm \tilde \xi_{n_p}$ on the real line with~$\tilde\xi_j\in [k,\frac\pi d]$. These poles correspond to modes~$v_j$ that satisfy \eqref{eq:ulr_def} and propagate along~$\gamma$ (see \cite{agocs2024trapped} and the references therein).

Again following~\cite{agocs2024trapped}, we next split~$G_{\xi,\gamma}(\bx,\by) = G_{\xi}(\bx-\by) +w_{\xi,\gamma}(\bx,\by)$, where
\begin{equation}
    \begin{cases}
        (\Delta + k^2)  G_{\xi}(\bx) =\delta(\bx),  & \text{in } \bbR^2,\\
         G_{\xi}(\bx+d\bs e_1) = e^{i\xi d}G_{\xi}(\bx),& \text{in } \bbR^2,
    \end{cases}
\end{equation}
is the quasi-periodic fundamental solution and
\begin{equation}
    \begin{cases}
        (\Delta + k^2)  w_{\xi,\gamma}(\bx,\by) =0, & \text{in } \Omega,\\
        \partial_n  w_{\xi,\gamma}(\bx,\by) =-\partial_n G_{\xi}(\bx-\by), & \text{on } \gamma,\\
         w_{\xi,\gamma_{L,R}}(\bx+d\bs e_1,\by) = e^{i\xi d}w_{\xi,\gamma}(\bx,\by),& \text{in } \Omega.
    \end{cases}
\end{equation}
In real space, i.e. after performing the integral in $\xi$ over $c,$ this splitting corresponds to writing
\begin{equation}\label{eq:G_dom_split}
G_\gamma(\bx,\by) = G(\bx-\by) + w(\bx,\by),
\end{equation}
where~$G$ is the free-space fundamental solution for the Helmholtz equation
\begin{equation}
    G\lp\bx\rp = \frac{i}{4}H^{(1)}_0(k\|\bx\|)
\end{equation}
and
    \begin{equation}\label{eq:w_def}
        w(\bx,\by):= \frac{d}{2\pi}\int_c w_{\xi,\gamma}(\bx,\by)\, \opd \xi.
    \end{equation}

As noted in the introduction, there are a number of numerical methods for solving problems of the form~\eqref{eq:quasi_per}. In this work, we use the method employed by~\cite{agocs2024trapped}, which is well-suited to our analysis. In this method, we find~$w_{\xi,\gamma_{L,R}}$ by expressing it as
\begin{equation}\label{eq:w_xi_def}
    w_{\xi,\gamma}(\bx,\by) = S_{\xi, \gamma}[\rho_{\xi,\by}](\bx) = \int_{\gamma}  G_{\xi}(\bx-\bz) \rho_{\xi,\by}(\bz)\, \opd \bz,
\end{equation}
where~$\rho_{\by}$ satisfies
\begin{equation}
    \cK_\xi[\rho_{\xi,\by}](\bz):=-\frac{\rho_{\xi,\by}(\bz)}{2}+ S_{\xi, \gamma}'[\rho_{\xi,\by}](\bz)=-\partial_{\bn(\bz)} G_\xi(\bz-\by) \label{eq:quasi-IE}
\end{equation}
with
\begin{equation}
    S_{\xi, \gamma}'[\rho_{\xi,\by}](\bx) = \int_{\gamma}  \partial_{\bn(\bz)}G_{\xi}(\bx-\bz) \rho_{\xi,\by}(\bz)\, \opd \bz.
\end{equation}

This formulation has a few important features. First, it allows us to use the analyticity of~$G_\xi$ to prove that~$G_{\xi,\gamma}$, and so~$G_\gamma$, can be analytically continued. Second, in \cite{agocs2024trapped,aylwin2020properties} it was observed that \eqref{eq:quasi-IE} is well-conditioned when~$\xi$ is away from the branch cuts and poles, making it easy to solve accurately. 

There are a few equivalent formulas for the quasi-periodic fundamental solution. The most obvious formula is the conditionally convergent series (equation 2.3 in~\cite{linton2010lattice})
\begin{equation}\label{eq:cond_form}
    G_\xi(\bx) = \sum_{n=-\infty}^\infty e^{in\xi d} G\lp\bx+nd \bs e_1\rp.
\end{equation}
This formula is difficult to use in practice because it does not converge in any sense for complex~$\xi$. A more convenient formula is the~$x_1$-Fourier series (equation 2.9 in~\cite{linton2010lattice}):
    \begin{equation}
    G_{\xi} (\bx) = \sum_{m=-\infty}^\infty e^{i\xi_m x_1}\frac{e^{\alpha(\xi_m)\sqrt{x_2^2}}}{-2\alpha(\xi_m)} \label{eq:dualsum}
\end{equation} 
where~$\xi_m = \xi + \frac{2\pi}{d}m$ and~$\alpha(\xi) = -\sqrt{i(\xi-k)}\sqrt{- i(\xi+k)}$ with the branch cut of the square root taken along the negative real axis. 
We will see below that~\eqref{eq:dualsum} converges as long as~$x_2\neq 0$. 

To analyze the behavior of~$G_\xi$ when~$x_2\approx 0$, we introduce the following formula, which is equivalent to equation 17 in \cite{yasumoto2002efficient} using \eqref{eq:cond_form} and an integral formula for the Hankel function.
For any $l\in\mathbb{N}$, the quasi-periodic Green's function can be written as a sum of the Bessel functions
\begin{equation}
    G_\xi(\bx) = \sum_{j=-l}^le^{ij\xi d}G(\bx+jd\bs e_1) + \frac12 S_0 J_0(k\|\bx\|) +\sum_{n=1}^\infty S_n J_n(k\|\bx\|) \cos(n\opnm{arg} \bx) \label{eq:bessel_sum}
\end{equation}
for~$\|\bx\|< (l+1/2)d$. The lattice coefficients are given by
\begin{multline}
    S_n = \frac{e^{i\pi/4}}{\sqrt{2}\pi}\left[(-1)^ne^{-(l+1)\xi d i} \int_0^\infty\lp G_n((1-i)t)+G_n(-(1-i)t) \rp F((1-i)t,\xi) \, \opd t\right.\\
    +\left. e^{(l+1)\xi d i}\int_0^\infty\lp G_n((1-i)t)+G_n(-(1-i)t) \rp F((1-i)t,-\xi) \, \opd t\right]
\end{multline}
where
\begin{equation}
    G_n(t) = \lp t-i\sqrt{1-t^2} \rp^n e^{ilkd\sqrt{1-t^2}},\quad F(t,\xi) = \lp \sqrt{1-t^2} \left[1-e^{ikd\lp\sqrt{1-t^2}-\xi/kd\rp} \right]\rp^{-1}.
\end{equation}

An immediate consequence of this formula is that~$G_\xi(\bx) - G(\bx)$ is smooth in the rectangle~$(x_1,x_2)\in (-d,d)\times (-ld,ld)$ for any~$l$. In the remainder of this section, we use the two formulas for $G_\xi$ given in~\eqref{eq:dualsum} and~\eqref{eq:bessel_sum} to study the behavior of~$G_\gamma$ near~$\gamma$ with the following assumptions.
\begin{assumption}\label{ass:boundary}
    The boundary $\gamma$ is piecewise smooth and flat in a neighborhood of~$(0,X_2)$. Further,~$\gamma$ is such that the operator~$\cK_\xi$ \eqref{eq:quasi-IE} is bounded on~$L^2(\gamma)$, the space of continuous functions on~$\gamma$, and invertible for all~$\xi$ except for the branch cuts of~$\alpha$ and modes~$\pm \tilde \xi_1,\ldots, \pm \tilde \xi_{n_p}$ that lie on the real axis with~$k<\tilde \xi_j<\frac\pi d$.
\end{assumption}
\begin{remark}
    It was shown in \cite{agocs2024trapped} that if~$\gamma$ is the graph of a piecewise smooth function, flat near $(0,X_2),$ then Assumption~\ref{ass:boundary} is satisfied.
\end{remark}

\begin{lemma}\label{lem:ops_analytic}
    Let~$h=\frac\pi d+k+1$,~$\epsilon$ be a positive constant, and~$B_k$ be the branch cuts of~$\alpha(\xi)$. 
    Further, let
    \begin{equation}\label{eq:vdef}
        V_{\gamma,\epsilon} = \left\{\xi\in\bbC \,\middle| \, |\Re \xi| < \frac{\pi}{d} +\epsilon,\,|\Im \xi| < h,\, |\xi\pm \tilde\xi_j|>\epsilon, \opnm{dist}(\xi,B_k)>\epsilon \right\},
    \end{equation}
    where~$\epsilon>0$ is small enough that~$V_{\gamma,\epsilon}$ contains the origin and~$\pm \pi/d$.
    The operators $\cK^{-1}_{\xi,\gamma}$ and~$S_{\xi,\gamma}$ are analytic operators for~$\xi\in V_{\gamma,\epsilon}$.
\end{lemma}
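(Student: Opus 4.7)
The plan is to establish analyticity in three stages: first for the scalar quasi-periodic kernel $G_\xi$ as a function of $\xi$, then for the integral operators $S_{\xi,\gamma}$ and $\cK_\xi = -\tfrac12 I + S'_{\xi,\gamma}$ as operator-valued functions, and finally for $\cK_\xi^{-1}$ via the analytic Fredholm theorem.

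For the first step, I would combine the two representations \eqref{eq:dualsum} and \eqref{eq:bessel_sum} of $G_\xi$. When $x_2 \ne 0$, the $x_1$-Fourier series \eqref{eq:dualsum} converges exponentially, since $\alpha(\xi_m)$ grows like $2\pi|m|/d$ for large $|m|$. Inside the strip $|\Re\xi| < \pi/d + \epsilon$, the branch points $\pm k$ of $\alpha(\xi_m)$ can be reached only by the $m=0$ term, and the remaining singularities of the quasi-periodic fundamental solution are the trapped-mode poles $\pm\tilde\xi_j$; both sets are excluded from $V_{\gamma,\epsilon}$ by construction, so every term is analytic there. Near the lattice, the Bessel expansion \eqref{eq:bessel_sum} isolates the $\xi$-independent singular part $\sum_{j=-l}^{l} G(\bx + j d \bs e_1)$ plus a smooth series whose lattice coefficients $S_n(\xi)$ are given by complex contour integrals whose denominators $\sqrt{1-t^2}[1 - e^{ikd(\sqrt{1-t^2} - \xi/(kd))}]$ vanish only at the excluded points. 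A Weierstrass uniform-convergence argument then upgrades termwise analyticity to joint analyticity of $G_\xi - G$ in $\xi$ and smoothness in $\bx$ on compact subsets of $V_{\gamma,\epsilon}\times\bbR^2$.

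For the second step, since $\gamma$ is piecewise smooth and flat near $(0,X_2)$, the splitting above expresses the kernels of $S_{\xi,\gamma}$ and $S'_{\xi,\gamma}$ as a $\xi$-independent free-space singular piece, which is bounded on $L^2(\gamma)$ by standard potential theory, plus a remainder that is smooth in $(\bx,\bz)$ and jointly analytic in $\xi$. Bounding this remainder and its $\xi$-derivatives locally uniformly on $V_{\gamma,\epsilon}$ lets one expand the kernel in a Taylor series about any $\xi_0 \in V_{\gamma,\epsilon}$ and show that the induced operator power series converges in the operator norm. Hence $\xi \mapsto S_{\xi,\gamma}$ and $\xi \mapsto \cK_\xi$ are analytic operator-valued functions on $V_{\gamma,\epsilon}$.

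Finally, $\cK_\xi$ is a compact perturbation of $-\tfrac12 I$ and so is Fredholm of index zero. Assumption~\ref{ass:boundary} provides pointwise invertibility of $\cK_\xi$ throughout $V_{\gamma,\epsilon}$, so the analytic Fredholm theorem immediately yields analyticity of $\cK_\xi^{-1}$ on $V_{\gamma,\epsilon}$. The main obstacle I anticipate lies in the first step: establishing analyticity and locally uniform bounds for the lattice coefficients $S_n(\xi)$ and the tail of the Bessel series, together with matching the two representations \eqref{eq:dualsum} and \eqref{eq:bessel_sum} on the overlap of their domains of convergence. Once these analytic estimates are in hand, the operator-level statement and the inverse statement follow by essentially routine functional analysis.
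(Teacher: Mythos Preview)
Your approach is essentially the paper's: analyticity of $G_\xi$ from the two explicit representations \eqref{eq:dualsum} and \eqref{eq:bessel_sum}, analyticity and boundedness of $S_{\xi,\gamma}$ and $\cK_\xi$ via the splitting into the $\xi$-independent free-space piece plus a smooth analytic remainder, and then analyticity of $\cK_\xi^{-1}$ from pointwise invertibility (the paper cites Dunford--Schwartz directly rather than the analytic Fredholm theorem, but the content is the same). One small slip: the trapped-mode poles $\pm\tilde\xi_j$ are not singularities of the quasi-periodic \emph{fundamental solution} $G_\xi$ --- only the branch points $\pm k$ are --- but rather of $\cK_\xi^{-1}$ (and hence of $G_{\xi,\gamma}$); this does not damage your argument, since both sets are excluded from $V_{\gamma,\epsilon}$ and are used at the correct stages.
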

\begin{proof}
We first observe that equations~\eqref{eq:dualsum} and~\eqref{eq:bessel_sum} imply that~$G_\xi$ is an analytic function of~$\xi\in V_{\gamma,\epsilon}$ for all~$\bx,\by$. Thus the kernels of $S_{\xi, \gamma}$ and $S_{\xi, \gamma}'$ are analytic. To prove that they are bounded operators, we write
\begin{equation}
    S_{\xi, \gamma} = \lp S_{\xi, \gamma}-S_{\gamma}\rp+S_{\gamma} \quad S_{\xi, \gamma} = \lp S_{\xi, \gamma}'-S_{\gamma}'\rp+S_{\gamma}',
\end{equation}
where~$S_\gamma$ and~$S_\gamma'$ are the usual Helmholtz layer potentials. These are well-known to be bounded on~$L^2(\gamma)$ (see \cite{kress1999linear}).

By \eqref{eq:bessel_sum}, the operators~$S_{\xi, \gamma}-S_{\gamma}$ and $S_{\xi, \gamma}'-S_{\gamma}'$ have smooth analytic kernels and so are bounded and analytic operators on the same spaces. Putting this together with the results for $S_{\gamma}$ and $S_{\gamma}'$, we have that $S_{\xi,\gamma},S_{\xi,\gamma}'$ and~$\cK_{\xi,\gamma}$ are analytic and bounded operators.

    Since~$\cK_{\xi,\gamma}$ is an analytic and invertible at each~$\xi\in V_{\gamma,\epsilon}$, we have that~$\cK^{-1}_{\xi,\gamma}$ is also analytic in the same region (see e.g. \cite{Dunford1958} Lemma VII.6.4). 
\end{proof}
We now use this result to establish the existence of the domain Green's function.
\begin{lemma}\label{lem:G_exist}
    The function $G_\gamma(\bx,\by)$ and all of its derivatives exist for~$\bx,\by\in \Omega$ with~$\bx\neq \by$.
\end{lemma}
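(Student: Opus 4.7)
The plan is to use the decomposition \eqref{eq:G_dom_split}, writing $G_\gamma(\bx,\by) = G(\bx-\by) + w(\bx,\by)$. Since $G(\bx-\by)=\frac{i}{4}H_0^{(1)}(k\|\bx-\by\|)$ is $C^\infty$ for $\bx\neq\by$ by standard properties of the Hankel function, the problem reduces to proving that $w(\bx,\by)$ and all its derivatives exist for $\bx,\by\in\Omega$ (no restriction $\bx\neq\by$ is needed for $w$, which will in fact be smooth on all of $\Omega\times\Omega$).

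First I would verify that the contour $c$ in \eqref{eq:green-floq} can be taken to lie in $V_{\gamma,\epsilon}$ for some $\epsilon>0$. By construction, $c$ connects $\pm\pi/d$ through the second and fourth quadrants; since the branch cuts $B_k$ of $\alpha$ emanate from $\pm k\in(-\pi/d,\pi/d)$ and can be routed into the first and third quadrants, and the real poles $\pm\tilde\xi_j$ lie in $[k,\pi/d]$, the compact curve $c$ has positive distance from $B_k\cup\{\pm\tilde\xi_j\}$. Consequently $c\subset V_{\gamma,\epsilon}$ for some sufficiently small $\epsilon>0$, and Lemma~\ref{lem:ops_analytic} gives that $\cK_\xi^{-1}$ and $S_{\xi,\gamma}$ are analytic in $\xi$ and bounded on $L^2(\gamma)$ uniformly in $\xi\in c$.

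Next, fix $\bx,\by\in\Omega$. Both points lie at positive distance from $\gamma$, so the data $\bz\mapsto -\partial_{\bn(\bz)}G_\xi(\bz-\by)$ appearing in \eqref{eq:quasi-IE} is smooth along each smooth piece of $\gamma$, depends smoothly on $\by$, and is analytic in $\xi\in V_{\gamma,\epsilon}$; in particular its $L^2(\gamma)$-valued $\by$-derivatives of all orders are uniformly bounded on the compact contour $c$. Applying $\cK_\xi^{-1}$ shows that $\rho_{\xi,\by}$ exists in $L^2(\gamma)$, is analytic in $\xi$, and has $L^2$-derivatives in $\by$ of all orders, with uniform bounds as $\xi$ ranges over $c$. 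Likewise, for fixed $\bx\in\Omega$ the kernel $\bz\mapsto G_\xi(\bx-\bz)$ of $S_{\xi,\gamma}$ in \eqref{eq:w_xi_def} is smooth on $\gamma$ and its $\bx$-derivatives remain smooth in $\bz$ with uniform bounds in $\xi\in c$. Hence arbitrary $\bx,\by$-derivatives may be brought inside the integral \eqref{eq:w_xi_def} and computed by pairing a smooth $L^2$-function against $\partial_\by^\beta\rho_{\xi,\by}$, yielding $w_{\xi,\gamma}(\bx,\by)\in C^\infty$ in $(\bx,\by)$ with all derivatives bounded uniformly in $\xi\in c$.

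Finally, since the integrand in \eqref{eq:w_def} is continuous on the compact contour $c$ and all of its $\bx,\by$-derivatives are uniformly bounded, standard differentiation under the integral sign shows $w\in C^\infty(\Omega\times\Omega)$, and together with the regularity of $G(\bx-\by)$ for $\bx\neq\by$ this proves the lemma. The main technical point I expect is the first step: Lemma~\ref{lem:ops_analytic} requires that $c$ lie inside $V_{\gamma,\epsilon}$, yet \eqref{eq:green-floq} only specifies that $c$ pass through the second and fourth quadrants. Pinning down that $c$ can simultaneously avoid the branch cuts emanating from $\pm k$ and the finitely many trapped-mode wavenumbers $\pm\tilde\xi_j$ is essentially bookkeeping, but must be settled before invoking Lemma~\ref{lem:ops_analytic}.
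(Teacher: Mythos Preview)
Your proposal is correct and follows essentially the same route as the paper: split $G_\gamma=G+w$, represent $w_\xi$ via \eqref{eq:w_xi_def} and \eqref{eq:quasi-IE}, invoke Lemma~\ref{lem:ops_analytic} for the boundedness and analyticity of $\cK_\xi^{-1}$ and $S_{\xi,\gamma}$ on the compact contour $c\subset V_{\gamma,\epsilon}$, and then differentiate under the integral using the smoothness of the data and kernel. Your explicit discussion of why $c$ can be chosen inside $V_{\gamma,\epsilon}$ is a point the paper simply asserts, so your version is if anything more careful there.
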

\begin{proof}
    By equations~\eqref{eq:dualsum} and~\eqref{eq:bessel_sum}, we have that $-\partial_{\bn(\bz)} G_\xi(\bz-\by)$ is well defined for all~$\by$ not on~$\gamma$ with $|y_1|<2d$ and all~$\xi\in V_{\gamma,\epsilon}$. The periodicity of~$G_\xi$ then implies that it exists and is a smooth function of~$\bz$ for all~$\by\in\Omega$. By the previous lemma and~\eqref{eq:w_xi_def}, we thus have that 
    \begin{equation}
        w_{\xi}(\bx,\by)=S_{\xi,\gamma} \left[\cK_{\xi,\gamma}^{-1}[-\partial_{\bn(\cdot )} G_\xi(\cdot-\by)]\right](\bx)\label{eq:w_xi_comp}
    \end{equation}
    is well defined and analytic in~$V_{\gamma,\epsilon}$. Since the contour~$c\subset V_{\gamma,\epsilon}$ in \eqref{eq:w_def} is compact, we thus have that $w(\bx,\by)$ 
    is well defined for each~$\bx,\by\in\Omega$. Since $G_\gamma(\bx,\by) = G(\bx-\by)+w_\gamma(\bx,\by)$, we have proved the result.

    To see that that~$w_\gamma$ is a smooth function of~$\by$, we note that, since the operators~$S_{\xi,\gamma} $ and $\cK_{\xi,\gamma}^{-1}$ are bounded, we can pull any $\by$ derivatives inside the righthand side of $\eqref{eq:w_xi_comp}$ and repeat the argument. The smoothness as a function of~$\bx$ follows from the smoothness of the kernel of~$S_{\xi,\gamma}$. The smoothness of~$G_\gamma$ then follows.
\end{proof}

\section{Analyzing the glued integral equation}\label{sec:glued_IE_anal}
Given the system of integral equations~\eqref{eq:real_IE} on the real contour $\Gamma$, we next show that it can be analytically continued to an integral equation on a suitable family of complex contours with a Fredholm index zero operator. We do this in three steps. First, in Section~\ref{sec:green_ana} we show that the domain Green's functions (and so the kernels of the integral operators in~\eqref{eq:real_IE}) can be analytically continued. We then show in Section~\ref{sec:bdd_opps} that the operators are bounded in a suitable Banach space. Finally, in Section~\ref{sec:Fredholm} we show that the resulting operator is Fredholm index zero. We refer the reader to Section \ref{sec:transm_prob} and Figure \ref{fig:prob_setup} for an illustration of the geometry considered and relevant notation.

\subsection{Analyticity of the domain Green's function}\label{sec:green_ana}
In order to handle the slow decay of the kernels and densities, we analytically continue the integral equation into the complex plane. Specifically, the relevant set of the complex plane will be
\begin{equation}
    \Gamma_U = \{ x_2 \in \bbC \;|\; \Re x_2\geq \max(d_L,d_R)/2,\; 0\leq \Im x_2\leq K_{slope} \Re x_2\},
\end{equation}
for some constant~$\Kslope>0$. The domain~$\Gamma_U$ is chosen so that outgoing oscillatory functions, such as~$G((0,x_2))$, will decay exponentially as~$\Im x_2$ grows. We give conditions on the size of~$\Kslope$ in \eqref{eq:slope_def} that will guarantee that the kernels of our integral equation decay sufficiently rapidly on the complexified contour. We choose~$\Gamma_U$ to start at the height~$\Re x_2=\max(d_L,d_R)/2$ to ensure that the the contour deformation starts a finite distance away from the boundaries,~$\gamma_{L}$ and~$\gamma_R$ which were assumed to live in the region~$x_2<0$.

We denote the remaining piece of the interface as 
\begin{equation}
    \Gamma_D = ({X_2},\max(d_L,d_R)/2],
\end{equation}
and the union of these regions as
\begin{equation}
    \Gamma_\bbC = \Gamma_D\cup \Gamma_U.
\end{equation}
We also use the following extension of~$\Omega$:
\begin{equation}
    \Omega_\bbC = \{\by \in \bbR\times \lp (-\infty,\max(d_L,d_R)/2]\cup \Gamma_U\rp \;|\; (x_1,\Re x_2)\in \Omega\}.
\end{equation}
It will sometimes be convenient to exclude a neighborhood of any corners of~$\gamma$. For any~$\delta>0$, we let $ \Omega_{\bbC,\delta}$ be the set of points in~$\Omega_{\bbC}$ that are at least a distance~$\delta$ from every corner of~$\gamma$.
The set of allowable contours will be defined as follows.
\begin{definition}\label{def:G}
Let~$\Kslope$ be a real number greater than zero and~$\cG$ be the collection of curves~$\tGamma\subset \Gamma_\bbC$ with a parameterization
    \begin{equation}
        x_2(t) = t + i f(t)
    \end{equation}
    where~$f$ is a smooth function satisfying $0\leq f'(t) \leq \Kslope$ and~$f(t)=0$ if~$t<\max(d_L,d_R)/2$.
\end{definition}
In order to show that~$w$ can be analytically continued to~$\GammaC$, we need the following properties of~$\alpha$.
\begin{lemma}\label{lem:alphaprop}
    The function~$\alpha$ satisfies
    \begin{equation}
    \alpha(\xi) = \begin{cases}
        i\sqrt{k^2- \xi^2} & \text{if} \;|\xi|<k\\
        -\sqrt{\xi^2-k^2} & \text{if} \;|\Re \xi| >k
    \end{cases}\label{eq:alpha_expl}.
\end{equation}
    Further
    \begin{equation}
        \alpha(\xi) = -\sqrt{\xi^2} + \frac{k^2}{2\sqrt{\xi^2}}+ O(\xi^{-3})\label{eq:alpha_asym}
    \end{equation}
    as~$\Re\xi\to\pm \infty$.
\end{lemma}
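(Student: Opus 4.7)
The plan is to prove both parts by reducing them to direct calculations on the two factors $\sqrt{i(\xi-k)}$ and $\sqrt{-i(\xi+k)}$ and then taking products. Since $\alpha$ is defined as a product of two principal square roots (with branch cut on the negative real axis), the main task is keeping track of the arguments of these factors in each region so that the signs and phases combine correctly.

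First I would establish the key algebraic identity $\alpha(\xi)^2 = (\xi^2-k^2)$, which follows immediately from $\alpha(\xi)=-\sqrt{i(\xi-k)}\sqrt{-i(\xi+k)}$ and $\sqrt{z}^2=z$. This tells us $\alpha(\xi)=\pm\sqrt{\xi^2-k^2}$ globally (away from branch cuts), so the work is in picking the correct branch in each region.

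For \eqref{eq:alpha_expl}, I would compute the two factors explicitly. If $\xi\in\mathbb{R}$ and $|\xi|<k$, then $i(\xi-k)$ lies on the negative imaginary axis (argument $-\pi/2$) and $-i(\xi+k)$ also lies on the negative imaginary axis. The principal square root of each is $\sqrt{k-\xi}\,e^{-i\pi/4}$ and $\sqrt{\xi+k}\,e^{-i\pi/4}$, giving
\begin{equation}
\alpha(\xi)=-\sqrt{k-\xi}\sqrt{\xi+k}\,e^{-i\pi/2}=i\sqrt{k^2-\xi^2}.
\end{equation}
For real $\xi>k$, $i(\xi-k)$ has argument $+\pi/2$ and $-i(\xi+k)$ has argument $-\pi/2$, so the two exponential factors are $e^{i\pi/4}$ and $e^{-i\pi/4}$, which multiply to $1$; this yields $\alpha(\xi)=-\sqrt{\xi^2-k^2}$. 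The case $\xi<-k$ is symmetric. To extend to the full half-planes $|\Re\xi|>k$, I would invoke continuity of $\alpha$ away from the branch cuts (which, by the choice of cuts for $\sqrt{\cdot}$, sit along the rays $\xi\in k+i\mathbb{R}_{-}$ and $\xi\in -k+i\mathbb{R}_{+}$), so that the formula $-\sqrt{\xi^2-k^2}$ extends by analytic continuation from the positive and negative real rays respectively.

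For \eqref{eq:alpha_asym}, the identity $\alpha(\xi)^2=\xi^2-k^2$ gives
\begin{equation}
\alpha(\xi)=\pm\sqrt{\xi^2}\sqrt{1-k^2/\xi^2}=\pm\sqrt{\xi^2}\left(1-\frac{k^2}{2\xi^2}+O(\xi^{-4})\right)
\end{equation}
for large $|\Re\xi|$, with the principal square root understood. By part \eqref{eq:alpha_expl}, the sign on the positive real axis is negative, matching $-\sqrt{\xi^2}$ since $\sqrt{\xi^2}=\xi>0$ there. On the negative real axis, $\sqrt{\xi^2}=-\xi>0$, and the formula $\alpha(\xi)=-\sqrt{\xi^2-k^2}$ again matches $-\sqrt{\xi^2}+O(\xi^{-1})$. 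Hence the negative sign is correct uniformly as $\Re\xi\to\pm\infty$.

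The only subtle step is the branch tracking in \eqref{eq:alpha_expl}: one must be careful that when $\xi$ moves from real values into the complex plane (e.g. into the strip in $V_{\gamma,\epsilon}$), the arguments of $i(\xi-k)$ and $-i(\xi+k)$ stay on the same side of the negative real axis, so that the principal branch of $\sqrt{\cdot}$ continues analytically without crossing a cut. This is where I would spend the most care; the algebraic identity $\alpha^2=\xi^2-k^2$ and the explicit case computations on the real axis then pin down the sign choice unambiguously in each component of the complement of the branch cuts of $\alpha$.
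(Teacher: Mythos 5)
Your proposal is correct and follows essentially the same route as the paper: track the arguments of the factors $\sqrt{i(\xi-k)}$ and $\sqrt{-i(\xi+k)}$ to fix the branch in each region, then use the binomial expansion of $\sqrt{1-k^2/\xi^2}$ for the asymptotics. One harmless slip: the cuts of the two factors are the rays $k+i\mathbb{R}_{+}$ and $-k+i\mathbb{R}_{-}$ (not $k+i\mathbb{R}_{-}$ and $-k+i\mathbb{R}_{+}$), but since either way they lie on the lines $\Re\xi=\pm k$, your analytic continuation into the half-planes $|\Re\xi|>k$ goes through unchanged.
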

\begin{proof}
    When~$\Re\xi>k$, we have~$\opnm{Arg}(i(\xi-k))\in (0,\pi)$, where~$\opnm{Arg}$ is the principle argument chosen to lie in~$(-\pi,\pi]$. Similarly~$\opnm{Arg}(-i(\xi+k))\in (-\pi,0)$. We thus have that
    \begin{equation}
        \opnm{Arg}(\xi^2-k^2) = \opnm{Arg}(i(\xi-k))+\opnm{Arg}(-i(\xi+k)),
    \end{equation}
    which implies that~$\alpha(\xi) = -\sqrt{\xi^2-k^2}$ when~$\Re \xi>k$. A similar argument can be applied to the case~$\Re \xi<-k$. The expression for~$|\xi|<k$ can be proved by a similar, but more tedious calculation.

    To derive the asymptotic formula, we simply note that if $\Re\xi$ is large, then~$\xi^2-k^2$ is far from the branch cut of the square root for large. We can therefore apply the binomial formula to derive the result.
\end{proof}

\begin{lemma} \label{lem:ana_cont}
    If~$\gamma$ satisfies Assumption \ref{ass:boundary} and~$\xi\in V_{\gamma,\epsilon}$, then~$w_{\xi,\gamma}(\bx,\by)$ and its $x_1$ and~$y_1$ derivatives can be analytically continued to any~$\bx,\by\in \Omega_\bbC$.
\end{lemma}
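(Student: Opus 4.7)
The strategy is to use the integral representation \eqref{eq:w_xi_comp},
\[
w_{\xi,\gamma}(\bx,\by) = S_{\xi,\gamma}\!\left[\cK_{\xi,\gamma}^{-1}[-\partial_{\bn(\cdot)} G_\xi(\cdot-\by)]\right]\!(\bx),
\]
and to analytically continue each factor separately in $\by$ and then in $\bx$, working with the Fourier-series representation \eqref{eq:dualsum} of $G_\xi$ rather than with $G_\xi$ itself. The crucial geometric fact driving the argument is that, because $\gamma$ lies in $\{x_2\le 0\}$ while $\Gamma_U$ begins at $\Re x_2 = \max(d_L,d_R)/2$, any boundary point $\bz\in\gamma$ is separated from any $\bx$ or $\by$ in $\Omega_\bbC$ by $\Re(x_2-z_2)\ge \max(d_L,d_R)/2 > 0$ (and similarly in $\by$). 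This separation lets me choose, once and for all, the branch $\sqrt{(x_2-z_2)^2} = x_2-z_2$ in \eqref{eq:dualsum}, so the branch cuts of the square root are harmlessly avoided.

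First, I would fix $\xi\in V_{\gamma,\epsilon}$ and $\bz\in\gamma$ and show that $\by \mapsto -\partial_{\bn(\bz)} G_\xi(\bz-\by)$ extends analytically from the real line to all of $\Omega_\bbC$. Using the chosen branch, each term in \eqref{eq:dualsum} becomes $e^{-i\xi_m y_1}\, e^{\alpha(\xi_m)(y_2-z_2)}/(-2\alpha(\xi_m))$, which is entire in $\by$. By Lemma \ref{lem:alphaprop}, $\Re\alpha(\xi_m)\sim -|\Re\xi_m|$ and $\Im\alpha(\xi_m)$ stays bounded as $|m|\to\infty$, while $\Im(y_2-z_2) = \Im y_2 \le \Kslope \Re y_2$ is bounded on compact subsets of $\Omega_\bbC$. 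Consequently
\[
|e^{\alpha(\xi_m)(y_2-z_2)}| \;\le\; C\, e^{\Re\alpha(\xi_m)\,\Re(y_2-z_2)},
\]
which decays like $e^{-c|m|}$ uniformly on compact subsets of $\Omega_\bbC\times\gamma$, so the series, as well as any $y_1$-derivative of it (which only introduces polynomial factors of $\xi_m$), converges absolutely and uniformly, giving an analytic extension; the same argument works for $\partial_{\bn(\bz)}$ applied on the $\bz$ side. Next, since $\cK_{\xi,\gamma}^{-1}:L^2(\gamma)\to L^2(\gamma)$ is a bounded operator independent of $\by$ (Lemma \ref{lem:ops_analytic}), composition yields an analytic $L^2(\gamma)$-valued map $\by\mapsto \rho_{\xi,\by}$ on $\Omega_\bbC$. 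Finally, I apply $S_{\xi,\gamma}$ and analytically continue in $\bx$ by repeating the Fourier-series argument: $G_\xi(\bx-\bz)$ with $\bx\in\Omega_\bbC$, $\bz\in\gamma$ admits the analogous convergent termwise extension, and because the sum converges in $L^\infty$ over compact subsets, it can be integrated against $\rho_{\xi,\by}\in L^2(\gamma)$ term by term, giving analyticity of $w_{\xi,\gamma}$ jointly in $(\bx,\by)\in\Omega_\bbC\times\Omega_\bbC$. Differentiation in $x_1$ is immediate from the same termwise differentiation.

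The main obstacle is managing the branch choice for $\sqrt{(x_2-z_2)^2}$ in \eqref{eq:dualsum}. The formula is manifestly symmetric under $x_2\mapsto -x_2$ on the real axis, but the natural complex extension has branch cuts on the locus $\Re(x_2-z_2)=0$; if either $\bx$ or $\by$ were allowed to approach $\gamma$ in $\Omega_\bbC$, a single analytic branch would not cover the domain, and convergence of the series would break down. The definition of $\Omega_\bbC$ is engineered precisely to rule this out: placing the foot of $\Gamma_U$ at the height $\max(d_L,d_R)/2$ keeps $\Re(x_2-z_2)$ and $\Re(y_2-z_2)$ uniformly positive for $\bz\in\gamma$. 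A related subtlety worth flagging is that $w_{\xi,\gamma}(\bx,\by)$ is only ever evaluated through boundary integrals in $\bz$, never as $G_\xi(\bx-\by)$ directly, so nothing bad happens even in the regime where $\Re(x_2-y_2)$ might vanish for $\bx,\by\in\Omega_\bbC$.
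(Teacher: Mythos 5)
Your proposal is correct and follows essentially the same route as the paper: both proofs establish analyticity of $G_\xi$ in the complexified region via termwise exponential decay of the Fourier series \eqref{eq:dualsum}, then use the boundedness of $\cK_{\xi,\gamma}^{-1}$ and the representation \eqref{eq:w_xi_comp} to continue first in $\by$ and then in $\bx$, handling derivatives by termwise differentiation. The only cosmetic difference is that you justify analyticity of the boundary integral by uniform termwise convergence while the paper invokes Morera and Fubini, and you make the branch choice $\sqrt{(x_2-z_2)^2}=x_2-z_2$ explicit where the paper states the equivalent sign condition on $\Re x_2$ and $\Im x_2$.
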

\begin{proof}
We first show that~$G_\xi(\bx)$ is analytic in the region
\begin{equation}
    D_\delta=\{(x_1,x_2)\in \bbR\times \bbC\;|\Re x_2\geq \delta, \; \Im x_2\geq 0,\; |x_2|<\delta^{-1}\}
\end{equation} 
for any~$\delta>0$. Each term in the representation of $G_\xi$ given by ~\eqref{eq:dualsum} can be bounded as follows
\begin{equation}
    \left|e^{i\xi_mx_1 }\frac{e^{\alpha(\xi_m)\sqrt{x_2^2} } }{-2\alpha(\xi_m)}\right| =\left|e^{i(\xi + \frac{2|m|\pi}{d})x_1 }\frac{e^{\alpha(\xi_m)x_2} }{2\alpha(\xi_m)} \right|.
\end{equation}
For~$m\neq 0$,~$\Re \alpha(\xi_m)<0$, so we can in turn bound the above expression by
\begin{multline}
    |e^{i\xi_mx_1 }|\frac{e^{\Re\lp\alpha(\xi_m)x_2 \rp} }{2|\alpha(\xi_m)|}=e^{-x_1\Im \xi_m } \left|\frac{e^{\Re\lp -\frac{2|m|\pi}{d}x_2 - \operatorname{sign}(m) \xi x_2  + O(m^{-1})\rp } }{\frac{4|m|\pi}{d} + O(1)} \right|\\
    \leq  e^{-x_1\Im \xi } \left|\frac{e^{-\frac{2|m|\pi}{d}\delta + O(m^{-1})} }{\frac{4|m|\pi}{d} + O(1)} \right|e^{|\xi| \delta^{-1}}.
\end{multline}
This decays exponentially in~$|m|$ for any fixed~$\xi\in V_{\gamma,\epsilon}$, so the series in~\eqref{eq:dualsum} converges and $G_\xi(\bx)$ exists. In fact, we have shown that~$G_\xi(\bx)$ is a uniform limit of analytic functions on~$D_\delta$ and so is analytic there. Repeating this for all~$\delta$ shows that~$G_\xi(\bx)$ is analytic on~$\Re x_2>0$ and~$\Im x_2\geq 0$. An equivalent argument shows that~$G_\xi$ is an analytic on the region~$\Re x_2 < 0$ and~$\Im x_2\leq 0$, so~$G_\xi(\bx)$ is analytic when~$\Re x_2\neq 0$ and~$\operatorname{sign}(\Im x_2) = \operatorname{sign}(\Re x_2)$.

By differentiating~\eqref{eq:dualsum} with respect to~$x_1$ term by term and repeating the above argument, we can also show that the $x_1$ derivatives of~$G_\xi$ exist and are analytic when~$\Re x_2 \neq 0$. We therefore have that 
\begin{equation}
    \partial_{\bn(\bz)} G_\xi (\bz- \by) 
\end{equation}
can be analytically continued from~$\by\in\Omega$ to~$y\in\Omega_\bbC$ and is a smooth function of~$\bz$. By assumption, the boundary integral equation~\eqref{eq:quasi-IE} is invertible and so~$\rho_{\by}(\bz)\in L^2(\gamma)$ is well-defined for any~$\by\in \Omega_\bbC$. We may therefore write
\begin{equation}
    w_{\xi,\gamma}(\bx,\by) = S_{\xi,\gamma}[\rho_{\xi,\by}](\bx) = \int_\gamma G_\xi(\bx-\bz) \rho_{\xi,\by}(\bz) \,\opd \bz. \label{eq:wxi_analytic}
\end{equation}
For any fixed~$\bx \in \Omega_\bbC,$ $G_\xi(\bx-\bz)$ will be a smooth function of~$\bz\in\gamma$ and so the integral must converge. A simple application of Morera's and Fubini's theorems implies that~$w_{\xi,\gamma}(\bx,\by)$ is an analytic function of~$\bx$ in~$\Omega_\bbC$.

To see that the~$x_1$ derivatives of~$w_{\xi,\gamma}$ are analytic, we note that the integrand in~\eqref{eq:wxi_analytic} is a smooth function of~$\bx$ in the interior of $\Omega_\bbC$ and so we can pull the derivatives inside the integral and repeat the above argument. For the $y_1$ derivatives, we note that~$\rho_{\by}(\bz)$ is a bounded linear operator applied to a smooth function of~$\by$ in the interior of~$\Omega_\bbC$, and so a smooth function of $\by$ for each $\bz$. We can therefore pull the derivatives of~$w_{\xi,\gamma}$ with respect to~$y_1$ inside the integral and apply similar proofs.
\end{proof}

We now prove that the real-space function~$w_\gamma$ can be analytically continued.
\begin{theorem}\label{thm:w_analytic}
    The function~$w_\gamma$ and its $x_1$ and~$y_1$ derivatives can be analytically continued from~$\Omega^2$ to~$\Omega_\bbC^2$.
\end{theorem}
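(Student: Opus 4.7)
The plan is to pass the analytic continuation supplied by Lemma \ref{lem:ana_cont} through the compact $\xi$-integral in the Floquet--Bloch formula
\begin{equation*}
w_\gamma(\bx,\by) = \frac{d}{2\pi}\int_c w_{\xi,\gamma}(\bx,\by)\,\opd\xi.
\end{equation*}
Two structural observations do all the work. First, the contour $c$ connecting $\pm\pi/d$ through the second and fourth quadrants is a compact arc and, for $\epsilon>0$ sufficiently small, sits inside $V_{\gamma,\epsilon}$, uniformly bounded away from the branch cuts $B_k$ and the real-axis poles $\pm\tilde\xi_j$. Second, Lemma \ref{lem:ana_cont} already delivers, for each such $\xi$, the analytic continuation of $w_{\xi,\gamma}(\bx,\by)$ and its $x_1,y_1$ derivatives from $\Omega^2$ to $\Omega_\bbC^2$.

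First I would establish that $w_{\xi,\gamma}(\bx,\by)$, together with $\partial_{x_1}w_{\xi,\gamma}$ and $\partial_{y_1}w_{\xi,\gamma}$, is continuous in $\xi\in c$ and uniformly bounded on $c\times K$ for any compact $K\subset \Omega_\bbC^2$. This follows from the representation
\begin{equation*}
w_{\xi,\gamma}(\bx,\by) = S_{\xi,\gamma}\bigl[\cK_{\xi,\gamma}^{-1}[-\partial_{\bn(\cdot)} G_\xi(\cdot-\by)]\bigr](\bx),
\end{equation*}
the analyticity (hence continuity) in $\xi$ of the bounded operators $S_{\xi,\gamma}$ and $\cK^{-1}_{\xi,\gamma}$ from Lemma \ref{lem:ops_analytic}, and the continuity in $\xi$ of $\partial_{\bn}G_\xi(\cdot-\by)$ as an $L^2(\gamma)$-valued function, which can be read off from the series \eqref{eq:dualsum} and \eqref{eq:bessel_sum}. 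Compactness of $c$ then yields the required uniform bounds; the same argument, differentiating the series representations in $x_1$ and $y_1$ term by term, yields uniform bounds on the corresponding derivatives.

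With these bounds in place, the analytic continuation of $w_\gamma$ follows from the standard analytic-parameter-integral machinery. Freezing all other variables, for the complex variable $x_2$ one applies Morera's theorem: for any closed triangle $T\subset\{x_2\in\bbC:(x_1,x_2)\in\Omega_\bbC\}$, the uniform bound justifies swapping $\oint_{\partial T}$ with $\int_c$ via Fubini, and Cauchy's theorem applied to the analytic function $x_2\mapsto w_{\xi,\gamma}(\bx,\by)$ from Lemma \ref{lem:ana_cont} kills the inner contour integral. The same argument works for $y_2$, and Hartogs' theorem promotes separate analyticity to joint analyticity in $(x_2,y_2)$. Existence and analyticity of the $x_1$ and $y_1$ derivatives come from differentiating under the $\xi$-integral, which is legal thanks to the uniform bounds on the corresponding derivatives of $w_{\xi,\gamma}$.

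The only delicate point is the uniform-in-$\xi$ operator norm bound on $c$. Because $c$ has positive distance from the singular set excluded by $V_{\gamma,\epsilon}$, analyticity of $\xi\mapsto \cK^{-1}_{\xi,\gamma}$ from Lemma \ref{lem:ops_analytic} combined with compactness of $c$ immediately yields a uniform bound on $\|\cK^{-1}_{\xi,\gamma}\|$, and similarly for $\|S_{\xi,\gamma}\|$. Once those two norms are controlled uniformly on $c$, the rest is a direct adaptation of the existence argument already used in Lemma \ref{lem:G_exist} for the real case.
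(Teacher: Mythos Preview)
Your proposal is correct and follows essentially the same route as the paper: write $w_\gamma$ as the $\xi$-integral over the compact contour $c$, invoke Lemma~\ref{lem:ana_cont} for analyticity of the integrand in $x_2,y_2$, then apply Morera and Fubini to push analyticity through the integral, with the $x_1,y_1$ derivatives handled by differentiation under the integral sign. Your version is more explicit than the paper's about the uniform-in-$\xi$ bounds (via compactness of $c$ and Lemma~\ref{lem:ops_analytic}) and about promoting separate to joint analyticity via Hartogs, but these are details the paper simply elides rather than a different strategy.
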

\begin{proof}
    Since~$w_{\xi,\gamma}$ is well defined on~$\Omega_\bbC^2$, we can formally write
\begin{equation}
    w_\gamma(\bx,\by) = \int_c w_{\xi,\gamma}(\bx,\by)\,\opd \xi
\end{equation}
for all~$\bx,\by\in \Omega_\bbC$. By Lemma~\ref{lem:ops_analytic},~$w_{\xi,\gamma}$ is an analytic function of~$\xi$ in a neighborhood of~$c$, so the integral is well defined.

By Lemma~\ref{lem:ana_cont}, we know that~$w_{\xi,\gamma}(\bx,\by)$ is an analytic function of~$x_2$ and~$y_2$. A simple application of Morera's and Fubini's theorems therefore gives that~$w_\gamma$ is analytic function of~$x_2$ and~$y_2$. The $x_1$ and~$y_1$ derivatives of~$w_\gamma$ can similarly be shown to be analytic.
\end{proof}
To find the appropriate domains for our integral operators, we must understand the behavior of their kernels for large~$x_2,y_2$, which is characterized in following theorem.
\begin{theorem}\label{thm:split_w}
    Let~$\thetaeta>0$ be such that
    \begin{equation}
        \thetaeta =  \frac12 \opnm{Arg}\left[\lp \frac{\pi}d +ih\rp^2-k^2\right]+\arctan\Kslope\label{eq:slope_def},
    \end{equation}
    where $h$ is the same $h$ as was used in the definition of $V_{\gamma,\epsilon}$ in \eqref{eq:vdef}. Also let~$\Kslope$ be small enough that~$\thetaeta<\pi/2$ and let~$\eta = |\cos(\thetaeta) \alpha\lp\pi/d\rp+\epsilon|$ for some small~$\epsilon>0$. 
    
    For every~$l\geq 0$, there exist a constant~$K_l$ and continuous functions~$a_l$ and~$A_l$ such that~$w_\gamma$ can be split
    \begin{equation}
        w_{\gamma} = w_{\gamma,rr} + w_{\gamma,ri}+ w_{\gamma,ir}+ w_{\gamma,ii}
    \end{equation}
    with
    \begin{align}
        &|\partial_{x_1}^l w_{\gamma,rr}(\bx,\by)| \leq K e^{-\eta\Re(x_2+y_2)}e^{h|x_1-y_1|},\quad \\ 
        &\left|\partial_{x_1}^l w_{\gamma,ri}(\bx,\by) - \frac{a_l(\bx,y_1)e^{iky_2}}{y_2^{\opnm{ceil}(l/2)+1/2}}\right| \leq \frac{K  e^{-\eta\Re x_2-k\Im y_2}e^{h|x_1-y_1|}}{(1+|y_2|)^{\opnm{ceil}(l/2)+1}},\\
        &w_{\gamma,ir}(\bx,\by)=w_{\gamma,ri}(\by,\bx), \quad \\ \text{and}&\quad \left|\partial_{x_1}^l  w_{\gamma,ii}(\bx,\by) - \frac{A_l(x_1,y_1)e^{ik(x_2+y_2)}}{(x_2+y_2)^{\opnm{ceil}(l/2)+1/2}}\right| \leq K \frac{e^{-k\Im(x_2+y_2)}e^{h|x_1-y_1|} }{(1+|x_2+y_2|)^{\opnm{ceil}(l/2)+1}},
    \end{align}
    for all~$\bx,\by\in\Omega_{\bbC}$ with~$\Re x_2$ and~$\Re y_2> d/2$. The functions~$a_l$ are analytic and satisfy $|a_l(\bx,y_1)|\leq D_l e^{-\eta\Re x_2 + h|x_1-y_1|}$ for some constant~$D_l$.

    Further, for every~$\delta>0$ and~$l\geq 0$, there exists a constant~$K_{l,\delta}$ and continuous function~$b_l$ such that~$w_\gamma$ can be split
    \begin{equation}
        \left|w_{\gamma,r}(\bx,\by) - \frac{b_l(\bx,y_1)e^{iky_2}}{y_2^{^{\opnm{ceil}(l/2)+1/2}}}\right| \leq K_{l,\delta} e^{-\eta \Re y_2 + h|y_1|} + K_{l,\delta} \frac{e^{-k\Im y_2+ h|y_1|}}{|y_2|^{^{\opnm{ceil}(l/2)+1}}}\label{eq:wnearfar_bd}
    \end{equation}
    for all~$\bx,\by\in\Omega_{\bbC,\delta}$ with~$\Re x_2 \leq d/2$ and~$\Re y_2> d/2$. 

    Finally, equivalent expressions hold for~$\Re x_2 > d/2$ and~$\Re y_2\leq d/2$ and for the $y_1$ derivatives of~$w_\gamma$.
\end{theorem}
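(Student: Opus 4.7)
The approach is to analyze the Floquet--Bloch integral~\eqref{eq:green-floq} by first expanding the integrand $w_{\xi,\gamma}(\bx,\by)$ in a bilateral $x_1,y_1$-Fourier series and then deforming the $\xi$-contour so as to separate the evanescent and radiating contributions. For $\Re x_2,\Re y_2>d/2$ and $\bz\in\gamma$, substituting the Fourier series~\eqref{eq:dualsum} for $G_\xi(\bx-\bz)$ into both $w_{\xi,\gamma}(\bx,\by)=S_{\xi,\gamma}[\rho_{\xi,\by}](\bx)$ and the right-hand side of~\eqref{eq:quasi-IE}, and using boundedness of $\cK_{\xi,\gamma}^{-1}$ (Lemma~\ref{lem:ops_analytic}), produces an expansion
\[
w_{\xi,\gamma}(\bx,\by)=\sum_{m,n\in\mathbb{Z}}C_{m,n}(\xi)\,e^{i\xi_m x_1+\alpha(\xi_m)x_2}\,e^{-i\xi_n y_1+\alpha(\xi_n)y_2},
\]
with coefficients $C_{m,n}(\xi)$ analytic in $V_{\gamma,\epsilon}$ away from the discrete modes $\pm\tilde\xi_j$ and decaying rapidly in $|m|,|n|$ by virtue of the $1/\alpha(\xi_{m,n})$ factors and the smoothness of $\rho_{\xi,\by}$ in the Fourier-driven direction.

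\textbf{Contour deformation.} Next, deform the contour $c$ in~\eqref{eq:green-floq} upward to a contour $c_h$ that lies on $\Im\xi=h$ except in small neighborhoods of $\pm k$, where it wraps around the vertical branch cuts of $\alpha(\xi)$ via Hankel-type loops $H_{\pm k}$; any poles $\pm\tilde\xi_j$ traversed are picked up as residues that contribute to the rapidly-decaying piece. Lemma~\ref{lem:alphaprop} together with the definition of $\thetaeta$ in~\eqref{eq:slope_def} shows that along the horizontal portion of $c_h$, and for every $(m,n)\neq(0,0)$ even along $H_{\pm k}$, one has $\Re[\alpha(\xi_m)x_2+\alpha(\xi_n)y_2]\le-\eta\Re(x_2+y_2)$ uniformly for $\bx,\by\in\Omega_\bbC$. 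Setting $w_{\gamma,rr}$ to be the sum of all contributions from the horizontal part of $c_h$ plus the $(m,n)\neq(0,0)$ parts of the $H_{\pm k}$ integrals yields the stated bound, with the prefactor $e^{h|x_1-y_1|}$ arising from $|e^{i\xi_m x_1-i\xi_n y_1}|$ at $\Im\xi=h$.

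\textbf{Branch-point asymptotics.} The remaining $H_{\pm k}$ contributions split naturally into three groups $(0,0)$, $(m,0)$ with $m\neq 0$, and $(0,n)$ with $n\neq 0$, defining $w_{\gamma,ii}$, $w_{\gamma,ri}$, and $w_{\gamma,ir}$ respectively. Near $\xi=k$, parameterize the Hankel loop by $\xi=k+it^2$ so that $\alpha(\xi)\sim e^{i\pi/4}\sqrt{2k}\,t$, converting the $\xi$-integral into a Laplace-type integral in $x_2+y_2$ (for $(0,0)$) or in $y_2$ alone (for $(m,0)$), with an analogous treatment near $-k$. Watson's lemma then yields the leading asymptotics $A_l(x_1,y_1)e^{ik(x_2+y_2)}/(x_2+y_2)^{\opnm{ceil}(l/2)+1/2}$ and $a_l(\bx,y_1)e^{iky_2}/y_2^{\opnm{ceil}(l/2)+1/2}$, with a half-power-faster remainder as stated. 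The differentiation $\partial_{x_1}^l$ introduces factors $(i\xi_m)^l$, which after regrouping into pairs via integration-by-parts in $t$ produces the $\opnm{ceil}(l/2)$ offset in the exponent. For $\Re x_2\le d/2$, the near-field estimate~\eqref{eq:wnearfar_bd} follows by the same deformation in $\xi$, but with \eqref{eq:dualsum} replaced by the Bessel expansion~\eqref{eq:bessel_sum} when estimating $G_\xi(\bx-\bz)$ for $\bz\in\gamma$ (to accommodate possible overlap with the singular disk); only the $y_2$-contour analysis is needed, and the $(0,0)$ and $(0,n)$ Hankel contributions combine into the single $b_l(\bx,y_1)e^{iky_2}/y_2^{\opnm{ceil}(l/2)+1/2}$ leading term.

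\textbf{Main obstacle.} The most delicate step is obtaining Watson's-lemma remainder bounds that are uniform as $(x_2,y_2)$ ranges over the complex wedge $\Omega_\bbC$: the optimal steepest-descent direction for the $t$-integral rotates with $\arg(x_2+y_2)\in[0,\arctan\Kslope]$, and the formula~\eqref{eq:slope_def} for $\thetaeta$ is chosen precisely so that this direction remains admissible (i.e.\ keeps the integrand decaying along the deformed $t$-contour) throughout the full range. A secondary difficulty is justifying interchange of the $\xi$-integral with the infinite $(m,n)$-sums on the Hankel loops, where the integrand is not smooth; this is handled by a dominated-convergence bound on $C_{m,n}(\xi)$ together with the exponential decay of $e^{\alpha(\xi_m)x_2+\alpha(\xi_n)y_2}$ in $|m|,|n|$, uniform for $\xi$ on $c_h\cup H_{\pm k}$.
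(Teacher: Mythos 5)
Your overall architecture matches the paper's: expand $w_{\xi,\gamma}$ in the Rayleigh series $\sum_{m,n}f_{nm}(\xi)e^{i\xi_m x_1-i\xi_n y_1+\alpha(\xi_m)x_2+\alpha(\xi_n)y_2}$ with coefficients controlled via $\cK_\xi^{-1}$, group the modes into $(0,0)$, $(0,n)$, $(n,0)$, and the rest to produce $w_{ii},w_{ri},w_{ir},w_{rr}$, and treat the near-boundary case separately. However, the central analytic step --- your choice of $\xi$-contour and your identification of where the $e^{ikz}z^{-1/2}$ asymptotics come from --- is wrong, and the proof does not go through as written. First, the leading oscillatory term is a \emph{saddle-point} contribution from $\xi=0$, where $\alpha(0)=ik$ and $\alpha'(0)=0$, not a branch-point contribution from $\xi=\pm k$, where $\alpha(\pm k)=0$. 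A Watson's-lemma analysis at $\xi=k$ with $\xi=k+it^2$, $\alpha\sim ct$, $\opd\xi=2it\,\opd t$ produces a term with no $e^{ikz}$ phase and the wrong algebraic power ($z^{-2}$ rather than $z^{-1/2}$); moreover $\Re\alpha>0$ on one side of each vertical cut emanating from $\pm k$, so the integrand on one leg of each Hankel loop grows like $e^{+c\Re x_2}$ and the loop integral cannot be bounded as you claim. The paper instead keeps the contour as $c_-\cup c_0\cup c_+$, where $c_0$ is the steepest-descent arc through the saddle $\xi=0$ on which $\alpha(\xi(t))=ik-kt^2e^{-i\pi/4}$ exactly (equations \eqref{eq:00_descent}--\eqref{eq:00_descent_val}), so that Laplace's method (Lemma~\ref{lem:00_bound}, Propositions~\ref{prop:0n_close} and~\ref{lem:0n_estimate}) delivers $A_le^{ik(x_2+y_2)}(x_2+y_2)^{-1/2}$ with a remainder that is uniform over the complex wedge.

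Second, your exponential-decay claim on the horizontal portion of $c_h$ fails for the $(0,0)$ mode: at $\xi=ih$ one has $\xi^2-k^2=-h^2-k^2$, so $\Arg(\xi^2-k^2)=\pi$, $\Re\alpha(\xi)=0$, and $\Im\alpha(\xi)<0$, whence $|e^{\alpha(\xi)(x_2+y_2)}|=e^{-\Im\alpha\,\Im(x_2+y_2)}\geq 1$ and in fact grows along $\Gamma_U$. So the horizontal-contour contribution cannot be placed in $w_{\gamma,rr}$ with the bound $Ke^{-\eta\Re(x_2+y_2)}$; the decay estimate $\Re(\alpha(\xi)z)\le\eta_1\Re z$ of Lemma~\ref{lem:alpha_bd} is only available on the short vertical segments where $|\Re\xi|=\pi/d>k$, which is precisely why the paper restricts the non-saddle part of the contour to $c_\pm$. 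Third, the residues you pick up at the crossed poles $+\tilde\xi_j$ decay like $e^{-|\alpha(\tilde\xi_j)|\Re(x_2+y_2)}$ with $|\alpha(\tilde\xi_j)|=\sqrt{\tilde\xi_j^2-k^2}$, which is arbitrarily small when $\tilde\xi_j$ is close to $k$ and hence generically smaller than $\eta\approx\cos(\thetaeta)|\alpha(\pi/d)|$; these cannot be absorbed into the "rapidly-decaying piece." The paper avoids all three issues by never deforming across the poles or the cuts. To repair your argument you would need to abandon the deformation to $\Im\xi=h$ and instead carry out the stationary-point analysis at $\xi=0$ on a contour confined to $V_{\gamma,\epsilon}$, together with the angular bound on $\Re(\alpha(\xi_n)z)$ for $z\in\Gamma_U$ that motivates the definition \eqref{eq:slope_def} of $\thetaeta$.
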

\begin{proof}
We study the behavior for large $x_2$ or~$y_2$ in Appendices~\ref{app:far_as} and~\ref{app:far2near_as}. The desired results can be obtained by applying~\eqref{eq:wnm_def} and Lemmas~\ref{lem:nonzero_nm}, \ref{lem:00_bound}, and \ref{lem:far2near} and Proposition \ref{lem:0n_estimate}, the observation that~$\partial_{x_1} w_\gamma(\bx,\by) = - \partial_{y_1} w_\gamma(\bx,\by)$, the symmetry of~$w_\gamma$ in~$\bx,\by$, and the fact that~$\eta_1 = \alpha(\pi/d)\cos \thetaeta$, so $\eta = -(\eta_1+\epsilon)$.
\end{proof}
The behavior of $w_\gamma$ when both~$\bx$ and~$\by$ are close to~$\gamma$ is captured in the following lemma.
\begin{lemma}\label{lem:w_image}
Suppose~$\gamma$ coincides with the line~$\gamma_{X_2}$ in the~$\delta$ ball centered at~$(0,{X_2})$ and~$\tilde\by$ is the reflection of~$\by$ through $\gamma_{X_2}$. We can split
    \begin{equation}
        w_\gamma(\bx,\by) = G(\bx-\tilde\by) + \tilde w_\gamma(\bx,\by)
    \end{equation}
    where~$\tilde w_\gamma$ and its derivatives are bounded functions of~$\bx,\by\in \Omega \cap \{\|\bz-(0,{X_2})\|<\delta/2\}$.
\end{lemma}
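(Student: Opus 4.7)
The plan is to implement the method of images explicitly. Since $\gamma$ agrees with the straight line $\gamma_{X_2}$ inside $B_\delta((0,X_2))$, the combination $G(\bx-\by)+G(\bx-\tilde\by)$ would be the exact Green's function if $\gamma_{X_2}$ were the entire boundary. I therefore set
\begin{equation*}
\tilde w_\gamma(\bx,\by) := w_\gamma(\bx,\by) - G(\bx-\tilde\by) = G_\gamma(\bx,\by) - G(\bx-\by) - G(\bx-\tilde\by),
\end{equation*}
and aim to show that this remainder solves a regular boundary-value problem in a neighborhood of $(0,X_2)$, from which smoothness and uniform bounds will follow by elliptic regularity.

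The first step is to check that $\tilde w_\gamma$ has no singularities on $\Omega\cap B_\delta((0,X_2))$: the logarithmic singularity of $G_\gamma(\bx,\by)$ at $\bx=\by$ is exactly cancelled by $G(\bx-\by)$, while the potential singularity of $G(\bx-\tilde\by)$ at $\bx=\tilde\by$ cannot occur because $\by\in \Omega$ forces $\tilde\by$ to lie on the opposite side of $\gamma_{X_2}$, hence outside $\Omega$. Next, I verify the PDE: $(\Delta+k^2)G_\gamma(\cdot,\by)$ and $(\Delta+k^2)G(\cdot-\by)$ both equal $\delta(\cdot-\by)$, while $G(\cdot-\tilde\by)$ is annihilated on $\Omega$; hence $(\Delta+k^2)\tilde w_\gamma(\cdot,\by)=0$ on $\Omega\cap B_\delta((0,X_2))$. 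For the boundary condition on the flat piece $\gamma_{X_2}\cap B_\delta((0,X_2))$, $G_\gamma$ has vanishing normal derivative by construction, and the reflection symmetry about $\gamma_{X_2}$ gives $\partial_{\bn}G(\bx-\by)+\partial_{\bn}G(\bx-\tilde\by)=0$, so $\partial_{\bn}\tilde w_\gamma(\cdot,\by)=0$ there. Thus for each fixed $\by$, $\tilde w_\gamma(\cdot,\by)$ is a solution of Helmholtz with homogeneous Neumann data on a flat portion of smooth boundary.

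To turn this into a uniform bound for $\bx,\by\in \Omega\cap B_{\delta/2}((0,X_2))$, I bound $\tilde w_\gamma(\cdot,\by)$ on the outer boundary $\partial B_{3\delta/4}((0,X_2))\cap \overline\Omega$: for such $\bx$, both $|\bx-\by|$ and $|\bx-\tilde\by|$ are at least $\delta/4$, so each of $G(\bx-\by)$, $G(\bx-\tilde\by)$, and (by Lemma~\ref{lem:G_exist}) $G_\gamma(\bx,\by)$ is bounded uniformly in $\by\in B_{\delta/2}((0,X_2))\cap \Omega$. Standard interior plus boundary Schauder estimates for Helmholtz with Neumann data on a flat boundary then control $\tilde w_\gamma(\cdot,\by)$ and all its $\bx$-derivatives in $\Omega\cap B_{\delta/2}((0,X_2))$ uniformly in $\by$. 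Joint smoothness and bounds in $(\bx,\by)$ are obtained by combining this with the symmetry $\tilde w_\gamma(\bx,\by)=\tilde w_\gamma(\by,\bx)$ (inherited from the symmetry of $G_\gamma$), which transfers regularity in $\bx$ into regularity in $\by$, and then applying the estimates again to mixed derivatives using the fact that $\by$-derivatives of $\tilde w_\gamma(\cdot,\by)$ also satisfy Helmholtz with homogeneous Neumann data on the flat portion.

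The main obstacle I anticipate is bookkeeping: one has to be careful that the outer-boundary bounds on $G_\gamma$ are in fact uniform in $\by$ ranging over the inner ball, and that joint regularity (not merely regularity in each variable separately) is what one actually extracts from the elliptic estimates. Both are standard once the reflection identity on the flat piece is in place, but they require tracking the dependencies through the parametric elliptic regularity argument.
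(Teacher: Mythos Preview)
Your approach via elliptic regularity is genuinely different from the paper's, but there is a real gap in the step where you invoke ``standard interior plus boundary Schauder estimates.'' Local elliptic estimates bound a solution on an inner region in terms of its values on a \emph{full neighbourhood}, not merely on the boundary of that neighbourhood, and $\Delta+k^2$ has no maximum principle. Concretely: after even reflection across the flat piece you obtain a Helmholtz solution on the full disk $B_{3\delta/4}$ with bounded Dirichlet data on the circle, but if $k^2$ were a Dirichlet eigenvalue of that disk the interior values could be arbitrarily large relative to the boundary values. This is repairable---choose an intermediate radius $r\in(\delta/2,\delta)$ with $k^2$ not a Dirichlet eigenvalue of $B_r$, so the mixed problem (Dirichlet on the arc, Neumann on the diameter) is well posed and its solution operator is bounded---but as written the inference is not justified. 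A secondary issue you flag yourself but do not resolve: Lemma~\ref{lem:G_exist} gives smoothness of $G_\gamma(\bx,\by)$ only for $\bx,\by$ in the \emph{open} domain $\Omega$; the uniform bound on the arc as $\by$ approaches the flat boundary is not contained in that statement and needs its own argument.

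The paper avoids both difficulties by staying inside the integral-equation machinery already set up. It writes $\tilde w_{\xi,\gamma}(\bx,\by)=-S_{\xi,\gamma}\bigl[\cK_{\xi,\gamma}^{-1}[\tilde h_{\xi,\by}]\bigr](\bx)$ with image-corrected data $\tilde h_{\xi,\by}(\bz)=\partial_{\bn(\bz)}G_\xi(\bz-\by)+\partial_{\bn(\bz)}G_\xi(\bz-\tilde\by)$, observes that the reflection makes $\tilde h_{\xi,\by}$ vanish identically on the flat portion $\gamma\cap\gamma_{X_2}$ and hence stay uniformly bounded in $L^2(\gamma)$ as $\by$ ranges over the $\delta/2$-ball, and then applies the already-established boundedness of $\cK_{\xi,\gamma}^{-1}$ and $S_{\xi,\gamma}$ from Lemma~\ref{lem:ops_analytic} together with the argument of Lemma~\ref{lem:G_exist}. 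This route never needs a local maximum principle, a non-resonance condition on an auxiliary disk, or a uniform-in-$\by$ bound on $G_\gamma$ up to the boundary.
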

\begin{proof}
    By repeating the above argument, we can see that if $\by$ is within~$\delta/2$ of~$(0,{X_2})$ then
\begin{equation}
    \tilde w_{\xi,\gamma}(\bx,\by)=- S_{\xi,\gamma} \left[\cK_{\xi,\gamma}^{-1}[\tilde h_{\xi,\by}]\right](\bx),
\end{equation}
  where
\begin{equation}
    \tilde h_{\xi,\by}(\bz)=\partial_{\bn(\bz )} G_\xi(\bz-\by)+\partial_{\bn(\bz )} G_\xi(\bz-\tilde\by).
\end{equation}
By symmetry, we have that $\tilde h_{\xi,\by}(\bz)$ is zero for~$\bz\in \gamma\cap\gamma_{X_2}$. Thus $\tilde h_{\xi,\by}(\bz)$ is bounded for all $\by\in \{\|\bz-(0,{X_2})\|<\delta/2\}$ and~$\bz\in\gamma$. We can then repeat the proof of Lemma~\ref{lem:G_exist} to prove the result.
\end{proof}

\subsection{Properties of the continued operators}\label{sec:bdd_opps}
Using the above analytic continuation, we can also define analytic continuation of the kernels~$k_\Ddiff, k_\Sdiff, k_\Dpdiff,$ and~$ k_\Spdiff$. The Helmholtz Green's function can also be analytically continued to the same domain (see~\cite{epstein2024coordinate}). We are therefore able to define the analytic continuation of the layer potentials to curves in $\cG$ (see Definition \ref{def:G}).
\begin{definition}
    For any~$\tGamma\in \cG$ and functions~$\sigma$ and~$\tau$, we define the following integral operators:
    \begin{equation}        
        \Ddiff_{\tGamma}[\sigma](x_2) = \int_{\tGamma} k_{\Ddiff}(x_2,y_2) \sigma(y_2)\,\opd y_2,\quad\Sdiff_{\tGamma}[\sigma](x_2) = \int_{\tGamma} k_{\Sdiff}(x_2,y_2) \sigma(y_2)\,\opd y_2,
    \end{equation}
    and
    \begin{equation}        
        \Dpdiff_{\tGamma}[\sigma](x_2) = \int_{\tGamma} k_{\Dpdiff}(x_2,y_2) \sigma(y_2)\,\opd y_2,\quad\Spdiff_{\tGamma}[\sigma](x_2) = \int_{\tGamma} k_{\Spdiff}(x_2,y_2) \sigma(y_2)\,\opd y_2,
    \end{equation}
    for any~$x_2\in \Gamma_\bbC$. We also define
\begin{multline}
    \cD_{\tGamma,L,R}[\sigma](\bx)=\int_{\tGamma} \partial_{y_1}G_{\gamma_{L,R}}(\bx; 0,y_2)\sigma(y_2)\, \opd y_2 \\\quad\text{and} \quad 
    \cS_{\tGamma,L,R}[\tau](\bx)=\int_{\tGamma} G_{\gamma_{L,R}}(\bx;0,y_2) \tau(y_2)\, \opd y_2
\end{multline}
for any~$\bx\in \Omega_\bbC$.
\end{definition}

\begin{theorem}
    The kernel~$k_\Sdiff$ can be split
    \begin{equation}
        k_\Sdiff = k_{\Sdiff{rr}} + k_{\Sdiff{ri}}+ k_{\Sdiff{ir}}+ k_{\Sdiff{ii}}
    \end{equation}
    with
    \begin{multline}
        |k_{\Sdiff{rr}}(x_2,y_2)| \leq K e^{-\eta\Re(x_2+y_2)},\quad |k_{\Sdiff{ri}}(x_2,y_2)| \leq K  (1+|y_2-{X_2}|)^{-1/2}e^{-\eta\Re x_2-k\Im y_2},\\
        |k_{\Sdiff{ir}}(x_2,y_2)| \leq K (1+|x_2-{X_2}|)^{-1/2}e^{-k\Im x_2-\eta\Re y_2}, \\\quad \text{and}\quad |k_{\Sdiff{ii}}(x_2,y_2)| \leq K \frac{e^{-k\Im(x_2+y_2)} }{(1+|x_2+y_2-2{X_2}|)^{1/2}}
    \end{multline}
    for all~$x_2,y_2\in \Gamma_\bbC$. The kernels~$k_\Ddiff,k_\Dpdiff,$ and~$k_\Spdiff$ can be split similarly with algebraic power~$3/2$. 
\end{theorem}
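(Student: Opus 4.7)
The plan is to reduce the statement to Theorem~\ref{thm:split_w} applied to each of $w_{\gamma_L}$ and $w_{\gamma_R}$ separately. The key observation is that since $G_\gamma = G + w_\gamma$ by~\eqref{eq:G_dom_split}, the singular free-space fundamental solution cancels in every difference $G_{\gamma_R} - G_{\gamma_L}$ and in every derivative of such a difference. Thus, at $\bx = (0, x_2)$ and $\by = (0, y_2)$, the non-identity parts of the four kernels are, up to signs, the four mixed derivatives $\partial_{x_1}^\alpha \partial_{y_1}^\beta (w_{\gamma_R} - w_{\gamma_L})$ for $(\alpha,\beta) \in \{(0,0),(0,1),(1,0),(1,1)\}$. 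Using the identity $\partial_{x_1} w_\gamma = -\partial_{y_1} w_\gamma$ recorded in the proof of Theorem~\ref{thm:split_w}, all four cases reduce to pure $y_1$-derivatives of orders $l = 0, 1, 1, 2$, and since $\opnm{ceil}(0/2) = 0$ while $\opnm{ceil}(1/2) = \opnm{ceil}(2/2) = 1$, Theorem~\ref{thm:split_w} delivers algebraic powers $1/2$ for $k_\Sdiff$ and $3/2$ for the other three kernels, exactly matching the claim.

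In the far-far regime $\Re x_2, \Re y_2 > d/2$ (with $d = \max(d_L, d_R)$), I would apply Theorem~\ref{thm:split_w} to each of $w_{\gamma_L}$ and $w_{\gamma_R}$ and set $k_{\Sdiff,ab}(x_2, y_2) := w_{\gamma_R,ab}((0,x_2),(0,y_2)) - w_{\gamma_L,ab}((0,x_2),(0,y_2))$ piece by piece (and analogously for the three derivative kernels). Since $x_1 = y_1 = 0$, the prefactor $e^{h|x_1-y_1|}$ equals $1$. The $rr$ bound follows directly. For the $ri$, $ir$, $ii$ pieces, the explicit leading term (whose magnitude is bounded by $|a_l|\,|y_2|^{-(\opnm{ceil}(l/2)+1/2)} e^{-k\Im y_2}$ or the $A_l$ analog) and the algebraic remainder (which decays one power of $|y_2|$ or $|x_2+y_2|$ faster) combine into a single bound with the claimed algebraic factor, using the estimate $|a_l((0,x_2),0)| \le D_l e^{-\eta\Re x_2}$ together with $|y_2| \asymp 1 + |y_2 - X_2|$ and $|x_2+y_2| \asymp 1 + |x_2 + y_2 - 2X_2|$ throughout this regime.

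For the mixed regime $\Re x_2 \le d/2$, $\Re y_2 > d/2$, I would use the near-far estimate~\eqref{eq:wnearfar_bd}. Because $\Re x_2$ is bounded here, $e^{-\eta \Re x_2}$ is bounded between positive constants, so the $e^{-\eta \Re y_2}$ piece of~\eqref{eq:wnearfar_bd} can be absorbed into $k_{\Sdiff,rr}$ (by inserting $e^{\eta \Re x_2}$ and its bounded reciprocal), while the $e^{-k\Im y_2}/|y_2|^{\opnm{ceil}(l/2)+1/2}$ piece, together with the $b_l$ leading term, goes into $k_{\Sdiff,ri}$; the remaining two pieces are set to zero. The symmetric case $\Re y_2 \le d/2$, $\Re x_2 > d/2$ is handled by the symmetric analog of~\eqref{eq:wnearfar_bd} mentioned at the end of Theorem~\ref{thm:split_w}. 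Finally, the near-near region ($\Re x_2, \Re y_2 \le d/2$) is trivial: $w_\gamma$ and its $\bx,\by$-derivatives are bounded on this compact set by Lemma~\ref{lem:G_exist}, so all four bounds hold automatically after placing the entire kernel in $k_{\Sdiff,rr}$.

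The main obstacle is purely bookkeeping: stitching the three regional splittings into a single globally defined four-piece decomposition on $\Gamma_\bbC \times \Gamma_\bbC$ and verifying that combining leading terms with remainders in the far-far regime does not weaken the stated algebraic rates (in particular that the coefficient $|a_l|$'s $e^{-\eta\Re x_2}$ decay matches what the remainder already provides). No new analytic input beyond Theorem~\ref{thm:split_w} and Lemma~\ref{lem:G_exist} is required.
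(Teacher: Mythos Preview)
Your overall strategy --- reduce the kernels to differences of $w_{\gamma_R}-w_{\gamma_L}$ and their derivatives, then invoke Theorem~\ref{thm:split_w} region by region --- matches the paper's approach, and the far-far and mixed regimes are handled correctly. There is, however, a genuine gap in your treatment of the near-near region.

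You claim that ``$w_\gamma$ and its $\bx,\by$-derivatives are bounded on this compact set by Lemma~\ref{lem:G_exist}.'' This is false on two counts. First, Lemma~\ref{lem:G_exist} only asserts pointwise existence of $G_\gamma$ (hence $w_\gamma$) for $\bx,\by\in\Omega$, not uniform boundedness, and $\Gamma_D=(X_2,\max(d_L,d_R)/2]$ is not compact in $\Omega$ because the endpoint $(0,X_2)$ lies on $\gamma$. Second, and more importantly, each $w_\gamma$ \emph{individually} blows up as $x_2,y_2\to X_2^+$: by Lemma~\ref{lem:w_image} one has $w_\gamma(\bx,\by)=G(\bx-\tilde\by)+\tilde w_\gamma(\bx,\by)$, and the image term $G(\bx-\tilde\by)$ carries a logarithmic singularity as both points approach the boundary. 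What saves the kernel is that both $\gamma_L$ and $\gamma_R$ are assumed to coincide with the same line $\gamma_{X_2}$ near $(0,X_2)$, so the reflected point $\tilde\by$ is the same for both, the image sources cancel in the difference, and one is left with $\tilde w_{\gamma_L}-\tilde w_{\gamma_R}$, which Lemma~\ref{lem:w_image} guarantees is bounded together with its derivatives. This is precisely the argument the paper gives; you need Lemma~\ref{lem:w_image}, not Lemma~\ref{lem:G_exist}, to close the near-near case.
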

\begin{proof}
By Lemma~\ref{lem:w_image} if~${X_2}<x_2,y_2<{X_2}+\delta/2 $, then
\begin{equation}
    w_{\gamma_L}((0,x_2),(0,y_2) - w_{\gamma_R}((0,x_2),(0,y_2)=\tilde w_{\gamma_L}((0,x_2),(0,y_2) - \tilde w_{\gamma_R}((0,x_2),(0,y_2).
\end{equation}
The boundedness of~$\tilde w_{\gamma_{L,R}}$ thus give that~$k_\Ddiff,k_\Sdiff,k_\Dpdiff,$ and~$k_\Spdiff$ are bounded on~$\Gamma_D\times\Gamma_D$. The above estimates can then be derived by applying Theorem~\ref{thm:split_w} to~$w_{\gamma_{L,R}}$ and noting that~$(1+|x_2|)^{\alpha}$ can be bounded by a multiple of $(1+|x_2-X_2|)^{\alpha}$ for any~$\alpha$.
\end{proof}
The split kernels define operators, which we denote by~$\Ddiff_{\tGamma,rr}$, etc. We introduce the following Banach spaces, which are naturally suited to kernels with these decay properties.
\begin{definition}
    Let~$\imagdecay{\alpha}{\beta}$ be the space of functions that are continuous in~$\GammaC$, are analytic in its interior, and satisfy
    \begin{equation}
        \|f\|_{\alpha,\beta} = \sup_{z\in\GammaC} (1+|z|)^\alpha e^{\beta \Im z} |f(z)| <\infty.
    \end{equation}
    Let~$\realdecay{\rho}$  be the space of functions that are continuous in~$\GammaC$, are analytic in its interior, and satisfy
    \begin{equation}
        \|f\|_{\rho} = \sup_{z\in\GammaC} e^{\rho \Re z} |f(z)| <\infty.
    \end{equation}

\end{definition}
The~$\imagdecay{\alpha}{\beta}$ space was observed to be a Banach space in~\cite{epstein2025complex}. The space~$\realdecay{\rho}$ can be seen to be a Banach space by similar arguments. While we introduce~$\realdecay\rho$ for convenience and clarity, it is important to note that the choice of~$\GammaC$ implies that it is contained in~$\imagdecay\alpha\beta$ for an appropriate choice of~$\beta$.

\begin{lemma}\label{lem:embed}
    If~$\alpha>0$ and~$0<\epsilon<\rho$, then
    \begin{equation}
        \realdecay\rho\subset\imagdecay\alpha{(\rho-\epsilon)/\Kslope} .
    \end{equation}
\end{lemma}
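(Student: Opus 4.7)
The plan is to show that any $f\in\realdecay\rho$ satisfies the weighted bound defining $\imagdecay{\alpha}{(\rho-\epsilon)/\Kslope}$ by directly exploiting the geometric constraint on contours in $\cG$. Since both spaces require the same continuity-on-$\GammaC$ and analyticity-in-the-interior conditions, only the norm estimate needs proof.

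First, I would fix $f\in\realdecay{\rho}$ so that $|f(z)|\leq \|f\|_\rho\, e^{-\rho\Re z}$ for all $z\in\GammaC$, and set $\beta = (\rho-\epsilon)/\Kslope$. The main step is to use the fact that every point $z\in\Gamma_U$ satisfies $0\leq \Im z\leq \Kslope\,\Re z$ (by Definition~\ref{def:G}, since $\Im z = f(\Re z)$ with $f(0)=0$ and $0\leq f'\leq \Kslope$). Multiplying through by $\beta$ gives $\beta\,\Im z\leq \beta\Kslope\,\Re z = (\rho-\epsilon)\,\Re z$, hence
\begin{equation}
    (1+|z|)^\alpha e^{\beta\Im z}|f(z)|\leq \|f\|_\rho\,(1+|z|)^\alpha e^{\beta\Im z - \rho\Re z}\leq \|f\|_\rho\,(1+|z|)^\alpha e^{-\epsilon\Re z}.
\end{equation}

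Next, since $|z|^2 = (\Re z)^2+(\Im z)^2 \leq (1+\Kslope^2)(\Re z)^2$ on $\Gamma_U$, the quantity $(1+|z|)^\alpha$ grows at most polynomially in $\Re z$, and this growth is dominated by $e^{-\epsilon\Re z}$ for any $\epsilon>0$ and $\alpha\geq 0$. Thus $(1+|z|)^\alpha e^{-\epsilon\Re z}$ is uniformly bounded on $\Gamma_U$ by some constant $C_{\alpha,\epsilon,\Kslope}$. On the remaining piece $\Gamma_D$, we have $\Im z = 0$ and $\Re z$ confined to a bounded interval, so $(1+|z|)^\alpha e^{\beta\Im z}|f(z)|\leq (1+|z|)^\alpha \|f\|_\rho\, e^{-\rho\Re z}$ is trivially bounded. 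Combining the two regions yields $\|f\|_{\alpha,\beta}\leq C\,\|f\|_\rho$, establishing the continuous inclusion.

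There is no substantive obstacle in this argument; the only thing one must check carefully is that the defining slope condition $\Im z\leq \Kslope\,\Re z$ persists globally on $\GammaC$ (trivial on $\Gamma_D$ where $\Im z = 0$), and that the choice $\beta = (\rho-\epsilon)/\Kslope$ leaves a strictly positive exponential margin $e^{-\epsilon\Re z}$ to absorb the polynomial factor $(1+|z|)^\alpha$. The role of $\epsilon>0$ is precisely to provide this margin, which is why the embedding fails at the endpoint $\epsilon=0$ without further assumptions on $\alpha$.
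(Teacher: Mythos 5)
Your argument is correct and is essentially the same as the paper's: both use the defining inequality $0\leq \Im z\leq \Kslope\,\Re z$ on $\GammaC$ to convert the imaginary-part weight into $(\rho-\epsilon)\Re z$, and both absorb the polynomial factor $(1+|z|)^\alpha$ into the leftover exponential margin $e^{-\epsilon\Re z}$ via the bound $|z|\leq\sqrt{1+\Kslope^2}\,\Re z$. The only cosmetic difference is that the paper multiplies by $e^{\epsilon\Re z}$ rather than subtracting $\epsilon\Re z$ from the exponent, which is the same estimate.
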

\begin{proof}
    Since~$0\leq \Im z\leq \Kslope \Re z$ for all~$z\in \GammaC$, we know
    \begin{equation}
        e^{\lp (\rho-\epsilon)/\Kslope\rp \Im z} \leq e^{(\rho-\epsilon) \Re z}.
    \end{equation}
    Similarly, we can bound
    $
        (1+|z|)^\alpha\leq C_{\epsilon,\alpha} e^{\lp \epsilon/\sqrt{1+\Kslope^2}\rp|z|} \leq C_{\epsilon,\alpha} e^{\epsilon\Re z}
    $
    for all~$z\in\GammaC$. Putting these together, we have that for all~$f\in \realdecay{\rho}$,
    \begin{equation}
        (1+|z|)^\alpha e^{(\rho -\epsilon)/\Kslope\Im z} |f(z)| \leq C_{\epsilon,\alpha}  e^{\epsilon \Re z} e^{\lp(\rho -\epsilon)/\Kslope\rp\Im z}  |f(z)| \leq C_{\epsilon,\alpha} e^{\rho \Re z} |f(z)| \leq C_{\epsilon,\alpha}\|f\|_{\rho},
    \end{equation}
    which proves the result.
\end{proof}

It is also clear that our spaces are nested in the sense that
\begin{equation}
    \imagdecay{\alpha'}{\beta'}\subset\imagdecay{\alpha}{\beta} \quad\text{and}\quad \realdecay{\rho'}\subset\realdecay{\rho}
\end{equation}
whenever~$\alpha\leq \alpha'$, $\beta\leq \beta'$, and~$\rho\leq \rho'$.

\begin{assumption}\label{ass:powers}
    Unless otherwise stated, we assume that the parameters~$\alpha,\beta,\rho$ satisfy
    \begin{equation}
        0<\alpha<\frac 12,\quad 0\leq \beta \leq  \min(k,(\eta-\epsilon)/\Kslope), \quad\text{and} \quad 0<\rho< \eta.
    \end{equation}
    where~$k$ is the wavenumber and~$\eta$ is the positive constant defined in Theorem~\ref{thm:split_w}. Further, we also assume that~$0<\epsilon < \eta$
\end{assumption}

We now check that the integral operators are defined for~$\sigma,\tau$ in these spaces.

\begin{lemma}\label{lem:op_def}
    If~$\sigma\in \imagdecay\alpha\beta$, then for any~$x_2 \in \GammaC$ and any~$\tGamma\in \cG$, the integrals defining~$\Ddiff_{\tGamma}[\sigma](x_2)$ and~$\Dpdiff_{\tGamma}[\sigma](x_2)$ converge. If $\tau\in \imagdecay{\alpha+\frac12}\beta$ then the integrals defining~$\Sdiff_{\tGamma}[\tau](x_2)$ and~$\Spdiff_{\tGamma}[\tau](x_2)$ converge.
\end{lemma}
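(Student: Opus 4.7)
The plan is to verify pointwise absolute convergence of each of the four defining integrals. For any fixed $x_2\in\GammaC$, I would split the kernel using the four-piece decomposition $k = k_{rr}+k_{ri}+k_{ir}+k_{ii}$ from the preceding theorem, apply the pointwise bounds on $\sigma$ or $\tau$ coming from the norms on $\imagdecay\alpha\beta$ and $\imagdecay{\alpha+1/2}\beta$, and verify absolute convergence for each of the four resulting integrals on $\tGamma$. First I would parametrize $\tGamma$ by $y_2(t)=t+if(t)$ for $t\in[X_2,\infty)$ with $f(t)=0$ for $t\leq \max(d_L,d_R)/2$ and $0\leq f'(t)\leq \Kslope$ thereafter, so that $|dy_2|\leq\sqrt{1+\Kslope^2}\,dt$, $\Re y_2=t$, and $\Im y_2=f(t)\geq 0$. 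The defining pointwise estimates $|\sigma(y_2)|\leq \|\sigma\|_{\alpha,\beta}(1+|y_2|)^{-\alpha}e^{-\beta\Im y_2}$ and $|\tau(y_2)|\leq \|\tau\|_{\alpha+1/2,\beta}(1+|y_2|)^{-\alpha-1/2}e^{-\beta\Im y_2}$ then follow.

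Next I would handle the four pieces in turn. For the $rr$ and $ir$ pieces of any of the four kernels, the bound contains the factor $e^{-\eta\Re y_2}=e^{-\eta t}$ with $\eta>0$ (Theorem~\ref{thm:split_w}), which dominates any $(1+t)^{-\alpha}$ growth or decay coming from the density and produces exponential decay in $t$; the resulting integrals are absolutely convergent regardless of whether the density is $\sigma$ or $\tau$. For the $ri$ and $ii$ pieces the kernel has no exponential decay in $\Re y_2$, but instead has algebraic decay of order $-3/2$ (for $\Ddiff$ and $\Dpdiff$) or $-1/2$ (for $\Sdiff$ and $\Spdiff$) in $|y_2-X_2|$ or $|x_2+y_2-2X_2|$. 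In the $\Ddiff,\Dpdiff$ case the $-3/2$ factor is already integrable over $[X_2,\infty)$ by itself, so convergence is immediate (the extra $(1+|y_2|)^{-\alpha}$ from $\sigma$ only helps). In the $\Sdiff,\Spdiff$ case the $-1/2$ factor is not integrable on its own, but combines with the extra $(1+|y_2|)^{-1/2}$ decay of $\tau$ (relative to $\sigma$) to yield an integrand decaying like $(1+t)^{-1-\alpha}$, which is integrable precisely because $\alpha>0$ in Assumption~\ref{ass:powers}. The exponential factors $e^{-k\Im y_2}$ and $e^{-\beta\Im y_2}$ are both bounded above by $1$ since $k,\beta\geq 0$ and $\Im y_2\geq 0$, so they play no role in convergence.

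There is no substantial obstacle here; the argument is essentially bookkeeping. The one nontrivial observation is that the calibration of the spaces in Assumption~\ref{ass:powers} — specifically the strict positivity $\alpha>0$ and the extra $\tfrac12$ of algebraic decay demanded of $\tau$ relative to $\sigma$ — is exactly what is needed to offset the slower algebraic decay of the $\Sdiff,\Spdiff$ kernels and thereby render all four integrals absolutely convergent at each $x_2\in\GammaC$. Operator-theoretic boundedness between the various $\imagdecay{}{}$ (and $\realdecay{}$) spaces will then be the subject of the subsequent lemmas.
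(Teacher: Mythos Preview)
Your approach is essentially the same as the paper's: split each kernel into its four pieces $rr,ri,ir,ii$, observe that the $rr$ and $ir$ pieces carry the exponential factor $e^{-\eta\Re y_2}$ and are harmless, and then check that the algebraic decay of the $ri$ and $ii$ pieces combines with the decay of the density to give an integrable tail. The key point you isolate --- that the strict inequality $\alpha>0$ together with the extra $\tfrac12$ of decay in $\imagdecay{\alpha+1/2}\beta$ is precisely what handles the slowest-decaying piece $k_{\Sdiff,ii}$ --- is exactly what the paper uses.

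One small correction: you group $\Spdiff$ with $\Sdiff$ as having algebraic decay of order $-1/2$, but by the preceding theorem $k_\Spdiff$ has the same $-3/2$ bound as $k_\Ddiff$ and $k_\Dpdiff$; only $k_\Sdiff$ has the weaker $-1/2$ rate. This slip is harmless, since your argument for the $-1/2$ case with $\tau\in\imagdecay{\alpha+1/2}\beta$ covers the easier $-3/2$ case \emph{a fortiori}, and the paper likewise dispatches $\Spdiff_{\tGamma}[\tau]$ by simply noting that $k_\Spdiff$ obeys the same estimates as $k_\Ddiff$.
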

\begin{proof}
    As all of the integrands are locally bounded, it is sufficient to check that the integrands decay fast enough at infinity. 
    Since~$\sigma\in \imagdecay\alpha\beta$, we have
    \begin{multline}
        |k_{\Ddiff_{ii}}(x_2,y_2) \sigma(y_2)|\leq K\frac{e^{ -k\Im (x_2+y_2)}}{(1+|x_2+y_2-2{X_2}|)^{\frac{3}{2}}}\|\sigma\|_{\alpha,\beta} \frac{e^{-\beta \Im y_2}}{(1+|y_2-{X_2}|)^\alpha}\\
        \leq \frac{K\|\sigma\|_{\alpha,\beta} }{(1+|x_2+y_2-2{X_2}|)^{\frac{3}{2}}(1+|y_2-{X_2}|)^\alpha}.
    \end{multline}
    Since~$\tGamma$ is monotonic and~$\frac32+\alpha>1$, this is integrable so $\Ddiff_{ii}[\sigma](x_2)$ exists. A similar calculation shows that~$\Ddiff_{ri}[\sigma](x_2), \Ddiff_{ir}[\sigma](x_2),$ and~$\Ddiff_{rr}[\sigma](x_2)$ exist. Since~$\Ddiff = \Ddiff_{rr}+\Ddiff_{ri}+\Ddiff_{ir}+\Ddiff_{ii}$, we also have that~$\Ddiff[\sigma](x_2)$ exists. 

The kernels~$k_\Dpdiff$ and~$k_\Spdiff$ satisfy the same estimates as~$k_\Ddiff$ and so~$\Dpdiff_{\tGamma}[\sigma](x_2)$ and $\Spdiff_{\tGamma}[\tau](x_2)$ also exist for any~$\sigma\in \imagdecay\alpha\beta$ and~$\tau\in \imagdecay{\alpha+\frac12}\beta$.
To see that $\Sdiff_{\tGamma}[\tau](x_2)$ exists, the only different term is~$\Sdiff_{\tGamma,ii}[\tau](x_2)$.
For~$\tau\in \imagdecay{\alpha+\frac12}\beta$ we have
    \begin{equation}
        |k_{\Sdiff_{ii}}(x_2,y_2) \tau(y_2)|\leq \frac{K\|\tau\|_{\alpha+\frac12,\beta} }{(1+|x_2+y_2-2{X_2}|)^{\frac{1}{2}}(1+|y_2-{X_2}|)^{\alpha+\frac12}},
    \end{equation}
    which is integrable because~$\alpha>0$. Thus~$\Sdiff_{\tGamma,ii}[\tau](x_2)$ exists. The rest of the proof that~$\Sdiff_{\tGamma}[\tau](x_2)$ exists for~$\tau\in\imagdecay{\alpha+\frac12}\beta$ is identical to the the other cases.
\end{proof}
We also verify that our operators map onto continuous functions that are analytic in the required region.
\begin{lemma}    \label{lem:layers_ana}
If $\sigma\in \imagdecay\alpha\beta$ and~$\tau\in \imagdecay{\alpha+\frac12}\beta$, then for any~$\tGamma\in \cG$, $\Ddiff_{\tGamma}[\sigma]$ and~$\Dpdiff_{\tGamma}[\sigma]$, $\Sdiff_{\tGamma}[\tau]$, and~$\Spdiff_{\tGamma}[\tau]$ are continuous on~$\GammaC$ and analytic in the interior of~$\GammaC$.
\end{lemma}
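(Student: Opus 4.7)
The plan is to prove continuity via dominated convergence and analyticity via Morera's theorem plus Fubini, exactly mirroring the strategy used in Theorem~\ref{thm:w_analytic}. The heavy lifting has already been done: the kernel splittings from the preceding theorem and the integrability bookkeeping from Lemma~\ref{lem:op_def} together provide the estimates I need, and Theorem~\ref{thm:w_analytic} guarantees that each kernel is analytic in its first argument throughout the interior of $\GammaC$.

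For continuity at a fixed point $x_2^\ast$ in the interior (or on the boundary) of $\GammaC$, I would pick a small compact neighborhood $U$ of $x_2^\ast$ inside $\GammaC$ and observe that each of the four kernel estimates of the preceding theorem can be made uniform in $x_2 \in U$: the factors $e^{-\eta \Re x_2}$ and $e^{-k\Im x_2}$ are bounded above on $U$, while the polynomial factors involving $|x_2 + y_2 - 2{X_2}|$ can, on $U$, be absorbed into polynomial factors involving only $|y_2 - {X_2}|$ up to a constant depending on $U$. Combining this with $|\sigma(y_2)| \le \|\sigma\|_{\alpha,\beta}(1+|y_2-{X_2}|)^{-\alpha}e^{-\beta \Im y_2}$ (and the analogous bound for $\tau$), the very integrability computations carried out in Lemma~\ref{lem:op_def} furnish a $\tGamma$-integrable dominating function independent of $x_2 \in U$. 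Since each of $k_\Ddiff,k_\Sdiff,k_\Dpdiff,k_\Spdiff$ is continuous in $x_2$ for fixed $y_2 \in \tGamma$ (an immediate corollary of Theorem~\ref{thm:w_analytic} applied to $w_{\gamma_{L,R}}$ and the analyticity of the free-space Green's function), dominated convergence yields continuity on $\GammaC$.

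For analyticity on the interior of $\GammaC$, I would apply Morera's theorem: fix any triangle $T$ inside the interior and consider
\begin{equation}
    \oint_T \Ddiff_\tGamma[\sigma](x_2)\,\opd x_2 = \oint_T \int_\tGamma k_\Ddiff(x_2,y_2)\sigma(y_2)\,\opd y_2\,\opd x_2.
\end{equation}
The uniform domination established above, applied on the compact curve $T$, makes $|k_\Ddiff(x_2,y_2)\sigma(y_2)|$ absolutely integrable on $T\times\tGamma$, so Fubini's theorem permits swapping the order of integration. For each fixed $y_2 \in \tGamma$, the kernel $x_2 \mapsto k_\Ddiff(x_2,y_2)$ is analytic on the interior of $\GammaC$ by Theorem~\ref{thm:w_analytic} and the corresponding analyticity of $\partial_{x_1}G(\,\cdot\,-(0,y_2))$, so $\oint_T k_\Ddiff(x_2,y_2)\,\opd x_2 = 0$ by Cauchy's theorem. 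Hence $\oint_T \Ddiff_\tGamma[\sigma]\,\opd x_2 = 0$ for every such $T$, and Morera's theorem delivers analyticity. The same recipe applies verbatim to $\Dpdiff_\tGamma[\sigma]$, $\Sdiff_\tGamma[\tau]$, and $\Spdiff_\tGamma[\tau]$.

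The main subtlety I anticipate is the single-layer operator $\Sdiff_\tGamma$, whose ``$ii$'' kernel has the weakest decay, of order $(1+|x_2+y_2-2{X_2}|)^{-1/2}$; this is precisely where the slightly stronger assumption $\tau \in \imagdecay{\alpha+\frac12}{\beta}$ is used, exactly as in Lemma~\ref{lem:op_def}, to produce an integrable uniform dominating function on $\tGamma$. Everything else is a direct transcription of the Morera-plus-Fubini argument already used for $w_\gamma$ in Theorem~\ref{thm:w_analytic}.
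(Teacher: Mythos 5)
Your proposal is correct and follows exactly the paper's (much terser) argument: continuity from the continuity of the kernels together with the dominating estimates already established in Lemma~\ref{lem:op_def}, and analyticity from the kernels' analyticity via Morera's theorem with Fubini. The only cosmetic remark is that the difference kernels $k_\Sdiff$, etc.\ are built from $w_{\gamma_R}-w_{\gamma_L}$ alone (the free-space parts cancel), so the appeal to the analyticity of $\partial_{x_1}G$ is unnecessary, though harmless.
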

\begin{proof}
    The continuity follows from the continuity of the kernels. The analyticity can be derived from the analyticity of the kernels and an application of Morera's theorem.
\end{proof}

We now show that the operators~$\Ddiff_{\tGamma},\Sdiff_{\tGamma},\Dpdiff_{\tGamma},$ and~$\Spdiff_{\tGamma}$ are independent of~$\tGamma$.
\begin{theorem}\label{thm:path_indep}
    Let~$\sigma\in \imagdecay\alpha\beta$. If~$\tGamma,\overline \Gamma\in \cG$, then~$\Ddiff_{\tGamma}[\sigma](x_2)=\Ddiff_{\overline \Gamma}[\sigma](x_2)$ for all~$x_2\in \Gamma_\bbC$. Similar results hold for~$\Sdiff,\Dpdiff$ and~$\Spdiff$.
\end{theorem}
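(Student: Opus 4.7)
The plan is to establish path independence by a contour deformation argument, exploiting the joint analyticity in $y_2$ of the kernels and densities. I will treat $\Sdiff_{\tGamma}$ in detail; the arguments for $\Ddiff_{\tGamma}$, $\Dpdiff_{\tGamma}$, and $\Spdiff_{\tGamma}$ are identical and in fact slightly easier, since the algebraic prefactors in their kernel bounds decay with power $3/2$ rather than $1/2$ (the identity summand, when present, is trivially contour independent because it only involves $\sigma(x_2)$).

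Fix $x_2\in\GammaC$ and two contours $\tGamma,\overline\Gamma\in\cG$ with parameterizations $t\mapsto t+if(t)$ and $t\mapsto t+i\overline f(t)$. For each $R>\max(d_L,d_R)/2$, I truncate both contours at $\Re y_2=R$ and connect the endpoints $R+if(R)$ and $R+i\overline f(R)$ by the vertical segment $V_R$. Together, $\tGamma_R$, $V_R$, and $\overline\Gamma_R$ (traversed in reverse) form a closed piecewise smooth curve bounding a region $D_R$ contained in the interior of $\GammaC$. The first step is to observe that $k_\Sdiff(x_2,\cdot)\sigma(\cdot)$ is analytic in $y_2$ throughout $\overline{D_R}$: writing $k_\Sdiff(x_2,y_2)=w_{\gamma_R}((0,x_2),(0,y_2))-w_{\gamma_L}((0,x_2),(0,y_2))$, the free-space singular parts of the two domain Green's functions cancel in the difference, so Theorem~\ref{thm:w_analytic} gives analyticity of the kernel for every $y_2\in\GammaC$, including at the possibly exceptional point $y_2=x_2$. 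The density $\sigma$ is analytic in the interior of $\GammaC$ by the definition of $\imagdecay{\alpha+\frac12}\beta$. Cauchy's theorem then yields
\begin{equation*}
\int_{\tGamma_R}k_\Sdiff(x_2,y_2)\sigma(y_2)\,\opd y_2-\int_{\overline\Gamma_R}k_\Sdiff(x_2,y_2)\sigma(y_2)\,\opd y_2=-\int_{V_R}k_\Sdiff(x_2,y_2)\sigma(y_2)\,\opd y_2.
\end{equation*}

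The main obstacle is showing that the integral over the connecting segment $V_R$ vanishes as $R\to\infty$. Parameterizing $V_R$ by $y_2=R+it$ with $t$ in an interval of length at most $\Kslope R$, I will use the splitting $k_\Sdiff=k_{\Sdiff_{rr}}+k_{\Sdiff_{ri}}+k_{\Sdiff_{ir}}+k_{\Sdiff_{ii}}$ from the preceding theorem. The rr and ir components each carry a factor $e^{-\eta R}$ and hence decay exponentially in $R$ uniformly in $t$. The ri and ii components each obey a bound of the form $K(x_2)\,R^{-1/2}e^{-kt}$ on $V_R$; combining this with the pointwise estimate $|\sigma(R+it)|\leq\|\sigma\|_{\alpha+\frac12,\beta}(1+R)^{-\alpha-\frac12}e^{-\beta t}$ yields an integrand bounded by $K\,R^{-\alpha-1}e^{-(k+\beta)t}$. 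Since $k>0$ and $\alpha>0$ by Assumption~\ref{ass:powers}, integration in $t$ produces an overall bound of $O(R^{-\alpha-1})$, which tends to zero. Letting $R\to\infty$ in the Cauchy identity then gives the desired equality of contour integrals, and the same scheme applied to $\Ddiff$, $\Dpdiff$, and $\Spdiff$ completes the proof.
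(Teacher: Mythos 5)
Your proposal is correct and follows essentially the same route as the paper: truncate both contours, close them with a vertical segment, invoke Cauchy's theorem using the analyticity of the kernel (the free-space singularities cancel in the difference of domain Green's functions) and of the density, and then kill the connecting integral as $R\to\infty$ using the $rr/ri/ir/ii$ kernel bounds together with the decay of the density. The paper's proof is the same argument written for $\Ddiff$ with the segment parameterized as $M+it$; your extra care about the $y_2=x_2$ point and the length $O(\Kslope R)$ of $V_R$ for the exponentially small pieces only tightens details the paper leaves implicit.
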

\begin{proof}
    Let~$\tGamma_M$ and~$\overline\Gamma_M$ be the truncation of~$\tGamma$ and~$\overline\Gamma$ to the region~$\Re y_2\leq M$. Let~$\Gamma_M$ be the straight line in~$\Gamma_\bbC$ connecting their endpoints and orientated to start at~$\overline\Gamma_M$. Since the kernel and density are analytic, we have that
    \begin{equation}
        \Ddiff_{\tGamma_M}[\sigma](x_2) - \Ddiff_{\overline\Gamma_M}[\sigma](x_2) =\Ddiff_{\Gamma_M}[\sigma](x_2) .
    \end{equation}
    By Lemma~\ref{lem:op_def} we have~$\Ddiff_{\tGamma}[\sigma](x_2) = \lim_{M\to\infty} \Ddiff_{\tGamma_M}[\sigma](x_2)$ and~$\Ddiff_{\overline\Gamma}[\sigma](x_2) = \lim_{M\to\infty} \Ddiff_{\overline\Gamma_M}[\sigma](x_2)$ so it is enough to show that~$\Ddiff_{\Gamma_M}[\sigma](x_2)\to 0$ for all~$x_2\in\Gamma_{\bbC}$. 
    
    If we  parameterize~$\Gamma_M$ by~$M + it$ and note~$|M + it-X_2|\geq |M-X_2|$, then
we have that
    \begin{equation}
        \max_{x_2\in \Gamma_\bbC}|k_{\Ddiff}(x_2,M + it)\sigma(M + i t)| 
        \leq K\|\sigma\|_{\alpha,\beta} \lp \frac{e^{ -k t}}{(1+M-{X_2})^{1/2}} +e^{ -\eta M}\rp e^{-\beta t}.
    \end{equation}
    We can thus bound
        \begin{equation}
        \left| \Ddiff_{\Gamma_M}[\sigma](x_2)\right|\leq \int_0^\infty K\|\sigma\|_{\alpha,\beta} \lp \frac{1}{(1+M-{X_2})^{1/2}} +e^{ -\eta M}\rp e^{-\beta t}\,\opd t,
    \end{equation}
    which is finite because~$\beta>0$ and also goes to zero as~$M\to \infty$.  Identical proofs show that that the other operators are independent of~$\tGamma\in \cG$.
\end{proof}
In light of the previous theorem, we can drop the subscripts and unambiguously define~$\Ddiff,\Sdiff,\Dpdiff$ and~$\Spdiff$ as an integral over any~$\tGamma\in\cG$. We now show that the operators are bounded.
\begin{theorem}\label{thm:bdd_ops}
    We have the following mapping properties
    \begin{multline}
\Ddiff:\imagdecay\alpha\beta\to \imagdecay{\alpha+\frac12}{\min(k,(\eta-\epsilon)/\Kslope)}, \quad \Sdiff:\imagdecay{\alpha+\frac12}\beta\to \imagdecay\alpha{ \min(k,(\eta-\epsilon)/\Kslope)}, \\\Dpdiff:\imagdecay\alpha k\to \imagdecay{\alpha+\frac12} {\min(k,(\eta-\epsilon)/\Kslope)}, \quad\text{and}\quad 
\Spdiff:\imagdecay{\alpha+\frac12}k\to \imagdecay{\alpha+1} {\min(k,(\eta-\epsilon)/\Kslope)}.
    \end{multline}
\end{theorem}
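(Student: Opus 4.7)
By Theorem~\ref{thm:path_indep} the four operators are independent of the contour $\tGamma\in\cG$ along which they are integrated, so I can work with a single convenient parameterization $y_2 = t + i f(t)$, $0\le f'\le\Kslope$. Continuity of the outputs on $\GammaC$ and their analyticity on the interior of $\GammaC$ have already been established in Lemma~\ref{lem:layers_ana}, so what remains is the weighted sup-norm bounds defining the target spaces. My approach is to split each operator into four pieces using the kernel decomposition $k = k_{rr} + k_{ri} + k_{ir} + k_{ii}$ from the preceding theorem, pull out the density's weighted norm pointwise, and estimate each of the resulting scalar integrals separately.

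\textbf{Exponential decay.} The $ir$ and $ii$ kernel pieces carry an explicit $e^{-k\Im x_2}$ factor, so those contributions already lie in the $e^{-k\Im x_2}$-weighted space. The $rr$ and $ri$ pieces carry an $e^{-\eta\Re x_2}$ factor, and the remaining $y_2$-integral converges exponentially (with rate $\eta$ in $\Re y_2$ for $rr$ and with rate $k+\beta$ in $\Im y_2$ for $ri$); hence those contributions lie in $\realdecay{\eta'}$ for any $\eta'<\eta$. By Lemma~\ref{lem:embed} this embeds continuously into $\imagdecay{\alpha'}{(\eta-\epsilon)/\Kslope}$ for any admissible $\alpha'$, absorbing the algebraic factor. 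Taking the minimum of the two exponential rates then produces the stated $\min(k,(\eta-\epsilon)/\Kslope)$ exponent in every case.

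\textbf{Algebraic decay.} The algebraic rate in the output is controlled by the $ii$ piece, which is the slowest-decaying. Bounding $|x_2+y_2-2X_2|\ge \Re x_2 + \Re y_2 - 2X_2$ and $|y_2-X_2|\ge \Re y_2 - X_2$ along the contour reduces the problem to the elementary convolution estimate
\[
    \int_0^\infty \frac{dt}{(1+|s+t|)^\lambda (1+t)^\mu}\;\lesssim\;(1+|s|)^{\,1-\lambda-\mu},\qquad \lambda+\mu>1,\ \min(\lambda,\mu)<1,
\]
which is proved by splitting the domain at $t=|s|$. Substituting $(\lambda,\mu) = (\tfrac12,\alpha+\tfrac12),\ (\tfrac32,\alpha),\ (\tfrac32,\alpha),\ (\tfrac32,\alpha+\tfrac12)$ for $\Sdiff, \Ddiff, \Dpdiff, \Spdiff$ respectively yields the required output algebraic rates $-\alpha$, $-\alpha-\tfrac12$, $-\alpha-\tfrac12$, $-\alpha-1$. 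The $ri$ piece gives the same or faster algebraic decay and is handled identically.

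\textbf{Main obstacle.} The delicate case is $\Sdiff$: there $\lambda+\mu=1+\alpha$ is borderline, and the $ii$ integral converges only because of the hypothesis $\alpha>0$ in Assumption~\ref{ass:powers} together with the choice of $\tau$ in $\imagdecay{\alpha+\tfrac12}{\beta}$ rather than $\imagdecay{\alpha}{\beta}$ --- the extra $\tfrac12$ in the density weight is exactly what is needed to push $\lambda+\mu$ past $1$. The remaining bookkeeping, namely assembling the four pieces and verifying that each $rr, ri, ir, ii$ contribution lands in the correct target space with the right algebraic and exponential weights, is routine once this critical convergence is secured.
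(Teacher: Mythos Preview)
Your proposal is correct and follows essentially the same route as the paper: invoke path independence, split each kernel into the $rr,ri,ir,ii$ pieces, show that the separable pieces land in $\realdecay\eta$ or $\imagdecay{3/2}{k}$ directly, and control the $ii$ piece by the convolution-type bound that produces the algebraic exponent $1-\lambda-\mu$. The only cosmetic difference is that the paper integrates specifically over the real contour $\Gamma$ (so $\Im y_2=0$ and no exponential help is needed from the density in the $ri$ integral) and evaluates the $ii$ convolution by the scaling substitution $s=(1+|x_2-X_2|)u$ rather than your split at $t=|s|$; both give the same estimate.
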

\begin{proof}
    Due to the path independence, we are free to integrate over~$\Gamma$, i.e. a subset of the real line. We begin by studying~$\Ddiff$. Since the bounds on~$k_{\Ddiff rr}, k_{\Ddiff ir},$ and~$k_{\Ddiff ri}$ are separable in~$x_2$ and $y_2$, the operators~$\Ddiff_{rr},\Ddiff_{ir},$ and~$\Ddiff_{ri}$ are clearly bounded from~$\imagdecay\alpha\beta$ to~$\realdecay\eta, \realdecay\eta,$ and~$\imagdecay{3/2}k$ respectively.  For example, if~$\sigma\in \imagdecay\alpha\beta$, then
    \begin{multline}
        \left|\Ddiff_{ir}[\sigma](x_2) \right| \leq \int_{X_2}^\infty K\frac{e^{-k\Im x_2-\eta t}}{(1+|x_2-{X_2}|)^{3/2}} \frac{\|\sigma\|_{\alpha,\beta} }{(1+t-{X_2})^\alpha}\, \opd t \\\leq  K\|\sigma\|_{\alpha,\beta}\frac{e^{-k\Im x_2}}{(1+|x_2-{X_2}|)^{3/2}} \frac{e^{-\eta {X_2}}}{\eta}.
    \end{multline}
    The remaining piece of~$\Ddiff$ is~$\Ddiff_{ii}$.
   If~$\sigma\in \imagdecay\alpha\beta$, then
    \begin{multline}
        |\Ddiff_{ii}[\sigma](x_2)| \leq \int_{X_2}^\infty \frac{K e^{-k\Im x_2  }}{(1+|x_2+t-2{X_2}|)^{3/2}}   \|\sigma\|_{\alpha,\beta} (t-{X_2})^{-\alpha}\, \opd t \\\leq  \int_0^\infty \frac{K \|\sigma\|_{\alpha,\beta} e^{-k \Im x_2}}{(1+|x_2-{X_2}|+s)^{3/2}s^\alpha} \,\opd s. \label{eq:A_ii_bd}
    \end{multline}
    Substituting~$s=(1+|x_2-{X_2}|)u$ gives
    \begin{equation}
        |\Ddiff_{ii}[\sigma](x_2)| \leq \frac{K \|\sigma\|_{\alpha,\beta} e^{-k \Im x_2}}{(1+|x_2-{X_2}|)^{-1+\alpha+3/2}} \int_0^\infty \frac{1}{\lp 1+u\rp^{3/2}u^\alpha} \,\opd u \leq  \frac{\tilde K \|\sigma\|_{\alpha,\beta} e^{-k \Im x_2}}{(1+|x_2-{X_2}|)^{\alpha+1/2}}.
    \end{equation}

    Thus~$\Ddiff_{ii}:\imagdecay\alpha\beta\to\imagdecay{\alpha+1/2}k$. Since~$\Ddiff =\Ddiff_{rr}+\Ddiff_{ir}+\Ddiff_{ri}+\Ddiff_{ii}$, Lemma~\ref{lem:embed} tells us that~$\Ddiff$ maps $\imagdecay\alpha\beta\to\imagdecay{\alpha+1/2}{\min(k,(\eta-\epsilon)/\Kslope)}$. The kernel~$k_{\Dpdiff}$ has identical bounds, so~$\Dpdiff$ is also bounded.

    We next turn to~$\Spdiff$. For~$\tau\in \imagdecay{\alpha+\frac12}\beta$~the equivalent expression to~\eqref{eq:A_ii_bd} is
    \begin{multline}
        |\Spdiff_{ii}[\sigma](x_2)| \leq \int_{X_2}^\infty \frac{K e^{-k\Im x_2  }}{(1+|x_2+t-2{X_2}|)^{3/2}}   \|\sigma\|_{\alpha+\frac12,\beta} (t-{X_2})^{-\alpha-\frac12}\, \opd t\\
        \leq   \frac{\tilde K \|\sigma\|_{\alpha+\frac12,\beta} e^{-k \Im x_2}}{(1+|x_2-{X_2}|)^{-1+\alpha+1/2+3/2}} .
    \end{multline}
    Thus~$\Spdiff_{ii}:\imagdecay{\alpha+\frac12}\beta\to \imagdecay{\alpha+1}{k}$ and the fact that~$\Spdiff:\imagdecay{\alpha+\frac12}\beta\to \imagdecay{\alpha+1}{\min(k,(\eta-\epsilon)/\Kslope)}$ follows.

    For~$\Sdiff$, the proof is nearly identical, except that~$k_{\Sdiff,ii}$ decays slower at infinity. Thus, if~$\tau\in \imagdecay{\alpha+\frac12}\beta$, then the equivalent expression to~\eqref{eq:A_ii_bd} is
    \begin{multline}
        |\Sdiff_{ii}[\sigma](x_2)| \leq \int_{X_2}^\infty \frac{K e^{-k\Im x_2  }}{(1+|x_2+t-2{X_2}|)^{1/2}}   \|\sigma\|_{\alpha+\frac12,\beta} (t-{X_2})^{-\alpha-\frac12}\, \opd t\\
        \leq   \frac{\tilde K \|\sigma\|_{\alpha+\frac12,\beta} e^{-k \Im x_2}}{(1+|x_2-{X_2}|)^{-1+\alpha+1/2+1/2}} .
    \end{multline}
    Thus~$\Sdiff_{ii}:\imagdecay{\alpha+\frac12}\beta\to \imagdecay{\alpha}k$. The fact that~$\Sdiff:\imagdecay{\alpha+\frac12}\beta\to \imagdecay{\alpha}{\min(k,(\eta-\epsilon)/\Kslope)}$ follows.
\end{proof}
This theorem tells us that the operator on the left hand side of
\begin{equation}
       \begin{pmatrix}
        \cI +\Ddiff&  \Sdiff \\ \Dpdiff & \cI + \Spdiff
    \end{pmatrix}\begin{pmatrix}
        \sigma\\\tau
    \end{pmatrix} = \begin{pmatrix}
        r_D\\ -r_N
    \end{pmatrix}. \label{eq:comp_IE} 
\end{equation}
is a bounded operator on~$\imagdecay\alpha\beta\,\oplus\,\imagdecay{\alpha+\frac12}\beta$. In the next section, we show that is in fact Fredholm index zero on the same space.

\subsection{Fredholm structure}\label{sec:Fredholm}
In this section, we work to show that the operator on the left hand side of~\eqref{eq:comp_IE} is Fredholm index zero. To show that an operator is compact on $\imagdecay\alpha\beta$, we need the following proposition, which was proved in~\cite{epstein2025complex}.
\begin{proposition}[Proposition 2 of \cite{epstein2025complex}]\label{prop:compact}
    Suppose~$0 <\alpha$,~$0<\tilde\alpha < \alpha'$, and $\beta \in \mathbb{R}$. If~$T:\imagdecay\alpha\beta\to \imagdecay{\alpha'}\beta$ be a bounded linear operator such that~$\partial_{x_2}T:\imagdecay\alpha\beta
    \to \imagdecay0{\beta'}$ boundedly, then $T:\imagdecay\alpha\beta\to \imagdecay{\tilde\alpha}\beta$ is a compact operator.
\end{proposition}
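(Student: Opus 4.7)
The plan is a classical Arzel\`a--Ascoli diagonal compactness argument, using the faster algebraic decay rate $\alpha'>\tilde\alpha$ to control tails at infinity and the derivative bound to get equicontinuity on bounded portions of $\GammaC$. Concretely, I would take a bounded sequence $\{f_n\}\subset\imagdecay\alpha\beta$ with $\|f_n\|_{\alpha,\beta}\leq M$, set $g_n:=Tf_n$, and try to extract a Cauchy subsequence in $\imagdecay{\tilde\alpha}\beta$. The two hypotheses give, uniformly in $n$ and $z\in\GammaC$,
\begin{equation}
|g_n(z)|\leq \|T\|M(1+|z|)^{-\alpha'}e^{-\beta\Im z},\qquad |\partial_{x_2}g_n(z)|\leq \|\partial_{x_2}T\|M\,e^{-\beta'\Im z}.
\end{equation}

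For the tail, let $K_R:=\GammaC\cap\{|z|\leq R\}$. For $z\in\GammaC\setminus K_R$ the first bound gives
\begin{equation}
(1+|z|)^{\tilde\alpha}e^{\beta\Im z}|g_n(z)|\leq \|T\|M(1+R)^{\tilde\alpha-\alpha'},
\end{equation}
which tends to zero as $R\to\infty$ uniformly in $n$ because $\tilde\alpha<\alpha'$. For equicontinuity on $K_R$, each $g_n$ is analytic in the interior of $\GammaC$, so the Cauchy--Riemann equations give $\partial_{x_1}g_n=-i\partial_{x_2}g_n$, and the derivative bound therefore controls the full complex gradient on every compact subset of the interior. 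Combined with continuity of $g_n$ on the closure, this yields a uniform Lipschitz estimate on $K_R$ (with a small thickening of the set near the boundary, which is admissible since by Definition~\ref{def:G} the boundary of $\GammaC$ is a Lipschitz curve). Arzel\`a--Ascoli then produces a subsequence converging uniformly on $K_R$, and a standard diagonal extraction over $R\in\mathbb{N}$ gives a subsequence $\{g_{n_k}\}$ converging uniformly on every $K_R$.

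To close the argument, given $\varepsilon>0$ I would choose $R$ large enough that the tail bound above is less than $\varepsilon/4$ for every $n$. On the compact set $K_R$ the weight $(1+|z|)^{\tilde\alpha}e^{\beta\Im z}$ is bounded, so uniform convergence on $K_R$ gives
\begin{equation}
\sup_{z\in K_R}(1+|z|)^{\tilde\alpha}e^{\beta\Im z}|g_{n_k}(z)-g_{n_l}(z)|<\varepsilon/2
\end{equation}
for all sufficiently large $k,l$. Combining this with the tail bound applied to both $g_{n_k}$ and $g_{n_l}$ yields $\|g_{n_k}-g_{n_l}\|_{\tilde\alpha,\beta}<\varepsilon$, so $\{g_{n_k}\}$ is Cauchy, hence convergent, in $\imagdecay{\tilde\alpha}\beta$.

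The step I expect to require the most care is the equicontinuity argument up to the boundary of $\GammaC$, since the interior derivative bound does not \emph{a priori} control the modulus of continuity at boundary points. The remedy is to exploit the explicit geometry of $\GammaC$ from Definition~\ref{def:G}: any two points of $K_R$ can be joined by a short path that lies at a positive distance from the boundary (except near its endpoints), and there the holomorphic derivative bound can be integrated to give the desired Lipschitz estimate; continuity up to the boundary then extends the bound. Everything else is bookkeeping and the standard tail/diagonal scheme.
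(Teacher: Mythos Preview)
The paper does not prove this proposition; it simply cites it from \cite{epstein2025complex}. Your Arzel\`a--Ascoli argument is the natural one and is essentially correct: the strict gain in algebraic decay ($\tilde\alpha<\alpha'$) handles the tails, and the uniform derivative bound furnishes equicontinuity on each compact piece.

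Two small points. First, your line ``$\partial_{x_1}g_n=-i\partial_{x_2}g_n$'' is a notational slip: in this paper $x_2$ is the (complexified) variable parametrising $\GammaC$ and there is no $x_1$ in play here. What you mean is that for an analytic function the complex derivative $\partial_{x_2}g_n$ controls both $\partial_{\Re x_2}g_n$ and $\partial_{\Im x_2}g_n$, hence the full gradient on the two-dimensional region $\Gamma_U$.

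Second, the boundary equicontinuity issue you flag is less delicate than you suggest. The hypothesis $\partial_{x_2}T:\imagdecay\alpha\beta\to\imagdecay{0}{\beta'}$ means, by the very definition of $\imagdecay{0}{\beta'}$, that $\partial_{x_2}g_n$ is \emph{continuous on all of $\GammaC$} (not just the interior) with the stated weighted bound. Moreover $\Gamma_U$ is convex (a direct check from its defining inequalities), so for any $z,w\in\Gamma_U$ you may integrate the derivative along the straight segment and obtain $|g_n(z)-g_n(w)|\leq C_R|z-w|$ on $K_R$ outright. On the one-dimensional piece $\Gamma_D$ the same bound follows by integrating along the real interval, and points in $\Gamma_D$ and $\Gamma_U$ are connected through the junction point $\max(d_L,d_R)/2$. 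No thickening or limiting argument is required.
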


This proposition and the proof of Theorem~\ref{thm:bdd_ops} immediately gives that many of our operators are compact.
\begin{lemma}\label{eq:easy_comp}
    The operators
    \begin{multline}
        \Ddiff: \imagdecay\alpha\beta \to \imagdecay\alpha\beta, \quad \Sdiff_{rr}, \Sdiff_{ri},\Sdiff_{ir}: \imagdecay\alpha\beta \to \imagdecay{\alpha+\frac12}\beta\\
        \Dpdiff_{ir}, \Dpdiff_{ri},\Dpdiff_{ir}: \imagdecay{\alpha+\frac12}\beta \to \imagdecay{\alpha}\beta
        \quad\text{and}\quad \Spdiff: \imagdecay{\alpha+\frac12}\beta \to \imagdecay{\alpha+\frac12}\beta
    \end{multline}
    are compact.
\end{lemma}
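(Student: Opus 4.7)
The plan is to apply Proposition~\ref{prop:compact} to each of the operators in the statement. For each such $T$, this reduces the task to verifying two hypotheses: (i) $T$ is bounded from its source space into some $\imagdecay{\alpha'}{\beta}$ with $\alpha'$ strictly greater than the algebraic decay of the target space in which compactness is claimed; and (ii) $\partial_{x_2} T$ is bounded into some $\imagdecay{0}{\beta'}$.

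Hypothesis (i) is almost immediate. For the full operators $\Ddiff$ and $\Spdiff$, Theorem~\ref{thm:bdd_ops} already asserts mappings into $\imagdecay{\alpha+1/2}{\min(k,(\eta-\epsilon)/\Kslope)}$ and $\imagdecay{\alpha+1}{\min(k,(\eta-\epsilon)/\Kslope)}$, respectively; in both cases the target has strictly greater algebraic decay than the space in which compactness is claimed, so (i) is automatic. For the separable pieces $\Sdiff_{rr},\Sdiff_{ri},\Sdiff_{ir}$ and $\Dpdiff_{rr},\Dpdiff_{ri},\Dpdiff_{ir}$, the kernel estimates preceding this lemma each contain at least one factor $e^{-\eta\Re x_2}$ or $e^{-\eta\Re y_2}$. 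Arguing as for the $\Ddiff_{ir}$ case in the proof of Theorem~\ref{thm:bdd_ops}, each of these operators maps into $\realdecay{\eta'}$ for some $\eta'>0$, and by Lemma~\ref{lem:embed} this space lies in $\imagdecay{\alpha''}{(\eta'-\epsilon)/\Kslope}$ for every $\alpha''>0$, giving (i) with margin to spare.

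For hypothesis (ii), I would pass the $x_2$-derivative inside the integral and estimate the resulting kernel. By Theorem~\ref{thm:w_analytic}, $w_\gamma$, and hence each kernel $k_T$, is analytic in $x_2$ on a complex neighborhood of $\Gamma_\bbC$. Cauchy's integral formula applied on a disk of radius $r$ centered at $x_2$ therefore yields
\begin{equation*}
|\partial_{x_2} k_T(x_2,y_2)| \leq r^{-1} \sup_{|z-x_2|=r} |k_T(z,y_2)|.
\end{equation*}
Because the bounds in the kernel-splitting theorem preceding this lemma are robust under a small perturbation of $x_2$ (with at most an arbitrarily small reduction in the exponential rates $\eta$ and $k$), $\partial_{x_2} k_T$ satisfies estimates of essentially the same shape as $k_T$. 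Replaying the integration argument of Theorem~\ref{thm:bdd_ops} (and the direct estimates for the separable pieces above) with these new bounds then gives boundedness of $\partial_{x_2} T$ into a space with zero algebraic decay and a positive $\beta'$, verifying (ii).

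The main obstacle is handling the $x_2$-derivative bounds uniformly. The estimates of Theorem~\ref{thm:split_w} are stated only for $\partial_{x_1}^l$ derivatives and have no direct $\partial_{x_2}$ analogue, so one must rely on analyticity plus Cauchy's formula, which in turn requires an admissible disk radius that does not collapse. Near the low endpoint $\Re x_2 = \max(d_L,d_R)/2$ of $\Gamma_U$ and near any corner of $\gamma$ the radius may shrink, but by construction $\Gamma_\bbC$ is separated from $\gamma$ by a fixed distance, so a uniform radius can be chosen over the entire contour. Once this uniformity is secured, hypotheses (i) and (ii) are both in hand and Proposition~\ref{prop:compact} delivers compactness for each operator in the list.
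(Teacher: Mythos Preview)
Your approach is the paper's: apply Proposition~\ref{prop:compact} using the mapping bounds from (the proof of) Theorem~\ref{thm:bdd_ops}. The paper's own proof is a one-line reference to exactly these two results, so you have essentially filled in the argument it leaves implicit, including a reasonable Cauchy-estimate route to the $\partial_{x_2}$ hypothesis.

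Two small corrections. First, not every separable piece lands in $\realdecay{\eta'}$: for $\Sdiff_{ir}$ and $\Dpdiff_{ir}$ the factor $e^{-\eta\Re y_2}$ sits in the \emph{integration} variable, not in $x_2$, so after integrating the output is only in $\imagdecay{1/2}{k}$ (respectively $\imagdecay{3/2}{k}$), not in any $\realdecay{\eta'}$. This is still enough for Proposition~\ref{prop:compact} once the source/target spaces for the $\Sdiff$ and $\Dpdiff$ pieces are read as $\imagdecay{\alpha+1/2}\beta\to\imagdecay\alpha\beta$ and $\imagdecay\alpha\beta\to\imagdecay{\alpha+1/2}\beta$, which are the directions actually needed for $\cK_{\opnm{comp}}$ to be compact on $\imagdecay\alpha\beta\oplus\imagdecay{\alpha+1/2}\beta$; with $\alpha<1/2$ one then has $1/2>\alpha$ and $3/2>\alpha+1/2$. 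Second, $\Gamma_\bbC$ is not uniformly separated from $\gamma$: the real segment $\Gamma_D$ meets $\gamma_L\cap\gamma_R$ at $(0,X_2)$. Your Cauchy-disk argument therefore should invoke Lemma~\ref{lem:w_image} (smoothness, with bounded derivatives, of the kernel \emph{difference} $\tilde w_{\gamma_L}-\tilde w_{\gamma_R}$ near the contact point) rather than a positive distance to the boundary.
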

Based on this lemma, we separate the left-hand side of~\eqref{eq:comp_IE} to expose the remaining operators:
\begin{equation}
    \cI + \begin{pmatrix}
        \Ddiff & \Sdiff\\ \Dpdiff & \Spdiff
    \end{pmatrix} = \cI +\cK_{\opnm{comp}} + \begin{pmatrix}
        0 & \Sdiff_{ii} \\ \Dpdiff_{ii} &0
    \end{pmatrix},
\end{equation}
where
\begin{equation}
     \cK_{\opnm{comp}}=\begin{pmatrix}
        \Ddiff & \Sdiff_{rr} + \Sdiff_{ri}+\Sdiff_{ir}\\ \Dpdiff_{rr}+\Dpdiff_{ri}+\Dpdiff_{ir} & \Spdiff
    \end{pmatrix}.
\end{equation}
The operator~$\cK_{\opnm{comp}}$ is a compact map from~$\imagdecay\alpha\beta\,\oplus\, \imagdecay{\alpha+\frac12}\beta$ to itself by the the previous lemma. Proposition~\ref{prop:compact} cannot be used to show that the remaining off diagonal operators~$\Sdiff_{ii}$ and~$\Dpdiff_{ii}$ are compact, as they swap the algebraic rates of decay~$\alpha$ and~$\alpha+\frac12$. Instead, we follow~\cite{epstein2023solvinga} and write:
\begin{multline}
    \begin{pmatrix}
        \cI & -\Sdiff_{ii} \\ 0 & \cI
    \end{pmatrix} \lp   \cI + \begin{pmatrix}
        \Ddiff & \Sdiff\\ \Dpdiff & \Spdiff
    \end{pmatrix}\rp \begin{pmatrix}
        \cI & 0\\ -\Dpdiff_{ii} & \cI
    \end{pmatrix} \\
    = \begin{pmatrix}
        \cI & -\Sdiff_{ii} \\ 0 & \cI
    \end{pmatrix}    \cK_{\opnm{comp}} \begin{pmatrix}
        \cI & 0\\ -\Dpdiff_{ii} & \cI
    \end{pmatrix}  +     \begin{pmatrix}
        \cI -\Sdiff_{ii}\Dpdiff_{ii} & 0 \\ 0 & \cI
    \end{pmatrix}
\end{multline}
Since~$\begin{pmatrix}
        \cI & -\Sdiff_{ii} \\ 0 & \cI
    \end{pmatrix} $ and~$\begin{pmatrix}
        \cI & 0\\ -\Dpdiff_{ii}  & \cI
    \end{pmatrix} $ are invertible and $\cK_{\opnm{comp}}$ is compact on~$\imagdecay\alpha\beta\,\oplus\, \imagdecay{\alpha+\frac12}\beta$, it is enough to show that $\begin{pmatrix}
        \cI -\Sdiff_{ii}\Dpdiff_{ii} & 0 \\ 0 & \cI
    \end{pmatrix} = \cI -\begin{pmatrix}
         \Sdiff_{ii}\Dpdiff_{ii} & 0 \\ 0 & 0
    \end{pmatrix} $ is Fredholm second kind. This fact will follow immediately from the following lemma.
\begin{lemma}
    The product
    $\Sdiff_{ii}\Dpdiff_{ii}$ is a compact map from~$\imagdecay\alpha\beta$ to~$\imagdecay{\alpha}\beta$.
\end{lemma}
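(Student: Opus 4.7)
The plan is to apply Proposition~\ref{prop:compact}, which reduces the claim to showing (i) $\Sdiff_{ii}\Dpdiff_{ii}:\imagdecay\alpha\beta \to \imagdecay{\alpha'}\beta$ boundedly for some $\alpha' > \alpha$, and (ii) $\partial_{x_2}(\Sdiff_{ii}\Dpdiff_{ii}):\imagdecay\alpha\beta \to \imagdecay{0}{\beta'}$ boundedly for some $\beta'$. Since Assumption~\ref{ass:powers} gives $\alpha < 1/2$, I would aim for $\alpha' = 1/2$. Note that a direct chaining of the bounds of Theorem~\ref{thm:bdd_ops} gives only $\Dpdiff_{ii}:\imagdecay\alpha\beta\to\imagdecay{\alpha+1/2}{k}$ followed by $\Sdiff_{ii}:\imagdecay{\alpha+1/2}{k}\to\imagdecay{\alpha}{k}$, i.e.\ $\alpha' = \alpha$; the extra half-power must come from the composition itself.

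By Fubini and Theorem~\ref{thm:path_indep}, the composition is the integral operator with kernel
\[
K(x_2,y_2) \;=\; \int_{\tGamma} k_{\Sdiff_{ii}}(x_2,z_2)\, k_{\Dpdiff_{ii}}(z_2,y_2)\, d z_2, \qquad \tGamma \in \cG.
\]
By Theorem~\ref{thm:split_w}, each of the two kernels splits as a leading oscillatory term $\propto e^{ik(\cdot+\cdot)}$ (with algebraic powers $1/2$ and $3/2$ respectively) plus a remainder with one extra half-power of algebraic decay. Substituting this splitting into $K$ produces four pieces. The three pieces that contain at least one remainder inherit strictly faster algebraic decay than the naive composition, and by an argument parallel to the proof of Theorem~\ref{thm:bdd_ops} they map $\imagdecay\alpha\beta$ into $\imagdecay{1/2}{k}$.

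For the remaining leading-by-leading piece, the inner integrand acquires the factor $e^{2ikz_2}$, since both kernels propagate outward in the same direction. This factor is rapidly oscillating, and in fact decays exponentially for $\Im z_2 > 0$ along any $\tGamma \in \cG$. I would exploit this through a single integration by parts in $z_2$, justified by the analyticity established in Theorem~\ref{thm:w_analytic} together with the contour freedom of Theorem~\ref{thm:path_indep}. The boundary term at infinity along $\tGamma$ vanishes because $e^{2ikz_2}$ decays exponentially there, while the boundary term at the finite endpoint, near the base of $\GammaC$, is bounded by a constant multiple of $1/[(1+|x_2|)^{1/2}(1+|y_2|)^{3/2}]$, which is now separable in $x_2$ and $y_2$. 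The remaining integral, with the algebraic factor differentiated once, carries one more power of decay and can be handled by absolute-value bounds exactly as in the proof of Theorem~\ref{thm:bdd_ops}.

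Assembling the four pieces yields a separable bound $|K(x_2,y_2)|\lesssim e^{-k\Im(x_2+y_2)}/[(1+|x_2|)^{1/2}(1+|y_2|)^{1+\alpha}]$, so integrating against $\sigma\in\imagdecay\alpha\beta$ gives $\Sdiff_{ii}\Dpdiff_{ii}:\imagdecay\alpha\beta\to \imagdecay{1/2}{k}\subset \imagdecay{1/2}\beta$, i.e.\ $\alpha' = 1/2 > \alpha$. Condition (ii) follows from the same analysis applied to $\partial_{x_2} K$: at leading order differentiation produces an extra $ik$ factor but preserves the separable decay, so $\partial_{x_2}(\Sdiff_{ii}\Dpdiff_{ii})$ maps into $\imagdecay{0}{k}$. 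Proposition~\ref{prop:compact} then yields compactness into $\imagdecay\alpha\beta$. The main obstacle I anticipate is the integration-by-parts estimate for the leading piece: carefully tracking the branch cuts of the fractional-power factors along a complex contour $\tGamma\in\cG$, verifying that the endpoint contribution at infinity actually vanishes rather than only decays, and stitching the asymptotic-regime estimate together with uniform absolute-value bounds when $x_2$ or $y_2$ lies near the base of $\GammaC$, where the splitting in Theorem~\ref{thm:split_w} is not yet effective.
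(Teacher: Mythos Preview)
Your proposal is correct and essentially reconstructs the argument the paper defers to in Lemma~9 of \cite{epstein2025complex}: split the composed kernel via the asymptotics of Theorem~\ref{thm:split_w}, exploit the $e^{2ikz_2}$ oscillation in the leading-by-leading piece (via integration by parts or, equivalently, contour deformation in $\cG$) to gain the extra half-power of decay, and then invoke Proposition~\ref{prop:compact}. The only imprecision is that your stated separable pointwise bound on $K(x_2,y_2)$ really holds only for the leading-by-leading piece---the three remainder pieces are controlled at the operator level via the Schur-type estimates of Theorem~\ref{thm:bdd_ops}, exactly as you say earlier---but this does not affect the argument.
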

\begin{proof}
The asymptotic form of~$k_{\Dpdiff,ii}$ and~$k_{\Sdiff,ii}$ are the same as those considered in Lemma 9 of \cite{epstein2025complex}. The same proof therefore shows that $\Sdiff_{ii}\Dpdiff_{ii}$ is a compact map from~$\imagdecay\alpha\beta$ to~$\imagdecay{\alpha}\beta$.
\end{proof}
Assembling the above arguments yields the following result.
\begin{theorem}
The operator
    $ \cI + \begin{pmatrix}
        \Ddiff & \Sdiff\\ \Dpdiff & \Spdiff
    \end{pmatrix}$ 
    is a Fredholm index zero operator on~$\imagdecay\alpha\beta\,\oplus\, \imagdecay{\alpha+\frac12}\beta$.
\end{theorem}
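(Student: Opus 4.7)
The plan is to assemble the pieces established in the paragraphs immediately preceding the theorem. The crucial observation is the matrix factorization identity
$$
A_1 \lp \cI + \begin{pmatrix} \Ddiff & \Sdiff\\ \Dpdiff & \Spdiff \end{pmatrix}\rp A_2 = A_1 \cK_{\opnm{comp}} A_2 + \begin{pmatrix} \cI - \Sdiff_{ii}\Dpdiff_{ii} & 0 \\ 0 & \cI \end{pmatrix},
$$
where $A_1 = \begin{pmatrix}\cI & -\Sdiff_{ii} \\ 0 & \cI\end{pmatrix}$ and $A_2 = \begin{pmatrix}\cI & 0\\ -\Dpdiff_{ii} & \cI\end{pmatrix}$. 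This reduces the theorem to two subclaims: that $A_1, A_2$ are bounded invertible maps on $\imagdecay\alpha\beta \oplus \imagdecay{\alpha+\frac12}\beta$, and that the right-hand side is Fredholm of index zero on the same space.

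First, I would verify the boundedness and invertibility of $A_1$ and $A_2$. Theorem \ref{thm:bdd_ops} gives that $\Sdiff_{ii}: \imagdecay{\alpha+\frac12}\beta \to \imagdecay{\alpha}\beta$ and $\Dpdiff_{ii}: \imagdecay{\alpha}\beta \to \imagdecay{\alpha+\frac12}\beta$ are bounded, so $A_1$ and $A_2$ are bounded on $\imagdecay\alpha\beta \oplus \imagdecay{\alpha+\frac12}\beta$. Invertibility is immediate upon writing $A_1^{-1} = \begin{pmatrix}\cI & \Sdiff_{ii} \\ 0 & \cI\end{pmatrix}$ and $A_2^{-1} = \begin{pmatrix}\cI & 0\\ \Dpdiff_{ii} & \cI\end{pmatrix}$; the product identities $A_1 A_1^{-1} = A_2 A_2^{-1} = \cI$ follow from the fact that the off-diagonal squares vanish in triangular block matrices.

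Next, I would argue that the right-hand side is the identity plus a compact perturbation. The term $A_1 \cK_{\opnm{comp}} A_2$ is compact on $\imagdecay\alpha\beta \oplus \imagdecay{\alpha+\frac12}\beta$ because $\cK_{\opnm{comp}}$ is compact by Lemma \ref{eq:easy_comp} and compactness is preserved under pre- and post-composition with bounded operators. The block-diagonal correction rewrites as $\cI - \begin{pmatrix} \Sdiff_{ii}\Dpdiff_{ii} & 0 \\ 0 & 0 \end{pmatrix}$, and the immediately preceding lemma provides that $\Sdiff_{ii}\Dpdiff_{ii}$ is compact on $\imagdecay{\alpha}\beta$. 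Summing, the right-hand side equals $\cI$ plus a compact operator on $\imagdecay\alpha\beta \oplus \imagdecay{\alpha+\frac12}\beta$, so it is Fredholm of index zero.

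To finish, I would invoke the general fact that pre- and post-composition by bounded invertible operators preserves both the Fredholm property and the Fredholm index. Applied with $A_1^{-1}$ on the left and $A_2^{-1}$ on the right of the factorization identity, this yields the claim. I do not anticipate any real obstacle: every nontrivial ingredient — the analytic continuation of $w_\gamma$, the decay splittings of the kernels, the mapping properties of $\Ddiff, \Sdiff, \Dpdiff, \Spdiff$, and the compactness of $\Sdiff_{ii}\Dpdiff_{ii}$ — is already in place. The only point that merits care in the write-up is bookkeeping the algebraic decay exponents $\alpha$ and $\alpha+\frac12$ through each composition, but this is automatic from Theorem \ref{thm:bdd_ops}.
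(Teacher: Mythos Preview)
Your proposal is correct and follows essentially the same route as the paper: the paper states the theorem as the direct consequence of assembling the factorization identity, the compactness of $\cK_{\opnm{comp}}$, the invertibility of the triangular factors, and the compactness of $\Sdiff_{ii}\Dpdiff_{ii}$. Your write-up fills in the routine bookkeeping (explicit inverses for $A_1,A_2$, citing Theorem~\ref{thm:bdd_ops} for the boundedness of the off-diagonal blocks) that the paper leaves implicit, but there is no substantive difference in strategy.
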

We now show that it is enough to enforce~\eqref{eq:comp_IE} on a single contour.
\begin{theorem}\label{thm:solve_complex}
    Suppose~$(r_D,r_N)\in \imagdecay\alpha\beta\,\oplus\, \imagdecay{\alpha+\frac12}\beta$. If~$(\sigma,\tau)\in \imagdecay\alpha\beta\,\oplus\, \imagdecay{\alpha+\frac12}\beta$  satisfies
    \begin{equation}\label{eq:complex_solve}
        \left.\lp \cI + \begin{pmatrix}
        \Ddiff & \Sdiff\\ \Dpdiff & \Spdiff
    \end{pmatrix}\rp \begin{pmatrix}
        \sigma\\ \tau
    \end{pmatrix} \right|_{\tGamma} =\left.\begin{pmatrix}
        r_D\\ -r_N
    \end{pmatrix} \right|_{\tGamma}
    \end{equation}
    on some~$\tGamma\in \cG$ then
    \begin{equation}\label{eq:comp_IE_2}
        \lp \cI + \begin{pmatrix}
        \Ddiff & \Sdiff\\ \Dpdiff & \Spdiff
    \end{pmatrix}\rp \begin{pmatrix}
        \sigma\\ \tau
    \end{pmatrix}=\begin{pmatrix}
        r_D\\ -r_N
    \end{pmatrix}
    \end{equation}
    on all of~$\Gamma_\bbC$.
\end{theorem}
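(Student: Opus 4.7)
The plan is to prove \eqref{eq:comp_IE_2} by analytic continuation. I would begin by introducing the residual
\begin{equation*}
F(x_2) := \left( \cI + \begin{pmatrix} \Ddiff & \Sdiff \\ \Dpdiff & \Spdiff \end{pmatrix} \right) \begin{pmatrix} \sigma \\ \tau \end{pmatrix}(x_2) - \begin{pmatrix} r_D \\ -r_N \end{pmatrix}(x_2),
\end{equation*}
viewed as a $\bbC^2$-valued function on $\Gamma_\bbC$. The hypothesis of the theorem reads $F\equiv 0$ on $\tilde\Gamma$, and the goal is to deduce $F\equiv 0$ on all of $\Gamma_\bbC$.

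The first step is to establish the regularity of $F$. Since $\sigma,r_D\in\imagdecay\alpha\beta$ and $\tau,r_N\in\imagdecay{\alpha+\frac12}\beta$, each of these is continuous on $\Gamma_\bbC$ and analytic in its interior by definition of the spaces. Lemma~\ref{lem:layers_ana} transfers the same regularity to $\Ddiff[\sigma]$, $\Sdiff[\tau]$, $\Dpdiff[\sigma]$, and $\Spdiff[\tau]$, and Theorem~\ref{thm:path_indep} ensures that these values are well defined independently of the integration contour used in $\cG$. Combining these observations, $F$ is continuous on $\Gamma_\bbC$ and analytic on its open, connected interior, which coincides with $\operatorname{int}(\Gamma_U)$.

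The second step is the identity theorem for analytic functions. In the generic situation where $f$ from Definition~\ref{def:G} is not identically zero, the curve $\tilde\Gamma$ contains a non-trivial arc lying in $\operatorname{int}(\Gamma_U)$, which has accumulation points in the interior. The component-wise identity theorem then forces $F\equiv 0$ on $\operatorname{int}(\Gamma_U)$, and continuity of $F$ on $\Gamma_\bbC$ extends this to all of $\Gamma_U$. Since $\tilde\Gamma$ automatically contains the real segment $\Gamma_D$ (because $f(t)=0$ for $t<\max(d_L,d_R)/2$), the hypothesis gives $F\equiv 0$ on $\Gamma_D$ directly, finishing the argument.

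The remaining edge case is $f\equiv 0$, in which $\tilde\Gamma$ lies entirely on the real axis, on the boundary of $\Gamma_\bbC$. Here I would appeal to the Schwarz reflection principle: since $F$ is continuous on $\Gamma_\bbC$, analytic on $\operatorname{int}(\Gamma_U)$, and vanishes on a real open subarc of $\tilde\Gamma$, the formula $\widetilde F(z):=\overline{F(\bar z)}$ on the reflection of $\operatorname{int}(\Gamma_U)$ extends $F$ analytically across $\tilde\Gamma$ to a domain symmetric about $\bbR$. The extension vanishes on the real subarc, which has accumulation points in the enlarged open domain, so the identity theorem forces it to be identically zero; in particular $F\equiv 0$ on $\operatorname{int}(\Gamma_U)$, and continuity concludes as before. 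I expect this boundary case to be the only technical subtlety, but Schwarz reflection dispatches it cleanly.
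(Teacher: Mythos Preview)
Your proof is correct and follows the same approach as the paper, which simply invokes Lemma~\ref{lem:layers_ana} to get analyticity of the left-hand side on the interior of $\Gamma_\bbC$ and then appeals to the identity theorem. You are more thorough than the paper in explicitly treating the boundary case $f\equiv 0$ via Schwarz reflection; the paper's one-line proof glosses over this point.
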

\begin{proof}
    By Lemma~\ref{lem:layers_ana}, $\begin{pmatrix}
        \Ddiff & \Sdiff\\ \Dpdiff & \Spdiff
    \end{pmatrix} \begin{pmatrix}
        \sigma\\ \tau
    \end{pmatrix}$ is analytic on the interior of~$\Gamma_\bbC$. The sum~$\begin{pmatrix}
        \sigma\\ \tau
    \end{pmatrix}+ \begin{pmatrix}
        \Ddiff & \Sdiff\\ \Dpdiff & \Spdiff
    \end{pmatrix} \begin{pmatrix}
        \sigma\\ \tau
    \end{pmatrix}$ is thus analytic there. The result then follows from the identity theorem.
\end{proof}
The two previous theorems have the following important corollary.

\begin{corollary}\label{cor:unique}
    Let~$\cC^\alpha(\Gamma)$ be the set of continuous functions on~$\Gamma$ that are bounded by a multiple of~$(1+|x_2|)^{-\alpha}$.
    If
    \begin{equation}
        \left.\lp \cI + \begin{pmatrix}
        \Ddiff_\Gamma & \Sdiff_\Gamma\\ \Dpdiff_\Gamma & \Spdiff_\Gamma
    \end{pmatrix}\rp \begin{pmatrix}
        \sigma\\ \tau
    \end{pmatrix} \right|_{\Gamma} =0\label{eq:uni_real}
    \end{equation}
    has only the trivial solution in~$\cC^{\alpha}(\Gamma)\,\oplus\,\cC^{\alpha+\frac12}(\Gamma)$, then for all $(r_D,r_N)\in \imagdecay\alpha\beta\,\oplus\, \imagdecay{\alpha+\frac12}\beta$ there exists a unique $(\sigma,\tau)\in \imagdecay\alpha\beta\,\oplus\, \imagdecay{\alpha+\frac12}\beta$ that solves~\eqref{eq:complex_solve} for any~$\tGamma$. 
\end{corollary}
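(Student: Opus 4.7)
The plan is to combine the preceding Fredholm index zero result with the Fredholm alternative. Because $\cI + \begin{pmatrix} \Ddiff & \Sdiff \\ \Dpdiff & \Spdiff \end{pmatrix}$ is Fredholm of index zero on $\imagdecay\alpha\beta \oplus \imagdecay{\alpha+\frac12}\beta$, it suffices to prove that the homogeneous version of~\eqref{eq:comp_IE_2} has only the trivial solution in this space; existence and uniqueness on $\Gamma_\bbC$ then follow automatically, and the solvability of~\eqref{eq:complex_solve} on any $\tGamma \in \cG$ is recovered by Theorem~\ref{thm:solve_complex}.

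To prove triviality of the kernel, I would take $(\sigma,\tau) \in \imagdecay\alpha\beta \oplus \imagdecay{\alpha+\frac12}\beta$ satisfying the homogeneous equation on some $\tGamma$ and show $(\sigma,\tau) \equiv 0$. By Theorem~\ref{thm:solve_complex}, the equation holds on all of $\Gamma_\bbC$. The real contour $\Gamma$ itself belongs to $\cG$ (taking $f \equiv 0$ in Definition~\ref{def:G}), so by path independence (Theorem~\ref{thm:path_indep}) the restrictions $\sigma|_\Gamma$ and $\tau|_\Gamma$ satisfy the real equation~\eqref{eq:uni_real}. Since $\Im z = 0$ on $\Gamma$, the norm definitions give $|\sigma(z)| \leq \|\sigma\|_{\alpha,\beta}(1+|z|)^{-\alpha}$ and $|\tau(z)| \leq \|\tau\|_{\alpha+\frac12,\beta}(1+|z|)^{-\alpha-\frac12}$, so $(\sigma|_\Gamma,\tau|_\Gamma) \in \cC^{\alpha}(\Gamma) \oplus \cC^{\alpha+\frac12}(\Gamma)$, and the hypothesis forces $\sigma|_\Gamma = \tau|_\Gamma = 0$.

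The last step is to upgrade vanishing on $\Gamma$ to vanishing on all of $\Gamma_\bbC$, which I expect to be the main obstacle since the hypothesis is purely real but the conclusion must hold throughout the complexified domain. Because $\sigma, \tau \in \imagdecay\alpha\beta \oplus \imagdecay{\alpha+\frac12}\beta$, both densities are continuous on $\Gamma_\bbC$ and analytic in the interior of the wedge $\Gamma_U$, and they vanish on the straight segment $\Gamma_U \cap \bbR$ of $\partial \Gamma_U$. Applying the Schwarz reflection principle across this segment would extend $\sigma$ (and similarly $\tau$) to a function analytic in a genuine open neighborhood of $\Gamma_U \cap \bbR$ which still vanishes on an interval there; the identity theorem then forces the extension to vanish identically on a connected open subset of the interior of $\Gamma_U$. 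A second identity-theorem argument on the connected interior of $\Gamma_U$, combined with continuity up to the boundary and the fact that $\sigma$ is already zero on $\Gamma_D$, yields $\sigma \equiv 0 \equiv \tau$ on all of $\Gamma_\bbC$, completing the kernel triviality and, via Fredholm index zero, the full corollary.
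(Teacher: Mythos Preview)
Your proposal is correct and follows essentially the same route as the paper: reduce to triviality of the kernel via the Fredholm index-zero theorem, use Theorem~\ref{thm:solve_complex} and path independence to restrict a complexified null solution to~$\Gamma$, invoke the hypothesis to kill it on the real line, and then use analyticity to conclude vanishing on all of~$\Gamma_\bbC$. The paper compresses your last step to the phrase ``their analyticity thus gives that $\sigma=\tau=0$ on all of $\Gamma_\bbC$,'' whereas your Schwarz-reflection argument is a correct way to make that deduction precise, since the real ray lies only on the boundary of the domain of analyticity.
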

\begin{proof}
    We begin by proving uniqueness. By linearity, it is enough to show that if~$(\sigma,\tau)\in \imagdecay\alpha\beta\oplus\, \imagdecay{\alpha+\frac12}\beta$ satisfy
    \begin{equation}
        \lp \cI + \begin{pmatrix}
        \Ddiff & \Sdiff\\ \Dpdiff & \Spdiff
    \end{pmatrix}\rp \begin{pmatrix}
        \sigma\\ \tau
    \end{pmatrix} = 0
    \end{equation}
    on $\tGamma$, then~$\sigma\equiv\tau\equiv0$.
    By the previous theorem and the path independence of the operators we have that such $(\sigma,\tau)$ satisfy~\eqref{eq:uni_real}. Since $(\sigma|_\Gamma, \tau|_\Gamma)\in\cC^{\alpha}(\Gamma)\,\oplus\,\cC^{\alpha+\frac12}(\Gamma)$, the assumption gives that $(\sigma|_\Gamma, \tau|_\Gamma)=0$. Their analyticity thus gives that~$\sigma=\tau=0$ on all of~$\Gamma_\bbC$.

    As the operator in \eqref{eq:comp_IE_2} is Fredholm index zero on~$\Gamma_\bbC$, the assumptions of the corollary imply that~\eqref{eq:comp_IE_2} has a unique solution for all~$(r_D,r_N)\in \imagdecay\alpha\beta\,\oplus\, \imagdecay{\alpha+\frac12}\beta$. Theorem~\ref{thm:solve_complex} then gives that~\eqref{eq:complex_solve} has a unique solution.
\end{proof}
\begin{remark}
    The previous corollary asserts that if we could show uniqueness for the integral equation~\eqref{eq:real_IE}, then we would know that the complexified integral equation~\eqref{eq:comp_IE_2} has a unique solution for every appropriate right hand side. As with many transmission integral equations, the uniqueness on the real line would be an easy consequence of a uniqueness result for the original PDE~\eqref{eq:tot_PDE} (see~\cite{epstein2023solvinga}). 
    
    The uniqueness of this and related problems has been the subject of much research. For example, the uniqueness for the transmission version of this problem with straight interfaces was established in~\cite{epstein2024solving}. The recent work~\cite{humaikani2025rellich} studies, among other things, the junction of several straight open waveguides and one waveguide with periodic walls. That work establishes that there can be no solutions of the homogeneous problem that are trapped in the vicinity of a transmission junction. As the authors are not aware of a uniqueness theorem for the glued grating problem, we leave this as an open question.
\end{remark}

If its assumptions are satisfied, the previous corollary implies that the integral equation~\eqref{eq:comp_IE_2} has a unique solution in~$\imagdecay\alpha\beta\oplus\, \imagdecay{\alpha+\frac12}\beta$. For physically meaningful choices of~$(r_D,r_N)$, we will actually be able to have tighter control on the behavior of~$\sigma$ and~$\tau$. These results are discussed in Theorem \ref{thm:dens_asymp} below.

\section{Recovered solution}\label{sec:recovered_sol}
So far we have shown that the integral equation~\eqref{eq:real_IE} can be analytically continued to the Fredholm integral equation~\eqref{eq:complex_solve} and that the kernels and densities will be exponentially decaying along the complexified contour~$\tGamma$. In this section we show that the solutions of~\eqref{eq:complex_solve} can be used to recover the solution~$u_{L,R}$. We also show that the solutions satisfy the Sommerfeld radiation condition away from the boundaries~$\gamma_{L,R}$. Finally, we discuss physically meaningful data~$(r_D,r_N)$ that lies in the spaces~$\imagdecay\alpha\beta\oplus\imagdecay{\alpha+\frac12}\beta$. 

\begin{theorem}\label{thm:solve_pde}
Suppose~$(r_D,r_N)\in \imagdecay\alpha\beta\,\oplus\, \imagdecay{\alpha+\frac12}\beta$.
If~$(\sigma,\tau)\in \imagdecay\alpha\beta\,\oplus\, \imagdecay{\alpha+\frac12}\beta$ solves \eqref{eq:complex_solve}, then
\begin{equation}\label{eq:u_comp_rep}
    u_{L,R}(\bx) = \cD_{\tGamma,LR}[\sigma](\bx) + \cS_{\tGamma,LR}[\tau](\bx)
\end{equation}
exists for all~$\bx \in \Theta$ with~$x_1\neq 0$ and is independent of~$\tGamma$, provided~$\tGamma$ is such that~$y_2\in\tGamma$ is real when~$\Re y_2\leq x_2$. Further~$u_{L,R}$ and satisfies~\eqref{eq:ulr_def} in~$\Theta\setminus\Gamma$ and \eqref{eq:transmission_prob}. 
\end{theorem}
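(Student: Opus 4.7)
The plan is to verify existence of the integral, independence from the choice of $\tGamma$ (subject to the stated constraint), that $u_{L,R}$ solves~\eqref{eq:ulr_def} in $\Theta\setminus\Gamma$, and finally the transmission conditions~\eqref{eq:transmission_prob} on $\Gamma$. For existence and path independence, I would use the splitting $G_{\gamma_{L,R}} = G + w_{\gamma_{L,R}}$ together with the asymptotic estimates of Theorem~\ref{thm:split_w} and the exponential decay of the free-space Hankel function $G(\bx-(0,y_2))$ as $\Im y_2\to\infty$. The hypotheses $\sigma\in\imagdecay\alpha\beta$ and $\tau\in\imagdecay{\alpha+\frac12}\beta$ then give absolute integrability of both integrands along $\tGamma$. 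Path independence follows by Cauchy's theorem: Theorem~\ref{thm:w_analytic} and Lemma~\ref{lem:layers_ana} ensure analyticity of the integrand in $y_2$ between $\tGamma$ and any competing contour $\overline\Gamma\in\cG$, and the vertical connectors at $\Re y_2 = M$ vanish as $M\to\infty$ by an argument mimicking the proof of Theorem~\ref{thm:path_indep}. The restriction that $\tGamma$ be real on $\{\Re y_2\leq x_2\}$ precisely ensures that $(\bx,(0,y_2))$ stays inside the domain of analyticity of $G_{\gamma_{L,R}}$ identified in Lemma~\ref{lem:ana_cont}; deforming $\tGamma$ off the real axis below the height of $\bx$ would cross the sign obstruction there.

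For the PDE and Neumann boundary conditions in $\Theta\setminus\Gamma$, any such $\bx$ has $x_1\neq 0$ and so lies a positive distance from the integration points $\{(0,y_2) : y_2\in\tGamma\}$. The integrand and its $\bx$-derivatives are therefore smooth on a neighborhood of $\bx$, allowing differentiation under the integral. Since $G_{\gamma_{L,R}}(\cdot,\by)$ solves the homogeneous Helmholtz equation with vanishing Neumann data on $\gamma_{L,R}$ at each fixed $\by\neq\bx$, the representation~\eqref{eq:u_comp_rep} inherits both properties and~\eqref{eq:ulr_def} follows.

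The transmission conditions require more care. Given a target $\bx_0\in\Gamma$, choose $\tGamma\in\cG$ whose profile function $f$ vanishes on a neighborhood of the height $x_{0,2}$; this is permitted by Definition~\ref{def:G}. Near $\bx_0$ this $\tGamma$ coincides with $\Gamma$, so as $x_1\to 0^{\pm}$ the classical jump relations for the Helmholtz single- and double-layer potentials apply to the $G(\bx-\by)$ piece of each domain Green's function. By Theorem~\ref{thm:w_analytic} the remaining contributions from $w_{\gamma_{L,R}}$ are continuous across $\Gamma$ and contribute no jumps. Combining the jumps with the principal-value operators reduces the transmission identities to the restriction of~\eqref{eq:complex_solve} to $\Gamma$, which holds by Theorem~\ref{thm:solve_complex} together with path independence.

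The principal obstacle is the interplay between the complexified contour and the classical jump relations. The resolution is the path-independence result, which lets us pick $\tGamma$ agreeing with $\Gamma$ in a neighborhood of whichever point we wish to approach; away from that neighborhood the integrand is smooth in $x_1$ across $\Gamma$ and contributes nothing to the jump. Equally delicate is characterizing which contours $\tGamma$ give a valid representation, encoded in the assumption ``$\tGamma$ real on $\{\Re y_2\leq x_2\}$'', which tracks the branching structure of $G_{\gamma_{L,R}}(\bx,\cdot)$ in its second argument.
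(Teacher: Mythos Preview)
Your approach is essentially the paper's: split $G_{\gamma_{L,R}}=G+w_{\gamma_{L,R}}$, verify existence and path independence for each piece using the estimates of Theorem~\ref{thm:split_w} and the argument of Theorem~\ref{thm:path_indep}, differentiate under the integral for~\eqref{eq:ulr_def}, and appeal to the standard jump relations (together with Theorem~\ref{thm:solve_complex} to transfer~\eqref{eq:complex_solve} back to the real contour) for~\eqref{eq:transmission_prob}. Your treatment of the transmission conditions is in fact more explicit than the paper's terse appeal to the smoothness of $w_{\gamma_{L,R}}$.

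The one misstep is your attribution of the constraint ``$\tGamma$ real on $\{\Re y_2\le x_2\}$'' to Lemma~\ref{lem:ana_cont}. That lemma (and Theorem~\ref{thm:w_analytic}) analytically continue $w_{\gamma_{L,R}}(\bx,\by)$ to all of $\Omega_\bbC^2$, with no target-dependent restriction and no sign obstruction at the height $\Re y_2=x_2$; the sign condition in the proof of Lemma~\ref{lem:ana_cont} concerns $G_\xi(\bz-\by)$ for $\bz\in\gamma$, not the pair $(\bx,(0,y_2))$. The constraint in the theorem is forced instead by the \emph{free-space} piece: $G(\bx-(0,y_2))=\tfrac{i}{4}H_0^{(1)}\bigl(k\sqrt{x_1^2+(x_2-y_2)^2}\bigr)$ has branch points in $y_2$ at $y_2=x_2\pm ix_1$, so the Hankel kernel fails to be analytic precisely when the contour leaves the real axis at or below the height of the target. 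This is why the paper treats path independence for $\cS_{\tGamma,0}$ and $\cS_{\tGamma,w_{L,R}}$ separately.
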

\begin{proof}
    To show that~$\cS_{\tGamma,LR}[\tau](\bx)$ exists, we split the operator~$\cS_{\tGamma,L,R}= \cS_{\tGamma,0}+\cS_{\tGamma,w_{L,R}}$, where~$\cS_0$ is standard Helmholtz single layer operator and~$\cS_{\tGamma,w_{L,R}}$ is the operator with kernel~$w_{\gamma_L,\gamma_R}$. The function~$\cS_{\tGamma,w_{L,R}}[\tau](\bx)$ is well-defined and finite for all~$\bx\in \Theta,$ which can be shown using an almost identical proof to that of Lemma~\ref{lem:op_def}. The kernel~$G(\bx-(0,y_2))$ is analytic in the region of interest and has the same decay rate as~$w_{\gamma_{L,R}ii}$ as~$y_2\to \infty$, so~$\cS_{\tGamma,0}[\tau](\bx)$ exists. We thus have that~$\cS_{\tGamma,LR}[\tau](\bx)$ exists. 

    The proof of Theorem~\ref{thm:path_indep} can be repeated to show that~$\cS_{\tGamma,w_{L,R}}[\tau](\bx)$ is independent of~$\tGamma$. For~$\cS_{\tGamma,0}[\tau](\bx)$, we must understand the analyticity of~$G(\bx-(0,y_2))$. The analytic continuation of the free-space kernel is given by
    \begin{equation}
        G(\bx-(0,y_2)) = \frac{i}4 H_0^{(1)}\lp k\sqrt{x_1^2 + (x_2-y_2)^2}\rp,
    \end{equation}
    which will be analytic as long as~$\Re y_2> x_2$. The argument can thus be repeated for $\cS_{\tGamma,0}[\tau](\bx)$ as long as~$\tGamma$ is real when~$\Re y_2\leq  x_2$. A similar argument can be applied to~$\cD_{\tGamma,LR}[\sigma](\bx)$.

    To check that $u_{L,R}$ satisfies the PDE, we note that the kernels of both integral operators satisfy~\eqref{eq:ulr_def}. As the integral converges uniformly for $\bx$ in any closed subset of $\overline{\Theta}\setminus\Gamma$ that doesn't include a corner of~$\gamma_{L,R}$, we have that $u_{L,R}$ also satisfies~\eqref{eq:ulr_def}. Finally, we have already noted that~$u_{L,R}$ will satisfy \eqref{eq:transmission_prob} because~$w_{L,R}(\bx,\by)$ is smooth for all~$\bx,\by\in \Theta$ away from any corners and the usual jump relations for the Helmholtz layer potentials.
\end{proof}

\begin{remark}
    The advantage of introducing the complexified contour~$\tGamma$ is that the kernels and densities will decay exponentially along~$\tGamma$. Indeed if~$\tGamma$ is a line of slope~$\Kslope$ outside some compact region then as~$y_2\to\infty$ along~$\tGamma$ the kernels and densities will decay as
    \begin{equation}
        O\lp e^{-k \Im y_2} + e^{-\eta\Re y_2}\rp \quad\text{and}\quad O\lp e^{-\beta\Im y_2 } \rp,
    \end{equation}
    respectively. Thus, for numerical purposes, truncation of the contour will produce easily controllable errors. To that end, let~$\tGamma_\epsilon$ be the truncation of~$\tGamma$ to the region~$e^{-\min (k \Im x_2, \eta\Re y_2)}<\epsilon$. It is not hard to show (see~\cite{epstein2025complex}) that under the same assumptions as Corollary~\ref{cor:unique} and for~$\epsilon$ sufficiently small, that there exists a unique solution of
\begin{equation}\label{eq:complex_solve_trunc}
        \left.\lp \cI + \begin{pmatrix}
        \Ddiff & \Sdiff\\ \Dpdiff & \Spdiff
    \end{pmatrix}\rp \begin{pmatrix}
        \sigma_\epsilon\\ \tau_\epsilon
    \end{pmatrix} \right|_{\tGamma_\epsilon} =\left.\begin{pmatrix}
        r_D\\ -r_N
    \end{pmatrix} \right|_{\tGamma_\epsilon}.
    \end{equation}
    Further, the arguments in \cite{epstein2025complex} can be used to show that, if~$\tGamma$ is real in the region~$\Re y_2< L$, then for every compact subset~$V$ of~$\Theta$ that is contained in the region~$\{x_2\leq L\}$ and does not contain a corner of~$\gamma_L$, there exists a~$C$ such that
    \begin{equation}
        \left|   u_{L,R}(\bx) - \cD_{\tGamma_\epsilon,LR}[\sigma_\epsilon](\bx) + \cS_{\tGamma_\epsilon,LR}[\tau_\epsilon](\bx) \right| < C \epsilon
    \end{equation}
    for all~$\bx\in V$.
\end{remark}

\subsection{Outgoing solutions}
In order for the solution~$u_{L,R}$ in \eqref{eq:u_comp_rep} to be physically meaningful, it must be outgoing. For scattering problems involving compact obstacles, the appropriate radiation condition is the Sommerfeld radiation condition. We recall that a field~$u$ is said to satisfy the Sommerfeld radiation condition if
\begin{equation}
    (\partial_r - ik) u(r\cos\theta,r\sin\theta) = o(r^{-1/2})
\end{equation}
as~$r\to \infty$, where the implicit constant is independent of angle. It is well-known result by Rellich that if the obstacle is compact and the Sommerfeld radiation condition holds uniformly in angle, then the solution will be unique, and will be the limiting absorption solution (see e.g. \cite{colton2013integral}). 
 For problems involving unbounded interfaces, such as~\eqref{eq:tot_PDE}, the Sommerfeld radiation condition are insufficient because trapped modes must be considered outgoing, even though they oscillate at frequencies other than $k$. The radiation condition for problems involving periodic interfaces is not well understood and so we simply show that the field~$u_{L,R}$ satisfies the Sommerfeld radiation condition in directions that point away from the boundary~$\gamma$. 

This proof is similar to the arguments presented in~\cite{epstein2023solvingb} for the junction of two leaky waveguides. In short, there are two steps. First, we work to show that the densities are outgoing. Following that, we show that this implies that the layer potentials satisfy the Sommerfeld radiation condition. We begin by showing that the functions in the range of the system matrix of~\eqref{eq:comp_IE} are outgoing in the following two lemmas.
\begin{lemma}\label{lem:apply_asym}
    Suppose $\phi$ is a continuous function that is identically zero when~$\Re x_2\leq 0$ and identically one when~$\Re x_2\geq \max(d_L,d_R)/2$. If
    $(\sigma,\tau)\in \imagdecay{\alpha}\beta \,\oplus\, \imagdecay{\alpha+\frac12}\beta$ for some~$\alpha,\beta>0$ and
\begin{equation}
        \begin{pmatrix}
        \tilde\sigma\\ \tilde \tau
    \end{pmatrix}=\begin{pmatrix}
        A & B\\ C &D
    \end{pmatrix} \begin{pmatrix}
        \phi\sigma\\ \phi\tau
    \end{pmatrix}
    \end{equation}
    then $(\tilde\sigma,\tilde\tau)\in \imagdecay{\frac12}{\min(k,(\eta-\epsilon)/\Kslope)} \,\oplus\, \imagdecay{\frac32}{\min(k,(\eta-\epsilon)/\Kslope)}$ and there exist constants~$a,b,$ and~$K$ such that
        \begin{equation}
        \left|\tilde\sigma(x_2) - \frac{ae^{ik x_2}}{\sqrt{x_2}} \right| \leq \frac{K e^{-k \Im x_2}}{|x_2|}+Ke^{-\eta\Re x_2} \quad \text{and}\quad         \left|\tilde\tau(x_2) - \frac{b e^{ik x_2}}{x_2^{3/2}} \right| \leq \frac{Ke^{-k \Im x_2}}{|x_2|^2}+Ke^{-\eta\Re x_2} \label{eq:apply_asym}
    \end{equation}
    for all~$x_2\in\Gamma_U$.
\end{lemma}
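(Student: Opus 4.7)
The plan is to apply the kernel splitting of Theorem~\ref{thm:split_w} to each entry of $\begin{pmatrix}A & B\\ C & D\end{pmatrix}$, writing each operator as $K = K_{rr}+K_{ri}+K_{ir}+K_{ii}$, and tracking the four pieces separately. Path independence (Theorem~\ref{thm:path_indep}) lets me fix a convenient contour $\tGamma$, for instance a ray of slope $\Kslope$ in $\Gamma_U$, and the cutoff $\phi$ guarantees that every integration takes place in the asymptotic regime covered by Theorem~\ref{thm:split_w}.

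First I would dispose of the $rr$ and $ri$ pieces, which will contribute only to the $Ke^{-\eta\Re x_2}$ error. The $rr$-kernel is bounded by $Ke^{-\eta(\Re x_2+\Re y_2)}$, and the leading term of the $ri$-kernel is $a_l(\bx,y_1)e^{iky_2}/y_2^{\opnm{ceil}(l/2)+1/2}$ with $|a_l(\bx,y_1)|\lesssim e^{-\eta\Re x_2}$; in both cases the prefactor $e^{-\eta\Re x_2}$ factors out of the $y_2$-integral, which converges by the decay of $\phi\sigma,\phi\tau$ along $\tGamma$. Next, the $ir$ piece, by the symmetry $w_{ir}(\bx,\by)=w_{ri}(\by,\bx)$, has leading term $a_l(\by,x_1)e^{ikx_2}/x_2^{\opnm{ceil}(l/2)+1/2}$ with $|a_l(\by,x_1)|\lesssim e^{-\eta\Re y_2}$. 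Integrating against the density yields a prefactor $e^{ikx_2}/x_2^{\opnm{ceil}(l/2)+1/2}$ times the convergent integral of $a_l(\by,0)\,\phi(\sigma\text{ or }\tau)$ plus an error no worse than $Ke^{-k\Im x_2}/|x_2|^{\opnm{ceil}(l/2)+1}$ coming from the stated $ir$-remainder.

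The $ii$ piece produces the other oscillatory contribution. For its leading term $A_l(0,0)e^{ik(x_2+y_2)}/(x_2+y_2)^{\opnm{ceil}(l/2)+1/2}$, I would factor $(x_2+y_2)^{-\gamma}=x_2^{-\gamma}(1+y_2/x_2)^{-\gamma}$ and split the $y_2$-integration at $|y_2|=|x_2|/2$: on the low range a Taylor expansion of $(1+y_2/x_2)^{-\gamma}$ extracts the $x_2^{-\gamma}$ prefactor with Taylor error $O(|y_2||x_2|^{-\gamma-1})$, which integrates to $O(|x_2|^{-\gamma-1})$ by the exponential decay of $\phi(\sigma\text{ or }\tau)$ along $\tGamma$; on the high range that same decay makes the contribution exponentially small in $|x_2|$. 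The $ii$-remainder (bounded by $1/(1+|x_2+y_2|)^{\opnm{ceil}(l/2)+1}$) similarly contributes at most $Ke^{-k\Im x_2}/|x_2|^{\opnm{ceil}(l/2)+1}$. Summing the $ir$ and $ii$ leading-order contributions then defines $a$ (from $B$, with $l=0$) for $\tilde\sigma$ and $b$ (from $C$ and $D$, with effective $l=2$ and $l=1$ respectively) for $\tilde\tau$; the $A$-contribution to $\tilde\sigma$, having $l=1$, yields only a $1/x_2^{3/2}$ piece that is absorbed into the claimed $Ke^{-k\Im x_2}/|x_2|$ error. The $\imagdecay\cdot\cdot$ containment follows a posteriori by combining the leading terms (which lie in $\imagdecay{1/2}{k}$ and $\imagdecay{3/2}{k}$) with the error bounds (which lie in stronger spaces, using Lemma~\ref{lem:embed} to absorb the $e^{-\eta\Re x_2}$ factor into an $e^{-(\eta-\epsilon)\Im x_2/\Kslope}$ one).

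The main obstacle will be the careful bookkeeping in the $ii$ analysis—in particular, verifying that the two-region split produces an error compatible with the target $e^{-k\Im x_2}/|x_2|^{\opnm{ceil}(l/2)+1}$ bound. This hinges on the density's exponential decay along $\tGamma$ dominating the slow algebraic decay of $(x_2+y_2)^{-\gamma}$ when $|y_2|\gtrsim |x_2|$, and on the Taylor remainder's $|y_2|$ factor being integrable in the same way.
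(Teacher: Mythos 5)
Your route is genuinely different from the paper's. The paper does not integrate the kernel asymptotics of Theorem~\ref{thm:split_w} against the density at all: it uses Fubini to collapse the $y_2$-integral into an aggregate boundary density $\tilde\rho_\xi(\bz)=\int_{\tGamma}\sigma(y_2)\rho_{\xi,y_2}(\bz)\phi(y_2)\,\opd y_2$ (bounded and analytic in $\xi$ precisely because $\phi$ vanishes near $\gamma$ and $\sigma$ decays exponentially along $\tGamma$), so that $\tilde\sigma_{\Ddiff,L}$ becomes a single inverse Floquet--Bloch transform $\int_c\int_{\gamma_L}\partial_{x_1}G_\xi(\cdot-\bz)\tilde\rho_\xi(\bz)\,\opd\bz\,\opd\xi$ of exactly the type analyzed in Appendix~\ref{app:far2near_as}; the asymptotics in $x_2$ then come for free from that machinery. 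That choice sidesteps what your approach must do by hand: uniformity of the kernel expansion in $y_2$ and the interchange of the $y_2$-integration with the large-$x_2$ limit. Your version is viable, and the delicate points you flag do resolve the way you expect --- in particular the $ii$-remainder only integrates to $O(e^{-k\Im x_2}/|x_2|^{\opnm{ceil}(l/2)+1})$ rather than the weaker $O(e^{-k\Im x_2}|x_2|^{-\opnm{ceil}(l/2)-1/2-\alpha})$ because you integrate over a sloped contour, where $e^{-(k+\beta)\Im y_2}$ localizes the integral to a compact set on which $|x_2+y_2|\asymp|x_2|$; on the real line this step would fail for $\alpha<\tfrac12$.

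One point needs patching: your claim that the cutoff $\phi$ keeps every integration ``in the asymptotic regime covered by Theorem~\ref{thm:split_w}'' is not quite right. The four-way splitting requires $\Re y_2>d/2$, but $\phi$ is nonzero on the transition band $0<\Re y_2<\max(d_L,d_R)/2$, where that splitting is unavailable. The fix is immediate --- this band is compact and is covered by the second half of Theorem~\ref{thm:split_w}, i.e.\ the swapped version of \eqref{eq:wnearfar_bd}, which already has the form $b_l(\by,x_1)e^{ikx_2}/x_2^{\gamma}$ plus admissible errors; this is exactly how the paper's Lemma~\ref{lem:close_apply_asym} handles the $(1-\phi)$ piece --- but it must be said, since otherwise the decomposition into $rr,ri,ir,ii$ is simply undefined on part of the support of $\phi\sigma$ and $\phi\tau$.
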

\begin{proof}
    Let $\tGamma\in\cG$ be a contour with slope~$\Kslope$ at infinity. We begin by studying $\tilde\sigma_\Ddiff(x_2) := \Ddiff[\phi\sigma]$
    By definition, we can split
    \begin{equation}
        \tilde\sigma_\Ddiff(x_2)= \tilde\sigma_{A,R}(x_2) - \tilde\sigma_{A,L}(x_2),
    \end{equation}
    where
    \begin{equation}
        \tilde\sigma_{\Ddiff,L,R}(x_2) =\int_{\tGamma} \partial_{x_1} w_{\gamma_{L,R}}(0,x_2;0,y_2) \phi(y_2) \sigma(y_2) \, \opd y_2.
    \end{equation}
    We can similarly define
    \begin{multline}
        \tilde\sigma_{\Ddiff,L,R,\xi}(x_2) =\int_{\tGamma} \partial_{x_1} w_{\xi,\gamma_{L,R}}(0,x_2;0,y_2) \phi(y_2) \sigma(y_2) \, \opd y_2\\
        =\int_{\tGamma}\lp \int_{\gamma_{L}}\partial_{x_1} G_\xi((0,x_2)-\bz)  \rho_{\xi,y_2}(\bz)   \,  \opd \bz \rp \phi(y_2)\sigma(y_2)\,  \opd y_2,
    \end{multline}
    where
    \begin{equation}
        \rho_{\xi,y_2} = -\cK^{-1}_{\xi,\gamma_L}[ \partial_{\bn(\cdot)} G_\xi(\cdot-(0,y_2))].
    \end{equation}
    As $\sigma$ decays exponentially along~$\tGamma $ and~$\phi$ is zero in the vicinity of~$\gamma_L$, the function
    \begin{equation}
        \tilde\rho_{\xi}(\bz) := \int_{\tGamma} \sigma(y_2) \rho_{\xi,y_2}(\bz) \phi(y_2)  \, \opd y_2
    \end{equation}
    is a bounded analytic function of~$\xi$ in~$V_{\gamma_L,\epsilon}$. We can thus use Fubini's theorem to see that
   \begin{equation}
        \tilde\sigma_{\Ddiff,L,\xi}(x_2) =\int_{\gamma_{L}}\partial_{x_1} G_\xi((0,x_2)-\bz)  \tilde\rho_{\xi}(\bz)   \,  \opd \bz.
    \end{equation}
    Since the integrals in~$\xi$ and~$\bz$ converge absolutely for finite~$x_2$, we can use Fubini's theorem again to see that
    \begin{equation}
        \tilde\sigma_{\Ddiff,L}(x_2) = \int_c \tilde\sigma_{\Ddiff,L,\xi}(x_2)\,\opd \xi = \int_c \int_{\gamma_{L}}\partial_{x_1} G_\xi((0,x_2)-\bz)  \tilde\rho_{\xi}(\bz)   \,  \opd \bz\,\opd \xi.
    \end{equation}
    This integral is of the same form as is considered in Appendix~\ref{app:far2near_as}. An estimate of the form~\eqref{eq:apply_asym} then follows from the same argument. Repeating the same argument for $\tilde\sigma_{A,R}$ and the other operators gives the desired result.
\end{proof}

\begin{lemma}\label{lem:close_apply_asym}
    Under the same assumptions as the previous lemma, if
\begin{equation}
        \begin{pmatrix}
        \tilde\sigma\\ \tilde \tau
    \end{pmatrix}=\begin{pmatrix}
        A & B\\ C &D
    \end{pmatrix} \begin{pmatrix}
       (1-\phi)\sigma\\ (1-\phi)\tau
    \end{pmatrix},
    \end{equation}
    then $(\tilde\sigma,\tilde\tau)\in \imagdecay{\frac12}{\min(k,(\eta-\epsilon)/\Kslope)} \,\oplus\, \imagdecay{\frac32}{\min(k,(\eta-\epsilon)/\Kslope)}$ and there exist constants~$a,b,$ and~$K$ such that
        \begin{equation}
        \left|\tilde\sigma(x_2) - \frac{a e^{ik x_2}}{\sqrt{x_2}} \right| \leq \frac{Ke^{-k \Im x_2}}{|x_2|}+Ke^{-\eta\Re x_2} \quad \text{and}\quad         \left|\tilde\sigma(x_2) - \frac{be^{ik x_2}}{x_2^{3/2}} \right| \leq \frac{Ke^{-k \Im x_2}}{|x_2|^2}+Ke^{-\eta\Re x_2} \label{eq:apply_asym_loc}
    \end{equation}
    for all~$x_2\in\Gamma_U$.
\end{lemma}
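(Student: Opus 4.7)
The key observation is that, because $(1-\phi)$ vanishes for $\Re y_2 \geq \max(d_L, d_R)/2$, both $(1-\phi)\sigma$ and $(1-\phi)\tau$ are supported on a bounded subset of $\Gamma_D$. Meanwhile, the desired estimates concern $x_2 \in \Gamma_U$, where $\Re x_2 \geq \max(d_L, d_R)/2 \geq \Re y_2$ throughout the integration. We are therefore squarely in the ``far $\bx$, near $\by$'' regime controlled by the final clause of Theorem~\ref{thm:split_w}, which gives equivalent expressions with the roles of $\bx$ and $\by$ swapped and permits $y_1$ derivatives through the symmetry $w_\gamma(\bx, \by) = w_\gamma(\by, \bx)$.

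My plan is to first expand each of the four kernels $k_\Sdiff, k_\Ddiff, k_\Dpdiff, k_\Spdiff$ asymptotically in this regime. Up to sign, each kernel is a $\partial_{x_1}^{l_1}\partial_{y_1}^{l_2}$ derivative of $w_{\gamma_R} - w_{\gamma_L}$ (the free-space singular parts cancel in the difference), with $(l_1,l_2)$ equal to $(0,0), (0,1), (1,1), (1,0)$ for $\Sdiff, \Ddiff, \Dpdiff, \Spdiff$ respectively. Repeated application of Theorem~\ref{thm:split_w} combined with the $\bx \leftrightarrow \by$ symmetry yields, for each kernel,
\begin{equation*}
    k_\bullet(x_2, y_2) \;=\; \frac{c_\bullet(y_2)\, e^{ikx_2}}{x_2^{\opnm{ceil}((l_1+l_2)/2)+1/2}} \;+\; R_\bullet(x_2, y_2),
\end{equation*}
where $c_\bullet$ is continuous in $y_2$ on the compact support of $1-\phi$, and $|R_\bullet(x_2,y_2)| \leq K e^{-\eta\Re x_2} + K e^{-k\Im x_2}/(1+|x_2|)^{\opnm{ceil}((l_1+l_2)/2)+1}$ uniformly in $y_2$ on that support.

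Substituting these expansions into $\tilde\sigma = \Ddiff[(1-\phi)\sigma] + \Sdiff[(1-\phi)\tau]$ and $\tilde\tau = \Dpdiff[(1-\phi)\sigma] + \Spdiff[(1-\phi)\tau]$, and pulling the $x_2$-dependence of the leading term out of the bounded $y_2$-integral by Fubini, the $\Sdiff$ contribution (no derivatives) yields the leading $a\, e^{ikx_2}/\sqrt{x_2}$ of $\tilde\sigma$ with
\begin{equation*}
    a \;=\; \int_{\Gamma_D} c_\Sdiff(y_2)\,(1-\phi(y_2))\,\tau(y_2)\,\opd y_2.
\end{equation*}
The $\Ddiff$ contribution (one $y_1$ derivative) has algebraic order $x_2^{-3/2}$ and is absorbed into the error $e^{-k\Im x_2}/|x_2|$; analogously, $\Spdiff[(1-\phi)\tau]$ yields the leading $b\, e^{ikx_2}/x_2^{3/2}$ of $\tilde\tau$, while the $\Dpdiff$ contribution of order $x_2^{-5/2}$ is absorbed into $e^{-k\Im x_2}/|x_2|^2$. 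The remainders $R_\bullet$ produce the exponentially decaying $e^{-\eta\Re x_2}$ piece of each error, and membership in $\imagdecay{1/2}{\min(k,(\eta-\epsilon)/\Kslope)} \oplus \imagdecay{3/2}{\min(k,(\eta-\epsilon)/\Kslope)}$ then follows from Lemma~\ref{lem:embed}.

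The main obstacle is establishing the mixed-derivative bound needed for $\Dpdiff$: Theorem~\ref{thm:split_w} is only stated for pure $\partial_{x_1}^l$ derivatives (plus the parenthetical extension to $y_1$ derivatives), so one must argue carefully via the $w_\gamma(\bx, \by) = w_\gamma(\by, \bx)$ symmetry that differentiation in $y_1$ followed by differentiation in $x_1$ compounds correctly, each factor contributing an extra $1/x_2$ beyond the $1/\sqrt{x_2}$ baseline. Everything else reduces to routine estimation of integrals of continuous functions against a compactly supported density.
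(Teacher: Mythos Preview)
Your approach is the same as the paper's---invoke the far-near estimate \eqref{eq:wnearfar_bd} from Theorem~\ref{thm:split_w}, use compact support of $1-\phi$ and continuity of the coefficient function $b_l$, and integrate---but your accounting of how derivatives enter the decay exponent is wrong. Only derivatives in the \emph{far} variable (here $x_1$) improve the $x_2$-decay; the near-variable derivative $\partial_{y_1}$ merely differentiates the smooth coefficient and leaves the $x_2$-power unchanged. Concretely, in the swapped-variable form of the expansion underlying Lemma~\ref{lem:far2near},
\[
w_\gamma(\bx,\by)=\sum_n\int_c e^{\alpha(\xi_n)x_2-i\xi_n x_1}\,S_{\gamma,\xi}[\rho_{n,\xi}](\by)\,\opd\xi,
\]
a $\partial_{x_1}$ pulls down a factor $-i\xi$, which vanishes at the stationary point $\xi=0$ and therefore raises the exponent by one, whereas $\partial_{y_1}$ acts only on the smooth $\by$-dependent factor $S_{\gamma,\xi}[\rho_{n,\xi}](\by)$ and does not touch the Laplace integral in $\xi$. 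So the correct exponent is $\opnm{ceil}(l_1/2)+\tfrac12$, not $\opnm{ceil}((l_1+l_2)/2)+\tfrac12$.

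In particular $k_\Ddiff\sim c_\Ddiff(y_2)\,e^{ikx_2}/\sqrt{x_2}$, not $e^{ikx_2}/x_2^{3/2}$, so $\Ddiff[(1-\phi)\sigma]$ contributes to the leading term of $\tilde\sigma$ rather than being absorbed into the remainder; your displayed formula for $a$ is therefore incomplete (and similarly both $\Dpdiff$ and $\Spdiff$ contribute at order $x_2^{-3/2}$ to $\tilde\tau$). The lemma only asserts existence of constants $a,b$, so the conclusion survives, but the ``main obstacle'' you flag dissolves once you see that the $\bx\leftrightarrow\by$ symmetry does not make a near-variable derivative behave like a far-variable one---it swaps which argument is far, which is exactly not what you want here. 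The mixed derivative for $\Dpdiff$ needs no special argument: the $\partial_{x_1}$ raises the exponent to $3/2$ and the $\partial_{y_1}$ just replaces $b_1(\by,0)$ by $\partial_{y_1}b_1(\by,0)$, still continuous on the compact support.
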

\begin{proof}
    This result follows directly from~\eqref{eq:wnearfar_bd} because~$1-\phi$ is compactly supported and~$b_l$ is continuous.
\end{proof}
We are now ready to prove that the solutions of~\eqref{eq:complex_solve} are outgoing in the sense of~\eqref{eq:apply_asym}.
\begin{theorem}\label{thm:dens_asymp}
    Suppose $(\sigma,\tau)\in \imagdecay{\alpha}{\beta} \,\oplus\, \imagdecay{\alpha+\frac12}{\beta}$ for some~$\alpha,\beta>0$. Further suppose~$(\sigma,\tau)$ solves \eqref{eq:complex_solve} with a right hand side $(r_D,r_N) \in \imagdecay{\frac12}{\min(k,(\eta-\epsilon)/\Kslope)} \,\oplus\, \imagdecay{\frac32}{\min(k,(\eta-\epsilon)/\Kslope)}$. If there are constants~$a,b,$ and~$K$ such that
        \begin{equation}
        \left|r_D(x_2) - \frac{ae^{ik x_2}}{\sqrt{x_2}} \right| \leq \frac{Ke^{-k \Im x_2}}{|x_2|}+Ke^{-\eta\Re x_2} \quad \text{and}\quad         \left|r_N(x_2) - \frac{be^{ik x_2}}{x_2^{3/2}} \right| \leq \frac{Ke^{-k \Im x_2}}{|x_2|^2}+Ke^{-\eta\Re x_2}\label{eq:rhs_asym}
    \end{equation}
    for all~$x_2\in\Gamma_U$, then the solution $(\sigma,\tau)$ is in $\imagdecay{\frac12}{\min(k,(\eta-\epsilon)/\Kslope)} \,\oplus\, \imagdecay{\frac32}{\min(k,(\eta-\epsilon)/\Kslope)}$ and there are constants~$\tilde a,\tilde b,$ and~$\tilde K$ such that
        \begin{equation}
        \left|\sigma(x_2) - \frac{\tilde ae^{ik x_2}}{\sqrt{x_2}} \right| \leq \frac{\tilde Ke^{-k \Im x_2}}{|x_2|}+\tilde Ke^{-\eta\Re x_2} \quad \text{and}\quad         \left|\tau(x_2) - \frac{\tilde be^{ik x_2}}{x_2^{3/2}} \right| \leq \frac{\tilde Ke^{-k \Im x_2}}{|x_2|^2}+\tilde K e^{-\eta\Re x_2} \label{eq:dens_asym}
    \end{equation}
    for all~$x_2\in\Gamma_U$.
\end{theorem}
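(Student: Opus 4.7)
The plan is to bootstrap the asymptotic structure from the right hand side to the solution using the integral equation itself. Rewriting \eqref{eq:complex_solve} as
\begin{equation}
\begin{pmatrix} \sigma \\ \tau \end{pmatrix} = \begin{pmatrix} r_D \\ -r_N \end{pmatrix} - \begin{pmatrix} \Ddiff & \Sdiff \\ \Dpdiff & \Spdiff \end{pmatrix} \begin{pmatrix} \sigma \\ \tau \end{pmatrix},
\end{equation}
I only need to produce the asymptotic expansion \eqref{eq:dens_asym} and the required space membership for the image of $(\sigma,\tau)$ under the matrix operator, since these are already given for $(r_D,-r_N)$ by hypothesis \eqref{eq:rhs_asym}.

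To access Lemmas \ref{lem:apply_asym} and \ref{lem:close_apply_asym}, I would fix a cutoff $\phi$ as in their statements and decompose $(\sigma,\tau) = (\phi\sigma,\phi\tau) + ((1-\phi)\sigma,(1-\phi)\tau)$. Linearity of the operator then gives
\begin{equation}
\begin{pmatrix} \Ddiff & \Sdiff \\ \Dpdiff & \Spdiff \end{pmatrix} \begin{pmatrix} \sigma \\ \tau \end{pmatrix} = \begin{pmatrix} \Ddiff & \Sdiff \\ \Dpdiff & \Spdiff \end{pmatrix} \begin{pmatrix} \phi\sigma \\ \phi\tau \end{pmatrix} + \begin{pmatrix} \Ddiff & \Sdiff \\ \Dpdiff & \Spdiff \end{pmatrix} \begin{pmatrix} (1-\phi)\sigma \\ (1-\phi)\tau \end{pmatrix}.
\end{equation}
Since $(\sigma,\tau)\in\imagdecay{\alpha}{\beta}\,\oplus\,\imagdecay{\alpha+\frac12}{\beta}$ with $\alpha,\beta>0$, the hypotheses of both lemmas are satisfied. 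Lemma \ref{lem:apply_asym} then handles the first summand and Lemma \ref{lem:close_apply_asym} handles the second, producing for each a pair of functions in $\imagdecay{\frac12}{\min(k,(\eta-\epsilon)/\Kslope)}\,\oplus\,\imagdecay{\frac32}{\min(k,(\eta-\epsilon)/\Kslope)}$ whose components obey expansions of the form in \eqref{eq:apply_asym} with their own leading coefficients and error constants.

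Adding these two expansions just combines the leading coefficients of $e^{ikx_2}/\sqrt{x_2}$ and $e^{ikx_2}/x_2^{3/2}$ and sums the error bounds, so the full operator image satisfies the same asymptotic form. Substituting back into the displayed identity for $(\sigma,\tau)$ and combining with \eqref{eq:rhs_asym} yields \eqref{eq:dens_asym}: the constants $\tilde a,\tilde b$ are obtained by algebraically combining those coming from $r_D, r_N$ and from the two operator images, while $\tilde K$ absorbs all of the error constants. Membership of $(\sigma,\tau)$ in $\imagdecay{\frac12}{\min(k,(\eta-\epsilon)/\Kslope)}\,\oplus\,\imagdecay{\frac32}{\min(k,(\eta-\epsilon)/\Kslope)}$ follows from the same identity, since both terms on the right lie in that space.

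There is no real obstacle beyond careful bookkeeping: Lemmas \ref{lem:apply_asym} and \ref{lem:close_apply_asym} were engineered precisely to do the analytic heavy lifting for this bootstrap, and the present theorem is essentially a one-line deduction from them via the integral equation. The only checks worth making carefully are that the exponential decay rates in the error terms of \eqref{eq:apply_asym} and \eqref{eq:apply_asym_loc} are the same pair $e^{-k\Im x_2}, e^{-\eta\Re x_2}$ appearing in \eqref{eq:rhs_asym}, and that the imaginary-decay index $\min(k,(\eta-\epsilon)/\Kslope)$ produced by the lemmas matches the one required in the conclusion, both of which hold by the matched definitions of the spaces $\imagdecay{\cdot}{\cdot}$.
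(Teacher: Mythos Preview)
Your proposal is correct and follows essentially the same approach as the paper: rewrite the integral equation as $(\sigma,\tau) = (r_D,-r_N) - \begin{pmatrix}\Ddiff & \Sdiff\\ \Dpdiff & \Spdiff\end{pmatrix}(\sigma,\tau)$ and then apply Lemmas~\ref{lem:apply_asym} and~\ref{lem:close_apply_asym} to the operator image. The only small addition in the paper is an explicit citation of Theorem~\ref{thm:solve_complex} to justify that the equation, assumed on a single contour $\tGamma$, in fact holds on all of $\Gamma_\bbC$ (and hence on $\Gamma_U$), which you use implicitly.
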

\begin{proof}
    By Theorem~\ref{thm:solve_complex}, we can write
    \begin{equation}
    \begin{pmatrix}
        \sigma\\ \tau
    \end{pmatrix} 
       =\begin{pmatrix}
        r_D\\ -r_N
    \end{pmatrix}-\begin{pmatrix}
        \Ddiff & \Sdiff\\ \Dpdiff & \Spdiff
    \end{pmatrix}\begin{pmatrix}
        \sigma\\ \tau
    \end{pmatrix}.
    \end{equation}
    The result then follows from the previous two lemmas.
\end{proof}

Having shown that the densities~$\sigma$ and~$\tau$ are outgoing, we now work to show that the solution~$u_{L,R}$ satisfies the Sommerfeld radiation condition. We begin with the following lemma.

\begin{proposition}\label{prop:asym_rho}
    Let~$\sqrt{2}k<\frac\pi d$ and~$\rho_\xi(\bz)\in L^2(\gamma_{L,R})$ be an analytic function of~$\xi$ on~$V_{\gamma_L,\epsilon}$ with~$\|\rho_\xi(\bz)\|_{L^2(\gamma_L)}\leq K$ for some~$K>0$. If
    \begin{equation}
        v_{L,R}(\bx) := \int_{c} S_{\xi,\gamma_{L,R}}[\tilde \rho_\xi](\bx)\, \opd \xi,
    \end{equation}
    then there are functions~$b_{L,R}(\theta)$ and $c_{L,R}(\theta)$ such that if~$0<\theta<\pi $, then along any ray $\bx = r(\cos(\theta),\sin(\theta))=r\hat\theta$ we have
    \begin{equation}\label{eq:v_sommerfeld}
        \left|(\partial_r -ik)v_{L,R}(r\hat\theta) \right| \leq \frac{b_{L,R}(\theta)}{r^{3/2}}+c_{L,R}(\theta)e^{-\sin(\theta)\tilde \eta_{L,R}(\theta) r}
    \end{equation}
    for any $r>0$,
    where~$\tilde \eta_{L,R}(\theta)>0 $ and $\tilde \eta_{L,R}(\theta)=|\alpha(\pi/d_{L,R})|$ for~$|\cos\theta|< k\pi/d_{L,R}$.
     Further the functions~$b_{L,R},c_{L,R},$ and $\tilde\eta_{L,R}(\theta)$ are continuous on $(0,\pi)$.
\end{proposition}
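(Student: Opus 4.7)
The plan is to extract the outgoing far-field of $v_{L,R}$ along the ray $\bx = r\hat\theta$ by a Fourier--Bloch decomposition followed by steepest descent, in the spirit of Appendix~\ref{app:far2near_as}. Fix $\theta \in (0,\pi)$, so $\sin\theta > 0$; since $\gamma_{L,R} \subset \{x_2 \leq 0\}$, for $r$ large enough $r\sin\theta$ exceeds the height of $\gamma_{L,R}$ over any given period. Inserting the dual series \eqref{eq:dualsum} for $G_\xi(\bx - \bz)$ into the definition of $S_{\xi,\gamma_{L,R}}[\rho_\xi](\bx)$ and integrating in $\bz$ against $\rho_\xi$ gives a Rayleigh expansion
\begin{equation*}
S_{\xi,\gamma_{L,R}}[\rho_\xi](r\hat\theta) = \sum_{m \in \mathbb Z} \frac{\hat\rho_{\xi,m}}{-2\alpha(\xi_m)}\, e^{i\xi_m r\cos\theta + \alpha(\xi_m) r\sin\theta},
\end{equation*}
where the Rayleigh coefficients $\hat\rho_{\xi,m}$ are uniformly bounded in $\xi \in V_{\gamma_{L,R},\epsilon}$ by Cauchy--Schwarz together with the hypothesis $\|\rho_\xi\|_{L^2} \leq K$.

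Next, integrating against $\int_c \cdot\, \opd \xi$ and performing the change of variables $\eta = \xi_m = \xi + 2\pi m/d_{L,R}$ in the $m$-th summand unfolds the sum into a single contour integral
\begin{equation*}
v_{L,R}(r\hat\theta) = \int_{\tilde c} F(\eta)\, e^{r\phi(\eta,\theta)}\, \opd \eta, \qquad \phi(\eta,\theta) := i\eta\cos\theta + \alpha(\eta)\sin\theta,
\end{equation*}
over a contour $\tilde c$ covering the real axis with the correct outgoing deformations around the branch points of $\alpha$ at $\pm k$ and the trapped-mode poles $\pm \tilde\xi_j$, and $F$ inheriting the Rayleigh estimate. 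On the propagating segment $|\eta| < k$, $\alpha(\eta) = i\sqrt{k^2 - \eta^2}$ and $\phi$ is purely imaginary, with a single critical point at $\eta_s(\theta) = k\cos\theta$ where $\phi = ik$ and $\phi'' = -i/(k\sin^2\theta)$. Deforming $\tilde c$ onto the steepest-descent path through $\eta_s$ and invoking the one-dimensional stationary phase expansion to second order yields $v_{L,R}(r\hat\theta) = \tilde a_{L,R}(\theta)\, e^{ikr}/\sqrt r + E_{L,R}(r,\theta) + R_{L,R}(r,\theta)$ with $\tilde a_{L,R}$ continuous on $(0,\pi)$ and $|E_{L,R}| \leq C/r^{3/2}$.

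The remainder $R_{L,R}$ comes from the evanescent portion $|\eta| > k$, where $\alpha(\eta) = -\sqrt{\eta^2-k^2}$ is real and negative, so $\Re \phi \leq -\sin\theta\sqrt{\eta^2 - k^2}$; choosing the deformation so that the closest approach on this segment is at $|\eta| = \pi/d_{L,R}$ (enabled by the hypothesis $\sqrt{2}\,k < \pi/d_{L,R}$, which leaves enough room inside $V_{\gamma_{L,R},\epsilon}$ past the trapped poles) gives $|R_{L,R}| \leq c_{L,R}(\theta)\, e^{-\sin\theta\,\tilde\eta_{L,R}(\theta)\,r}$ with $\tilde\eta_{L,R}(\theta) = |\alpha(\pi/d_{L,R})|$ in the stated regime and a continuous angle-dependent variant otherwise. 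Applying $(\partial_r - ik)$ annihilates the oscillatory factor of the leading stationary-phase term up to a $-\tilde a_{L,R}(\theta)\, e^{ikr}/(2 r^{3/2})$ correction; the $O(r^{-3/2})$ stationary-phase remainder also survives termwise differentiation, and $R_{L,R}$ remains exponentially small. Setting $b_{L,R}(\theta)$ to the resulting algebraic constant produces \eqref{eq:v_sommerfeld}, with continuity of $b_{L,R}$, $c_{L,R}$, and $\tilde\eta_{L,R}$ inherited from the continuity of the stationary-phase coefficients and of $\alpha(\pi/d_{L,R})$ in $\theta$. The main obstacle will be the contour-deformation bookkeeping: one must verify simultaneous deformability of $\tilde c$ across both branch cuts of $\alpha$ and around the trapped poles $\pm\tilde\xi_j$ without leaving $V_{\gamma_{L,R},\epsilon}$, and handle the transition of $\tilde\eta_{L,R}(\theta)$ as $|\cos\theta|$ crosses the threshold $k\pi/d_{L,R}$, all while keeping the constants uniform over compact subsets of $(0,\pi)$.
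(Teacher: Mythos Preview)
Your overall strategy matches the paper's: Rayleigh-expand $S_{\xi,\gamma}[\rho_\xi]$ via \eqref{eq:dualsum}, bound the evanescent ($m\neq 0$) terms by exponential decay from $\Re\alpha(\xi_m)<0$, and treat the $m=0$ term by steepest descent in $\xi$ through the stationary point $\xi_*=k\cos\theta$ of $g(\xi,\theta)=\alpha(\xi)\sin\theta+i\xi\cos\theta$, where $g(\xi_*,\theta)=ik$ so that $(\partial_r-ik)$ kills the leading stationary-phase term and leaves $O(r^{-3/2})$.

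There is one genuine slip. Your ``unfolding'' step asserts that after the substitution $\eta=\xi_m$ the sum collapses to a single integral $\int_{\tilde c}F(\eta)e^{r\phi(\eta,\theta)}\opd\eta$. But the Rayleigh coefficients $\hat\rho_{\xi,m}=\int_{\gamma}e^{-i\xi_m z_1-\alpha(\xi_m)z_2}\rho_\xi(\bz)\,\opd\bz$ depend on $\xi$ through $\rho_\xi$ as well as through $\xi_m$, and the proposition does not assume $\rho_\xi$ is $2\pi/d$-periodic in $\xi$. After the change of variables the $m$-th integrand still carries $\rho_{\eta-2\pi m/d}$, so the pieces do not glue into a single analytic $F(\eta)$ on a common contour. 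The paper sidesteps this by \emph{not} unfolding: it keeps the sum $\sum_n v_n$ with $v_n(r\hat\theta)=\int_c f_n(\xi)e^{g(\xi_n,\theta)r}\opd\xi$, bounds $\sum_{n\neq0}$ directly from $\Re\alpha(\xi_n)\leq\eta_n<0$ uniformly over $V_{\gamma,\epsilon}$ (Lemma~\ref{lem:vn_asym}), and reserves the steepest-descent work for $n=0$ only.

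The other place where your sketch is thin is exactly the spot you flag. For the $n=0$ term the paper writes down the descent contour explicitly, $\xi(t;\theta)=k(1+it^2)\cos\theta+kt\sqrt{t^2-2i}\sin\theta$, truncates it inside $V_{\gamma,\epsilon}$, and then must bound the integral over the vertical connectors $c_\pm$ joining the truncated descent path to $\pm\pi/d$. On $c_-$ one has $\Re g\leq\alpha(\pi/d)\sin\theta$ directly, but on $c_+$ the term $i\xi\cos\theta$ works against you; the paper uses $\Im\xi(t;\theta)>-k\sin\theta$ along the descent path to bound the length of $c_+$ and then shows $\max_{c_+}\Re g\leq\sin\theta\bigl(k^2d/\pi-\sqrt{(\pi/d)^2-k^2}\bigr)$, which is negative precisely under (a slight weakening of) $\sqrt2\,k<\pi/d$. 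When $|\cos\theta|$ is larger the descent contour would cross the poles $\pm\tilde\xi_j$, so one instead routes through a near-real segment $\tilde c(\theta)$, which is where the $\theta$-dependent $\tilde\eta_{L,R}(\theta)=|\Re\alpha(k/\cos\theta-\epsilon)|$ arises. Your proposal gestures at all of this but does not supply it; without the $c_+$ estimate the role of the hypothesis $\sqrt2\,k<\pi/d$ remains unexplained.
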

We prove this in Appendix~\ref{app:som_ang}.
\begin{remark}
    The requirement~$\sqrt{2} k < \frac\pi {d_{L,R}}$ is a technical assumption required by our proof. Extensive numerical evidence suggests that it can be relaxed, and so Theorem~\ref{thm:u_som} should hold for all~$k<\frac\pi {d_{L,R}}$.
\end{remark}

\begin{lemma}\label{lem:close_u_asym}
    If~$\sqrt{2}k<\frac\pi{d_{L,R}}$, there are functions~$b_{L,R}(\theta,\by)$ and~$c_{L,R}(\theta,\by)$ such that if~$0<\theta<\pi $, then along any ray $r\hat\theta: = r(\cos(\theta),\sin(\theta))=$ we have
    \begin{equation}
        \left|(\partial_r -ik)w_{L,R}(r\hat\theta,\by)\right| \leq \frac{b_{L,R}(\theta,\by)}{r^{3/2}}+c_{L,R}(\theta,\by)e^{-\sin(\theta)\tilde \eta_{L,R}(\theta) r}
    \end{equation}
    for any $r>0$. Further the functions~$b_{L,R}$ and $c_{L,R}$ are continuous and an equivalent expression can be found for the~$y_1$ derivative of~$w_{L,R}$.
\end{lemma}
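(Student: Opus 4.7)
The plan is to reduce the claim directly to Proposition~\ref{prop:asym_rho}. Combining the representation $w_{\xi,\gamma_{L,R}}(\bx,\by)=S_{\xi,\gamma_{L,R}}[\rho_{\xi,\by}](\bx)$ from \eqref{eq:w_xi_def} with the Floquet--Bloch formula \eqref{eq:w_def} yields
\begin{equation*}
    w_{L,R}(\bx,\by) \;=\; \int_c S_{\xi,\gamma_{L,R}}\!\left[\tilde\rho_{\xi,\by}\right](\bx)\,\opd\xi,
    \qquad \tilde\rho_{\xi,\by}(\bz):=\tfrac{d}{2\pi}\rho_{\xi,\by}(\bz),
\end{equation*}
which is precisely the form to which Proposition~\ref{prop:asym_rho} applies. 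Hence it suffices to verify its two hypotheses for $\tilde\rho_{\xi,\by}$: analyticity in $\xi\in V_{\gamma_{L,R},\epsilon}$ as an $L^2(\gamma_{L,R})$-valued function, and a uniform $L^2(\gamma_{L,R})$ bound over $\xi\in c$ depending only on $\by$.

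For the analyticity, I would combine Lemma~\ref{lem:ops_analytic} with analyticity of the right-hand side of \eqref{eq:quasi-IE}. The Bessel representation \eqref{eq:bessel_sum}, together with Assumption~\ref{ass:boundary}, shows that for $\by\notin\gamma_{L,R}$ the map $\xi\mapsto\partial_{\bn(\cdot)}G_\xi(\cdot-\by)$ is an analytic $L^2(\gamma_{L,R})$-valued function on $V_{\gamma_{L,R},\epsilon}$; composing with the analytic operator-valued map $\cK_{\xi,\gamma_{L,R}}^{-1}$ gives analyticity of $\tilde\rho_{\xi,\by}$. For the uniform bound, since $c$ is a compact subcontour of $V_{\gamma_{L,R},\epsilon}$ and both factors depend continuously on $(\xi,\by)$, one has $\sup_{\xi\in c}\|\tilde\rho_{\xi,\by}\|_{L^2(\gamma_{L,R})}\le K(\by)$ with $K$ continuous in $\by$ away from $\gamma_{L,R}$ and its corners. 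Applying Proposition~\ref{prop:asym_rho} now yields the claimed estimate, and the continuity of $b_{L,R}(\theta,\by)$ and $c_{L,R}(\theta,\by)$ in $\by$ follows by tracing their construction through the proof of that proposition, using the continuity of $\tilde\rho_{\xi,\by}$ in $\by$.

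For the $y_1$-derivative statement, one differentiates \eqref{eq:quasi-IE} in $y_1$ to obtain $\partial_{y_1}\rho_{\xi,\by}=-\cK_{\xi,\gamma_{L,R}}^{-1}[\partial_{y_1}\partial_{\bn(\cdot)}G_\xi(\cdot-\by)]$; the new right-hand side is again analytic in $\xi$ with values in $L^2(\gamma_{L,R})$ and locally bounded in $\by$, so the same hypotheses hold and the argument repeats verbatim. The main obstacle in this proof is essentially bookkeeping rather than genuine analysis: one must keep track of the dependence on $\by$ through the chain of representations and verify local uniformity of the $L^2$ bounds. All of the essential analytic content—the contour deformation in $\xi$ past the trapped-mode poles and around the branch cuts of $\alpha$ that produces both the Sommerfeld-type oscillating tail at wavenumber $k$ and the exponentially decaying correction at rate $\tilde\eta_{L,R}(\theta)$—is already packaged inside Proposition~\ref{prop:asym_rho}.
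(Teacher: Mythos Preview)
Your proposal is correct and follows essentially the same route as the paper: both reduce the estimate to Proposition~\ref{prop:asym_rho} applied with $\tilde\rho_\xi=\tfrac{d}{2\pi}\rho_{\xi,\by}$. The paper's version is slightly less direct---it first expands $w_{L,R,\xi}(\bx,\by)$ into its Rayleigh modes in $\bx$ and invokes the constituent lemmas (Lemmas~\ref{lem:near2far_non} and~\ref{lem:vn_asym}) for the $n\neq 0$ terms and the argument of Proposition~\ref{prop:asym_rho} for the $n=0$ term---but the analytic content is identical, and your packaging is arguably cleaner.
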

\begin{proof}
    As in Appendix~\ref{app:far2near_as}, we split
    \begin{equation}
        w_{L,R,\xi}(\bx,\by) =  \sum_n e^{i\xi_n x_1 +\alpha(\xi_n) x_2} S[\rho_{\xi,n}](\by) = \sum_n w_{L,R,\xi,n}(\bx,\by).
    \end{equation}
    We begin by studying the $n\neq 0$ terms. By the same proof as Lemma~\ref{lem:near2far_non} and~\ref{lem:vn_asym}, we can show that there is a constant~$C$ such that
\begin{equation}
    \left|\int_c\sum_{n\neq 0}  (\partial_r -ik) w_{L,R,\xi,n}(\bx,\by)\, \opd \xi \right| \leq C e^{-\tilde \eta_{L,R}(\theta) \sin \theta x_2}.
\end{equation}
The remaining piece can be bounded in the same way as the integral in Proposition~\ref{prop:asym_rho} because~$S_{\xi,\gamma_{L,R}}[\rho_{\xi,0}](\by)$ is a continuous function of~$\by\in\Omega_{L,R}$. An equivalent theorem holds for the~$y_1$ derivative of~$w_{L,R}$ because the derivative of~$S_{\xi,\gamma_{L,R}}[\rho_{\xi,0}](\by)$ is continuous up to the boundary~$\gamma_L$.
\end{proof}

These two results allow us to find asymptotics for our solution in directions pointing away from the gratings.
\begin{theorem}\label{thm:u_som}
    Suppose~$\sqrt{2}k<\frac\pi d$ and $(\sigma,\tau)\in \imagdecay{\frac12}{\min(k,(\eta-\epsilon)/\Kslope)} \,\oplus\, \imagdecay{\frac32}{\min(k,(\eta-\epsilon)/\Kslope)}$ satisfy \eqref{eq:dens_asym}. If $u$ is defined by \eqref{eq:u_comp_rep} then there are functions~$b_{L,R}$ and~$c_{L,R}$ such that
    \begin{equation}
       \left|(\partial_r - ik)u_{L,R}(r\hat\theta)\right| \leq \frac{b(\theta)}{r^{3/2}}+c(\theta)e^{-\sin(\theta)\tilde \eta_{L,R}(\theta) r}
    \end{equation}
    for all~$r>0$. Further, these functions are all smooth for~$\theta$ in the interior of~$(0,\pi)$, except for a jumpt discontinuity at~$\theta=\pi/2$.
\end{theorem}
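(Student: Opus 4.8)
The plan is to split the recovered solution along the decomposition \eqref{eq:G_dom_split} of the domain Green's function, $G_{\gamma_{L,R}}(\bx,\by)=G(\bx-\by)+w_{\gamma_{L,R}}(\bx,\by)$, and to split the $w$-contribution further with the cutoff $\phi$ of Lemma~\ref{lem:apply_asym} (zero for $\Re x_2\le 0$, one for $\Re x_2\ge\max(d_L,d_R)/2$), isolating the part of $\Gamma$ near the junction $(0,X_2)$ from the part running to infinity. Writing $u_{L,R}=u^{\mathrm{free}}_{L,R}+u^{w}_{L,R}$ with $u^{\mathrm{free}}_{L,R}=\cD_{\tGamma,0}[\sigma]+\cS_{\tGamma,0}[\tau]$ built from the free-space kernel and $u^{w}_{L,R}=\cD^{w}_{\tGamma,LR}[\sigma]+\cS^{w}_{\tGamma,LR}[\tau]$ built from $w_{\gamma_{L,R}}$, and then $u^{w}_{L,R}=u^{w}_{L,R}[\phi\sigma,\phi\tau]+u^{w}_{L,R}[(1-\phi)\sigma,(1-\phi)\tau]$, it suffices to prove the claimed Sommerfeld bound for each of the three resulting pieces; here $(\sigma,\tau)$ already satisfies the outgoing asymptotics \eqref{eq:dens_asym} by hypothesis (these are the densities produced, e.g., by Theorem~\ref{thm:dens_asymp}).

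For the far $w$-piece $u^{w}_{L,R}[\phi\sigma,\phi\tau]$ I would follow the proof of Lemma~\ref{lem:apply_asym}: insert the Floquet--Bloch representation $w_{\xi,\gamma_{L,R}}(\bx,\by)=S_{\xi,\gamma_{L,R}}[\rho_{\xi,\by}](\bx)$ from \eqref{eq:green-floq}--\eqref{eq:w_xi_def}, move the $y_1$-derivative onto $\rho_{\xi,\by}$, and — since $\phi$ vanishes near the junction and the densities decay exponentially along $\tGamma$ — use Fubini's theorem to collapse the $\by$-integral, obtaining $u^{w}_{L,R}[\phi\sigma,\phi\tau](\bx)=\int_c S_{\xi,\gamma_{L,R}}[\hat\rho_\xi](\bx)\,\opd\xi$ with $\hat\rho_\xi\in L^2(\gamma_{L,R})$ uniformly bounded and analytic for $\xi\in V_{\gamma_{L,R},\epsilon}$. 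Proposition~\ref{prop:asym_rho} (valid since $\sqrt2 k<\tfrac\pi d$) then gives exactly a bound of the form \eqref{eq:v_sommerfeld}, with rate $\tilde\eta_{L,R}(\theta)$ and continuous — indeed smooth on $(0,\pi)$ — coefficients. For the near $w$-piece $u^{w}_{L,R}[(1-\phi)\sigma,(1-\phi)\tau]$, note that $1-\phi$ is compactly supported and $\sigma,\tau$ are continuous, hence bounded, on that support; integrating the pointwise estimate of Lemma~\ref{lem:close_u_asym} (for $w_{L,R}(\cdot,\by)$ and its $y_1$-derivative) against the density, and using continuity of its constants $b_{L,R}(\theta,\by),c_{L,R}(\theta,\by)$ in $\by$, yields finite $\theta$-dependent constants and the same rate $\tilde\eta_{L,R}(\theta)$.

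The remaining piece is the free-space layer potential $u^{\mathrm{free}}_{L,R}(\bx)=\cD_{\tGamma,0}[\sigma](\bx)+\cS_{\tGamma,0}[\tau](\bx)$, computed (as in Theorem~\ref{thm:solve_pde}) on a contour real for $\Re y_2\le x_2$. Fix a direction $\hat\theta$ with $\theta\in(0,\pi)\setminus\{\pi/2\}$, so that $\bx=r\hat\theta$ has distance $r|\cos\theta|$ from $\Gamma$; the free-space kernel $G(\bx-(0,y_2))=\tfrac i4 H_0^{(1)}\!\big(k\sqrt{x_1^2+(x_2-y_2)^2}\big)$ is then analytic in $y_2$ in a strip around the real axis whose width grows with $r$, which allows the relevant contour deformations. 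I would split the integral into a part over a bounded neighborhood of the origin in $\tGamma$ — a layer potential over a bounded curve with continuous density, which satisfies the Sommerfeld condition in every direction and contributes $O(r^{-3/2})$ after $(\partial_r-ik)$ — and a tail, on which \eqref{eq:dens_asym} writes $\sigma(y_2)=\tilde a\,e^{iky_2}/\sqrt{y_2}+O(e^{-k\Im y_2}/|y_2|+e^{-\eta\Re y_2})$ and $\tau(y_2)=\tilde b\,e^{iky_2}/y_2^{3/2}+O(e^{-k\Im y_2}/|y_2|^2+e^{-\eta\Re y_2})$. The $e^{-\eta\Re y_2}$ remainders produce the $c(\theta)e^{-\sin\theta\,\tilde\eta_{L,R}(\theta)r}$-type terms (after moving the contour to slope $\Kslope$), while the leading oscillatory terms reduce, via the Sommerfeld asymptotics of $H_0^{(1)}$ and a stationary-phase/steepest-descent analysis of integrals such as $\int_a^\infty H_0^{(1)}\!\big(k\|r\hat\theta-(0,t)\|\big)\,t^{-1/2}e^{ikt}\,\opd t$ and its $r$-derivative — as in \cite{epstein2023solvingb,epstein2024solving} — to an outgoing field whose $(\partial_r-ik)$ is $O(r^{-3/2})$ uniformly on compact subsets of $(0,\pi)\setminus\{\pi/2\}$. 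The stationary point of the total phase $\|r\hat\theta-(0,t)\|+t$ enters the contour precisely as $\theta$ crosses $\pi/2$, which is the source of the smooth $\theta$-dependence away from $\pi/2$ and of the jump discontinuity there.

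Adding the three contributions and the corresponding constants yields the stated estimate, with $\tilde\eta_{L,R}(\theta)$ as in Proposition~\ref{prop:asym_rho}; smoothness of the coefficients on $(0,\pi)\setminus\{\pi/2\}$ follows from that of the coefficients in Proposition~\ref{prop:asym_rho} and Lemma~\ref{lem:close_u_asym} together with the smooth $\theta$-dependence of the steepest-descent expansion, and the jump at $\theta=\pi/2$ comes solely from the free-space oscillatory term. I expect the main obstacle to be precisely this free-space oscillatory piece: making the stationary-phase analysis of the layer potential of an outgoing density rigorous, keeping the estimates uniform in $r$, handling the moving real and complex portions of $\tGamma$, and pinning down the crossover at $\theta=\pi/2$. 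The two $w$-contributions, by contrast, are essentially bookkeeping on top of Proposition~\ref{prop:asym_rho} and Lemma~\ref{lem:close_u_asym}.
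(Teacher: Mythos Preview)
Your proposal is correct and follows essentially the same route as the paper: split $u_{L,R}$ via \eqref{eq:G_dom_split} into a free-space part and a $w$-part, split the $w$-part with the cutoff $\phi$, handle the far piece by collapsing to $\int_c S_{\xi,\gamma_{L,R}}[\hat\rho_\xi]\,\opd\xi$ and invoking Proposition~\ref{prop:asym_rho}, and handle the near piece by Lemma~\ref{lem:close_u_asym} and compact support of $1-\phi$. The only difference is that the paper dispatches the free-space piece in one line by citing Theorem~2 of \cite{epstein2023solvingb}, whereas you sketch the stationary-phase argument behind that result; your identification of the $\theta=\pi/2$ jump as arising from the free-space oscillatory term is accurate.
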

\begin{proof}
As in the proof of Theorem~\ref{thm:solve_pde}, we split $u_{L,R}$ into the free space part $u_{0}$ and the scattered part~$u_{w_{L,R}}$. The Sommerfeld condition for the free-space part $u_0$ is given by Theorem 2 of~\cite{epstein2023solvingb}. For the remaining piece, we let $\tGamma \in \cG$ be a contour with slope $\Kslope$ at infinity. We then write
\begin{multline}
    u_{w_{L,R}}(\bx) = \int_{\tGamma} \phi(\by)\lp w_{L,R}(\bx,\by)\sigma(\by) + \partial_{y_1} w_{L,R}(\bx,\by)\tau(\by)\rp \, \opd y\\ + \int_{\tGamma} (1-\phi(\by))\lp w_{L,R}(\bx,\by)\sigma(\by) + \partial_{y_1} w_{L,R}(\bx,\by)\tau(\by) \rp\, \opd y =u_{w_{L,R},0}(\bx)+u_{w_{L,R},1}(\bx)  .  
\end{multline}
Using the same ideas as the proof of Lemma~\ref{lem:apply_asym}, we can write
\begin{equation}
    u_{w_{L,R},0}(\bx)=\int_c S_{\xi,\gamma_{L,R}}[\rho_\xi](\bx)\,\opd \xi,
\end{equation}
where~$\rho_\xi$ satisfies the assumptions of Proposition~\ref{prop:asym_rho}. The estimate for~$(\partial_r - ik)u_{w_{L,R},0}$ then follows from that proposition.

The remaining piece $(\partial_r - ik)u_{w_{L,R},1}$ can be bounded using Lemma~\ref{lem:close_u_asym} and fact that~$1-\phi$ is compactly supported.
\end{proof}

\subsection{Allowable data}\label{sec:data}
In this section, we illustrate a few examples of physically interesting examples of~$(r_D,r_N)$ and prove that the data lives in the required spaces.

First, we wish to solve \eqref{eq:tot_PDE} with right hand side~$f=\delta(\bx-\bz)$ for some~$\bz\in \Theta$ with~$z_{1}<0$. We can write~$u$ with
\begin{equation}
    u(\bx) = \begin{cases}
        u_L(\bx) + G_{L}(\bx,\bz) & x_1<0\\
        u_R(\bx) & x_1>0.
    \end{cases}
\end{equation}
This will solve~\eqref{eq:tot_PDE} provided~$u_{L,R}$ solves~\eqref{eq:transmission_prob} with $r_D =-G_{L}(\bx,\bz)$ and~$r_N = -\partial_{x_1}G_{L}(\bx,\bz)$. We therefore verify that this choice of~$(r_D,r_N)$ lies in the required spaces.

\begin{proposition}\label{prop:pt_allowable}
    If~$\bz\in \Omega\setminus\Gamma$, then~$r_D=-G_{L}(\bx,\bz)|_{\GammaC} \in \imagdecay{\frac12}k$ and $r_N=-\partial_{x_1}G_{L}(\bx,\bz)|_{\GammaC} \in \imagdecay{\frac32}k$ and satisfy \eqref{eq:rhs_asym}.
\end{proposition}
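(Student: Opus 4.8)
The plan is to split $G_{\gamma_L}(\bx,\bz) = G(\bx-\bz) + w_{\gamma_L}(\bx,\bz)$ as in \eqref{eq:G_dom_split}, restrict both pieces to $\bx=(0,x_2)$ with $x_2\in\Gamma_\bbC$, and handle the free-space kernel and the scattered part $w_{\gamma_L}$ separately. Continuity on $\Gamma_\bbC$ and analyticity in its interior come from Theorem~\ref{thm:w_analytic} for $w_{\gamma_L}(\cdot,\bz)$, and for $G((0,x_2)-\bz)=\tfrac{i}{4}H_0^{(1)}\!\big(k\sqrt{z_1^2+(x_2-z_2)^2}\big)$ from the facts that $\bz\notin\Gamma$ keeps the argument of $H_0^{(1)}$ away from the origin, and that $\bz$ stays clear of the branch-point locus of the continued free-space kernel (which holds for every source below the height at which $\Gamma_\bbC$ leaves the real axis, covering the physically relevant sources near the gratings). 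With these in hand the task is to prove the decay estimates and the leading-order forms \eqref{eq:rhs_asym}.

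For the free-space piece, since $\bz$ is fixed we have $\sqrt{z_1^2+(x_2-z_2)^2}=x_2-z_2+O(|x_2|^{-1})$ as $\Re x_2\to\infty$ along $\Gamma_\bbC$, so the large-argument asymptotics of $H_0^{(1)}$ give $G((0,x_2)-\bz)=\tfrac{a_0 e^{ikx_2}}{\sqrt{x_2}}+O\!\big(\tfrac{e^{-k\Im x_2}}{|x_2|^{3/2}}\big)$ for an explicit constant $a_0=a_0(\bz)$, with the remainder bounded by $Ke^{-k\Im x_2}/|x_2|$. In particular $|G((0,x_2)-\bz)|\le K(1+|x_2|)^{-1/2}e^{-k\Im x_2}$ on $\Gamma_U$, and $G((0,\cdot)-\bz)$ is continuous, hence bounded, on the compact set $\Gamma_D$, so it lies in $\imagdecay{1/2}{k}$. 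Differentiating in $x_1$ and setting $x_1=0$ brings down a factor $-z_1/\sqrt{z_1^2+(x_2-z_2)^2}=O(|x_2|^{-1})$ alongside $H_1^{(1)}$, giving $\partial_{x_1}G(\bx-\bz)\big|_{x_1=0}=\tfrac{b_0 e^{ikx_2}}{x_2^{3/2}}+O\!\big(\tfrac{e^{-k\Im x_2}}{|x_2|^{5/2}}\big)$ with remainder $\le Ke^{-k\Im x_2}/|x_2|^2$, so this term lies in $\imagdecay{3/2}{k}$. Both expansions are exactly of the shape demanded by \eqref{eq:rhs_asym}.

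For the scattered part, apply Theorem~\ref{thm:split_w} to $w_{\gamma_L}((0,x_2),\bz)$, using the symmetry $w_{\gamma_L}(\bx,\by)=w_{\gamma_L}(\by,\bx)$ so that the argument going to infinity plays the role of the ``far'' variable: the four-term split $w_{\gamma_L}=w_{\gamma_L,rr}+w_{\gamma_L,ri}+w_{\gamma_L,ir}+w_{\gamma_L,ii}$ when $\Re z_2>d_L/2$, and the near-far bound \eqref{eq:wnearfar_bd} (its $y_1$-derivative version for $r_N$) when $\Re z_2\le d_L/2$, the latter being legitimate since the fixed point $\bz$ stays a positive distance from any corner of $\gamma_L$. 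Because $\bz$ is fixed, every $z$-dependent factor appearing in Theorem~\ref{thm:split_w} --- $a_l(\cdot,z_1)$, $z_2^{-\opnm{ceil}(l/2)-1/2}$, $e^{ikz_2}$, $b_l(\bz,0)$ --- is a bounded constant, and replacing $x_2+z_2$ by $x_2$ in the oscillatory factors only creates errors of order $e^{-k\Im x_2}/|x_2|^{\,\opnm{ceil}(l/2)+1}$. The oscillatory ($ir$, $ii$) contributions, or in the near-far case the leading term of \eqref{eq:wnearfar_bd}, supply the behavior $\tfrac{b\,e^{ikx_2}}{\sqrt{x_2}}$ for $w_{\gamma_L}((0,x_2),\bz)$ and $\tfrac{b'\,e^{ikx_2}}{x_2^{3/2}}$ for $\partial_{x_1}w_{\gamma_L}((0,x_2),\bz)\big|_{x_1=0}$, with remainders $O(e^{-\eta\Re x_2})+O(e^{-k\Im x_2}/|x_2|)$ and $O(e^{-\eta\Re x_2})+O(e^{-k\Im x_2}/|x_2|^2)$ respectively; the $rr$, $ri$ pieces are $O(e^{-\eta\Re x_2})$, i.e.\ lie in $\realdecay{\eta}$, which by Lemma~\ref{lem:embed} and the nesting of the spaces sits inside $\imagdecay{1/2}{k}$ (and $\imagdecay{3/2}{k}$) once $\Kslope$ is small enough that $k\le(\eta-\epsilon)/\Kslope$, as assumed throughout. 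Thus $w_{\gamma_L}((0,\cdot),\bz)\in\imagdecay{1/2}{k}$ and $\partial_{x_1}w_{\gamma_L}((0,\cdot),\bz)\big|_{x_1=0}\in\imagdecay{3/2}{k}$, with the stated leading terms.

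Summing the two contributions (with the overall minus sign) yields $r_D\in\imagdecay{1/2}{k}$, $r_N\in\imagdecay{3/2}{k}$, and \eqref{eq:rhs_asym} with $a$, $b$ the sums of the respective leading coefficients; in particular Theorem~\ref{thm:dens_asymp} then applies to this right-hand side. The main obstacle is bookkeeping rather than any single hard estimate: one must carefully track the two regimes of Theorem~\ref{thm:split_w} (source above versus below $d_L/2$, and, for $r_N$, the $x_1$-derivative turning into a $y_1$-derivative after the symmetry swap) and verify that the $z_2$-dependent prefactors and the $x_2+z_2\mapsto x_2$ replacements only generate remainders of the permitted orders. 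A secondary point, routine but worth stating explicitly, is the analyticity of $G((0,x_2)-\bz)$ across $\Gamma_\bbC$, which requires $\bz$ to keep clear of the branch-point locus of the continued free-space kernel.
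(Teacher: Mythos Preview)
Your proof is correct and follows the same approach the paper sketches: split $G_{\gamma_L}=G+w_{\gamma_L}$, invoke Theorem~\ref{thm:split_w} for the scattered piece, and use the large-argument Hankel asymptotics for the free-space piece (the paper alludes to the latter via ``the properties of~$G$ discussed in the proof of Theorem~\ref{thm:solve_pde}''). Your write-up is in fact considerably more detailed than the paper's one-line justification, and your caveat about keeping~$\bz$ clear of the branch locus of the continued free-space kernel flags a genuine technical point that the paper's statement and proof leave implicit.
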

This proposition follows from Theorem \ref{thm:split_w} and the properties of~$G$ discussed in the proof of Theorem~\ref{thm:solve_pde}. In light of this theorem, we can apply the method described above to find~$u_{L,R}$ for this problem.

We next consider the case that the right hand side of \eqref{eq:tot_PDE} is a continuous and compactly supported function with support in the region~$x_1<0$. This case we let \begin{equation}\label{eq:u_in_out}
    u(\bx) = \begin{cases}
        u_L(\bx) +u_{\opnm{in}}(\bx) & x_1<0\\
        u_R(\bx) & x_1>0,
    \end{cases}
\end{equation}
where~$u_{\rm{in}}(\bx) = \int_{\opnm{supp}(f)}G_L(\bx,\bz)f(\bz)\,{\rm d}\bz$. It is easy to use Proposition~\ref{prop:pt_allowable} to see that
\begin{equation}
    (r_D,r_N)=(-u_{\opnm{in}}|_{\GammaC},-\partial_{x_1}u_{\opnm{in}}|_{\GammaC})\in  \imagdecay{\frac12}k \oplus \imagdecay{\frac32}k.
\end{equation} We can therefore also find~$u_{L,R}$ for this problem.

The final case that we consider is the case where~$u_{\opnm{in}}=v_{\tilde \xi_j}$ in~\eqref{eq:u_in_out} is a right-moving trapped mode for the left geometry.
\begin{proposition}\label{prop:mode_data}
    Let~$v_{\tilde \xi_j}$ be a trapped mode for the left geometry. Then~$r_D=-v_{\tilde\xi_j}|_{\GammaC}\in \realdecay\rho$ and $r_N=-\partial_{x_1}v_{\tilde\xi_j}|_{\GammaC} \in \realdecay\rho$ for~$\rho=|\alpha(\tilde \xi_1)|$. Further, $\realdecay\rho\subset \imagdecay{\frac12}{\beta}$ for some~$0<\beta< \min(k,(\eta-\epsilon)/\Kslope)$.
    
    Finally,~$r_D$ and $r_N$ satisfy~\eqref{eq:rhs_asym} with~$\eta$ replaced by~$\min(\eta,\rho)$ and if~$(\sigma,\tau)\in \imagdecay{\alpha}{\beta} \,\oplus\, \imagdecay{\alpha+\frac12}{\beta} $ solve~\eqref{eq:comp_IE_2}, then the results of Theorem~\ref{thm:dens_asymp} hold with~$\eta$ replaced by~$\min(\eta,\rho)$.
\end{proposition}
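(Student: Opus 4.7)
The plan is to handle each of the four assertions in turn, with the exponential decay of the trapped mode $v_{\tilde\xi_j}$ along the $x_2$-axis being the central ingredient.

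For the claim that $r_D,r_N\in\realdecay\rho$, I would use that $v_{\tilde\xi_j}$ is quasi-periodic in $x_1$ with quasi-momentum $\tilde\xi_j\in(k,\pi/d_L)$, satisfies the Helmholtz equation above $\gamma_L$, and decays as $x_2\to\infty$. Above the top of $\gamma_L$ it therefore admits a convergent Bragg expansion
\begin{equation*}
    v_{\tilde\xi_j}(\bx) = \sum_{n} a_n\, e^{i\tilde\xi_{j,n} x_1}\,e^{\alpha(\tilde\xi_{j,n})x_2},
    \qquad \tilde\xi_{j,n} = \tilde\xi_j + 2\pi n/d_L,
\end{equation*}
in which each $\alpha(\tilde\xi_{j,n})$ is real and negative (a trapped mode admits no radiating Fourier component, equivalently $|\tilde\xi_{j,n}|>k$ for all $n$). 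Interior regularity of Helmholtz solutions guarantees that the coefficients $a_n$ decay sufficiently rapidly for the series, and its $x_1$-derivative, to converge absolutely for $x_2$ bounded away from $\gamma_L$. Evaluating at $\bx = (0, x_2)$ with $x_2\in\GammaC$ and using $|e^{\alpha(\tilde\xi_{j,n})x_2}| = e^{\alpha(\tilde\xi_{j,n})\Re x_2}$, each term is bounded by $|a_n|e^{-\rho\Re x_2}$ where $\rho = \min_n|\alpha(\tilde\xi_{j,n})| = |\alpha(\tilde\xi_j)|$ (the $n=0$ Bragg mode is dominant because $\tilde\xi_j\in(k,\pi/d_L)$ lies closest to the branch point). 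This gives $r_D \in \realdecay\rho$; differentiating termwise in $x_1$ introduces an extra factor $i\tilde\xi_{j,n}$ that grows only linearly, while the $a_n$ decay super-algebraically, so the same argument yields $r_N\in\realdecay\rho$.

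For the embedding, Lemma~\ref{lem:embed} applied with $\alpha = 1/2$ and any $0 < \tilde\epsilon < \rho$ yields $\realdecay\rho\subset \imagdecay{1/2}{(\rho-\tilde\epsilon)/\Kslope}$. Any $\beta$ strictly less than both $(\rho-\tilde\epsilon)/\Kslope$ and $\min(k,(\eta-\epsilon)/\Kslope)$, combined with the nesting of the $\imagdecay\alpha\beta$ spaces, delivers the claim. For~\eqref{eq:rhs_asym} with $\eta$ replaced by $\min(\eta,\rho)$, I would take $a = b = 0$: the required bound then reads $|r_D(x_2)|\leq K e^{-k\Im x_2}/|x_2| + K e^{-\min(\eta,\rho)\Re x_2}$, which follows immediately from $r_D\in\realdecay\rho$ since the second term alone dominates $|r_D|$. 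The analogous estimate for $r_N$ is identical.

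For the final assertion about Theorem~\ref{thm:dens_asymp}, I would revisit its proof, which writes $(\sigma,\tau)^\top = (r_D,-r_N)^\top - M(\sigma,\tau)^\top$ where $M$ is the operator matrix in~\eqref{eq:comp_IE_2}, and invokes Lemmas~\ref{lem:apply_asym} and~\ref{lem:close_apply_asym} to establish that $M(\sigma,\tau)^\top$ satisfies~\eqref{eq:apply_asym} with decay $e^{-\eta\Re x_2}$. Adding the present right-hand side, whose $\realdecay\rho$ part decays at rate $\rho$, the sum inherits the slower of the two rates, namely $\min(\eta,\rho)$, while the oscillatory $e^{ikx_2}/\sqrt{x_2}$ and $e^{-k\Im x_2}/|x_2|$ contributions are untouched. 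The main obstacle is the first paragraph: one must justify both the rapid decay of the Bragg coefficients $a_n$ (for absolute convergence and termwise $x_1$-differentiation) and the pointwise bound $|e^{\alpha(\tilde\xi_{j,n})x_2}|\leq e^{-\rho\Re x_2}$ on $\GammaC$. Both hinge on $\alpha(\tilde\xi_{j,n})$ being real and negative, which follows from $\tilde\xi_j$ lying in the real-line gap $(k,\pi/d_L)$ identified in Section~\ref{sec:Green_period_def}.
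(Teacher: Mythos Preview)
Your proposal is correct and follows essentially the same approach as the paper. The only cosmetic difference is that the paper invokes the integral representation $v_{\tilde\xi_j}=S_{\gamma,\tilde\xi_j}[\rho_j]$ (with $\rho_j$ in the null space of $\cK_{\tilde\xi_j}$) and then appeals to the dual-sum formula~\eqref{eq:dualsum} to read off the exponential decay rate~$|\alpha(\tilde\xi_j)|$, whereas you expand the trapped mode directly in its Rayleigh series; these are the same calculation, since plugging~\eqref{eq:dualsum} into $S_{\gamma,\tilde\xi_j}[\rho_j]$ produces exactly your Bragg expansion. The remaining steps---the embedding via Lemma~\ref{lem:embed}, the verification of~\eqref{eq:rhs_asym} with $a=b=0$, and the re-run of Theorem~\ref{thm:dens_asymp} with $\eta$ replaced by $\min(\eta,\rho)$---match the paper line for line.
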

\begin{proof}
    It was observed~\cite{agocs2024trapped} that~$v_{\tilde \xi_j}$ is given by
    \begin{equation}
        v_{\tilde \xi_j}(\bx) = S_{\gamma,\tilde \xi_j}[\rho_j](\bx),
    \end{equation}
    where~$\rho_j$ is in the null space of~$\cK_{\gamma,\tilde \xi_j}$, the left hand side of \eqref{eq:quasi-IE}. Since~$\tilde \xi_j>k$, the formula \eqref{eq:dualsum} implies that the quasi-periodic Green's function and its~$x_1$ derivative decays exponentially with rate~$|\alpha(\tilde \xi_j)|$. The fact that~$r_D,r_N\in\realdecay\rho$ then follows.

    To see that~$(r_D,r_N)$ live in spaces compatible with Theorem~\ref{thm:solve_complex}, we note that Lemma~\ref{lem:embed} implies that
    \begin{equation}
        \realdecay\rho \subset \imagdecay{\frac12}{\frac{\rho-\epsilon}{\Kslope}}.
    \end{equation}
    We are thus free to choose any~$0<\beta< \min\lp k,(\min(\eta,\rho)-\epsilon)/\Kslope\rp$. Finally, the equation~\eqref{eq:rhs_asym} follows directly from the fact that~$r_D,r_N\in\realdecay\rho$. The asymptotics of~$\sigma$ and~$\tau$ follow from an equivalent proof.
\end{proof}

\section{Numerical Experiments}\label{sec:numerics}
In this section we illustrate the effectiveness of our integral equation formulation as a computational method for solving~\eqref{eq:tot_PDE}. We begin by discussing a method for discretizing~\eqref{eq:comp_IE_2} and~\eqref{eq:u_comp_rep}. We then present some accuracy tests for our method and show the solution of~\eqref{eq:tot_PDE} for a few different incoming fields. Finally, we finish the section by demonstrating that the solver can be extended to other setups such as two semi-infinite gratings separated by a compact transition region and a periodic layered media problem.
\subsection{Discretization}
Discretization proceeds in the following steps.
\begin{itemize}
    \item We discretize the boundary integral equation at each $\xi$ \eqref{eq:quasi-IE} using the modified Nystr\"om method implemented in the ChunkIE package \cite{chunkIE}. This package splits the boundaries~$\gamma_{L,R}$ into 16th order Gauss-Legendre panels and handles the kernel and corner singularities efficiently. To compute the layer potentials in~$w_{\xi,\gamma_{L,R}}$ \eqref{eq:w_xi_def}, we split~$G_\xi$ into a $\xi$-independent singular part~$G$ and a~$\xi$-dependent smooth part~$G_\xi - G$. The advantage of this splitting is that we can we reuse our adaptive integration of the singular part at each~$\xi$ and use the faster Gauss-Legendre quadrature on the~$\xi$-dependent part of the kernel. We also compress the far-field interaction using an analogue of the skeletonization approach described in~\cite{goodwill2024numerical}. If the source~$\by$ is within~$\max(d_L,d_R)/2$ of the interface, we add the image source discussed in Lemma~\ref{lem:w_image} to prevent the blow up of~$\rho_{\xi,\by}$.
    \item To integrate in~$\xi$, we use the contour satisfying~$\Im \xi = -0.3i \sin(d\,\Re \xi)$. We discretize this contour using a 60 point periodic trapezoid rule.
    \item We choose the contour~$\tilde\Gamma$ to be parameterized by
    \begin{equation}
        x_2 = t + i \psi(x_2),
    \end{equation}
    where~$\psi(x_2) = 20\opnm{erfc}((L-t)/5)$. The parameter~$L$ is chosen so that~$\psi$ is less than~$10^{-16}$ in the vicinity of~$X_2$ and~$\tGamma$ is truncated when~$\psi(x_2)=39$ (see \figref{fig:mode_dens}). 
    \item We discretize~$\tGamma$ using 16th order Gauss-Legendre panels and discretize \eqref{eq:comp_IE} using the corresponding smooth quadrature rule, which will be accurate since all involved kernels are smooth. To discretize the layer potentials in~\eqref{eq:u_comp_rep}, we split the Green's functions into free space part and the scattered part~\eqref{eq:G_dom_split}. We then use the same smooth quadrature rule to integrate the smooth scattered part and use adaptive integration for the singular free-space part. 
\end{itemize}
Further details of the discretization will be discussed in an upcoming manuscript.
\begin{figure}
    \centering
    \includegraphics[width=0.9\linewidth]{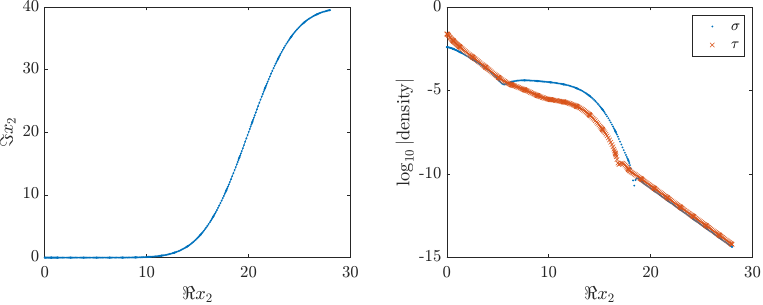}
    \caption{The left figure shows the real and imaginary parts of~$\tilde \Gamma$. The right shows the decay of the densities~$\sigma,\tau$ the solve~\eqref{eq:comp_IE_2} when the right hand side is associated to an incoming trapped mode on the left side.}
    \label{fig:mode_dens}
\end{figure}

\subsection{Single domain}
We demonstrate our solver for a single domain with a periodic wall ($d=1.3$), and wavenumber~$k=1$. A few examples of the domain Green's function are shown in \figref{fig:one-field}. 

To test the accuracy of our solver, we use an analytic solution test. Specifically, we observe that if~$\by$ is outside of~$\Omega$, then~$G_{\gamma}$ is zero, i.e.~$w_\gamma(\bx,\by) = -G_0(\bx-\by)$. To measure the error in our solver, we compute
\begin{equation}\label{eq:one_error}
\opnm{error}=|w_\gamma(\bx,\by) +G_0(\bx-\by)|/\max_{\bz} |G_0(\bz-\by)|,    
\end{equation}
where the maximum is taken over the plotting domain. The resulting errors are shown in \figref{fig:one_err} for~$\by=(0,-0.2)$. It is clear from this figure that the solution we have computed is accurate to at least 11 digits at every point that is inside the unit cell ($|x_1|<d/2$) and away from the corners of~$\gamma$.

\begin{figure}
    \centering
    \includegraphics[width=0.8\linewidth]{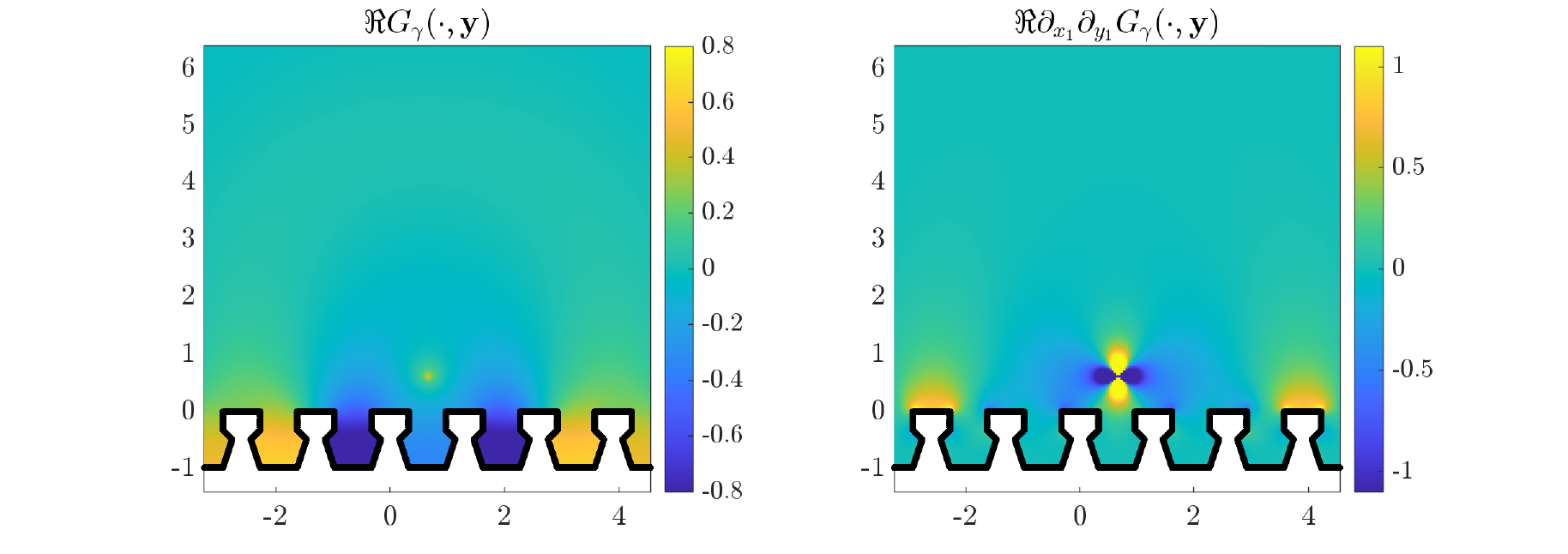}
    \caption{A few examples of the domain Green's function for a choice of~$\gamma$ with~$d=1.3$ and~$k=1$.}
    \label{fig:one-field}
\end{figure}

\begin{figure}
    \centering
    \includegraphics[width=0.4\linewidth]{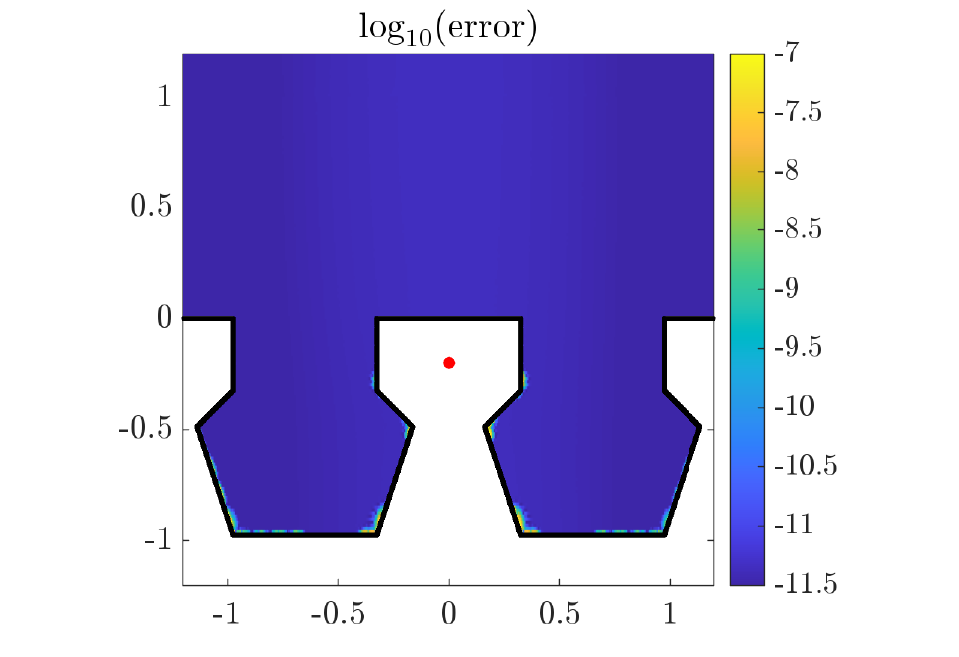}
    \caption{The error in the~\eqref{eq:one_error} for the point~$\by=(0,-0.2)$. We see that the solver is accurate to at least 11 digits everywhere away from the corners. The errors in the vicinity of the corners is due to the implementation of the RCIP method used by ChunkIE. The solver also makes errors outside the unit cell because of the~$G_{\xi}-G_0$ is nearly singular as~$\bx$ approaches a periodic copy of~$\gamma$.}
    \label{fig:one_err}
\end{figure}

\subsection{Glued staircases}
In this section we demonstrate our solver for the glued waveguide problem. Specifically, we consider two periodic boundaries~$\gamma_L$ and~$\gamma_R$ with periods~$d_L=1.6$ and~$d_R=1.3$. We begin looking for a~$u$ of the form~\eqref{eq:u_in_out} where~$u_{\opnm{in}}$ is given by a right-going trapped mode for the left domain. 

We find the trapped mode using the method described in~\cite{agocs2024trapped}. In short, we find the~$\tilde \xi_1$ such that~$\opnm{det} (2\cK_{\tilde \xi_1,\gamma})=0$. For this choice of~$\gamma_L$, the trapped mode occurs at~$\tilde \xi_1\approx 1.422265877314$ and is displayed in \figref{fig:trap_modes}. We then use the solver with the corresponding data in Proposition~\ref{prop:mode_data}. The resulting total field is displayed in~\figref{fig:mode_match}. The corresponding densities~$\sigma$ and~$\tau$ are shown in \figref{fig:mode_dens}.

To test the solver, we solve~\eqref{eq:comp_IE_2} using data from a left-going mode. In this case, it is easy to see that the true solution of~\eqref{eq:tot_PDE} will be~$u=0$, which corresponds to~$u_L=-u_{\opnm{in}}$ and~$u_R=0$. In~\figref{fig:mode_err} we plot the error in the sense~$\opnm{error}=|u|/\max_{\bz} |u_{\opnm{in}}|$, where the maximum is taken over the plotting domain. We see that the solver is accurate to at least 9 digits everywhere that the Green's function was observed to be in the previous section.

These codes were run on an Apple MacBook Pro with an M2 Max chip. Our solver took a total of 92 seconds to generate~\figref{fig:mode_match}, not including the time to find the trapped modes. Of this time, building the solvers for each side took 13 and 26 seconds respectively, building the system matrix~$\cI+\begin{pmatrix}
    \Ddiff&\Sdiff\\ \Dpdiff& \Spdiff
\end{pmatrix}$ took 11 seconds, and plotting the field~$u_{L,R}$ at 16 948 targets took 32 seconds. 

To conclude this section, we compute the field due to a points source at~$\by = (-5.6,0.6)$. We achieve this by picking the data~$(r_D,r_N)$ using the formulas given in Proposition~\ref{prop:pt_allowable}. The resulting field is shown in~\figref{fig:pt_match}.

\begin{figure}
    \centering
    \includegraphics[width=0.8\linewidth]{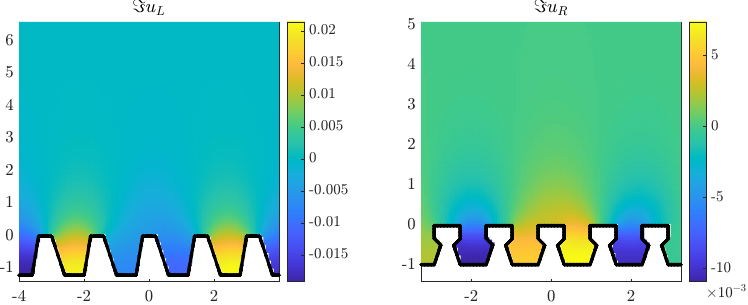}
    \caption{The trapped modes for our two geometries. The left geometry has a mode at $\tilde \xi_1=1.422265877314$. The right geometry has a mode at $\tilde \xi_1=1.47762000473$.}
    \label{fig:trap_modes}
\end{figure}

\begin{figure}
    \centering
    \includegraphics[width=0.7\linewidth]{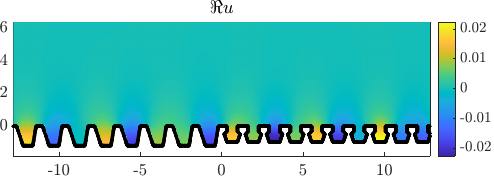}
    \caption{The solution of~\eqref{eq:tot_PDE} where~$u$ is of the form~\eqref{eq:u_in_out} where~$u_{\opnm{in}}$ is an incoming trapped mode on the left side.}
    \label{fig:mode_match}
\end{figure}

\begin{figure}
    \centering
    \includegraphics[width=0.7\linewidth]{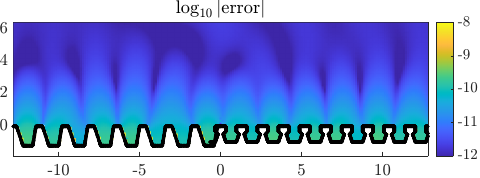}
    \caption{The results of our analytic solution test. In this test we pick~$u_{\opnm{in}}$ to be an outgoing mode on the left side. For this problem the exact solution is~$u\equiv0$ and we plot the error~$\opnm{error}=|u|/\max_{\bz} |u_{\opnm{in}}|$.}
    \label{fig:mode_err}
\end{figure}

\begin{figure}
    \centering
    \includegraphics[width=0.7\linewidth]{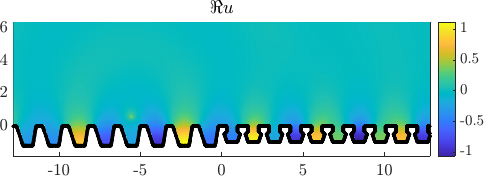}
    \caption{The field due to a point source at~$\by=(-5.6,0.6)$.}
    \label{fig:pt_match}
\end{figure}

\subsection{Compact transition region}\label{sec:egg}
Rather than abruptly changing from one periodic grating to another, it is more physically meaningful for there to be a compact transition region between the waveguides. In order to simulate this problem, we split our computational domain into three pieces: a compact transition region and the left and right halves. We then build an integral equation that forces continuity conditions one the interfaces connecting each region. Details on how this integral equation is constructed in the case of leaky waveguide are given in~\cite{goodwill2024numerical}.

In~\figref{fig:egg_match} we use this method to simulate the junction of the boundaries~$\gamma_L$ and~$\gamma_R$ from the previous example separated by a compact transition region. An equivalent analytic solution test to~\figref{fig:mode_err} indicates that our solver for this problem was accurate to 9 digits.

\begin{figure}
    \centering
    \includegraphics[width=0.7\linewidth]{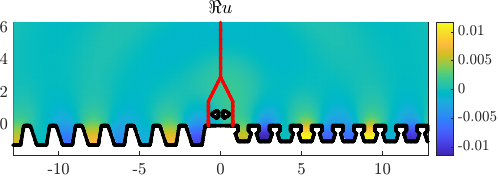}
    \caption{This figure shows our simulation of two semi-infinite gratings meeting with a compact transition region. The field is generated by a incoming trapped mode from the left on the left side. The red lines indicate the boundaries between the left, right, and center regions where we enforce the continuity conditions.}
    \label{fig:egg_match}
\end{figure}

\subsection{Transmission problems}\label{sec:transm}
We observed in Remark~\ref{rem:other_BC} that the method easily extends to other boundary conditions. In this section, we demonstrate the solver on a multilayer problem with period~$d_{L,R}=1.2$. We choose the layers to have wavenumbers 1, 7, 2, 7, and 1 in each layer. While some of these wavenumbers are larger than~$\pi/d_{L,R}$, all satisfy the requirement that the branch cuts of~$\alpha_i(\xi) = -\sqrt{i(\xi-k_i)}\sqrt{-i(\xi+k)}$ lie in the correct quadrants. We can therefore use our discrete inverse Floquet--Bloch transform without modification, though we do need to use~$80$ equispaced nodes to resolve~$w_{\xi,\gamma_{L,R}}$. We discretize each quasi-periodic problem using standard integral equation representations built out of the quasi-periodic Green's functions on each interface. We also impose homogeneous Neumann boundary conditions on some compact obstacles in the right half-space. 

The field due to a point source at (-11.4,1.4) is shown in \figref{fig:pt_layer_match}. An analytic solution test analogous to the test in~\figref{fig:mode_err} indicates that our solver for this problem was accurate to 6 digits.

\begin{figure}
    \centering
    \includegraphics[width=0.7\linewidth]{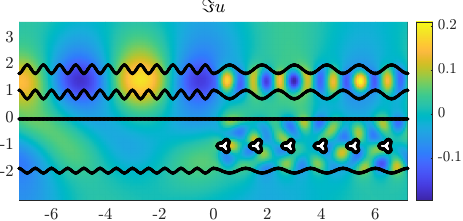}
    \caption{The field in a matched layered media problem due to a point source at~$\by=(-11.4,1.4)$.}
    \label{fig:pt_layer_match}
\end{figure}

\section{Concluding remarks} \label{sec:conclusion}
In this work, we showed how to use the domain Green's functions to reduce the scattering from two semi-infinite periodic gratings to an integral equation on the interface between the two halves of the computational domain. We then derived the asymptotic and analytic properties of the domain Green's functions. These properties allows us to build on the analysis in~\cite{epstein2025complex} and show that the integral equation could be analytically continued to a complex contour with a a Fredholm index zero operator. We also showed that the solution recovered through this method satisfies the Sommerfeld radiation condition in any cone away from the gratings. 

In order to complete the proof that our integral equation is well-posed it will be necessary to show that the solutions of~\eqref{eq:tot_PDE} are unique. As described in~\cite{epstein2024solving}, this requires the development of radiation condition that incorporate both radiated fields and the quasi-periodic trapped modes. Once uniqueness of the PDE solutions is established, we the proof of the uniqueness of solutions of~\eqref{eq:comp_IE_2} by proving that the representation~\eqref{eq:u_comp_rep} satisfies those outgoing conditions.

In this work, we assumed that the boundaries~$\gamma_{L,R}$ were flat in the vicinity of the~$x_2$-axis. In order to remove this assumption, it will be necessary to use a more detailed understanding of the domain Green's function for sources and targets near the boundary to show that the the kernels of our integral operators are not too singular. By using the method introduced in Section~\ref{sec:egg}, we can assume that the boundary is~$C^\infty$ in a neighborhood of the interface. It should also be straightforward to extend our analysis to this setup and to the case of non-parallel gratings.

There also exist a number of extensions of the results of Section~\ref{sec:transm}. One straightforward extension is to use more efficient evaluators for the quasi-periodic layered medium problem such as the method described in~\cite{zhang2022fast}. It would also be straightforward to use an extension of the adjoint Lippmann equation to simulate the junction of media with smoothly varying periodic wavenumber. The resulting method would be an analogue of the piecewise smooth waveguides considered in~\cite{epstein2025complex}.

\section*{Acknowledgements}
The authors would like to thank Alex Barnett, Charles Epstein, Manas Rachh, and Shidong Jiang for many useful discussions. J. Hoskins and T. Goodwill would like to thank the American Institute of Mathematics and, in particular, John Fry for hosting them on Bock Cay during the SQuaREs program, where parts of this work were completed.

\bibliography{references.bib}

\appendix

\section{Quasi-periodic asymptotics}\label{app:far_as}

In order to find the asymptotics of~$w_{\xi,\gamma}$, we treat the terms in~\eqref{eq:dualsum} separately. As we noted in the proof of Lemma~\ref{lem:ana_cont}, we can pull the derivative~$\partial_{\bn(\bz)}$ inside and expand the right-hand side of \eqref{eq:quasi-IE} as
\begin{equation}
    \partial_{\bn(\bz)}G_\xi(\bz-\by) =\sum_{m=-\infty}^\infty \partial_{\bn(\bz)}\lp\frac{e^{i\xi_m(z_1-y_1)}e^{\alpha(\xi_m)(y_2-z_2)}}{-2\alpha(\xi_m)}\rp= \sum_{m=-\infty}^\infty e^{-i\xi_m y_1+\alpha(\xi_m)y_2} h_{\xi,m}(\bz),
\end{equation}
where
\begin{equation}
    h_{\xi,m}(\bz) = \partial_{\bn(\bz)}\lp\frac{e^{i\xi_mz_1-\alpha(\xi_m)z_2}}{-2\alpha(\xi_m)}\rp .
\end{equation}
The solution~$\rho_{\xi,\by}(\bz)$ of~\eqref{eq:quasi-IE} can therefore be expanded as
\begin{equation}\label{eq:rho_exp}
    \rho_{\xi,\by}(\bz) = \sum_{m=-\infty}^\infty e^{-i\xi_my_1+\alpha(\xi_m)y_2} \lp \cK_\xi^{-1}h_{\xi,m}\rp(\bz)= \sum_{m=-\infty}^\infty e^{-i\xi_my_1+\alpha(\xi_m)y_2} \rho_{\xi,m}(\bz),
\end{equation}
where
\begin{equation}
    \rho_{\xi,m} := \cK_{\xi}^{-1} h_{\xi,m}.
\end{equation}
Plugging this formula for~$\rho_{\xi,\by}$ into~$w_{\xi,\gamma}$ gives
\begin{multline}
     w_{\xi,\gamma}(\bx,\by)=\cS_\xi [\rho_{\xi,\by}](\bx) = \int_\gamma G_\xi (\bx-\bz)\rho_{\xi,\by}(\bz)\, \opd \bz \\
    = \sum_{n,m}  e^{i\xi_m x_1+\alpha(\xi_m)y_2-i\xi_n y_1+\alpha(\xi_n)x_2}\int_\gamma \frac{e^{i\xi_mz_1-\alpha(\xi_m)z_2}}{-2\alpha(\xi_m)}\rho_{\xi,n}(\bz)\,  \opd \bz .
\end{multline}
If we let, 
\begin{equation}
    f_{nm}(\xi)= \int_\gamma \rho_{\xi,n}(\bz)\tilde h_{\xi,m}(\bz)\, \opd \bz,
\quad\text{where}\quad
    \tilde h_{\xi,m}(\bz)=\frac{e^{i\xi_mz_1-\alpha(\xi_m)z_2}}{-2\alpha(\xi_m)},
\end{equation}
then we can separate
\begin{equation}
     w_{\xi,\gamma}(\bx,\by)= \sum_{n,m}  e^{i\xi_mx_1+\alpha(\xi_m)y_2-i\xi_n y_1+\alpha(\xi_n)x_2}f_{nm}(\xi).
\end{equation}
This expression is often referred to as the Rayleigh expansion of~$w_{\xi,\gamma}$. If we introduce the inverse Floquet--Bloch transform of each term, 
\begin{equation}\label{eq:wnm_def}
    w_{nm}(\bx,\by) = \int_c e^{i(\xi_n x_1- \xi_m y_1)+\alpha(\xi_m)y_2+\alpha(\xi_n)x_2}f_{nm}(\xi) \opd \xi,
\end{equation}
then we can write
\begin{equation}
    w_\gamma(\bx,\by) = \sum_{nm} w_{nm}(\bx,\by).
\end{equation}
To bound~$w_\gamma$, it is therefore enough to bound each~$w_{nm}$ and sum those bounds. The first step is to bound the~$f_{nm}$'s. 
\begin{lemma}\label{lem:fnm_bdd}
    Let~$V_{\gamma,\epsilon}$ be as in Lemma~\ref{lem:ops_analytic}.
    There exists a constant~$F>0$ such that
    \begin{equation}
        |f_{nm}(\xi)|\leq F \label{eq:fbdd}
    \end{equation}
    for all~$\xi\in V_{\gamma,\epsilon}$ and all~$n,m$.
\end{lemma}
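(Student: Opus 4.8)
The plan is to track, uniformly in $n$, $m$, and $\xi \in V_{\gamma,\epsilon}$, the three ingredients that make up $f_{nm}(\xi) = \int_\gamma \rho_{\xi,n}(\bz) \tilde h_{\xi,m}(\bz)\,\opd\bz$: the test function $\tilde h_{\xi,m}$, the densities $\rho_{\xi,n}=\cK_\xi^{-1}h_{\xi,n}$, and the length of (a single period of) $\gamma$ against which we integrate. First I would bound $\tilde h_{\xi,m}(\bz) = e^{i\xi_m z_1 - \alpha(\xi_m)z_2}/(-2\alpha(\xi_m))$ on $\gamma$: using Lemma~\ref{lem:alphaprop} we have $\alpha(\xi_m) = -|\xi_m| + O(|\xi_m|^{-1})$ as $|m|\to\infty$, so $\Re\alpha(\xi_m) \to -\infty$ like $-2|m|\pi/d$; since $\gamma$ lies in $x_2 \le 0$, the factor $e^{-\alpha(\xi_m)z_2} = e^{|\alpha(\xi_m)|\cdot|z_2|}$ could in principle grow, but $\gamma$ is periodic and it suffices to integrate over one period, on which $|z_2|$ is bounded. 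Combined with $|e^{i\xi_m z_1}| = e^{-z_1\Im\xi_m} = e^{-z_1\Im\xi}$ (bounded on one period since $\Im\xi$ ranges over a compact set) and the decay $1/|\alpha(\xi_m)| = O(1/|m|)$, we get $\sup_{\bz \in \gamma,\ \text{one period}}|\tilde h_{\xi,m}(\bz)| \le C$ uniformly in $m$ and $\xi$. The same computation — this is essentially the argument already run in the proof of Lemma~\ref{lem:ana_cont} — bounds $h_{\xi,n}(\bz) = \partial_{\bn(\bz)}\tilde h_{\xi,n}(\bz)$ on $\gamma$; differentiation in $\bz$ only brings down a factor $\xi_n$ or $\alpha(\xi_n)$, which is $O(|n|)$, so $\|h_{\xi,n}\|_{L^2(\gamma)} \le C(1+|n|)$ uniformly in $\xi \in V_{\gamma,\epsilon}$.

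Next I would use Lemma~\ref{lem:ops_analytic}, which gives that $\cK_{\xi,\gamma}^{-1}$ is an analytic, hence (on the compact relevant portion of) locally bounded, operator on $L^2(\gamma)$; since $V_{\gamma,\epsilon}$ avoids the branch cuts and the pole set $\{\pm\tilde\xi_j\}$ by a distance $\epsilon$, and by periodicity in $\xi$ we may restrict to a compact fundamental domain, we obtain a single constant $M$ with $\|\cK_{\xi,\gamma}^{-1}\|_{L^2 \to L^2} \le M$ for all $\xi \in V_{\gamma,\epsilon}$. Hence $\|\rho_{\xi,n}\|_{L^2(\gamma)} \le M\|h_{\xi,n}\|_{L^2(\gamma)} \le CM(1+|n|)$. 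A naive Cauchy–Schwarz then gives $|f_{nm}(\xi)| \le \|\rho_{\xi,n}\|_{L^2}\|\tilde h_{\xi,m}\|_{L^2} \le C'(1+|n|)$, which is \emph{not} yet the uniform bound claimed. To close the gap I would recover the missing decay in $n$ by moving a $\bz$-derivative off $\tilde h_{\xi,m}$ and onto $\rho_{\xi,n}$, or equivalently by exploiting the structure $\rho_{\xi,n} = \cK_\xi^{-1}h_{\xi,n}$ with $h_{\xi,n}$ a \emph{normal derivative}: integrating by parts along the (piecewise smooth, one-period) boundary, or using that $f_{nm}$ is really a pairing of $\cK_\xi^{-1}$ applied to $\partial_{\bn}\tilde h_{\xi,n}$ against $\tilde h_{\xi,m}$, lets one transfer the $O(|n|)$ growth into an $O(1/|m|)$-type gain after recognizing $\tilde h_{\xi,m}$'s own $1/\alpha(\xi_m) = O(1/|m|)$ factor. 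The cleanest route is probably to observe that $h_{\xi,n}(\bz) = \partial_{\bn(\bz)}\tilde h_{\xi,n}(\bz) = \bn(\bz)\cdot(i\xi_n, -\alpha(\xi_n))\,\tilde h_{\xi,n}(\bz)$, and since $\gamma$ is flat near $(0,X_2)$ with non-vertical slope (so $\bn$ is bounded away from $\pm\bs e_1$ uniformly), the factor $|i\xi_n \bn_1 - \alpha(\xi_n)\bn_2|$ is $O(|n|)$ but is multiplied by $|\tilde h_{\xi,n}| = O(1/|n|)$ in the only place it matters — giving $\|h_{\xi,n}\|_{L^2} = O(1)$ after all, not $O(|n|)$.

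With $\|h_{\xi,n}\|_{L^2(\gamma)} \le C$ uniform in $n$ and $\xi$, and hence $\|\rho_{\xi,n}\|_{L^2} \le CM$ and $\|\tilde h_{\xi,m}\|_{L^2} \le C$, Cauchy–Schwarz immediately yields $|f_{nm}(\xi)| \le F := C^2 M$ for all $n$, $m$, and $\xi \in V_{\gamma,\epsilon}$, which is the claim. \textbf{The main obstacle} I anticipate is exactly the step of showing that $\|h_{\xi,n}\|_{L^2(\gamma)}$ does not actually grow in $n$: the crude estimate gives linear growth, and one must use the cancellation between the $O(|n|)$ from the normal derivative and the $O(1/|n|)$ from the $1/\alpha(\xi_m)$ prefactor together with the geometric assumption that $\gamma$ is flat and non-vertical near $(0,X_2)$ — i.e.\ that $\bn$ stays uniformly away from being parallel to the grating direction on the relevant period — to keep the product bounded. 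Everything else (uniform boundedness of $\cK_\xi^{-1}$, reduction to one period, the exponential factors being harmless because $\gamma \subset \{x_2 \le 0\}$ is bounded over a period) follows from Lemmas~\ref{lem:ops_analytic} and~\ref{lem:alphaprop} and the periodicity of all objects in $\xi$.
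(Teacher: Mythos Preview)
Your direct Cauchy--Schwarz approach is correct, but the paper takes a genuinely different and shorter route. The paper observes that the $f_{nm}(\xi)$ are precisely the double Fourier series coefficients (in $x_1,y_1$) of the periodic function $e^{-i\xi(x_1-y_1)}w_{\xi,\gamma}(x_1,0;y_1,0)$, so they are all bounded by that function's $L^1$ norm over one period; analyticity of $w_{\xi,\gamma}$ in $\xi$ (Lemma~\ref{lem:ops_analytic}) then gives the uniform bound over $V_{\gamma,\epsilon}$. This sidesteps every term-by-term estimate you assemble.

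Your argument does work, and the key cancellation you isolate --- the $O(|n|)$ from the normal derivative against the $O(1/|n|)$ from the prefactor $1/\alpha(\xi_n)$, yielding $\|h_{\xi,n}\|_{L^2(\gamma)}=O(1)$ --- is exactly the estimate the paper itself invokes later in the proof of Lemma~\ref{lem:near2far_non}. Two small corrections: first, your remark that $e^{-\alpha(\xi_m)z_2}$ ``could in principle grow'' has the sign backwards; since $\Re\alpha(\xi_m)\le 0$ for $m\ne 0$ and $z_2\le 0$ on $\gamma$, the real part of the exponent is $-z_2\,\Re\alpha(\xi_m)\le 0$, so this factor is already $\le 1$. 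Second, the flatness and non-vertical-slope assumption on $\gamma$ is a red herring here: the bound $|(i\xi_n,-\alpha(\xi_n))\cdot\bn(\bz)|\le\sqrt{|\xi_n|^2+|\alpha(\xi_n)|^2}=O(|n|)$ holds for \emph{any} unit normal $\bn$, so the cancellation requires no geometric hypothesis. What your approach buys is explicit, reusable pointwise control on $\rho_{\xi,n}$ and $\tilde h_{\xi,m}$; what the paper's buys is brevity and no constant-tracking.
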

\begin{proof}
    From the proof of Lemma~\ref{lem:G_exist} it is clear that~$w_{\xi,\gamma}(x_1,0;y_1;0)$ is a smooth function of~$x_1,y_1$. The coefficients~$f_{nm}(\xi)$ are then the Fourier series coefficients of~$e^{-i\xi(x_1-y_1)}w_{\xi,\gamma}(x_1,0;y_1,0)$. We therefore have that
    \begin{equation}
        |f_{nm}(\xi)|\leq C \|e^{-i\xi(x_1-y_1)}w_{\xi,\gamma}(x_1,0;y_1,0)\|_{L^1([-d/2,d/2]^2)}
    \end{equation}
    for all~$n,m$, where~$C$ is a constant depending on~$d$. Since~$w_{\xi,\gamma}$ is an analytic function of~$\xi\in V_{\epsilon,\gamma}$, the right hand side can be bounded independent of~$\xi$ and the result holds.
\end{proof}
\begin{figure}
    \centering
    \includegraphics[width=0.6\linewidth]{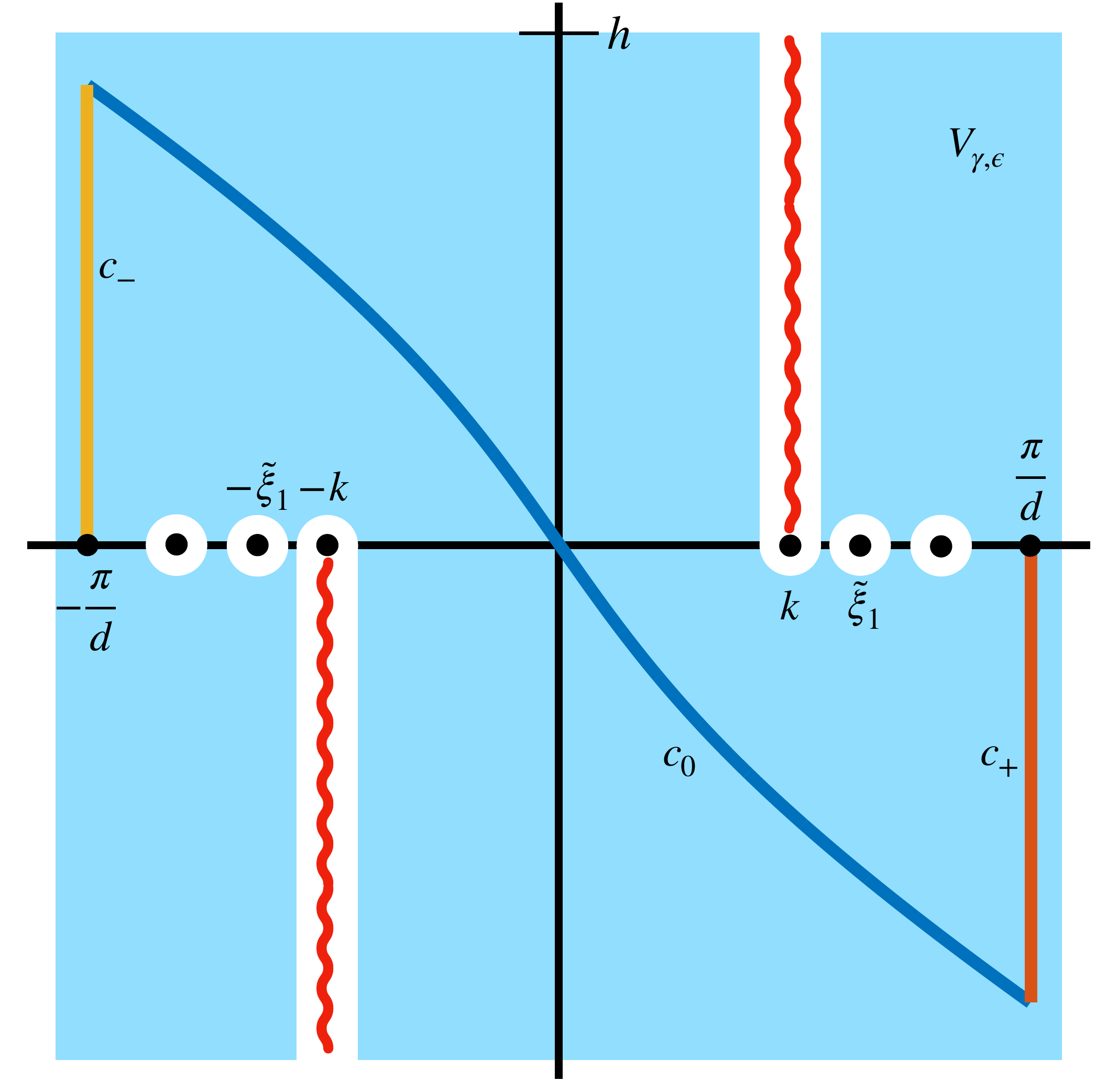}
    \caption{The contour~$c:=c_-\cup c_0\cup c_+$ used to derive our bounds on~$w_{nm}$.}
    \label{fig:Vgamma}
\end{figure}
In order to bound the~$w_{nm}$, we introduce the contours~$c_0,c_1,$ and~$c_+$ shown in \figref{fig:Vgamma}.
We choose~$c_0$ to be the contour parameterized by
\begin{equation}\label{eq:00_descent}
    \xi(t) = kt \sqrt{2} e^{-3i\pi/8} \sqrt{1+\frac{t^2}{2} e^{i\pi/4} } 
\end{equation}
truncated to live in the region~$|\Re \xi|\leq \frac{\pi}{d}$. This choice gives
\begin{equation}\label{eq:00_descent_val}
    \alpha(\xi(t)) = ik - kt^2e^{-i\pi/4},
\end{equation}
which ensures that for any~$z$ in the first quadrant
\begin{equation}
    \left|e^{\alpha(\xi(t)) z}\right| = \left|e^{\lp ik - kt^2e^{-i\pi/4}\rp z}\right| = e^{-k\Im z - k \Re\lp e^{-i\pi/4}z\rp}
\end{equation}
is exponentially decaying along~$c_0$. We choose the contours~$c_\pm$ to be the vertical lines connecting the ends of~$c_0$ to the points~$\xi=\pm \frac\pi d$. For the remainder of this appendix, we will let~$c:=c_-\cup c_0\cup c_+$. This results in contour that connects~$\pm\frac\pi d$, passes on the correct sides of the poles and branch cuts, and lives in~$V_{\gamma,\epsilon}$.

 \begin{lemma}\label{lem:alpha_bd}
    Let
    \begin{equation}
        \thetaeta := \frac12 \opnm{Arg}\left[\lp \frac{\pi}d +ih\rp^2-k^2\right] + \arctan \Kslope
    \end{equation}
    and assume~$\Kslope$ is small enough so that~$\thetaeta<\pi/2$. If~$z\in \Gamma_U$ and
    \begin{equation}
        \eta_n := \cos\thetaeta \alpha\lp \frac{(2|n|-1)\pi}d\rp,
    \end{equation}
    then
    \begin{equation}
        \Re \lp\alpha(\xi_n) z\rp  \leq \eta_n\Re z
    \end{equation}
    for all~$n\neq 0$ and~$\xi\in V_{\gamma,\epsilon}$. Further
    \begin{equation}
        \Re \lp\alpha(\xi)z\rp \leq \eta_1\Re z
    \end{equation}
    for all~$\xi\in c_-\cup c_+$.
 \end{lemma}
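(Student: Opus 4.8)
The plan is to reduce the whole statement to (i) an estimate on the \emph{argument} of $\alpha(\xi_n)z$ and (ii) a crude lower bound on its \emph{modulus}. Write $z=|z|e^{i\phi}$; since $z\in\Gamma_U$ we have $0\le\Im z\le\Kslope\Re z$, so $0\le\phi\le\arctan\Kslope$. For $\xi\in V_{\gamma,\epsilon}$ and $n\neq 0$ the shift $\xi_n=\xi+2\pi n/d$ satisfies $\Re\xi_n\ge \frac{(2|n|-1)\pi}{d}>k$ (the contour $c$ used to build $w_{nm}$ actually stays in $|\Re\xi|\le\pi/d$, so this holds with no loss there; on all of $V_{\gamma,\epsilon}$ one picks up an $O(\epsilon)$ that is exactly what the explicit $+\epsilon$ in the definition of $\eta$ in Theorem~\ref{thm:split_w} absorbs). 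By Lemma~\ref{lem:alphaprop} we are in the regime $\alpha(\xi_n)=-\sqrt{\xi_n^2-k^2}=-\sqrt{(\xi_n-k)(\xi_n+k)}$ with the principal branch; since $\Re\xi_n>k>0$ both factors lie in the open right half-plane, so $\opnm{Arg}(\xi_n-k)+\opnm{Arg}(\xi_n+k)\in(-\pi,\pi)$ and $\opnm{Arg}\alpha(\xi_n)=\pi+\mu_n$ with $\mu_n:=\tfrac12\bigl(\opnm{Arg}(\xi_n-k)+\opnm{Arg}(\xi_n+k)\bigr)$. The point of the formula defining $\thetaeta$ is that it is precisely the worst case of $|\mu_n|+\phi$.

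Next I would bound $|\mu_n|$ uniformly in $n$. Writing $\Re\xi_n=p\ge\pi/d$ and $\Im\xi_n=q$ with $|q|<h$, each quantity $|\opnm{Arg}(\xi_n\mp k)|=\arctan\frac{|q|}{p\mp k}$ is nondecreasing in $|q|$ and nonincreasing in $p$, so its extreme value is attained at $p=\pi/d$, $|q|=h$; since the two arguments share a sign, $|2\mu_n|\le\arctan\frac{h}{\pi/d-k}+\arctan\frac{h}{\pi/d+k}$. This last sum equals $\opnm{Arg}\bigl[(\tfrac{\pi}{d}+ih)^2-k^2\bigr]$ because $(\tfrac{\pi}{d}+ih)^2-k^2=(\tfrac{\pi}{d}-k+ih)(\tfrac{\pi}{d}+k+ih)$ with both factors in the first quadrant. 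Hence $|\mu_n|\le\thetaeta-\arctan\Kslope$, and therefore $|\opnm{Arg}\alpha(\xi_n)+\phi-\pi|\le|\mu_n|+\phi\le\thetaeta<\pi/2$.

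The assembly is then routine. Because $\cos$ is even and decreasing on $[0,\pi]$, $\cos(\opnm{Arg}\alpha(\xi_n)+\phi)\le-\cos\thetaeta$, so $\Re(\alpha(\xi_n)z)=|\alpha(\xi_n)|\,|z|\cos(\opnm{Arg}\alpha(\xi_n)+\phi)\le-|\alpha(\xi_n)|\cos\thetaeta\,\Re z$ using $|z|\ge\Re z\ge 0$. A crude modulus bound $|\alpha(\xi_n)|^2=|\xi_n-k|\,|\xi_n+k|\ge(\Re\xi_n-k)(\Re\xi_n+k)\ge\bigl(\tfrac{(2|n|-1)\pi}{d}\bigr)^2-k^2=\alpha\bigl(\tfrac{(2|n|-1)\pi}{d}\bigr)^2$ gives $|\alpha(\xi_n)|\ge-\alpha\bigl(\tfrac{(2|n|-1)\pi}{d}\bigr)$, and since $\cos\thetaeta\,\Re z\ge0$ this converts the previous line into $\Re(\alpha(\xi_n)z)\le\cos\thetaeta\,\alpha\bigl(\tfrac{(2|n|-1)\pi}{d}\bigr)\Re z=\eta_n\Re z$. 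For the claim on $c_-\cup c_+$ I would run the identical chain with $\xi$ itself in place of $\xi_n$: a point of $c_+$ has $\Re\xi=\pi/d$ exactly and $|\Im\xi|<h$ (as $c\subset V_{\gamma,\epsilon}$), which is if anything a better situation than the generic one, so the same argument yields $\Re(\alpha(\xi)z)\le\cos\thetaeta\,\alpha(\pi/d)\Re z=\eta_1\Re z$; the case $c_-$ follows by the evenness of $\alpha$.

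The only genuinely delicate step is the branch bookkeeping in the first paragraph — establishing $\opnm{Arg}\alpha(\xi_n)=\pi+\mu_n$ with the stated $\mu_n$, and controlling the $\Re\xi_n\ge\tfrac{(2|n|-1)\pi}{d}$ lower bound without an $\epsilon$ penalty (resolved by restricting to the contour $c$, or by the $+\epsilon$ slack already built into $\eta$ in Theorem~\ref{thm:split_w}). The remaining ingredients — monotonicity of $\arctan$, the evenness and monotonicity of $\cos$ on $[0,\pi]$, and the factorization of $\xi^2-k^2$ — are elementary.
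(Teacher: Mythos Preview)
Your argument is correct and rests on the same two ingredients as the paper's proof: the modulus bound $|\alpha(\xi_n)|\ge|\alpha((2|n|-1)\pi/d)|$ (from $|\Re\xi_n|\ge(2|n|-1)\pi/d$, up to the $O(\epsilon)$ you note) and the argument bound $|\opnm{Arg}(\xi_n^2-k^2)|\le\opnm{Arg}[(\pi/d+ih)^2-k^2]$ (maximized at the corner of the bounding rectangle of $V_{\gamma,\epsilon}$). The organizational difference is that the paper expands $\Re(\alpha(\xi_n)z)=\Re\alpha(\xi_n)\,\Re z-\Im\alpha(\xi_n)\,\Im z$ and splits into two cases according to the sign of $\Im\alpha(\xi_n)$, bounding $\max_\xi\Re\alpha(\xi_n)$ and $\max_\xi[\Re\alpha(\xi_n)-\Kslope\Im\alpha(\xi_n)]$ separately, whereas your polar form $|\alpha(\xi_n)|\,|z|\cos(\opnm{Arg}\alpha(\xi_n)+\phi)$ absorbs both cases into a single inequality $|\mu_n+\phi|\le\thetaeta$. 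For the $c_\pm$ claim, the paper observes that any $\xi\in c_+$ can be written as $\tilde\xi_1$ for some $\tilde\xi\in V_{\gamma,\epsilon}$ (namely $\tilde\xi=\xi-2\pi/d$) and invokes the $n=1$ case already proved, while you rerun the same estimate with $\xi$ in place of $\xi_n$; both are fine. Your route is slightly more streamlined, at the cost of the branch bookkeeping you flag (which is handled correctly, though for $n<0$ you should invoke the evenness of $\alpha$ rather than claim $\Re\xi_n\ge(2|n|-1)\pi/d$).
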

 \begin{proof}
We first note that since $0\leq \Im z\leq \Kslope\, \Re z$, we have
\begin{equation}
     \Im \alpha\lp\xi_n\rp  \Im z \leq \max\lp 0, -\Kslope \Im \alpha\lp\xi_n\rp \rp \Re z.
\end{equation}
Which term in the maximum is larger will depend on the sign of~$\Im \alpha(\xi_n)$.

We can therefore bound
\begin{equation}\label{eq:real_alpha_z}
    \Re \lp\alpha\lp\xi_n\rp z \rp= \Re \alpha\lp\xi_n\rp  \Re z - \Im \alpha\lp\xi_n\rp  \Im z\leq \max_{\xi\in V_{\gamma,\epsilon}} \left[\Re \alpha\lp\xi_n\rp +\max\lp 0, -\Kslope \Im \alpha\lp\xi_n\rp \rp\right]  \Re z.
\end{equation}

We can therefore prove the desired result by bounding the maximum of
\begin{equation}
    \eta_{n,1}:=\max_{\xi\in V_{\gamma,\epsilon}} \left[\Re \alpha\lp\xi_n\rp  -\Kslope \Im \alpha\lp\xi_n\rp\right]\quad\text{and}\quad  \eta_{n,2}:=\max_{\xi\in V_{\gamma,\epsilon}} \Re \alpha\lp\xi_n\rp.
\end{equation}

We begin by studying~$\eta_{n,+}$, which we bound by controlling its modulus and argument. By the choice of branch cut, we have that
\begin{align}\label{eq:re_alpha_bd}
    \frac{\Re \alpha(\xi_n)}{|\alpha(\xi_n)|}
    &=\cos\lp \pi-\frac12\opnm{Arg}(\xi_n^2-k^2) \rp = - \cos\lp \frac12\opnm{Arg}(\xi_n^2-k^2)\rp,
\end{align}
where~$\opnm{Arg}$ is the principle argument, defined to live in~$(-\pi,\pi]$.
We thus need to bound~$|\alpha(\xi_n)|$ and~$\opnm{Arg}(\xi_n^2-k^2)$. Since
\begin{equation}
    |\alpha(\xi_n)| = \sqrt{|\xi_n-k||\xi_n+k|},
\end{equation}
it is clear that that~$|\alpha(\xi_n)|$ is minimized when~$\xi$ is as close as possible to~$\pm k$, i.e.
\begin{equation}
    \min_{\xi\in V_{\gamma,\epsilon}}|\alpha(\xi_n)| = |\alpha((2
    |n|-1)\pi/d)| = -\alpha((2|n|-1)\pi/d).
\end{equation}
To understand~$\opnm{Arg}(\xi_n^2-k^2)$, we note that~$\{\xi_n\,|\,\xi\in V_{\gamma,\epsilon}\}$ is a subset of rectangle in the right half plane. Thus~$|\opnm{Arg} \xi_n|$ is maximized in the corners where~$\xi_n=\frac{(2n- \opnm{sign} n )\pi}d \pm ih$. This property is preserved by squaring and subtracting~$k^2$ (see \figref{fig:xi_m_angle}), and so
\begin{equation}\label{eq:xim2_arg}
    \max_{\xi\in V_{\gamma,\epsilon}}\left|\opnm{Arg}(\xi_n^2-k^2)\right| = \left|\opnm{Arg}\left[\lp \frac{(2
    |n|-1)\pi}d \pm ih\rp^2-k^2\right] \right|\\\leq \opnm{Arg}\left[\lp \frac{\pi}d +ih\rp^2-k^2\right].
\end{equation}
\begin{figure}
    \centering
    \includegraphics[width=0.5\linewidth]{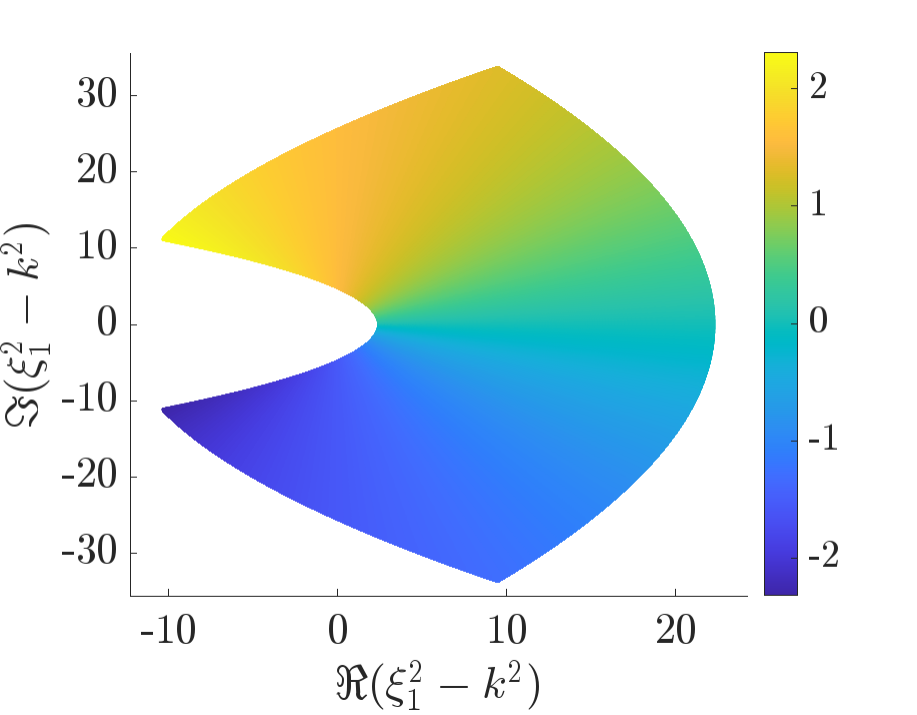}
    \caption{The argument of~$\xi_1^2-k^2$ for~$\xi$ in the bounding rectangle of~$V_{\gamma,\epsilon}$. The parameters~$k$ and~$d$ are set to 1 and 2 respectively.}
    \label{fig:xi_m_angle}
\end{figure}
By definition, this gives that
\begin{equation}
    \max_{\xi\in V_{\gamma,\epsilon}}\left|\frac12\opnm{Arg}(\xi_n^2-k^2)\right|\leq \thetaeta
\end{equation}
and so~\eqref{eq:re_alpha_bd} implies
\begin{equation}
   \eta_{n,1} =  \min_{\xi\in V_{\gamma,\epsilon}}\Re \alpha(\xi_n)\leq  \alpha((2|n|-1)\pi/d) \cos\thetaeta.
\end{equation}

To bound~$\eta_{n,2}$, we note
\begin{align}
    \frac{\Re \alpha(\xi_n) - \Kslope \Im \alpha(\xi_n)}{|\alpha(\xi_n)|\sqrt{1+\Kslope^2}}&=\frac{\Re \left[\alpha(\xi_n)(1+\Kslope i)\right]}{|\alpha(\xi_n)|\sqrt{1+\Kslope^2}}\\
    &=\cos\lp \pi-\frac12\opnm{Arg}(\xi_n^2-k^2) + \arctan \Kslope\rp \\
    &= - \cos\lp \frac12\opnm{Arg}(\xi_n^2-k^2) - \arctan \Kslope\rp.
\end{align}
Since we have already bounded~$|\alpha(\xi_n)|$, we now just have to bound the argument. Equation~\eqref{eq:xim2_arg} also implies that
\begin{equation}
    \opnm{Arg}\left[\lp \frac{\pi}d -ih\rp^2-k^2\right]\leq\frac12\opnm{Arg}(\xi_n^2-k^2) \leq  \opnm{Arg}\left[\lp \frac{\pi}d +ih\rp^2-k^2\right],
\end{equation}
for all~$\xi\in V_{\gamma,\epsilon}$. Thus
\begin{multline}
     \left|\frac12\opnm{Arg}(\xi_n^2-k^2) - \arctan \Kslope \right| \\
     \leq \min \left[\left|\frac12\opnm{Arg}\lp\lp \frac{\pi}d +ih\rp^2-k^2\rp - \arctan \Kslope \right|, \left|\frac12\opnm{Arg}\lp\lp \frac{\pi}d -ih\rp^2-k^2\rp + \arctan \Kslope \right|\right]\\
     =\left|\frac12\opnm{Arg}\lp\lp \frac{\pi}d -ih\rp^2-k^2\rp + \arctan \Kslope \right|\\
     = \frac12\opnm{Arg}\lp\lp \frac{\pi}d +ih\rp^2-k^2\rp - \arctan \Kslope=\thetaeta.
\end{multline}
We therefore have that
\begin{equation}
    \eta_{n,2} \leq  \min_{\xi\in V_{\gamma,\epsilon}}\left[\Re \alpha(\xi_n)-\Kslope \Im\alpha(\xi_n)\right]\leq  \alpha((2|n|-1)\pi/d) \cos(\thetaeta).
\end{equation}
Plugging both these estimates into \eqref{eq:real_alpha_z} gives
\begin{equation}
    \Re \lp \alpha\lp\xi_n\rp z\rp \leq \min(\eta_{n,1},\eta_{n,2}) \Re z \leq \alpha((2|n|-1)\pi/d) \cos(\thetaeta)
\end{equation}
for all~$\xi\in V_{\gamma,\epsilon}$, which is the desired result.

To bound~$\alpha(\xi)$ on~$c_{+}$, we note that if~$\xi\in c_+$ then~$\xi$ coincides with~$\tilde\xi_1$ for some~$\tilde\xi\in V_{\gamma,\epsilon}$. The previous argument thus gives that
\begin{equation}
    \max_{\xi\in c_+}\Re \lp \alpha(\xi) z\rp \leq \max_{\tilde\xi\in V_{\gamma,\epsilon}} \Re \lp \alpha(\tilde\xi_1) z\rp\leq \eta_1 \Re z.
\end{equation}
The symmetry of~$\alpha$ implies the same result for~$c_-$.
 \end{proof}

We now work to bound each of the~$w_{nm}$'s. Since~$\alpha(\xi_n)$ has a stationary point in~$V_{\gamma,\epsilon}$ for~$n=0$ and does not for~$n\neq 0$, we shall treat the cases where~$n$ or~$m$ is zero separately.

\subsection{Decay in source and target}

We begin with the easiest case, where both~$n$ and~$m$ are non-zero.
\begin{lemma}\label{lem:nonzero_nm}
    There is a constant~$C_>0$
    \begin{equation}
    \left|\sum_{n,m\neq 0}  w_{nm}(\bx,\by)\right| \leq C_ e^{\eta_1 \Re (x_2+y_2)+h|x_1-y_1|}
    \end{equation}
    whenever~$x_2,y_2\in \Gamma_U$ and~$x_1,y_1\in \bbR$. Further, an identical result holds for any~$x_1$ derivative of~$w_{Re}$ with a different constant~$C$.
\end{lemma}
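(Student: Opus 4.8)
The plan is to estimate each $w_{nm}$ in \eqref{eq:wnm_def} separately and then sum the bounds. Along the contour $c$ the integrand has three factors to control. For the oscillatory one: since $c\subset V_{\gamma,\epsilon}$ we have $|\Im\xi|<h$ on $c$, and since $\Im\xi_n=\Im\xi$ for all $n$, the product satisfies $|e^{i(\xi_nx_1-\xi_my_1)}|=e^{-\Im\xi\,(x_1-y_1)}\leq e^{h|x_1-y_1|}$ uniformly in $n,m$ and in $\xi\in c$. For the two factors involving $x_2,y_2$: the first part of Lemma~\ref{lem:alpha_bd} holds for every $\xi\in V_{\gamma,\epsilon}$, hence on $c$, so that for $n,m\neq0$ and $x_2,y_2\in\Gamma_U$,
\begin{equation*}
    \bigl|e^{\alpha(\xi_n)x_2+\alpha(\xi_m)y_2}\bigr|\leq e^{\eta_n\Re x_2+\eta_m\Re y_2},\qquad \eta_n=\cos\thetaeta\,\alpha\bigl((2|n|-1)\pi/d\bigr).
\end{equation*}
Combining these with the uniform bound $|f_{nm}(\xi)|\leq F$ from Lemma~\ref{lem:fnm_bdd} and the finite length $L_c$ of $c$ gives $|w_{nm}(\bx,\by)|\leq F L_c\,e^{h|x_1-y_1|}\,e^{\eta_n\Re x_2+\eta_m\Re y_2}$ for all $n,m\neq0$.

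Next I would extract the stated rate and check convergence of the resulting series. Because $\alpha$ is negative and strictly decreasing on $(k,\infty)$, with $k<\pi/d$, and $\cos\thetaeta>0$, we have $\eta_n\leq\eta_1<0$ for all $n\neq0$; using $\Re x_2\geq d/2$ on $\Gamma_U$ (and likewise for $y_2$) one writes $e^{\eta_n\Re x_2}\leq e^{\eta_1\Re x_2}e^{(\eta_n-\eta_1)d/2}$. The asymptotics $\alpha(\xi)=-\sqrt{\xi^2}+O(\xi^{-1})$ of Lemma~\ref{lem:alphaprop} give $\eta_n\sim-\cos\thetaeta\,(2|n|-1)\pi/d$, so $\Sigma:=\sum_{n\neq0}e^{(\eta_n-\eta_1)d/2}$ converges geometrically. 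Summing the per-term bound over $n,m\neq0$ then yields
\begin{equation*}
    \Bigl|\sum_{n,m\neq0}w_{nm}(\bx,\by)\Bigr|\leq F L_c\,\Sigma^2\,e^{\eta_1\Re(x_2+y_2)+h|x_1-y_1|},
\end{equation*}
i.e.\ the claim with $C=F L_c\Sigma^2$. For the $l$-th $x_1$-derivative one differentiates \eqref{eq:wnm_def} under the integral sign, which inserts only a factor $(i\xi_n)^l$; since $|\xi_n|$ grows at most linearly in $|n|$ on $c$, this polynomial factor is dominated by the geometric factor $e^{(\eta_n-\eta_1)d/2}$, the differentiated series converges uniformly by the same bounds, and the estimate follows with a different constant.

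I do not expect a real obstacle here: the two facts that carry the argument — the $(n,m)$-uniform bound on $f_{nm}$ and the $\xi$-uniform exponential decay of $e^{\alpha(\xi_n)z}$ on $\Gamma_U$ — are precisely Lemmas~\ref{lem:fnm_bdd} and~\ref{lem:alpha_bd}, which are already in hand. The only step needing a little care is ensuring that the tail sum $\Sigma$ is independent of $x_2,y_2$, which is exactly what the lower bound $\Re x_2\geq d/2$ on $\Gamma_U$ provides, so that the estimate holds uniformly on all of $\Gamma_U\times\Gamma_U$; everything else is bookkeeping of constants.
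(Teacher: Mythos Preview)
Your proof is correct and follows essentially the same line as the paper's: bound each $w_{nm}$ on $c$ via Lemmas~\ref{lem:fnm_bdd} and~\ref{lem:alpha_bd}, then sum. The only cosmetic difference is that the paper controls the tail sum by a convexity/mean-value estimate $\eta_n\leq\eta_1+\partial_\xi\alpha(\pi/d)\frac{2\pi}{d}(|n|-1)\cos\thetaeta$ to get an explicit geometric series, whereas you invoke the asymptotics $\alpha(\xi)=-\sqrt{\xi^2}+O(\xi^{-1})$ and the lower bound $\Re x_2\geq d/2$ on $\Gamma_U$ to extract a fixed convergent factor $\Sigma$; both arguments are equivalent, and your remark that the $x_1$-derivative brings down $i\xi_n$ (linear in $|n|$, hence absorbed by the geometric tail) is in fact a shade more careful than the paper's wording.
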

\begin{proof} 
By Lemma~\ref{lem:alpha_bd}, we have
\begin{equation}
    \left|e^{\alpha(\xi_m)y_2} \right|\leq e^{\eta_m \Re y_2} \quad\text{and}\quad \left|  e^{\alpha(\xi_n)x_2} \right|\leq e^{\eta_n\Re y_2}
\end{equation}
for all~$\xi\in V_{\gamma,\epsilon}$.
We can combine these and integrate over~$c$ to see
\begin{equation}
    |w_{nm}(\bx,\by)| = \left|\int_c e^{i\xi_mx_1-i\xi_my_1}e^{\alpha(\xi_n)x_2+\alpha(\xi_m) y_2}f_{nm} \,\opd \xi \right| \leq |c|F e^{h|x_1-y_1|}  e^{\eta_n \Re x_2+\eta_m \Re x_2} .\label{eq:nm_nonzero_int}
\end{equation}
By symmetry, the same estimate holds for negative~$m$. Summing over~$n$ and~$m$ then gives
\begin{equation}
    \left|\sum_{n,m\neq 0}w_{nm}(\bx,\by)\right| \leq |c|F e^{h|x_1-y_1|} \lp\sum_{n\neq 0} e^{\eta_n\Re x_2}\rp \lp \sum_{m\neq 0} e^{\eta_m\Re y_2} \rp \label{eq:nm_non_sum}
\end{equation}
To bound the remaining sums, we must control the growth of~$\eta_n$. To do this, we note that
\begin{equation}
    \partial_\xi^2 \alpha\lp\frac{(2|n|-1)\pi}d\rp = -\frac{k^2}{\alpha\lp \frac{(2|n|-1)\pi}d\rp^3} >0,
\end{equation}
which implies that~$\partial_\xi \alpha(\xi) \geq \partial_\xi \alpha(\pi/d)$. The mean value theorem thus tells us that
\begin{multline}
    \eta_n = \cos(\thetaeta)\alpha\lp\frac{(2|n|-1)\pi}d\rp \leq  \cos(\thetaeta)\left[ \alpha\lp\frac\pi d\rp+ \frac{\pi}d (2|n|-2) \partial_\xi\alpha\lp\frac\pi d\rp \right]\\
    = \eta_1 + \partial_\xi\alpha\lp\frac{\pi} d\rp\frac{2\pi}d (|n|-1) \cos(\thetaeta) .\label{eq:etan_def}
\end{multline}
Plugging this into~\eqref{eq:nm_non_sum} gives
\begin{equation}
\begin{split}
    \left|\sum_{n,m\neq 0}w_{nm}(\bx,\by)\right| &\leq 4|c|F e^{\eta_1 \Re(x_2+y_2)+h|x_1-y_1|} \lp\sum_{n= 0}^\infty e^{\partial_\xi\alpha\lp\frac\pi d\rp\frac{2\pi}d\cos(\thetaeta) n\Re x_2}\rp \lp \sum_{n= 0}^\infty e^{\partial_\xi\alpha\lp\frac\pi d\rp\frac{2\pi}d\cos(\thetaeta) m\Re y_2} \rp \\
    &=4|c|F e^{\eta_1 \Re(x_2+y_2)+h|x_1-y_1|} \frac{1}{1-e^{\partial_\xi\alpha\lp\frac\pi d\rp\frac{2\pi}d \cos(\thetaeta) \Re x_2}}\frac{1}{1-e^{\partial_\xi\alpha\lp\frac\pi d\rp\frac{2\pi}d\cos(\thetaeta) \Re y_2}}
    \end{split}\label{eq:wnm_non_sum}
\end{equation}
Since~$\Re x_2,\Re y_2\geq d/2$, we have the desired result. Taking~$x_1$ or~$y_1$ derivatives just pulls down powers of~$\xi$ in~\eqref{eq:nm_nonzero_int}, whose modulus can be bounded by~$\sqrt{\pi^2/d^2+h^2}$ and so the derivatives of~$w_{Re}$ can be similarly bounded.
\end{proof}

\subsection{Oscillatory in the source and target}
We now consider the case that both~$n$ and~$m$ are zero.
\begin{lemma}\label{lem:00_bound}
For~$l\geq 0$, there are constants~$A_{l}, C_{l}$, and~$C_{l}$ such that
    \begin{multline}
    \left|\partial_{x_1}^lw_{00}(\bx,\by)- \frac{A_l e^{ik(x_2+y_2)}}{(x_2+y_2)^{\opnm{ceil}(l/2)+1/2}}\right| \leq \frac{C_{l}e^{-k\Im(x_2+y_2)}}{|x_2+y_2|^{\opnm{ceil}(l/2)+1}}e^{h|x_1-y_1|} + C_l e^{\eta_1\Re(x_2+y_2)}e^{h|x_1-y_1|}
    \end{multline}
    for all~$x_2,y_2\in\Gamma_U$ and all~$x_1,y_1\in \bbR$.
\end{lemma}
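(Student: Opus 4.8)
The plan is to evaluate the inverse Floquet--Bloch integral defining $w_{00}$,
\begin{equation*}
    w_{00}(\bx,\by) = \int_c e^{\alpha(\xi)(x_2+y_2)}e^{i\xi(x_1-y_1)}f_{00}(\xi)\,\opd \xi,
\end{equation*}
by a steepest-descent / stationary-phase argument. The key observation, already recorded in the discussion around \eqref{eq:00_descent}--\eqref{eq:00_descent_val}, is that $\alpha(\xi)$ has a stationary point at $\xi=0$ with $\alpha(0)=ik$ and $\alpha''(0)\neq 0$, and that the contour $c_0$ was specifically chosen so that $\alpha(\xi(t)) = ik - kt^2 e^{-i\pi/4}$. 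Thus along $c_0$ the integrand becomes $e^{ik(x_2+y_2)}e^{-kt^2e^{-i\pi/4}(x_2+y_2)}$ times a smooth factor, which is exactly a Gaussian in $t$ whose width shrinks like $|x_2+y_2|^{-1/2}$. First I would substitute $s = t\sqrt{x_2+y_2}$ (using that $x_2+y_2$ lies in a sector where $\Re(e^{-i\pi/4}(x_2+y_2))>0$, guaranteed by the constraint $0\le \Im x_2\le\Kslope\Re x_2$ and $\Kslope$ small, so $\thetaeta<\pi/2$), Taylor-expand the smooth part $e^{i\xi(x_1-y_1)}f_{00}(\xi)$ in powers of $\xi = \xi(s/\sqrt{x_2+y_2})$, and integrate term by term. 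The leading term produces $A_0 e^{ik(x_2+y_2)}/(x_2+y_2)^{1/2}$ with $A_0$ proportional to $f_{00}(0)$ times a Gaussian moment, and the first correction is $O((x_2+y_2)^{-3/2})$ in modulus times $e^{-k\Im(x_2+y_2)}$; the $e^{h|x_1-y_1|}$ factor appears because $|\Im\xi|\le h$ on $c$ and $f_{00}$ is bounded by Lemma~\ref{lem:fnm_bdd}. For the $\partial_{x_1}^l$ derivative, differentiating under the integral pulls down a factor $(i\xi_0)^l=(i\xi)^l$, which on the rescaled contour is $O(s^l (x_2+y_2)^{-l/2})$; combined with the Gaussian this shifts the leading algebraic power to $(x_2+y_2)^{-\opnm{ceil}(l/2)-1/2}$ (odd $l$ loses the would-be leading odd moment, which vanishes by symmetry of the Gaussian, hence the ceiling), with remainder one power smaller.

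The second ingredient is to control the contributions of the straight pieces $c_\pm$ of the contour. On $c_+\cup c_-$ Lemma~\ref{lem:alpha_bd} gives $\Re(\alpha(\xi)(x_2+y_2))\le \eta_1\Re(x_2+y_2)$, and since $\eta_1<0$ (as $\alpha(\pi/d)<0$ and $\cos\thetaeta>0$) together with $|f_{00}|\le F$ and $|e^{i\xi(x_1-y_1)}|\le e^{h|x_1-y_1|}$, these pieces are bounded by $C_l e^{\eta_1\Re(x_2+y_2)}e^{h|x_1-y_1|}$, which is precisely the second term in the claimed estimate. One also has to handle the error from truncating $c_0$ at $|\Re\xi|=\pi/d$ rather than extending the Gaussian integral to all of $\mathbb{R}$: the tails decay like $e^{-c|x_2+y_2|}$ for some $c>0$ (again using $\Re(e^{-i\pi/4}(x_2+y_2))>0$), which is absorbed into the exponentially small term as well.

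The main obstacle I anticipate is making the term-by-term Taylor expansion of the amplitude rigorous and uniform: $f_{00}(\xi)$ is only known to be analytic and bounded on $V_{\gamma,\epsilon}$ (Lemma~\ref{lem:ops_analytic}, Lemma~\ref{lem:fnm_bdd}), so I need Cauchy estimates on its derivatives in a fixed neighborhood of $\xi=0$ inside $V_{\gamma,\epsilon}$, and I need to check that after the rescaling $s=t\sqrt{x_2+y_2}$ the remainder term genuinely gains a full power of $|x_2+y_2|^{-1}$ uniformly for $x_2,y_2\in\Gamma_U$ — in particular uniformly as $|x_2+y_2|\to\infty$ within the allowed sector and uniformly in $x_1-y_1$ (which only enters through the bounded factor $e^{i\xi(x_1-y_1)}$, contributing the $e^{h|x_1-y_1|}$). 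A careful bookkeeping of which Gaussian moments vanish gives the $\opnm{ceil}(l/2)$ in the exponent; this parity argument is routine but must be stated precisely. Everything else (continuity of $A_l$, existence of the constants) follows from dominated convergence and the boundedness of $f_{00}$ on $c$.
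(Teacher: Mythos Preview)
Your proposal is correct and matches the paper's approach essentially line for line: the same decomposition $c=c_0\cup c_-\cup c_+$, Laplace's method on the descent piece $c_0$ using the parameterization \eqref{eq:00_descent}--\eqref{eq:00_descent_val} (the paper simply cites \cite{epstein2025complex} for this computation, whereas you sketch the rescaling, Taylor expansion, and parity argument yourself), and the bound on $c_\pm$ via Lemma~\ref{lem:alpha_bd} together with $|\Im\xi|\le h$ and Lemma~\ref{lem:fnm_bdd}. The only cosmetic difference is that the paper records the $c_0$ remainder as $O(|x_2+y_2|^{-(l+3)/2})$, which is at least as strong as the $|x_2+y_2|^{-\operatorname{ceil}(l/2)-1}$ in the statement.
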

\begin{proof}
    We split the integral
\begin{equation}\label{eq:w00_int}
    \partial_{x_1}^lw_{00}(\bx,\by)=\int_c (i\xi)^le^{i\xi(x_1-y_1)+\alpha(\xi)(x_2+y_2)}f_{00}(\xi) \, \opd \xi=I_0 + I_- + I_+,
\end{equation}
where~$I_0,I_\pm$ are integrals over contours~$c_0,c_\pm$. 

In~\cite{epstein2025complex}, the authors used Laplace's method to prove the integral over~$c_0$ will be
\begin{equation}
    I_{0} =  \frac{e^{ik (x_2+y_2)}}{(x_2+y_2)^{\opnm{ceil}(l/2)+1/2}} A_l + e^{ik(x_2+y_2)}O\lp |x_2+y_2|^{-(l+3)/2}e^{h|x_1-y_1|}\rp.
\end{equation}

To bound~$I_+$, we note
\begin{equation}
    I_{+} = \int_{c_+} e^{\alpha\lp \xi\rp (x_2+y_2)+\xi (x_1-y_1)}f_{00}(\xi) \,\opd \xi.
\end{equation}
By definition, $|\Im\xi|$ will be less than~$h$. We can thus bound~$I_{+}$ by
\begin{equation}
    |I_{+}| \leq 2|c_+| Fe^{h|x_1-y_1|}\max_{\xi\in c_+}  e^{\Re\left[\alpha\lp \xi\rp (x_2+y_2)\right]} \leq 2|c_+| Fe^{h|x_1-y_1|+\eta_1 \Re (x_2+y_2)},
\end{equation}
where we used Lemma~\ref{lem:alpha_bd} in the second inequality.
The integral $I_{-}$ can be bounded similarly. Adding the bounds on~$I_0$,~$I_-$ and~$+I_+$ gives the desired result.
\end{proof}

\subsection{Oscillatory in the target and decay in the source }

We now consider the case that~$n=0$ and~$m\neq 0$. We start with a lemma. 

\begin{lemma}\label{lem:real_decay_bd}
Let~$U$ be any closed subset in the interior of~$V_{\gamma,\epsilon}$.
    For each~$l\geq 0$, there is a~$C_l$ such that
    \begin{equation}
        \left|\partial_\xi^l e^{\alpha(\xi_n) y_2}\right| \leq C_le^{(\eta_n+\epsilon) y_2}
    \end{equation}
    for all~$n\neq 0$,~$\xi\in U$, and~$y_2\in \Gamma_U$.
\end{lemma}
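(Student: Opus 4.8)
The plan is to reduce the statement to the chain rule together with two uniform bounds: one on $e^{\alpha(\xi_n)y_2}$ supplied by Lemma~\ref{lem:alpha_bd}, and one on the derivatives $\alpha^{(m)}(\xi_n)$ supplied by the explicit formula for $\alpha$ in Lemma~\ref{lem:alphaprop}.

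First I would differentiate. Since $\partial_\xi\xi_n=1$, repeated use of the chain and product rules (Fa\`a di Bruno's formula) gives
\[
\partial_\xi^l e^{\alpha(\xi_n)y_2}=Q_l(\xi_n;y_2)\,e^{\alpha(\xi_n)y_2},
\]
where $Q_l$ is a polynomial in $y_2$ of degree at most $l$ (with no constant term once $l\geq1$) whose coefficients are universal polynomials in $\alpha'(\xi_n),\dots,\alpha^{(l)}(\xi_n)$; concretely every monomial of $Q_l$ has the shape $y_2^{\,p}\prod_{m\geq1}\bigl(\alpha^{(m)}(\xi_n)\bigr)^{q_m}$ with $p=\sum_m q_m$ and $\sum_m m q_m=l$, so that $p\leq l$. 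This is immediate by induction on $l$.

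Next I would establish the two ingredients. For the exponential factor, Lemma~\ref{lem:alpha_bd} applied with $z=y_2\in\Gamma_U$ gives $\Re\bigl(\alpha(\xi_n)y_2\bigr)\leq\eta_n\Re y_2$ for every $n\neq0$ and $\xi\in V_{\gamma,\epsilon}\supseteq U$, hence $|e^{\alpha(\xi_n)y_2}|\leq e^{\eta_n\Re y_2}$. For the derivatives of $\alpha$, I would note that after shrinking $\epsilon$ if necessary so that $\pi/d-\epsilon>k$, one has $|\Re\xi_n|>k$ for all $n\neq0$ and all $\xi\in V_{\gamma,\epsilon}$, so Lemma~\ref{lem:alphaprop} gives $\alpha(\xi_n)=-\sqrt{\xi_n^2-k^2}$ there; differentiating $\alpha(\xi)^2=\xi^2-k^2$ yields $\alpha'=\xi/\alpha$, $\alpha''=-k^2/\alpha^3$, and inductively each $\alpha^{(m)}$ with $m\geq1$ is a rational function of $\xi$ and $\alpha$. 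Since $|\alpha(\xi_n)|^2=|\xi_n^2-k^2|\geq|\xi_n|^2-k^2$ and $|\xi_n|\geq|\Re\xi_n|\geq\pi/d-\epsilon>k$, the modulus $|\alpha(\xi_n)|$ is simultaneously bounded below by a positive constant and comparable to $|\xi_n|$; feeding these two facts into the rational expressions shows $|\alpha^{(m)}(\xi_n)|\leq M_m$ for a constant $M_m$ independent of $n\neq0$ and $\xi\in U$ (in fact $M_m\to0$ for $m\geq2$).

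Finally I would combine: the above gives $\bigl|\partial_\xi^l e^{\alpha(\xi_n)y_2}\bigr|\leq\tilde C_l\,(1+|y_2|)^l\,e^{\eta_n\Re y_2}$. For $y_2\in\Gamma_U$ we have $0\leq\Im y_2\leq\Kslope\Re y_2$ and $\Re y_2\geq d/2>0$, so $|y_2|\leq\sqrt{1+\Kslope^2}\,\Re y_2$ and hence $(1+|y_2|)^l\leq C_{l,\epsilon}\,e^{\epsilon\Re y_2}$, which yields
\[
\bigl|\partial_\xi^l e^{\alpha(\xi_n)y_2}\bigr|\leq C_l\,e^{(\eta_n+\epsilon)\Re y_2},
\]
the claimed bound (the displayed right-hand side $C_le^{(\eta_n+\epsilon)y_2}$ being read as its modulus). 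The one step that requires care is the uniform-in-$n$ control of $\alpha^{(m)}(\xi_n)$: because $\xi_n\to\infty$ as $|n|\to\infty$, a naive bound on the rational expressions would grow, and one must exploit that $|\alpha(\xi_n)|$ is comparable to $|\xi_n|$ to see that it stays bounded; the remaining manipulations are routine.
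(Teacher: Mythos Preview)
Your proof is correct and follows essentially the same route as the paper's: write $\partial_\xi^l e^{\alpha(\xi_n)y_2}$ as $e^{\alpha(\xi_n)y_2}$ times a polynomial in $y_2$ with coefficients built from derivatives of $\alpha$, bound the exponential by Lemma~\ref{lem:alpha_bd}, bound the polynomial by $(1+|y_2|)^l$ using uniform-in-$n$ control of $\alpha^{(m)}(\xi_n)$, and absorb the polynomial into $e^{\epsilon\Re y_2}$. The only substantive difference is in how the uniform bound on $\alpha^{(m)}(\xi_n)$ is obtained: you compute each $\alpha^{(m)}$ explicitly as a rational function of $\xi$ and $\alpha$ and use that $|\alpha(\xi_n)|$ is comparable to $|\xi_n|$, whereas the paper bounds only $\alpha'(\xi_n)$ directly (via $|\xi_n|/|\alpha(\xi_n)|\to1$) and then invokes Cauchy's integral formula on $\partial V_{\gamma,\epsilon}$ to control higher derivatives on the compact set $U$. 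Both arguments are short and give what is needed; the Cauchy estimate is slightly slicker in that it avoids tracking the explicit rational expressions, while your version makes the decay of $\alpha^{(m)}$ for $m\geq2$ transparent.
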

\begin{proof}
We begin by bounding the derivatives of~$\alpha(\xi_n)$. We recall
    \begin{equation}
        \partial_\xi\alpha(\xi_n) = \frac{\xi_n}{\alpha(\xi_n)},
    \end{equation}
    which is analytic for~$\xi \in V_{\gamma,\epsilon}$. We also have that
    \begin{equation}
        \max_{\xi\in V_{\gamma,\epsilon}}|\partial_\xi\alpha(\xi_n)| \leq \frac{(2|n|+1)\pi/d}{|\alpha((2|n|-1)\pi/d)|}.
    \end{equation}
    This is a continuous function of~$n$ and
    \begin{equation}
        \lim_{n\to \infty}\frac{(2|n|+1)\pi/d}{|\alpha((2|n|-1)\pi/d)|} = 1,
    \end{equation}
    so there is a~$C$ such that
    \begin{equation}
        \max_{\xi\in V_{\gamma,\epsilon},n\neq 0}|\partial_\xi\alpha(\xi_n)| \leq C.
    \end{equation}
    Cauchy's integral formula thus implies that
    \begin{equation}\label{eq:bd_all_der_alph}
        \max_{\xi\in U,n\neq 0}|\partial_\xi^l\alpha(\xi_n)| \leq \frac{l! |\partial V_{\gamma,\epsilon}|}{2\pi(\opnm{dist}( \partial V_{\gamma,\epsilon},U))^{l+1}} C
    \end{equation}
    for all~$n$.

    The product rule implies that
    \begin{equation}
        \partial_\xi^l e^{\alpha(\xi_n) y_2} = e^{\alpha(\xi_n) y_2} q_l(y_2;\xi_n),
    \end{equation}
    where~$q_l$ is a degree~$l$ polynomial in~$y_2$ whose coefficients are all derivatives of~$\alpha(\xi_n)$. Lemma~\ref{lem:alpha_bd} and the estimate~\eqref{eq:bd_all_der_alph} thus give that
    \begin{equation}
         \left|\partial_\xi^l e^{\alpha(\xi_n) y_2}\right| \leq C_le^{\eta_n \Re y_2}(1+|y_2|)^l.
    \end{equation}
    Since for any epsilon there is a constant such that
    \begin{equation}
        (1+|y_2|)^l \leq D_l e^{\frac{\epsilon}{\sqrt{1+\Kslope^2}} |y_2|} \leq D_l e^{\epsilon\Re y_2},
    \end{equation}
    we have proved the result.
\end{proof}

\begin{proposition}\label{prop:0n_close}
Let
\begin{equation}
    w_{0n,\opnm{close}}(\bx,\by) := \int_{c_0} e^{i(\xi x_1 -\xi_n y_1) + \alpha(\xi) x_2+\alpha(\xi_n) y_2} f_{0n}(\xi) \,\opd \xi.
\end{equation}
    If~$l\geq 0$, then there exist a function~$a_{n,l}(x_1,y_1,y_2)$ that is analytic in~$y_2$ such that
    \begin{equation} \label{eq:w0_nclose}
        \left| \partial_{x_2}^lw_{0n,\opnm{close}}(\bx,\by) - \frac{a_{n,l}(x_1,y_1,y_2) e^{ikx_2}}{x_2^{\opnm{ceil}(l/2)+1/2}}\right| \leq C_{l} \frac{e^{-k \Im x_2+(\eta_n+\epsilon)\Re y_2}e^{h|x_1-y_1|}}{|x_2|^{(l+3)/2}} 
    \end{equation}
    for all~$x_2,y_2\in \Gamma_U$, where~$C_l$ is independent of~$n$, and~$\eta_n$ is given by~\eqref{eq:etan_def}. Further, the functions $a_{n,l}$ satisfy
    \begin{equation}
        |a_{n,l}(x_1,y_1,y_2)| \leq D_l e^{\lp\cos \thetaeta \alpha\lp \frac{2|n|\pi}{d}\rp + \epsilon\rp \Re y_2}e^{h|x_1-y_1|}
    \end{equation}
    for some $D_l$ independent of~$n$.
\end{proposition}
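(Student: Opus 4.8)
The plan is to evaluate the $c_0$-integral by Laplace's method with $x_2$ as the large parameter, keeping the dependence on $n$ explicit throughout so that all constants come out uniform in $n$. Since $c_0$ is a compact arc and $f_{0n}$ is analytic on a neighborhood of $c_0$ (Lemma~\ref{lem:ops_analytic}) while the remaining factors are entire, differentiation under the integral sign is immediate and
\[
\partial_{x_2}^l w_{0n,\opnm{close}}(\bx,\by)=\int_{c_0}\alpha(\xi)^l\,e^{i(\xi x_1-\xi_n y_1)+\alpha(\xi)x_2+\alpha(\xi_n)y_2}\,f_{0n}(\xi)\,\opd\xi .
\]
I would then pass to the descent parameter $t$ from \eqref{eq:00_descent}, using $\alpha(\xi(t))=ik-ke^{-i\pi/4}t^2$ from \eqref{eq:00_descent_val}, so that $e^{\alpha(\xi(t))x_2}=e^{ikx_2}e^{-ke^{-i\pi/4}x_2 t^2}$ and
\[
\partial_{x_2}^l w_{0n,\opnm{close}}(\bx,\by)=e^{ikx_2}\int_{-T}^{T}\psi_{n,l}(t;x_1,y_1,y_2)\,e^{-ke^{-i\pi/4}x_2 t^2}\,\opd t,
\]
with $\psi_{n,l}(t;x_1,y_1,y_2)=\alpha(\xi(t))^l\,e^{i(\xi(t)x_1-\xi_n(t)y_1)+\alpha(\xi_n(t))y_2}f_{0n}(\xi(t))\,\xi'(t)$. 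Because $\Kslope$ is small enough that $\thetaeta<\pi/2$, every $x_2\in\Gamma_U$ satisfies $\arg x_2<\pi/4$, so $\Re(e^{-i\pi/4}x_2)>0$ and the Gaussian weight concentrates at the nondegenerate saddle $t=0$, where $\xi(0)=0$.

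Next I would run the usual Laplace/Watson expansion: Taylor-expand the slowly varying amplitude $\psi_{n,l}$ about $t=0$ to the order needed to isolate the leading non-vanishing contribution, replace $[-T,T]$ by $\bbR$ at the cost of an error that is exponentially small in $x_2$ (as $e^{-ke^{-i\pi/4}x_2 T^2}$ decays), integrate the resulting monomials in $t$ against the Gaussian (odd powers integrate to zero), and collect the first surviving term. This produces the main term $a_{n,l}(x_1,y_1,y_2)\,e^{ikx_2}/x_2^{\opnm{ceil}(l/2)+1/2}$ of~\eqref{eq:w0_nclose}, exactly as in the computation carried out in~\cite{epstein2025complex} for Lemma~\ref{lem:00_bound}; here $a_{n,l}$ is a fixed linear combination of a few low-order $t$-Taylor coefficients of $\psi_{n,l}$ at $t=0$, and since the $y_2$-dependence of $\psi_{n,l}$ sits entirely in the entire function $e^{\alpha(\xi_n(t))y_2}$, each such coefficient, and hence $a_{n,l}$, is analytic (indeed entire) in $y_2$. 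Bounding the Taylor remainder of $\psi_{n,l}$ and integrating it against the Gaussian gives the error estimate $O\!\big(e^{-k\Im x_2}|x_2|^{-(l+3)/2}\big)$ of~\eqref{eq:w0_nclose}.

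The main obstacle is making every constant uniform in $n$, which is precisely why this proposition is needed rather than a direct appeal to Lemma~\ref{lem:00_bound}. The $t$-Taylor coefficients of $\psi_{n,l}$ are products of: $|f_{0n}(\xi)|\le F$ uniformly in $n$ and $\xi\in V_{\gamma,\epsilon}$ (Lemma~\ref{lem:fnm_bdd}); derivatives of $f_{0n}(\xi(t))$ in $t$, bounded uniformly in $n$ by a Cauchy estimate off $c_0$; the $n$-independent factors $\alpha(\xi(t))^l,\xi(t),\xi'(t)$; and derivatives in $t$ of $e^{\alpha(\xi_n(t))y_2}$ and $e^{-i\xi_n(t)y_1}$, for which I would use the $n$-uniform bounds on the $\xi$-derivatives of $\alpha(\xi_n)$ from~\eqref{eq:bd_all_der_alph} together with Lemma~\ref{lem:alpha_bd} to control $|e^{\alpha(\xi_n(t))y_2}|\le e^{\eta_n\Re y_2}$ on $c_0$ for $y_2\in\Gamma_U$. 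The polynomial-in-$y_2$ prefactors produced by these $t$-differentiations are absorbed into the exponential by sacrificing a factor $\cos\thetaeta$ in the rate, exactly as in the proof of Lemma~\ref{lem:real_decay_bd}, which is what produces $\cos\thetaeta\,\alpha(2|n|\pi/d)+\epsilon$ in the bound on $a_{n,l}$ and $\eta_n+\epsilon$ in the error term; the $e^{h|x_1-y_1|}$ factor comes from writing $\xi(t)x_1-\xi_n(t)y_1=\xi(t)(x_1-y_1)-\tfrac{2\pi n}{d}y_1$, using $|\Im\xi(t)|\le h$ on $c_0$ so that $|e^{i\xi(t)(x_1-y_1)}|\le e^{h|x_1-y_1|}$, and absorbing the polynomial $(x_1-y_1)$-factors from the Taylor expansion into the same exponential. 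Carrying these uniform bounds through the Watson estimate yields $C_l$ and $D_l$ independent of $n$, completing the argument.
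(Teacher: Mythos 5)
Your roadmap matches the paper's — parameterize $c_0$ by $t$ via \eqref{eq:00_descent}, use $\alpha(\xi(t))=ik-ke^{-i\pi/4}t^2$ to pull out $e^{ikx_2}$ and a Gaussian in $t$, Taylor expand the amplitude, bound the remainder, and keep all constants uniform in $n$ via Lemma~\ref{lem:fnm_bdd}, Cauchy estimates, and the $\eta_n$-decay of $e^{\alpha(\xi_n)y_2}$. But there is a genuine gap at the first step. You differentiate with respect to $x_2$, correctly obtaining the factor $\alpha(\xi)^l$. Since $\alpha(\xi(0))=ik\neq 0$, your amplitude $\psi_{n,l}$ does not vanish at the saddle $t=0$, so Laplace's method gives a leading term of order $x_2^{-1/2}$ for \emph{every} $l$. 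For $l\geq 1$ this is incompatible with a main term proportional to $x_2^{-\lceil l/2\rceil-1/2}$ with a remainder of order $x_2^{-(l+3)/2}$: the true leading $O(x_2^{-1/2})$ contribution would land in what you try to call the remainder. So the sentence ``This produces the main term $a_{n,l}e^{ikx_2}/x_2^{\lceil l/2\rceil+1/2}$'' does not follow from the preceding display.

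The claimed rate requires the factor $(i\xi)^l$, not $\alpha(\xi)^l$ — i.e.\ $\partial_{x_1}^l$, not $\partial_{x_2}^l$. Since $\xi(t)=t\,v(t)$ with $v(0)\neq 0$, the factor $(i\xi(t))^l=i^lt^lv(t)^l$ supplies a zero of order $l$ at the saddle; this $t^l$, together with the vanishing of odd Gaussian moments, is exactly what yields the main term of order $x_2^{-\lceil l/2\rceil-1/2}$ and remainder of order $x_2^{-(l+3)/2}$. The $\partial_{x_2}^l$ in the statement is almost certainly a typo: Lemma~\ref{lem:00_bound}, Proposition~\ref{lem:0n_estimate}, and Theorem~\ref{thm:split_w} all use $\partial_{x_1}^l$, and the paper's own proof of this proposition writes $i^l\xi^l(t)$ — not $\alpha(\xi(t))^l$ — inside the integrand. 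Once you replace $\alpha(\xi)^l$ by $i^l t^l v(t)^l$ and fold the smooth nonvanishing factor $v(t)^l$ into the amplitude $\tilde f_n$, the rest of your argument (Taylor expansion at $t=0$, extraction of the first surviving moment, $n$-uniform bounds from Lemma~\ref{lem:fnm_bdd}, \eqref{eq:bd_all_der_alph}, and Lemma~\ref{lem:alpha_bd}, and absorption of polynomial factors into $e^{(\eta_n+\epsilon)\Re y_2}$ and $e^{h|x_1-y_1|}$) goes through as you described and recovers the paper's proof.
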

\begin{proof}
Let~$\xi(t)$ be the parameterization from~\eqref{eq:00_descent} and~$\xi(t_0)$ be the end of~$c_0$ with positive real parameter. Using this parameterization, we have
\begin{multline}
    \partial_{x_2}^lw_{0n,\opnm{close}}(\bx,\by) \\
    = e^{ikx_2} i^l\int_{-t_0}^{t_0} e^{-ke^{-i\pi/4} t^2 x_2}e^{\alpha(\xi(t)+\beta_n) y_2}e^{i\xi(t)(x_1-y_1)-i\beta_n y_1} f_{0n}(\xi(t))\xi^l(t)\partial_t \xi(t) \,\opd t,\label{eq:w_0n_expand}
\end{multline}
where $\beta_n = 2\pi n/d$.
By definition, we can write~$\xi(t) = t v(t)$ for a smooth~$v(t)$.
We let
\begin{equation}
    \tilde g(t;x_1,y_1)= e^{i\xi(t)(x_1-y_1)}\partial_t \xi(t) v^l(t)
\end{equation}
and
\begin{equation}
    \tilde f_n(t;x_1,y_1,y_2)= e^{\alpha(\xi(t)+\beta_n) y_2}\tilde g(t;x_1,y_1) f_{0n}(\xi(t))e^{-i\beta_n y_1}.
\end{equation}
With these definitions, the integral in \eqref{eq:w_0n_expand} becomes
\begin{equation}
        I =\int_{-t_0}^{t_0}t^l e^{-ke^{-i\pi/4} t^2 x_2}\tilde f_n(t;x_1,y_1,y_2) \,\opd t.
    \end{equation} 
To find our asymptotic estimate, we shall Taylor expand~$\tilde f_n$ about~$t=0$. To bound the terms of the Taylor series, we bound each piece of~$\tilde f_n$, beginning with~$\tilde g$. Since~$v(t)$ is smooth, it is easy to see that 
\begin{equation}
    \max_{|t|\leq t_0}|\partial_t^j\tilde g(t;x_1,y_1)| \leq C_j e^{\max_{|t|\leq t_0}|\Im \xi(t)||x_1-y_1|}(1+|x_1-y_1|)^j \leq  D_j e^{h|x_1-y_1|}
\end{equation}
for all~$j\geq0$. We bound~$\partial_\xi^j f_{0n}(\xi(t))$ using Lemma~\ref{lem:fnm_bdd}. Applying Cauchy's integral formula on~$\partial V_{\gamma,\epsilon}$ gives
\begin{equation}
    \max_{|t|\leq \epsilon} |\partial_\xi^j f_{0n}(\xi(t))| \leq \frac{j! |\partial V_{\gamma,\epsilon}|}{2\pi \opnm{dist}(\partial V_{\gamma,\epsilon},c_0)^{j+1}} F,
\end{equation}
which implies that~$\max_{|t|\leq \epsilon} |\partial_t^j f_{0n}(\xi(t))| \leq \tilde C_j$.
Finally, we can bound the~$y_2$-dependent term:
\begin{equation}
    \left|\partial_t^j e^{\alpha(\xi(t)+\beta_n) y_2}\right| \leq \tilde D_j e^{(\eta_n+\epsilon)\Re y_2}
\end{equation}
using Lemma~\ref{lem:real_decay_bd} and the smoothness of~$\xi(t)$. At~$t=0$ we have the tighter bounder
\begin{equation}
    \left.\left|\partial_t^j e^{\alpha(\xi(t)+\beta_n) y_2}\right|\right|_{t=0} \leq \tilde D_j e^{(\cos(\thetaeta)\alpha(\beta_n)+\epsilon)\Re y_2}.
\end{equation}
Combining these terms allows us to write the Taylor series of~$\tilde f_n$:
\begin{equation}
    \tilde f_n(t;x_1,y_1,y_2) = t^l g_{n,l,0}(x_1,y_1,y_2)+t^{l+1} g_{n,l,1}(x_1,y_1,y_2)+ t^{l+2} G_{n,l}(t;x_1,y_1,y_2).\label{eq:fn_Taylor}
\end{equation}
where
\begin{equation}
    |g_{n,l,j}(x_1,y_1,y_2)|\leq C_{l,j} e^{(\cos(\thetaeta)\alpha(\beta_n)+\epsilon)\Re y_2}
\end{equation}
and
\begin{equation}
     \max_{|t|\leq t_0}\left|G_{n,l}(t;x_1,y_1,y_2)\right|\leq D_{l} e^{(\eta_n+\epsilon)\Re y_2}e^{h|x_1-y_1|}.
\end{equation}
We also have that~$g_{n,l,j}$ is analytic in~$y_2$ because~$\tilde f_n$ is.
We can thus write
\begin{equation}
     \partial_{x_2}^lw_{0n,\opnm{close}}(\bx,\by) = e^{ikx_2}i^l\lp I_{l,0}+I_{l,1}+ J_{l}  \rp, \label{eq:partial_w0n_expand}
\end{equation}
where
\begin{equation}
    I_{l,j} = \int_{-t_0}^{t_0}e^{-ke^{-i\pi/4} t^2 x_2} t^{l+j} g_{n,l,j}(x_1,y_1,y_2)\, \opd t
\end{equation}
and
\begin{equation}
    J_{l} = \int_{-t_0}^{t_0}e^{-ke^{-i\pi/4} t^2 x_2} t^{l+1}G_{n,l}(t;x_1,y_1,y_2)\, \opd t.
\end{equation}
It is clear that~$I_{l,j} =0$ if~$l+j$ is odd. To bound the other terms, we compute:
\begin{multline}
    I_{l,j}=  g_{n,l}(x_1,y_1,y_2)\int_{-t_0}^{t_0}e^{-kt^2e^{-i\pi/4} x_2} t^{l+j}\, \opd t =  g_{n,l,j}(x_1,y_1,y_2)\lp\int_{-\infty}^\infty e^{-kt^2e^{-i\pi/4} x_2} t^{j+l}\, \opd t +\mathcal{E}_{n,l,j}\rp\\
    = g_{n,l,j}(x_1,y_1,y_2)\lp \tilde C_{j+l} \frac{1}{(e^{-i\pi/4} x_2)^{(l +j+1)/2}}+\mathcal{E}_{n,l,j}\rp,
\end{multline}
where the fractional power is defined to be the principal root, which exists because~$e^{-i\pi/4} x_2$ is in the right half plane, and $|\mathcal{E}_{n,l,j}(x_2)|\leq Ce^{-k \Re[e^{-i\pi/4} x_2]t_0^2} $. Since~$x_2$ is in the first quadrant, we can rewrite this as
\begin{equation}
 I_{l,j}=  \begin{cases}
     \frac{\tilde C_{j+l} g_{n,l,j}(x_1,y_1,y_2)}{e^{-i\pi (l +j+1)/8}x_2^{(l +j+1)/2}} + g_{n,l,j}(x_1,y_1,y_2)\mathcal{E}_{n,l}(x_2) & l+j \text{ even}\\
     0 & l+j \text{ odd}
 \end{cases}.\label{eq:Ilj_form}
\end{equation}
We can bound the remainder term:
\begin{multline}
    |J_{l}| \leq  D_{l} e^{(\eta_n+\epsilon)\Re y_2}e^{h|x_1-y_1|} \int_{-\infty}^{\infty}e^{-kt^2 \Re[e^{-i\pi/4} x_2]} |t|^{l+2}\, \opd t \\
    =\tilde D_{l} e^{(\eta_n+\epsilon)\Re y_2}e^{h|x_1-y_1|} \frac{1}{|x_2|^{(l+3)/2}},
\end{multline}
where we have used the fact that~$x_2\in \Gamma_U$ to bound~$\Re[e^{-i\pi/4} x_2]$ by~$|x_2|$.

If $l$ is even, then adding up the pieces of~\eqref{eq:partial_w0n_expand} gives
\begin{equation}
    \left|\partial_{x_2}^lw_{0n,\opnm{close}}(\bx,\by) - e^{ikx_2}i^l I_{l,0} \right|\leq  \tilde D_{l} e^{-k\Im x_2+(\eta_n+\epsilon)\Re y_2}e^{h|x_1-y_1|} \frac{1}{|x_2|^{(l+3)/2}},
\end{equation}
which gives the desired result with $a_{n,l} = \frac{\tilde C_{l}}{e^{-i\pi(l+1)/8}}g_{n,l,0}$ by \eqref{eq:Ilj_form}. If~$l$ is odd, we find
\begin{equation}
    \left|\partial_{x_2}^lw_{0n,\opnm{close}}(\bx,\by) - e^{ikx_2}i^l I_{l,1} \right|\leq  \tilde D_{l} e^{-k\Im x_2+(\eta_n+\epsilon)\Re y_2}e^{h|x_1-y_1|} \frac{1}{|x_2|^{(l+3)/2}},
\end{equation}
which gives the desired result with $a_{n,l} = \frac{\tilde C_{l+1}}{e^{-i\pi(l+2)/8}} g_{n,l,1}$.
\end{proof}
\begin{remark}
We could add more terms to the asymptotic expansion in \eqref{eq:w0_nclose} by taking more terms in the Taylor series~\eqref{eq:fn_Taylor}. It is also important to note that the asymptotic term in \eqref{eq:w0_nclose} is only an asymptotic form as $x_2\to \infty$ for fixed~$x_1,y_1,y_2$. As~$y_2\to \infty$ the asymptotic form decays faster than the remainder does and so it isn't a true asymptotic.
\end{remark}

To finish off our estimate of~$w_{0n}$ we have to integrate over the vertical strips connecting~$\xi = \pm\frac\pi d$ to the descent contours.
\begin{lemma} \label{lem:0n_connect}
    Let
    \begin{equation}
        w_{0n,\pm}(\bx,\by):=\int_{c_\pm} e^{\alpha(\xi)x_2+\alpha(\xi_n )y_2+i\xi(x_1-y_1)-i\beta_n y_1} f_{0n}\lp\xi\rp \, \opd \xi,
    \end{equation}
    where~$c_{\pm}$ is a vertical strip contained in~$\{ \xi \in \bbC,|\, \Re \xi = \pm \frac{\pi}{d},\, \pm\Im \xi \geq 0\}$ of length less than or equal to~$h$. If~$x_2,y_2\in \Gamma_U$, then
        \begin{equation}
        \left|\partial_{x_1}^lw_{0n,\pm}(\bx,\by)\right|\leq C_l Fe^{\eta_1\Re x_2 +\eta_n\Re y_2}e^{h|x_1-y_1|}
    \end{equation}
    for all~$x_2,y_2\in \Gamma_U$ and~$l\geq 0$.
    Similar results hold for the~$x_1$ and~$y_1$ derivatives of~$w_{0n,\pm}$.
\end{lemma}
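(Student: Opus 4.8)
The plan is to bound $w_{0n,\pm}$ by treating it just like the terms $I_\pm$ in the proof of Lemma~\ref{lem:00_bound}, using the key input that the contour $c_\pm$ lies on the vertical line $\Re\xi=\pm\pi/d$ with $|\Im\xi|\leq h$, so that $\xi$ coincides with $\tilde\xi_1$ for some $\tilde\xi\in V_{\gamma,\epsilon}$, and that along such a contour both $\left|e^{\alpha(\xi)x_2}\right|$ and $\left|e^{\alpha(\xi_n)y_2}\right|$ decay exponentially in $\Re x_2$ and $\Re y_2$ respectively by Lemma~\ref{lem:alpha_bd}. First I would write, for each $l\geq 0$,
\begin{equation}
    \partial_{x_1}^l w_{0n,\pm}(\bx,\by)=\int_{c_\pm}(i\xi)^l e^{\alpha(\xi)x_2+\alpha(\xi_n)y_2+i\xi(x_1-y_1)-i\beta_n y_1}f_{0n}(\xi)\,\opd\xi,
\end{equation}
which is justified by differentiating under the integral sign as in the proof of Lemma~\ref{lem:nonzero_nm} (the $x_1$-derivative only pulls down a power of $\xi$, whose modulus is bounded by $\sqrt{\pi^2/d^2+h^2}$ on $c_\pm$; the $y_1$-derivative pulls down a power of $\xi_n$, whose modulus grows only linearly in $n$ and is absorbed into the constant $C_l$ after summation, exactly as in Lemma~\ref{lem:nonzero_nm}).

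Next I would bound the integrand pointwise. Since $|\Im\xi|\leq h$ on $c_\pm$ we have $\left|e^{i\xi(x_1-y_1)}\right|\leq e^{h|x_1-y_1|}$ and $\left|e^{-i\beta_n y_1}\right|=1$ (recall $y_1\in\bbR$); by Lemma~\ref{lem:fnm_bdd}, $|f_{0n}(\xi)|\leq F$ uniformly in $n$ and $\xi$; and by Lemma~\ref{lem:alpha_bd}, since every point of $c_\pm$ is of the form $\tilde\xi_1$ for $\tilde\xi\in V_{\gamma,\epsilon}$, we get $\Re(\alpha(\xi)x_2)\leq\eta_1\Re x_2$ and $\Re(\alpha(\xi_n)y_2)\leq\eta_n\Re y_2$. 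Putting these together and integrating over the contour, whose length is at most $h$, gives
\begin{equation}
    \left|\partial_{x_1}^l w_{0n,\pm}(\bx,\by)\right|\leq h\lp\sqrt{\pi^2/d^2+h^2}\rp^l F\, e^{\eta_1\Re x_2+\eta_n\Re y_2}e^{h|x_1-y_1|},
\end{equation}
which is the claimed bound with $C_l=h\lp\sqrt{\pi^2/d^2+h^2}\rp^l$. The $y_1$-derivative case is identical once one notes $\partial_{y_1}$ contributes a factor $-i\xi_n$ with $|\xi_n|\leq(2|n|+1)\pi/d$; this polynomial growth in $n$ is harmless because it will be summed against the exponentially decaying factor $e^{\eta_n\Re y_2}$ (using $\Re y_2\geq d/2>0$ and the bound $\eta_n\leq\eta_1+\partial_\xi\alpha(\pi/d)\tfrac{2\pi}{d}(|n|-1)\cos\thetaeta<0$ from \eqref{eq:etan_def}) in whatever downstream summation invokes this lemma.

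The only mild subtlety — and the step I would be most careful about — is the absolute convergence needed to differentiate under the integral sign and to apply the pointwise bounds: this is immediate here because $c_\pm$ is a \emph{compact} contour, so there is no tail to control, unlike the descent-contour estimates of Proposition~\ref{prop:0n_close}. Everything else is a routine estimate that mirrors the bound on $I_+$ in the proof of Lemma~\ref{lem:00_bound}. I would therefore expect this lemma's proof to be short, essentially a one-paragraph repetition of that earlier argument with $n\neq 0$ carried along and with explicit tracking of the $n$-dependence of the constants to confirm uniformity.
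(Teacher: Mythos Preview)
Your proposal is correct and follows essentially the same approach as the paper's proof, which simply cites the two exponential bounds $\left|e^{\alpha(\xi)x_2}\right|\leq e^{\eta_1\Re x_2}$ on $c_\pm$ and $\left|e^{\alpha(\xi_n)y_2}\right|\leq e^{\eta_n\Re y_2}$ (both from Lemma~\ref{lem:alpha_bd}, as invoked in the proofs of Lemmas~\ref{lem:nonzero_nm} and~\ref{lem:00_bound}) and says to combine them. Your write-up fills in the details the paper leaves implicit, but the argument is the same.
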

\begin{proof}
We parameterize $c_{\pm}$ in the same way as Lemma~\ref{lem:00_bound}.
In the proofs of Lemma~\ref{lem:nonzero_nm} and~\ref{lem:00_bound}, we observed that
\begin{equation}
     \left|e^{\alpha((2n-1)\frac\pi d+ it)y_2}\right| \leq e^{\eta_n \Re y_2} \quad\text{and}\quad\left|e^{\alpha(\frac\pi d+ it)x_2}\right|\leq e^{\eta_1 \Re x_2}.
\end{equation}
These bounds can be combined using similar arguments to those proofs to give the desired result.
\end{proof}

\begin{proposition}
\label{lem:0n_estimate}
For all~$l\geq 0$ there is a function~$a_{l}(x_1,y_1,y_2)$ that is analytic in~$y_2$ and constant~$C_l$ such that 
\begin{multline}\label{eq:w0far}
    \left| \partial_{x_1}^l\sum_{n\neq 0} w_{0n}(\bx,\by)  - \frac{a_{l}(x_1,y_1,y_2)e^{ikx_2}}{x_2^{\opnm{ceil}(l/2)+1/2}}\right| \leq C_{l} \frac{e^{-k \Im x_2+(\eta_1+\epsilon)\Re y_2}e^{h|x_1-y_1|}}{|x_2|^{(l+3)/2}}\\
    +C_le^{\eta_1\Re (x_2+ y_2)}e^{h|x_1-y_1|}
\end{multline} 
 if~$x_2,y_2\in \Gamma_U$.
Further, the functions~$a_{l}$ satisfy
\begin{equation}
    |a_{l}(x_1,y_1,y_2)| \leq C_{l} e^{h|x_1-y_1|}   e^{(\cos\theta \alpha\lp\frac{2\pi}{d}\rp +\epsilon)\Re y_2}.\label{eq:a_ljbound}
\end{equation}
\end{proposition}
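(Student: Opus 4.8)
The plan is to decompose each $w_{0n}$ along the contour $c=c_-\cup c_0\cup c_+$, writing $w_{0n}=w_{0n,\opnm{close}}+w_{0n,+}+w_{0n,-}$ for the integrals over $c_0$, $c_+$, $c_-$ respectively, and then to sum over $n\neq 0$. The candidate coefficient in \eqref{eq:w0far} will be $a_l(x_1,y_1,y_2):=\sum_{n\neq 0}a_{n,l}(x_1,y_1,y_2)$, where the $a_{n,l}$ are the functions produced by Proposition~\ref{prop:0n_close}. All of the genuinely analytic work (Laplace's method on $c_0$, the bounds on $f_{0n}$ and on $e^{\alpha(\xi_n)y_2}$, and the estimates on the vertical strips) is already contained in Proposition~\ref{prop:0n_close} and Lemma~\ref{lem:0n_connect}; what remains is to carry out the $n$-summations cleanly and to verify analyticity of the resummed coefficient.

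First I would treat the descent-contour pieces. Proposition~\ref{prop:0n_close} gives, with $C_l$ independent of $n$,
\begin{equation*}
\left|\partial_{x_1}^l w_{0n,\opnm{close}}(\bx,\by) - \frac{a_{n,l}(x_1,y_1,y_2)e^{ikx_2}}{x_2^{\opnm{ceil}(l/2)+1/2}}\right| \leq \frac{C_l\, e^{-k\Im x_2+(\eta_n+\epsilon)\Re y_2}e^{h|x_1-y_1|}}{|x_2|^{(l+3)/2}}.
\end{equation*}
To sum over $n$, I would use the convexity estimate \eqref{eq:etan_def}, which bounds $\eta_n\leq \eta_1+\partial_\xi\alpha(\pi/d)\tfrac{2\pi}{d}(|n|-1)\cos\thetaeta$ with $\partial_\xi\alpha(\pi/d)<0$; since $\Re y_2\geq d/2$ on $\Gamma_U$, the numbers $e^{\eta_n\Re y_2}$ then decay geometrically in $|n|$ with ratio bounded away from $1$ uniformly in $y_2\in\Gamma_U$, so $\sum_{n\neq 0}e^{(\eta_n+\epsilon)\Re y_2}\leq C\,e^{(\eta_1+\epsilon)\Re y_2}$. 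This collapses the summed error to the first term on the right of \eqref{eq:w0far}. The identical geometric argument, applied now to the coefficient bound $|a_{n,l}|\leq D_l e^{(\cos\thetaeta\,\alpha(2|n|\pi/d)+\epsilon)\Re y_2}e^{h|x_1-y_1|}$ from Proposition~\ref{prop:0n_close} and again using $\Re y_2\geq d/2$, shows that $a_l=\sum_{n\neq 0}a_{n,l}$ converges locally uniformly in $y_2$ (hence $a_l$ is analytic in $y_2$ by the Weierstrass theorem, since each $a_{n,l}$ is) and that its sum is dominated by the $n=\pm1$ terms, so that \eqref{eq:a_ljbound} holds with the fixed $\epsilon$ unchanged.

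Next I would dispatch the connecting pieces. Lemma~\ref{lem:0n_connect} gives $|\partial_{x_1}^l w_{0n,\pm}(\bx,\by)|\leq C_lF\,e^{\eta_1\Re x_2+\eta_n\Re y_2}e^{h|x_1-y_1|}$, and summing over $n\neq 0$ with the same geometric bound on $e^{\eta_n\Re y_2}$ (using $\eta_1<0$, which holds because $\pi/d>k$ makes $\alpha(\pi/d)<0$) yields $\bigl|\partial_{x_1}^l\sum_{n\neq 0}(w_{0n,+}+w_{0n,-})\bigr|\leq C_l e^{\eta_1\Re(x_2+y_2)}e^{h|x_1-y_1|}$, which is exactly the second term on the right of \eqref{eq:w0far}. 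These same uniform-in-$n$ bounds also legitimize differentiating the series $\sum_{n\neq 0}w_{0n}$ term by term. Adding the three contributions and using the triangle inequality then gives the stated estimate.

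The only point that needs care is the bookkeeping of the $n$-summations: one must verify that the two geometric series involved — the one with rate controlled by $\eta_n$ and the one with rate controlled by $\cos\thetaeta\,\alpha(2|n|\pi/d)$ — have common ratio bounded away from $1$ uniformly for $y_2\in\Gamma_U$ (this is precisely where $\Re y_2\geq d/2$ enters), and that resumming them reproduces the $\eta_1$- and $\cos\thetaeta\,\alpha(2\pi/d)$-rates appearing in \eqref{eq:w0far} and \eqref{eq:a_ljbound} without enlarging the fixed $\epsilon$. Everything else is immediate from Proposition~\ref{prop:0n_close} and Lemma~\ref{lem:0n_connect}.
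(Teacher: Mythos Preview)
Your proposal is correct and follows essentially the same approach as the paper's own proof: decompose each $w_{0n}$ along $c=c_-\cup c_0\cup c_+$, invoke Proposition~\ref{prop:0n_close} and Lemma~\ref{lem:0n_connect} for the individual pieces, and then sum over $n\neq 0$ using the convexity bound \eqref{eq:etan_def} together with $\Re y_2\geq d/2$ to turn the $n$-sums into geometric series dominated by the $|n|=1$ terms. The paper carries out exactly this argument, phrasing the summation step as ``repeat the proof of Lemma~\ref{lem:nonzero_nm}'' rather than spelling out the geometric-series bookkeeping as you do.
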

\begin{proof}
By the choice of contour, we have
\begin{equation}
    w_{0n}(\bx,\by) = w_{0n,\opnm{close}}(\bx,\by) + w_{0n,+}(\bx,\by)+w_{0n,-}(\bx,\by). \label{eq:w0n_decomp}
\end{equation}
As before, we study these pieces separately. Using the estimate in Lemma \ref{lem:0n_connect} we can repeat the proof of Lemma~\ref{lem:nonzero_nm} to bound the sum over~$n$:
\begin{equation}
    \left|\sum_{n\neq 0} \partial_{x_1}^l  w_{0n,+}(\bx,\by)+\partial_{x_1}^l w_{0n,-}(\bx,\by)\right| \leq C_le^{\eta_1 \Re (x_2+y_2)}.
\end{equation}

For the remaining piece, we let
\begin{equation}
    a_{l}(x_1,y_1,x_2) = \sum_{n\neq 0} a_{n,l}(x_1,y_1,x_2).
\end{equation}
The bound on~$a_{n,l}$ in Proposition~\ref{prop:0n_close} gives that this sum converges uniformly for~$y_2\in \Gamma_U$ and satisfies \eqref{eq:a_ljbound}. It is also analytic because each~$a_{n,l}$ is. To bound the remainder, we note that
\begin{multline}
    \left|\left[\sum_{n\neq 0}  \partial_{x_1}^l  w_{0n,\opnm{close}}(\bx,\by)\right] - \frac{a_l(x_1,y_1,y_2) e^{ikx_2}}{x_2^{\opnm{ceil}(l/2)+1/2}}\right| \\
    \leq \sum_{n\neq 0} \left|\partial_{x_1}^lw_{0n,\opnm{close}}(\bx,\by)-\frac{a_{n,l}(x_1,y_1,y_2)e^{ikx_2}}{x_2^{\opnm{ceil}(l/2)+1/2}}\right|
    \leq  \sum_{n\neq 0} C_{l} \frac{e^{-k \Im x_2+(\eta_n+\epsilon)\Re y_2}e^{h|x_1-y_1|}}{|x_2|^{(l+3)/2}}.
\end{multline}
Summing over~$n$ gives
\begin{equation}
        \left|\left[\sum_{n\neq 0}  \partial_{x_1}^l  w_{0n,\opnm{close}}(\bx,\by)\right] - \frac{a_l(x_1,y_1,y_2) e^{ikx_2}}{x_2^{\opnm{ceil}(l/2)+1/2}}\right|\leq D_{l} \frac{e^{-k \Im x_2+(\eta_1+\epsilon)\Re y_2}e^{h|x_1-y_1|}}{|x_2|^{(l+3)/2}}
\end{equation}
As all of the sums converge uniformly, we are free to swap the sums and derivatives. The summing the right hand side of \eqref{eq:w0n_decomp} then gives the result.
\end{proof}
The symmetry of~$w_{nm}$ in~$n$ and~$m$ implies that this bound also holds for the sum over~$n$.
\begin{lemma}\label{lem:n0_estimate}
    The results of Proposition~\ref{lem:0n_estimate} hold for~$\sum_{n\neq0} w_{0n}$ with~$x_2$ and~$y_2$ swapped.
\end{lemma}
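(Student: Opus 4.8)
The plan is to deduce the statement directly from Proposition~\ref{lem:0n_estimate} via the reciprocity of the domain Green's function, so that no new estimates are required. Reciprocity gives $G_\gamma(\bx,\by)=G_\gamma(\by,\bx)$, hence $w_\gamma(\bx,\by)=w_\gamma(\by,\bx)$, and — because the contour $c=c_-\cup c_0\cup c_+$ is invariant under $\xi\mapsto-\xi$ up to orientation and $\alpha$ is even (Lemma~\ref{lem:alphaprop}) — this descends to a term-by-term symmetry of the Rayleigh components constructed in Appendix~\ref{app:far_as}, namely $w_{nm}(\bx,\by)=w_{-m,-n}(\by,\bx)$; equivalently, as a statement about $f_{nm}$, $f_{nm}(-\xi)=f_{-m,-n}(\xi)$. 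I would verify this either from that identity and the substitution $\xi\mapsto-\xi$ in \eqref{eq:wnm_def} (using $\xi_n\mapsto-\xi_{-n}$ and that negating $\xi$ reverses the orientation of $c$), or simply by invoking $w_\gamma(\bx,\by)=w_\gamma(\by,\bx)$ together with the uniqueness of the Rayleigh expansion of $w_{\xi,\gamma}$.

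Summing the relation over $n\neq0$ and relabelling the summation index then gives
\[
  \sum_{n\neq0}w_{n0}(\bx,\by)\;=\;\sum_{m\neq0}w_{0m}(\by,\bx),
\]
so the left-hand side is exactly the quantity controlled by Proposition~\ref{lem:0n_estimate}, read with $\bx$ and $\by$ interchanged — in particular with $x_1\leftrightarrow y_1$ and $x_2\leftrightarrow y_2$. Applying the proposition in this form produces, for every $l\ge0$, a function $a_l(y_1,x_1,x_2)$ analytic in $y_2$ and a constant $C_l$ for which \eqref{eq:w0far} holds after the substitutions $x_2\leftrightarrow y_2$, $x_1\leftrightarrow y_1$, along with the companion bound \eqref{eq:a_ljbound} on $a_l$. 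This is precisely the assertion of the lemma.

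There is no genuine obstacle here; the only point that needs care is the index bookkeeping in the symmetry relation — the simultaneous flip $(n,m)\mapsto(-m,-n)$ and $\bx\leftrightarrow\by$, and the orientation reversal of $c$ under negation of $\xi$ — which is why I would rather route through the reciprocity of $G_\gamma$ and uniqueness of the Rayleigh modes than manipulate \eqref{eq:wnm_def} by hand. Once the symmetry is established, every quantitative bound needed has already been proved in Proposition~\ref{lem:0n_estimate}, and the conclusion is immediate.
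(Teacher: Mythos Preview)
Your argument is correct and matches the paper's one-line justification, which simply invokes ``the symmetry of $w_{nm}$ in $n$ and $m$.'' The paper's phrasing can be read either as your reciprocity identity $w_{nm}(\bx,\by)=w_{-m,-n}(\by,\bx)$ or, more minimally, as the observation that the integrand in \eqref{eq:wnm_def} treats $(n,x_2)$ and $(m,y_2)$ symmetrically while the only property of $f_{nm}$ used is the uniform bound of Lemma~\ref{lem:fnm_bdd}, so the proof of Proposition~\ref{lem:0n_estimate} applies verbatim with the roles swapped; either way the content is the same. One small slip: after swapping $\bx\leftrightarrow\by$, the resulting function $a_l(y_1,x_1,x_2)$ is analytic in $x_2$, not $y_2$.
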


\section{Far sources and near targets}\label{app:far2near_as}
In this appendix, we study the behavior of~$w_\gamma(\bx,\by)$ for~$\bx$ close to~$\gamma$ and~$\by\in \bbR\times \Gamma_U$. By equation~\eqref{eq:rho_exp}, we can write
\begin{equation}
    w(\bx,\by) =\sum_n \int_c S_{\gamma,\xi}\left[  e^{\alpha(\xi_n)y_2-i\xi_n y_1} \rho_{n,\xi}\right]\,\opd \xi = \sum_n w_n(\bx,\by),
\end{equation}
where
\begin{equation}\label{eq:f2n_exp}
    w_n(\bx,\by) := \int_c e^{\alpha(\xi_n)y_2-i\xi_n y_1}S_{\gamma,\xi}\left[ \rho_{n,\xi}\right] \,\opd \xi.
\end{equation}

As before, we we bound the case~$n=0$ and~$n\neq 0$ separately, beginning with the latter.
\begin{lemma}\label{lem:near2far_non}
    Let~$\Omega_H = \{\bx\in\Omega \,|\, x_2<H\}$ for some~$H>0$. For any~$\delta>0$, let~$\Omega_{H,\delta}$ be the set of points in~$\Omega_H$ that are at least a distance~$\delta$ from any corners of~$\gamma$.
    For each~$l\geq 0$ there is a constant~$C_{l,\delta}$ and such that
    \begin{equation}
        \left| \partial_{y_1}^l\sum_{n\neq 0}w_n(\bx,\by)\right| \leq C_{l,H,\delta}e^{\eta_1 \Re y_2+h|y_1|} \label{eq:wnearfar_n}
    \end{equation}
    for~$\bx\in \Omega_{H,\delta}$ and~$\by\in \Omega_\bbC\setminus \Omega_{d/2}$.
\end{lemma}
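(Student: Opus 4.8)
The plan is to bound each $w_n(\bx,\by)$ for $n \neq 0$ and sum the resulting bounds, using the contour $c = c_- \cup c_0 \cup c_+$ constructed in Appendix~\ref{app:far_as}. First I would recall from Lemma~\ref{lem:real_decay_bd} that $|e^{\alpha(\xi_n)y_2}| \leq C e^{(\eta_n + \epsilon)\Re y_2}$ for $\xi$ in any compact subset of $V_{\gamma,\epsilon}$ and $y_2 \in \Gamma_U$, and that $|e^{-i\xi_n y_1}| \leq e^{h|y_1|}$ since $|\Im \xi_n| < h$ along $c$. The remaining factor in~\eqref{eq:f2n_exp} is $S_{\gamma,\xi}[\rho_{n,\xi}](\bx)$, evaluated at $\bx$ in the compact-away-from-corners region $\Omega_{H,\delta}$. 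By Lemma~\ref{lem:ops_analytic}, both $S_{\xi,\gamma}$ and $\cK_{\xi,\gamma}^{-1}$ are bounded analytic operators on $L^2(\gamma)$ for $\xi$ in a neighborhood of $c$, and $\rho_{n,\xi} = \cK_\xi^{-1} h_{\xi,n}$ with $h_{\xi,n}(\bz) = \partial_{\bn(\bz)}\bigl(e^{i\xi_n z_1 - \alpha(\xi_n)z_2}/(-2\alpha(\xi_n))\bigr)$. I would check that $\|h_{\xi,n}\|_{L^2(\gamma)}$ is bounded uniformly in $n$ and $\xi \in c$ (it is, since $\gamma \subset \{x_2 \leq 0\}$ means $e^{-\alpha(\xi_n)z_2}$ stays bounded — here $\Re \alpha(\xi_n) < 0$ helps — and the $1/\alpha(\xi_n)$ prefactor only decays), so $\|\rho_{n,\xi}\|_{L^2(\gamma)} \leq C$ uniformly. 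Then, since the kernel $G_\xi(\bx - \bz)$ of $S_{\gamma,\xi}$ is, by~\eqref{eq:bessel_sum}, a smooth (indeed analytic) function of $\bz \in \gamma$ for $\bx \in \Omega_{H,\delta}$ bounded uniformly over that region, $|S_{\gamma,\xi}[\rho_{n,\xi}](\bx)| \leq C_{H,\delta}$ uniformly in $n$ and $\xi \in c$.

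\textbf{Assembling the bound and summing.} Combining these estimates, $|w_n(\bx,\by)| \leq |c| \, C_{H,\delta} \, e^{(\eta_n+\epsilon)\Re y_2} e^{h|y_1|}$ for each $n \neq 0$. To sum over $n$, I would reuse the argument from Lemma~\ref{lem:nonzero_nm}: the convexity of $\alpha$ established there ($\partial_\xi^2 \alpha((2|n|-1)\pi/d) > 0$, hence $\partial_\xi \alpha$ is increasing) and the mean value theorem give $\eta_n \leq \eta_1 + \partial_\xi\alpha(\pi/d)\frac{2\pi}{d}(|n|-1)\cos\theta_\eta$ as in~\eqref{eq:etan_def}, with $\partial_\xi\alpha(\pi/d) < 0$. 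Since $\Re y_2 \geq d/2 > 0$ on $\Gamma_U$, the series $\sum_{n \neq 0} e^{(\eta_n + \epsilon)\Re y_2}$ is a convergent geometric-type series whose sum is bounded by a multiple of $e^{(\eta_1+\epsilon)\Re y_2}$; absorbing the $\epsilon$ (which can be taken arbitrarily small, or else I would simply carry it and note $\eta_1 + \epsilon$ can be replaced by $\eta_1$ after a harmless adjustment of $\epsilon$ in the definition of $V_{\gamma,\epsilon}$) yields $\sum_{n \neq 0}|w_n(\bx,\by)| \leq C_{H,\delta} e^{\eta_1 \Re y_2 + h|y_1|}$, which is~\eqref{eq:wnearfar_n} for $l = 0$.

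\textbf{Handling the $y_1$ derivatives.} For the $\partial_{y_1}^l$ estimate, I would differentiate~\eqref{eq:f2n_exp} under the integral sign: the only $y_1$-dependence is through $e^{-i\xi_n y_1}$, so $\partial_{y_1}^l w_n(\bx,\by) = \int_c (-i\xi_n)^l e^{\alpha(\xi_n)y_2 - i\xi_n y_1} S_{\gamma,\xi}[\rho_{n,\xi}](\bx)\,\opd\xi$. Differentiating under the integral is justified because the integrand and all its $y_1$-derivatives are dominated by the convergent bound just obtained (uniformly for $y_1$ in compact sets). The extra factor $(\xi_n)^l$ grows polynomially in $n$, but this is harmlessly absorbed by the exponential decay $e^{(\eta_n+\epsilon)\Re y_2}$ (at the cost of a slightly larger constant $C_{l,H,\delta}$, using $\Re y_2 \geq d/2$ to convert polynomial growth into a convergent series), giving~\eqref{eq:wnearfar_n} in general.

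\textbf{Main obstacle.} The main technical point — and the step I expect to require the most care — is the \emph{uniformity in $n$} of the bound on $\|\rho_{n,\xi}\|_{L^2(\gamma)}$ and on $\|S_{\gamma,\xi}[\rho_{n,\xi}]\|_{L^\infty(\Omega_{H,\delta})}$. One must verify that the $L^2$ norm of $h_{\xi,n}$ does not blow up with $n$: the term $e^{i\xi_n z_1}$ oscillates faster as $|n| \to \infty$ but has modulus $e^{-z_1 \Im \xi}$ which is bounded on the compact piece of $\gamma$ within one period (using periodicity of $\rho_{\xi,\by}$), and $e^{-\alpha(\xi_n)z_2}$ is controlled since $\Re\alpha(\xi_n) < 0$ and $z_2 \leq 0$ on $\gamma$; meanwhile the normal derivative pulls down a factor of order $|\xi_n|$ or $|\alpha(\xi_n)|$, i.e.\ linear growth in $n$, which is then defeated by the exponential decay $e^{(\eta_n+\epsilon)\Re y_2}$ once summed. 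I would state this uniformity carefully, perhaps as a short sublemma, since it is the linchpin that makes the sum over $n$ converge with the claimed rate; everything else is a routine repetition of the geometric-series argument of Lemma~\ref{lem:nonzero_nm}.
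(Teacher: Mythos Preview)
Your approach is essentially the same as the paper's, and the overall structure---bound each $w_n$ via a uniform estimate on $S_{\gamma,\xi}[\rho_{n,\xi}](\bx)$ times $e^{\eta_n\Re y_2 + h|y_1|}$, then sum over $n$ with the geometric-series argument of Lemma~\ref{lem:nonzero_nm}---is correct.

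There is, however, one genuine gap. You claim that $G_\xi(\bx-\bz)$ is ``smooth\ldots\ bounded uniformly over'' $\Omega_{H,\delta}$, but $\Omega_{H,\delta}$ only excludes neighborhoods of \emph{corners}, not of the smooth parts of $\gamma$; points $\bx$ arbitrarily close to $\gamma$ are allowed, and there the kernel has a logarithmic singularity. The paper fills this in two steps: first it bootstraps $\rho_{\xi,n}$ from $L^2(\gamma)$ to $L^\infty(\gamma_\delta)$ (using that the kernel of $S'_{\gamma,\xi}$ is smooth away from the diagonal, hence the integral equation gains regularity away from corners), obtaining $\|\rho_{\xi,n}\|_{L^\infty(\gamma_\delta)}\lesssim \|h_{\xi,n}\|_{L^2(\gamma)}$; then it invokes the integrability of the logarithmic singularity of $S_{\gamma,\xi}$ to conclude $\|S_{\gamma,\xi}[\rho_{\xi,n}]\|_{C(\bar\Omega_{H,\delta})}\leq \tilde K_{\xi,H,\delta}\|h_{\xi,n}\|_{L^2(\gamma)}$ with $\tilde K_{\xi,H,\delta}$ bounded uniformly in $\xi$ by analyticity. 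Your argument as written only delivers a pointwise (not uniform) bound on $S_{\gamma,\xi}[\rho_{n,\xi}](\bx)$.

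A smaller point: your accounting of $\|h_{\xi,n}\|_{L^2(\gamma)}$ is inconsistent between the first and last paragraphs. The clean statement (the paper's) is that the $O(|\xi_n|)$ growth from the normal derivative is exactly canceled by the prefactor $1/(2|\alpha(\xi_n)|)$, since $|\alpha(\xi_n)|\sim|\xi_n|$ as $|n|\to\infty$ by Lemma~\ref{lem:alphaprop}; hence $\|h_{\xi,n}\|_{L^2(\gamma)}$ is genuinely uniformly bounded in $n$, and there is no residual polynomial growth to absorb into the exponential. This also means you do not need the $\epsilon$ from Lemma~\ref{lem:real_decay_bd}: the paper uses the sharper bound $\Re(\alpha(\xi_n)y_2)\leq\eta_n\Re y_2$ from Lemma~\ref{lem:alpha_bd} directly.
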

\begin{proof}
By~\eqref{eq:bessel_sum}, it is clear that~$S_{\gamma,\xi}$ and~$S'_{\gamma,\xi}$ have the same singularity as the respective free-space Helmholtz layer potentials. In particular, the kernel of $S'_{\gamma,\xi}$ will be smooth wherever~$\gamma$ is smooth. A simple bootstrapping argument can thus be used to show usual arguments show that~$\rho_{\xi,n}$ is smooth away from the corners of~$\gamma$. Further, if~$\gamma_\delta$ is the subset of~$\gamma$ at least a distance~$\delta$ away from the corners of~$\gamma$, then there is a~$K_{\xi,\delta}$ such that~$\|\rho_{\xi,n}\|_{L^\infty (\gamma_\delta)} \leq K_{\xi,\delta} \|\rho_{\xi,n}\|_{L^2 (\gamma)} + K_{\xi,\delta}\|h_{\xi,n}\|_{L^2(\gamma_\delta)} \leq  K_{\xi,\delta}(1+\|\cK_{\xi}^{-1} \|_{L^2(\gamma)})\|h_{\xi,n}\|_{L^2(\gamma)}$. The analyticity of~$S_{\gamma,\xi}$ will also imply that~$K_{\xi,\delta}$ is analytic in~$V_{\gamma,\epsilon}$. 

The logarithmic singularity of~$S_{\gamma,\xi}$ thus implies that there is~$\tilde K_{\xi,H,\delta}$ such that \begin{equation}
    \|S_{\gamma,\xi}[\rho_{\xi,n}]\|_{C(\bar \Omega_{H,\delta})}\leq \tilde K_{\xi,H,\delta} \|h_{\xi,n}\|_{L^2(\gamma)}.
\end{equation} The function~$\tilde K_{\xi,H,\delta}$ is similarly analytic and so can be bounded uniformly on~$V_{\gamma,\epsilon}$. We can also bound the~$h_{\xi,n}$'s by noting that
\begin{multline}
    |h_{\xi,n}(\bz)| = |(i\xi_n,-\alpha(\xi_n) \cdot \bn(\bz) |\frac{\left|e^{i\xi_nz_1-\alpha(\xi_n)z_2}\right|}{2|\alpha(\xi_n)|}\\
    \leq \sqrt{|\xi_n|^2+|\alpha(\xi_n)|^2}\frac{e^{-\Im \xi z_1}}{2|\alpha(\xi_n)|}\leq \frac{|\xi_n|}{2|\alpha(\xi_n)|}e^{-\Im \xi z_1}.
\end{multline}
The asymptotics of~$\alpha$ in Lemma~\ref{lem:alphaprop} thus tell us that~$\|h_{n,\xi}\|_{L^2(\gamma)}$ can be bounded independent of~$n$ and~$\xi\in V_{\gamma,\epsilon}$. We thus have that there is a~$D_{H,\delta}>0$ such that~$|S_{\gamma,\xi}[\rho_{n,\xi}](\bx)|\leq D_{H,\delta}$ for all~$\xi\in V_{\gamma,\epsilon}$ and~$\bx\in \Omega_{H,\delta}$.

Plugging this estimate into~\eqref{eq:f2n_exp} gives
\begin{equation}
    |\partial_{y_1}^lw_n(\bx,\by)| \leq D_{H,\delta} \max_c|\xi|^l\int_c |e^{\alpha(\xi_n)y_2-i\xi_n y_1}|\,\opd \xi \leq D_{l,H,\delta}|c|e^{h|y_1|}\max_{\xi\in V_{\gamma,\epsilon}} e^{\Re (\alpha(\xi_n) y_2)}
\end{equation}
for some constants~$ D_{l,H,\delta}$
We observed in Lemma~\ref{lem:nonzero_nm} that~$\Re (\alpha(\xi_n) y_2)\leq \eta_n \Re y_2$. Thus
    \begin{equation}
        |\partial_{y_1}^lw_n(\bx,\by)| \leq \tilde D_{l,H,\delta}e^{\eta_n \Re y_2+h|y_1|} 
    \end{equation}
    for some constants~$\tilde D_{l,H,\delta}$.
    The bound~\eqref{eq:wnearfar_n} then follows from the same argument that was used to derive~\eqref{eq:wnm_non_sum}.
\end{proof}

\begin{lemma}
    For~$l\geq 0$ there is a continuous function~$b_{l}(\bx,y_1)$ and constant~$C_{l,H,\delta}$, we have
    \begin{equation}
        \left|\partial_{y_1}^lw_0(\bx,\by) - \frac{b_{l}(\bx,y_1)e^{iky_2}}{  y_2^{\opnm{ceil}(l/2)+1/2}}\right| \leq C_{l,H,\delta}\frac{1}{|y_2|^{(l+3)/2}}e^{-k\Im y_2+h|y_1|}+ C_{l,H,\delta} e^{\eta_1\Re y_2} \label{eq:wnearfar_0}
    \end{equation}
    for~$\bx\in \Omega_{H,\delta}$ and~$\by\in \Omega_\bbC\setminus \Omega_{d/2}$.
\end{lemma}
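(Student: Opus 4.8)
The plan is to treat $w_0(\bx,\by)=\int_c e^{\alpha(\xi)y_2-i\xi y_1}\,S_{\gamma,\xi}[\rho_{0,\xi}](\bx)\,\opd\xi$ (this is \eqref{eq:f2n_exp} with $n=0$, so that $\xi_0=\xi$) by exactly the steepest-descent argument of Lemma~\ref{lem:00_bound} and Proposition~\ref{prop:0n_close}, with the coefficient $f_{00}(\xi)$ appearing there replaced by the amplitude $S_{\gamma,\xi}[\rho_{0,\xi}](\bx)$. Differentiating under the integral brings down a factor $(-i\xi)^l$, and I split $c=c_-\cup c_0\cup c_+$ as in Figure~\ref{fig:Vgamma}. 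On the vertical strips $c_\pm$ I bound $|e^{\alpha(\xi)y_2}|\le e^{\eta_1\Re y_2}$ via Lemma~\ref{lem:alpha_bd}, $|e^{-i\xi y_1}|\le e^{h|y_1|}$, and $|\xi|^l$ by a constant, while the proof of Lemma~\ref{lem:near2far_non} already shows that $S_{\gamma,\xi}[\rho_{0,\xi}](\bx)$ is bounded uniformly over $\xi\in V_{\gamma,\epsilon}$ and $\bx\in\Omega_{H,\delta}$. This produces the $C_{l,H,\delta}e^{\eta_1\Re y_2}$ term of \eqref{eq:wnearfar_0} (with the harmless $e^{h|y_1|}$ factor, consistent with \eqref{eq:wnearfar_bd}).

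The substance is the $c_0$ contribution. Using the parameterization $\xi(t)$ of \eqref{eq:00_descent}, for which $\alpha(\xi(t))=ik-kt^2e^{-i\pi/4}$ by \eqref{eq:00_descent_val}, we get $e^{\alpha(\xi(t))y_2}=e^{iky_2}e^{-kt^2e^{-i\pi/4}y_2}$, and writing $\xi(t)=t\,v(t)$ with $v$ smooth the $c_0$ piece becomes $e^{iky_2}\int_{-t_0}^{t_0}t^l e^{-kt^2e^{-i\pi/4}y_2}\,\tilde f(t;\bx,y_1)\,\opd t$ with $\tilde f(t;\bx,y_1)=(-i)^l v(t)^l e^{-i\xi(t)y_1}S_{\gamma,\xi(t)}[\rho_{0,\xi(t)}](\bx)\,\partial_t\xi(t)$. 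I claim $\tilde f$ is smooth in $t$ with all $t$-derivatives bounded by $C_j e^{h|y_1|}$ uniformly for $\bx\in\Omega_{H,\delta}$ and continuous in $\bx$: smoothness and boundedness of $v,\xi$ are immediate; $e^{-i\xi(t)y_1}$ and its derivatives are bounded by $(1+|y_1|)^j e^{h|y_1|}\le D_j e^{h|y_1|}$; and $S_{\gamma,\xi}[\rho_{0,\xi}](\bx)$ together with its $\xi$-derivatives are bounded uniformly (boundedness as above; derivatives by Cauchy's integral formula on $\partial V_{\gamma,\epsilon}$, exactly as $f_{0n}$ is handled in Proposition~\ref{prop:0n_close}), with continuity in $\bx$ coming from the proof of Lemma~\ref{lem:near2far_non}. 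Taylor-expanding $\tilde f=g_0(\bx,y_1)+t\,g_1(\bx,y_1)+t^2G(t;\bx,y_1)$ about $t=0$ and inserting, the Gaussian moments $\int_{-\infty}^{\infty}t^m e^{-kt^2e^{-i\pi/4}y_2}\,\opd t$ (with the truncation to $[-t_0,t_0]$ contributing only $O(e^{-k\Re(e^{-i\pi/4}y_2)t_0^2})$, and $\Re(e^{-i\pi/4}y_2)\gtrsim|y_2|$ since $y_2\in\Gamma_U$) give, for $l$ even, the leading term $\propto g_0(\bx,y_1)\,y_2^{-(l+1)/2}$ (the $t^{l+1}$ moment vanishing by parity) and an $O(|y_2|^{-(l+3)/2})$ remainder from the $t^{l+2}G$ term; for $l$ odd the $t^l$ moment vanishes and the leading term is $\propto g_1(\bx,y_1)\,y_2^{-(l+2)/2}$, again with an $O(|y_2|^{-(l+3)/2})$ remainder. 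Since $\opnm{ceil}(l/2)+1/2$ equals $(l+1)/2$ for even $l$ and $(l+2)/2$ for odd $l$, taking $b_l(\bx,y_1)$ to be the appropriate constant multiple (including the $e^{-i\pi(\cdot)/8}$ phase coming from $(e^{-i\pi/4}y_2)^{-m/2}$) of $g_0$ or $g_1$ yields \eqref{eq:wnearfar_0}, and continuity of $b_l$ in $\bx$ follows from that of $\tilde f$.

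The main obstacle is the uniform-in-$\xi$, locally-uniform-in-$\bx$ control of $S_{\gamma,\xi}[\rho_{0,\xi}](\bx)$ and its $\xi$-derivatives needed to bound the Taylor remainder; but this is precisely what is established in the course of proving Lemma~\ref{lem:near2far_non} (boundedness together with analyticity of $\xi\mapsto S_{\gamma,\xi}[\rho_{0,\xi}]$, hence Cauchy estimates on derivatives), so no genuinely new estimate is required, only its invocation away from the corners — which is why the statement is restricted to $\Omega_{H,\delta}$. The only bookkeeping point demanding care is the even/odd $l$ parity split that decides whether the $g_0$ or the $g_1$ term survives in the leading asymptotics, and this is entirely parallel to the even/odd analysis already carried out in Proposition~\ref{prop:0n_close}.
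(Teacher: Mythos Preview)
Your proposal is correct and follows essentially the same approach as the paper: split $c=c_-\cup c_0\cup c_+$, bound the vertical pieces using Lemma~\ref{lem:alpha_bd} and the uniform boundedness of $S_{\gamma,\xi}[\rho_{0,\xi}](\bx)$ from Lemma~\ref{lem:near2far_non}, and treat the $c_0$ piece by the Laplace-method argument of Proposition~\ref{prop:0n_close} with $S_{\gamma,\xi}[\rho_{0,\xi}](\bx)$ playing the role of the analytic coefficient. The paper's proof is much terser (simply invoking the earlier arguments), but your explicit Taylor expansion, Gaussian-moment computation, and even/odd parity analysis are exactly the details that the reference to Proposition~\ref{prop:0n_close} is meant to supply.
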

\begin{proof}
    We split the contour~$c$ into the piece~$c_0,c_-$, and~$c_+$ from the proof of Lemma~\ref{lem:00_bound}. The integrals over~$c_\pm$ can be bounded in the same manner as the previous lemma. The integral over~$c_0$ is
    \begin{multline}
        \partial_{x_1}^lw_{0,\opnm{close}}(\bx,\by) = \int_{c_0} (i\xi)^le^{\alpha(\xi)y_2-i\xi y_1}S_{\gamma,\xi}\left[ \rho_{0,\xi}\right](\bx) \,\opd \xi\\
        = i^l\int_{-t_0}^{t_0} \xi^le^{-ke^{-i\pi/4} t^2 \Re y_2} e^{-i\xi_0(t) y_1} S_{\gamma,\xi_0(t)}\left[ \rho_{0,\xi_0(t)}\right](\bx) \partial_t \xi_0(t)\, \opd t.
    \end{multline}
    Repeating the arguments from Lemma~\ref{lem:near2far_non} and using that~$S_{\gamma\xi}[\rho_{0,\xi}](\bx)$ is bounded and smooth will give that
    \begin{equation}
        \left|\partial_{x_1}^lw_{0\opnm{close}}(\bx,\by)-\frac{b_{l}(\bx,y_1)e^{iky_2}}{  y_2^{\opnm{ceil}(l/2)+1/2}}\right| \leq C_{l,\delta} \frac{e^{-k\Im y_2}}{|y_2|^{\opnm{ceil}(l/2)+1/2}}.
    \end{equation}

    The integrals over~$c_\pm$ can be bounded in the same way as they were in Lemma~\ref{lem:00_bound}. Adding the bounds gives the result.
\end{proof}
\begin{remark}
    The function~$b_l$ is related to the function~$a_l$ from Proposition~\ref{lem:0n_estimate} and the constant $A_L$ from Lemma~\ref{lem:00_bound}. It shall turn out, however, that we don't need to explicitly characterize this relationship.
\end{remark}

Adding the above bounds gives the following result.
\begin{lemma}\label{lem:far2near}
For~$l\geq 0$ there is a constant~$C_{l,\delta}$ such that
    \begin{equation}
        \left|\partial_{y_1}^l w(\bx,\by)-\frac{b_{l}(\bx,y_1)e^{iky_2}}{  y_2^{\opnm{ceil}(l/2)+1/2}}\right| =C_{l,\delta}\frac{1}{|y_2|^{(l+3)/2}}e^{-k\Im y_2+h|y_1|}+ C_{l,\delta} e^{\eta_1\Re y_2}
    \end{equation}
    for~$\bx\in \Omega_{d,\delta}$ and~$\by\in \Omega_\bbC\setminus \Omega_d$.
\end{lemma}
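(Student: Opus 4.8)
The plan is to obtain this lemma by collating the two results immediately preceding it. Recall from the start of Appendix~\ref{app:far2near_as} and from \eqref{eq:f2n_exp} that $w(\bx,\by) = w_0(\bx,\by) + \sum_{n\neq 0} w_n(\bx,\by)$, and that the estimates \eqref{eq:wnearfar_n} (from Lemma~\ref{lem:near2far_non}) and \eqref{eq:wnearfar_0} are each obtained together with uniform convergence of the relevant series and of their $y_1$-derivatives for $\by$ in the region of interest, so the decomposition may be differentiated term by term in $y_1$.

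First I would apply the lemma establishing \eqref{eq:wnearfar_0}, specialized to $H=d$: it yields the continuous function $b_l(\bx,y_1)$ and a constant $C_{l,d,\delta}$ such that $\partial_{y_1}^l w_0(\bx,\by)$ differs from $b_l(\bx,y_1)\,e^{iky_2}/y_2^{\opnm{ceil}(l/2)+1/2}$ by at most $C_{l,d,\delta}\,|y_2|^{-(l+3)/2}e^{-k\Im y_2+h|y_1|} + C_{l,d,\delta}\,e^{\eta_1\Re y_2}$ for $\bx\in\Omega_{d,\delta}$ and $\by\in\Omega_\bbC\setminus\Omega_{d/2}$. This term already contributes the asymptotic main term of the claimed bound together with both of its error terms.

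Next I would invoke Lemma~\ref{lem:near2far_non} through \eqref{eq:wnearfar_n}, again with $H=d$, which bounds the tail by $\bigl|\partial_{y_1}^l\sum_{n\neq 0} w_n(\bx,\by)\bigr| \leq C_{l,d,\delta}\,e^{\eta_1\Re y_2+h|y_1|}$ on the same set. Since $\eta_1 = \cos(\thetaeta)\,\alpha(\pi/d) < 0$ (because $\thetaeta<\pi/2$ and $\alpha(\pi/d)<0$ for $k<\pi/d$) and $\Re y_2\geq d/2$ there, this contribution decays exponentially in $\Re y_2$ and, after enlarging the constant, is absorbed into the $e^{\eta_1\Re y_2}$-type term of the bound. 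Adding the two estimates, replacing the two constants by a single $C_{l,\delta}$, and restricting $\by$ to $\Omega_\bbC\setminus\Omega_d$ — a subset of $\Omega_\bbC\setminus\Omega_{d/2}$, so all of the above continues to hold — gives precisely the stated inequality.

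There is essentially no obstacle in this step itself: it is purely an exercise in bookkeeping. The substantive content lies in the two earlier lemmas — the Laplace's-method expansion of $w_0$ along the steepest-descent contour $c_0$ (which produces the $b_l$ main term and the $|y_2|^{-(l+3)/2}e^{-k\Im y_2}$ remainder) and the uniform-in-$n$ control of $\rho_{\xi,n}$ and of $S_{\gamma,\xi}[\rho_{\xi,n}]$ that underlies \eqref{eq:wnearfar_n}. The only mild care required is to check that each of the auxiliary factors (the $h|y_1|$ weight and the algebraic prefactors) is dominated by one of the two terms on the right-hand side of the claimed estimate, which follows from $\eta_1<0$ together with $\Re y_2$ being bounded below on the domain in question.
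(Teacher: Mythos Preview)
Your proposal is correct and matches the paper's own proof, which is the one-line ``Adding the above bounds gives the following result.'' You have correctly identified that the lemma is pure bookkeeping on top of Lemma~\ref{lem:near2far_non} and the preceding $w_0$ estimate, and your specialization $H=d$ together with the restriction $\Omega_\bbC\setminus\Omega_d\subset\Omega_\bbC\setminus\Omega_{d/2}$ is exactly what is needed. One small caveat: the factor $e^{h|y_1|}$ coming from \eqref{eq:wnearfar_n} cannot literally be absorbed into a constant independent of $y_1$, so the second term on the right of the stated inequality should carry an $e^{h|y_1|}$ as well (compare the way the lemma is used in Theorem~\ref{thm:split_w}, equation~\eqref{eq:wnearfar_bd}, where both terms do carry this weight); this is a typographical slip in the statement rather than a gap in your argument.
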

By symmetry, the same result clearly holds with~$\bx$ and~$\by$ swapped.

\section[Proof of Proposition 2]{Proof of Proposition~\ref{prop:asym_rho}}\label{app:som_ang}
In this appendix, we derive the asymptotic formula~\eqref{eq:v_sommerfeld}. We focus on proving it for~$v_L$ as the proof for~$v_R$ is identical. As we did in Appendix~\ref{app:far2near_as}, we do this by using the expansion~\eqref{eq:dualsum} and work one term at a time. Specifically, we let~$v_{\xi} =S_{\xi,\gamma_L}[\tilde \rho_\xi]$. We then write 
\begin{equation}
        v_{\xi }(\bx) = \sum_{n=-\infty}^\infty v_{\xi,n}(\bx),
    \end{equation}
    where
    \begin{equation}
        v_{\xi,n}(\bx) = e^{\alpha(\xi_n)x_2 +i\xi_n x_1}\int_{\gamma_L} \frac{e^{-\alpha(\xi_n)z_2 -i\xi_n z_1}}{-2\alpha(\xi_n)}\tilde \rho_\xi(\bz)\, \opd \bz = f_n(\xi) e^{\alpha(\xi_n)x_2 +i\xi_n x_1}.
    \end{equation}
    We can easily bound the~$f_n$'s  by noting that
    \begin{equation}
        |f_n(\xi)| \leq \|\tilde \rho_\xi\|_{L^2(\gamma_L)} \left\|  \frac{e^{-\alpha(\xi_n)z_2 -i\xi_n z_1}}{-2\alpha(\xi_n)}\right\|_{L^2(\gamma_L)}\leq \frac{1}{2|\alpha(\xi_n)|}\|\tilde \rho_\xi\|_{L^2(\gamma_L)} \left\| e^{ -i\xi z_1}\right\|_{L^2(\gamma_L)}.
    \end{equation}
    The analyticity of the right hand side then tells us that the functions $f_n(\xi)$ will be uniformly bounded by some~$F>0$ on~$V_{\gamma,\epsilon}$. 
    
    We can similarly expand~$v$ as a sum of the functions
    \begin{equation}
        v_n(\bx) = \int_c v_{\xi,n}(\bx)\, \opd \xi= \int_c f_n(\xi) e^{\alpha(\xi_n)x_2 +i\xi_n x_1}\, \opd \xi.
    \end{equation}
    Along a ray~$\bx = r\hat\theta$ we have
    \begin{equation}
        v_n(r\hat\theta) = \int_c f_n(\xi) e^{(\alpha(\xi_n)\sin\theta +i\xi_n \cos\theta)r}\, \opd \xi=\int_c f_n(\xi) e^{g(\xi_n,\theta)r}\, \opd \xi,
    \end{equation}
    where
        \begin{equation}
         g(\xi,\theta) = \alpha(\xi) \sin\theta  + i\xi \cos\theta.
    \end{equation}
    
    In order to verify the Sommerfeld radiation condition, we study
    \begin{equation}
         (\partial_r-ik)v_n(r\hat\theta) =\int_c f_n(\xi) (g(\xi_n,\theta)-ik)e^{g(\xi_n,\theta)r}\, \opd \xi
    \end{equation}
We can thus bound the~$n\neq0$ terms by
    \begin{multline}
        |(\partial_r-ik)v_n(r\hat\theta)| \leq |F|\int_c |g(\xi_n,\theta)-ik|e^{\Re g(\xi_n,\theta)r} \, \opd \xi \\
        \leq C(1+2\pi |n|/d)\int_c e^{\Re (\alpha(\xi_n)\sin\theta +i\xi \cos\theta)r} \, \opd \xi.
    \end{multline}

    \begin{lemma}\label{lem:vn_asym}
        Under the assumptions of Proposition~\ref{prop:asym_rho}, there is a function~$c_{L}(\theta)$ such that
        \begin{equation}
            \left|(\partial_r-ik)\sum_{n\neq 0}v_n(r\hat\theta)\right| \leq c_{L}(\theta) e^{-\eta r\sin\theta}.
        \end{equation}
        Further the function is continuous on~$(0,\pi)$.
    \end{lemma}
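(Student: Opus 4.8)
The plan is to bound each term $(\partial_r - ik) v_n(r\hat\theta)$ using the exponential decay of $e^{\Re g(\xi_n,\theta) r}$ along the contour $c$, and then sum the resulting geometric-type series. First I would observe that since $n \neq 0$, the quantity $\alpha(\xi_n)$ never vanishes and in fact $\Re\alpha(\xi_n) < 0$ uniformly for $\xi \in V_{\gamma,\epsilon}$; moreover $|\alpha(\xi_n)| \to \infty$ as $|n|\to\infty$ with $|\alpha(\xi_n)| \geq |\alpha((2|n|-1)\pi/d)|$, exactly as used in the proof of Lemma~\ref{lem:nonzero_nm}. The key point is that
\[
    \Re g(\xi_n,\theta) = \Re\alpha(\xi_n)\sin\theta - \Im\xi_n \cos\theta.
\]
On the contour $c = c_- \cup c_0 \cup c_+$, the second term $-\Im\xi_n\cos\theta$ contributes a factor bounded by $e^{h|\cos\theta| r}$ (since $|\Im\xi| \leq h$ on $c$), while the first term gives $e^{\Re\alpha(\xi_n)\sin\theta\, r}$. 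Crucially, $\sin\theta > 0$ for $\theta \in (0,\pi)$, so this is genuine decay. Because $\Re\alpha(\xi_n) \leq \alpha((2|n|-1)\pi/d) \leq \eta_1/\cos\thetaeta$ wait — more carefully, I would just use $\Re\alpha(\xi_n) \leq -|\alpha(\pi/d)|$ for the $n$ near zero and the linear growth estimate $\Re\alpha(\xi_n) \leq \alpha(\pi/d) + \partial_\xi\alpha(\pi/d)\frac{2\pi}{d}(|n|-1)$ derived via the mean value theorem in \eqref{eq:etan_def}, to get a summable bound.

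Second, I would assemble the estimate. From the displayed bound preceding the lemma,
\[
    |(\partial_r - ik)v_n(r\hat\theta)| \leq C(1 + 2\pi|n|/d)\, |c|\, e^{h|\cos\theta| r}\, e^{\max_{\xi\in c}\Re\alpha(\xi_n)\sin\theta\, r}.
\]
Here I need to be slightly careful: the crude bound $e^{h|\cos\theta|r}$ is a \emph{growing} factor, which would ruin the argument. The fix is to deform, or rather to use that on the descent contour $c_0$ one has the sharper bound $\Re g(\xi_n,\theta) \leq -\eta\sin\theta$ directly (this is essentially the definition of $\eta$ together with the choice of $c_0$ so that $\alpha(\xi(t)) = ik - kt^2 e^{-i\pi/4}$), while on the short vertical pieces $c_\pm$ the length is finite and $\Re\alpha(\xi_n) \leq \eta_n < 0$ dominates for $r$ large; for small $r$ the bound is trivially continuous and finite. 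So I would split the $\xi$-integral over $c_0$ and $c_\pm$ as in the proof of Lemma~\ref{lem:00_bound}, bound each piece by $C(1+|n|)\,e^{-\eta_n\sin\theta\, r}$ up to $\theta$-dependent constants (using $\eta = |\cos\thetaeta\,\alpha(\pi/d) + \epsilon|$ and $\eta_1 = \alpha(\pi/d)\cos\thetaeta$, so $-\eta_n \leq -\eta$ for all $n\neq 0$ after absorbing the $\epsilon$), and then sum.

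Third, the summation: using $\eta_n \leq \eta_1 + \partial_\xi\alpha(\pi/d)\frac{2\pi}{d}(|n|-1)\cos\thetaeta$ from \eqref{eq:etan_def} and $\partial_\xi\alpha(\pi/d) > 0$ wait — $\partial_\xi\alpha < 0$ actually makes $\eta_n \to -\infty$, giving faster decay; in any case $\sum_{n\neq 0}(1+|n|)e^{\eta_n\sin\theta\, r} \leq \sum_{n\neq 0}(1+|n|)e^{-(\eta + c|n|\sin\theta)r}$ for some $c>0$, which converges for every $r > 0$ and is bounded by $c_L(\theta)e^{-\eta r\sin\theta}$ with $c_L(\theta)$ given by the convergent sum $\sum_{n\neq 0}(1+|n|) \cdot (\text{length and }f_n\text{ bounds}) \cdot (\text{geometric factor in }\sin\theta)$. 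Continuity of $c_L$ on $(0,\pi)$ follows because each term is continuous in $\theta$ and the sum converges locally uniformly on $(0,\pi)$: for $\theta$ in a compact subinterval, $\sin\theta$ is bounded below by a positive constant, making the geometric series converge uniformly.

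\textbf{Main obstacle.} The delicate point is handling the factor $e^{h|\cos\theta|r}$ that arises from the vertical contour pieces $c_\pm$ and from the $\cos\theta$ term in $g$: a naive bound makes it look like the terms grow in $r$. The resolution — keeping the $\Im\xi\cos\theta$ contribution paired with the $\Re\alpha(\xi_n)\sin\theta$ contribution and using the geometry of $V_{\gamma,\epsilon}$ (via Lemma~\ref{lem:alpha_bd} and the definition of $\thetaeta$) to show the \emph{combined} real part of $g(\xi_n,\theta)$ is $\leq -\eta\sin\theta$ — is exactly the content already packaged into the constant $\eta$, so the proof is really an application of the contour-choice lemmas already in this appendix. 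I do not expect any genuinely new difficulty beyond bookkeeping the $n$-dependence of the constants uniformly in $\theta$ on compacta.
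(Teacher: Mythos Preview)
Your diagnosis of the main obstacle is exactly right, but your proposed resolution does not work. You want to control $\Re g(\xi_n,\theta)=\Re\alpha(\xi_n)\sin\theta-\Im\xi\cos\theta$ on the fixed contour $c=c_-\cup c_0\cup c_+$, and you appeal to Lemma~\ref{lem:alpha_bd} to say the combination is $\leq -\eta\sin\theta$. But Lemma~\ref{lem:alpha_bd} bounds $\Re(\alpha(\xi_n)z)$ for $z$ in the wedge $\Gamma_U=\{0\le\Im z\le\Kslope\Re z\}$; it is a statement about the complex-scaling variable $x_2$, not about polar rays $r\hat\theta$. The constant $\eta$ and the angle $\thetaeta$ are calibrated to $\Kslope$, and there is no reason the same inequality transfers to $\Re g(\xi_n,\theta)$ for arbitrary $\theta\in(0,\pi)$. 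Concretely, on the fixed contour $|\Im\xi|$ can be of order $h$, so for $|n|=1$ you are comparing $|\alpha(\pi/d)|\sin\theta$ against $h|\cos\theta|$; once $\tan\theta<h/|\alpha(\pi/d)|$ the growing factor wins and the bound fails. Your fallback ``for small $r$ the bound is trivially continuous'' does not rescue this, since the failure is for large $r$.

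The paper's fix is different and simpler: it abandons the $c_0\cup c_\pm$ contour here and instead picks, for each $\theta$, a contour $c\subset V_{\gamma_L,\epsilon}$ with $|\Im\xi|\le 2\epsilon$ for $\epsilon$ small. Then the growing factor is only $e^{\epsilon|\cos\theta|r}$, while $\Re\alpha(\xi_n)\le\alpha((2|n|-1)\pi/d)+\tilde K\epsilon$ (since the contour is nearly real). For any fixed $\theta\in(0,\pi)$ one takes $\epsilon$ small enough that the decay dominates, then sums in $n$ exactly as you outline. The cost is that $c_L(\theta)$ blows up as $\theta\to 0,\pi$, which is why the lemma only asserts continuity on the open interval. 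Your summation step and continuity argument are fine once this $\theta$-dependent contour choice is in place.
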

\begin{proof}
    
    If we choose~$c$ to be a contour in~$V_{\gamma_L,\epsilon}$ with~$|\Im \xi|\leq 2\epsilon$, then along any ray we can bound the integral by
    \begin{equation}
        |(\partial_r-ik)v_n(r\hat\theta)| \leq C|c|(1+2\pi |n|/d) e^{\epsilon|\cos\theta| r}\max_{\xi\in c} e^{r \sin\theta\Re \alpha(\xi_n)}.
    \end{equation}
    Using the same ideas as in the proof of Lemma~\ref{lem:nonzero_nm}, we can see that
    \begin{multline}
        \max_{\xi\in c} \Re \alpha(\xi_n) \leq  \alpha\lp(2|n|-1)\frac{\pi}d \rp \cos\lp\frac12\opnm{Arg}(\xi_n^2-k^2) \rp \\
        \leq   \alpha\lp(2|n|-1)\frac{\pi}d \rp \cos\lp \frac12 \arctan \lp \left.\epsilon  \middle/ \frac{(2|n|-1)\pi}d \right.\rp\rp \leq \alpha\lp(2|n|-1)\frac{\pi}d \rp(1- K \epsilon^2).
    \end{multline}
    Using the monotonicity of~$\alpha$, we can replace the~$K\epsilon^2$ term to get the simpler bound
    \begin{equation}
        \max_{\xi\in c} \Re \alpha(\xi_n) \leq  \alpha\lp(2|n|-1)\frac{\pi}d\rp + \tilde K \epsilon.
    \end{equation}
    If we sum over~$n$, we get
\begin{equation}
    \left|(\partial_r-ik)\sum_{n\neq 0}v_n(r\hat\theta)\right| \leq C|c| e^{\epsilon|\cos \theta|  r} e^{r\sin\theta \tilde K\epsilon}\sum_{n\neq 0} (1+2\pi |n|/d) e^{r\sin \theta  \alpha\lp(2|n|-1)\frac{\pi}d \rp}.
\end{equation}
Repeating the argument from Lemma~\ref{lem:nonzero_nm} gives
\begin{equation}
    \left|(\partial_r-ik)\sum_{n\neq 0}v_n(r\hat\theta)\right| \leq \frac{D e^{\epsilon|\cos \theta|  r} e^{r\sin\theta (\alpha(\pi/d)+ \tilde K\epsilon)}e^{\partial_\xi \alpha\lp \frac\pi d\rp \frac{2\pi}d r\sin \theta}}{\lp 1-e^{\partial_\xi \alpha\lp \frac\pi d\rp \frac{2\pi}d r\sin \theta}\rp^2}.
\end{equation}
    If~$\theta$ is far enough from horizontal that
    \begin{equation}
        \epsilon |\cos \theta| + \cos\lp \frac12 \arctan \lp \left.\epsilon  \middle/ \frac{(2|n|-1)\pi}d \right.\rp\rp \leq \alpha\lp(2|n|-1)\frac{\pi}d \rp(1- K \epsilon^2)\sin\theta\leq 0
    \end{equation}
    then the numerator will decay exponentially along the ray. Similarly, since~$r>1$ the denominator can be bounded away from zero. For $\theta$ closer to horizontal, we can repeat the argument with smaller~$\epsilon$, though the constants will blow up as~$\theta \to 0,\pi$.

\end{proof}

The previous lemma bounds the behavior of~$v_n$ for all~$n\neq 0$. What remains is~$(\partial_r -ik)v_0$, which is given as the integral
    \begin{equation}\label{eq:v_0_somm_start}
        (\partial_r -ik)v_0(\bx) = \int_c (g(\xi,\theta)-ik)e^{r g(\xi,\theta)}f_0(\xi) \,\opd \xi.
    \end{equation}
    To bound $v_0$, we begin by looking for its steepest descent contour. For a fixed~$\theta$, it was observed in~\cite{epstein2023solvingb} that the stationary point of the integral (i.e. the point where $\partial_\xi g(\xi_*(\theta),\theta) = 0$) will be
    \begin{equation}
        \xi_*(\theta) = k\cos\theta.
    \end{equation}
    At the stationary point, we have~$g(\xi_*(\theta),\theta)=ik$, and so the integrand in \eqref{eq:v_0_somm_start} vanishes there. To bound the integral, we look for a descent contour $\xi(t;\theta)$ such that
    \begin{equation}
        g(\xi(t;\theta),\theta) = g(\xi_*(\theta),\theta) -kt^2 =k(i-t^2).\label{eq:ang_steep}
    \end{equation}
    To find the contour, we let~$\xi(t;\theta) = k\sin \phi(t;\theta)$ and note that~$\alpha(k\sin\phi(t;\theta)) = ik\cos \phi(t;\theta)$, at least for~$t$ small enough to avoid any branch cuts. After this substitution, \eqref{eq:ang_steep} becomes
    \begin{equation}
        ik \lp \cos\phi(t;\theta) \sin \theta + \sin \phi(t;\theta) \cos \theta \rp =  k(i - t^2).
    \end{equation}
    An addition of angle formula gives
    \begin{equation}
          \sin(\theta +\phi(t;\theta)) = 1-\frac{t^2}{i},
    \end{equation}
    and so
    \begin{equation}
        \phi(t;\theta) = \arcsin\lp1+it^2\rp - \theta.
    \end{equation}
    After some simplification, this gives that
    \begin{equation}\label{eq:xi_ang_steep}
        \xi(t;\theta)
         = k\lp 1+it^2\rp\cos \theta + kt\sqrt{ t^2 -2i}\sin \theta .
    \end{equation}
    Since this formula is analytic for real~$t$, this~$\xi(t;\theta)$ will satisfy \eqref{eq:ang_steep}, as long as it avoids the branch cuts of~$\alpha$. 
    
    To see that it does so, we note that and $
        \opnm{sign} \Im \xi(t;\pi/2) = -\opnm{sign} t.$ Thus~$\xi(t;\pi/2)$ avoids the branch cuts.
    It is also clear from \eqref{eq:ang_steep} and the definition of~$g$ that~$\xi(t;\theta)$ only crosses the real axis when~$\xi(t;\theta) = k\cos\theta$ or~$k/\cos\theta$. Thus the continuity of~$\xi(t;\theta)$ implies that it avoids the branch cuts of~$\alpha$, and so \eqref{eq:xi_ang_steep} applies for all~$t$ and all~$\theta\in(0,\pi)$.
\begin{lemma}
    Let~$c_0(\theta)\subset V_{\gamma_L,\epsilon}$ be a portion of the steepest descent contour parameterized by~\eqref{eq:xi_ang_steep} with a truncation that is a continuous function of angle. Under the assumptions of Proposition~\ref{prop:asym_rho}, there is a continuous function~$b_{L}$ such that 
\begin{equation}
    v_{00}(r,\theta):=\int_{c_0(\theta)} e^{rg(\xi,\theta)} f_0(\xi)\, \opd \xi
\end{equation}
    satisfies
        \begin{equation}\label{eq:v00_sommerfeld}
        \left|(\partial_r-ik)v_{00}(r\hat\theta) \right| \leq \frac{b_{L}(\theta)}{r^{3/2}}.
    \end{equation}
\end{lemma}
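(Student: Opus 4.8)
The plan is to apply Laplace's method (the method of steepest descent) to the integral defining $v_{00}$ along the contour $c_0(\theta)$ built in \eqref{eq:xi_ang_steep}. The key structural fact is that along this contour, $g(\xi(t;\theta),\theta) = k(i-t^2)$, so $e^{r g(\xi(t;\theta),\theta)} = e^{ikr} e^{-krt^2}$, which turns the integral into a Gaussian-type integral in the parameter $t$ with a smooth amplitude. Concretely, I would first change variables to write
\begin{equation}
    v_{00}(r,\theta) = e^{ikr}\int_{-t_0(\theta)}^{t_0(\theta)} e^{-krt^2}\, \tilde f(t;\theta)\,\opd t, \qquad \tilde f(t;\theta) = f_0(\xi(t;\theta))\,\partial_t\xi(t;\theta),
\end{equation}
and then apply $(\partial_r - ik)$. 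Since the $e^{ikr}$ prefactor is exactly the stationary-point phase, differentiating in $r$ and subtracting $ik$ annihilates the leading term: $(\partial_r - ik)\big(e^{ikr} F(r)\big) = e^{ikr} F'(r)$, so we are left with $(\partial_r - ik)v_{00}(r,\theta) = e^{ikr}\,\partial_r\!\int_{-t_0}^{t_0} e^{-krt^2}\tilde f(t;\theta)\,\opd t = -e^{ikr}\int_{-t_0}^{t_0} t^2 e^{-krt^2}\tilde f(t;\theta)\,\opd t$.

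The second step is to Taylor expand $\tilde f(t;\theta)$ about $t=0$ as $\tilde f(t;\theta) = \tilde f(0;\theta) + t\,\partial_t\tilde f(0;\theta) + t^2 R(t;\theta)$, with $R$ bounded uniformly on $|t|\le t_0$ (using that $\xi(t;\theta)$ is analytic in $t$ for $t$ real, that $f_0$ is bounded by $F$ on $V_{\gamma_L,\epsilon}$, and Cauchy's integral formula to bound the derivatives, exactly as in Proposition~\ref{prop:0n_close}). The odd term integrates to zero against $t^2 e^{-krt^2}$. For the remaining terms, I use the standard estimates
\begin{equation}
    \int_{-t_0}^{t_0} t^2 e^{-krt^2}\,\opd t = \int_{-\infty}^{\infty} t^2 e^{-krt^2}\,\opd t + O\!\big(e^{-krt_0^2}\big) = \frac{C}{(kr)^{3/2}} + O\!\big(e^{-krt_0^2}\big),
\end{equation}
and $\big|\int_{-t_0}^{t_0} t^4 e^{-krt^2}\,\opd t\big| \le C' r^{-5/2}$ for the remainder contribution; since $|e^{ikr}| = 1$ (as $\theta \in (0,\pi)$ forces $\bx = r\hat\theta$ real, hence $r$ real), this yields the bound $\big|(\partial_r - ik)v_{00}(r\hat\theta)\big| \le b_L(\theta)/r^{3/2}$ as claimed.

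The last ingredient is continuity of $b_L(\theta)$ in $\theta \in (0,\pi)$: this follows because $\xi(t;\theta)$ depends continuously (indeed smoothly) on $\theta$ through \eqref{eq:xi_ang_steep}, $f_0$ is analytic and uniformly bounded on a fixed neighborhood $V_{\gamma_L,\epsilon}$, the truncation $t_0(\theta)$ is continuous by hypothesis, and the steepest-descent contour $c_0(\theta)$ is assumed to stay inside $V_{\gamma_L,\epsilon}$ (so $f_0$ and its derivatives are uniformly controlled along it); hence $\tilde f(0;\theta)$ and the remainder bounds all depend continuously on $\theta$. \textbf{Main obstacle.} The delicate point is not the Gaussian asymptotics — which are routine and already carried out in \cite{epstein2025complex} in the closely analogous Lemma~\ref{lem:00_bound} — but rather verifying uniformity as $\theta$ ranges over a compact subinterval of $(0,\pi)$: one must check that the descent contour $\xi(t;\theta)$ genuinely avoids the branch cuts of $\alpha$ for all such $\theta$ (established in the text preceding the lemma via the sign analysis of $\Im\xi(t;\pi/2)$ and the continuity argument) and that the constants in the Taylor remainder estimates, which depend on $\mathrm{dist}(\partial V_{\gamma_L,\epsilon}, c_0(\theta))$, do not degenerate — which is why $b_L$ is only asserted continuous on the open interval and may blow up as $\theta \to 0^+$ or $\theta \to \pi^-$.
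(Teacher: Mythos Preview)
Your proposal is correct and follows essentially the same route as the paper: both parametrize by $t$ to reduce $(\partial_r-ik)v_{00}$ to $\int (-kt^2)e^{ikr-krt^2}\tilde f(t;\theta)\,\opd t$ and then invoke Laplace's method (the paper by pointer to Proposition~\ref{prop:0n_close}, you by writing out the Taylor expansion and Gaussian moments explicitly). Your discussion of continuity in $\theta$ and potential blow-up as $\theta\to 0,\pi$ also matches the paper's remark about shrinking $\epsilon$; the only cosmetic difference is that the paper allows asymmetric truncation endpoints $t_{0\pm}$, which does not affect the argument.
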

\begin{proof}
     Along this contour, we have
    \begin{equation}
        (\partial_r-ik)v_{00}(r,\theta)= \int_{t_{0-}}^{t_{0+}}(- kt^2)e ^{ikr - rkt^2} f_0(\xi(t;\theta))\partial_t \xi(t;\theta) \, \opd t,
    \end{equation}
    where~$t_{0\pm}$ are the endpoints of~$c_0(\theta)$.

    Since the stationary point and descent contour are in~$V_{\gamma_L,\epsilon}$, this integral can be analyzed using Laplace's method (see Proposition~\ref{prop:0n_close}). The resulting formula will show that
    \begin{equation}
        \left|(\partial_r-ik)v_{00}\right| \leq \frac{b(\theta)}{r^{3/2}}.
    \end{equation}
    By taking smaller and smaller $\epsilon$, we can find this~$b_{L}(\theta)$ for all $\theta\in (0,\pi),$ though it may blow up as~$\theta\to0,\pi$. As the contour is a smooth function of~$\theta$, the function $b_L$ will also be smooth.
\end{proof}

\begin{lemma}\label{lem:v0_asym}
Under the assumptions of Proposition~\ref{prop:asym_rho}, there are functions~$b_{L}$ and~$c_{L}$ such that 
        \begin{equation}\label{eq:v0_sommerfeld}
        \left|(\partial_r-ik)v_{0}(r\hat\theta) \right| \leq \frac{b_{L}(\theta)}{r^{3/2}}+c_{L}(\theta)e^{-\sin(\theta)\tilde\eta_L(\theta) r},
    \end{equation}
    where~$\tilde \eta_L(\theta)$ is a positive function that is equal to $|\alpha(\pi/d)|$ for $|\cos\theta| \geq k\pi/d$ and goes to zero as~$\sin\theta\to 0$. Further $a_L,b_L,$ and~$\tilde\eta_L$ are continuous on~$(0,\pi)$.
\end{lemma}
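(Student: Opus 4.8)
The plan is to deform the contour in \eqref{eq:v_0_somm_start} onto the steepest descent curve \eqref{eq:xi_ang_steep} together with two short arcs joining its truncation to the fixed endpoints $\pm\pi/d$, and to estimate the resulting pieces separately. Since $f_0$ is analytic on $V_{\gamma_L,\epsilon}$ and $g(\cdot,\theta)$ is analytic there --- the curve $\xi(t;\theta)$ was shown to avoid the branch cuts of $\alpha$ immediately after \eqref{eq:xi_ang_steep} --- Cauchy's theorem lets me replace $c$ by $c_0(\theta)\cup c_+(\theta)\cup c_-(\theta)$, where $c_0(\theta)$ is a truncation of $t\mapsto\xi(t;\theta)$ chosen (as in the preceding lemma) to depend continuously on $\theta$, and $c_\pm(\theta)$ are short arcs from the ends of $c_0(\theta)$ to $\pm\pi/d$ lying in $V_{\gamma_L,\epsilon}$. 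The contribution of $c_0(\theta)$ is precisely $(\partial_r-ik)v_{00}(r\hat\theta)$, which the preceding lemma bounds by $b_L(\theta)r^{-3/2}$; this gives the first term of \eqref{eq:v0_sommerfeld}.

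The heart of the matter is the estimate on the connecting arcs. Using the bound $|f_0(\xi)|\le F$ on $V_{\gamma_L,\epsilon}$ established above and the boundedness of $|g(\xi,\theta)-ik|$ on the bounded arcs $c_\pm(\theta)$, the integral over $c_\pm(\theta)$ is at most a $\theta$-dependent constant times $\max_{c_\pm(\theta)}e^{r\Re g(\xi,\theta)}$, so I want $c_\pm(\theta)$ chosen so that
\[
    \max_{\xi\in c_+(\theta)\cup c_-(\theta)}\Re g(\xi,\theta)\le -\sin\theta\,\tilde\eta_L(\theta),\qquad \tilde\eta_L(\theta)>0.
\]
Here $\Re g(\xi,\theta)=\sin\theta\,\Re\alpha(\xi)-\cos\theta\,\Im\xi$; at $\xi=\pm\pi/d$ it equals $\alpha(\pi/d)\sin\theta=-|\alpha(\pi/d)|\sin\theta$, since $\Im(\pm\pi/d)=0$ and $\alpha(\pi/d)<0$ by Lemma~\ref{lem:alphaprop} (as $\pi/d>k$), and along the descent curve $\Re g=-kt^2$ by \eqref{eq:ang_steep}, so by truncating $c_0(\theta)$ far enough out its ends lie well inside the sub-level set $\{\Re g(\cdot,\theta)\le -|\alpha(\pi/d)|\sin\theta\}$. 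The task therefore reduces to joining each truncation end to $\pm\pi/d$ by an arc staying in this sub-level set. For $\theta$ bounded away from $0$ and $\pi$ --- equivalently, with the stationary point $\xi_*(\theta)=k\cos\theta$ bounded away from the branch points $\pm k$, which is the range recorded in the statement --- a short examination of the sub-level set of the harmonic function $\Re g(\cdot,\theta)$ shows such arcs exist (near $\theta=\pi/2$ one can even take vertical segments, along which $\Re\alpha(\pm\pi/d+is)$ has a local maximum at $s=0$), and then $\tilde\eta_L(\theta)=|\alpha(\pi/d)|$. For $\theta$ near $0$ or $\pi$, where the descent curve must wrap around a branch point and the $-\cos\theta\,\Im\xi$ term is no longer negligible, $\pm\pi/d$ need only be reached inside a larger set $\{\Re g(\cdot,\theta)\le -c\sin\theta\}$ with $c=\tilde\eta_L(\theta)>0$ degrading continuously to $0$ as $\sin\theta\to0$ --- a degradation of the same kind met for the $n\neq0$ terms in the proof of Lemma~\ref{lem:vn_asym}. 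Summing the descent and connecting contributions gives \eqref{eq:v0_sommerfeld}.

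Continuity of $b_L$, $c_L$, and $\tilde\eta_L$ on $(0,\pi)$ then follows because $\xi(t;\theta)$, its truncation, and the arcs $c_\pm(\theta)$ may all be chosen smooth in $\theta$ while $f_0$ is continuous in $\xi$, so the constant produced by Laplace's method on $c_0(\theta)$ and the elementary bounds on $c_\pm(\theta)$ depend continuously on $\theta$. I expect the main obstacle to be the second half of the connecting-arc estimate: pinning down the exact range of $\theta$ on which the full rate $|\alpha(\pi/d)|$ survives and exhibiting an explicit connecting arc with a uniform-in-$r$ strictly negative bound on $\Re g$ in the remaining near-horizontal regime, which comes down to understanding the level sets of $\xi\mapsto\Re g(\xi,\theta)$ near $\pm\pi/d$ and near the branch points of $\alpha$.
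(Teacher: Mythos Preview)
Your strategy—deform to the steepest-descent piece $c_0(\theta)$ plus connecting arcs to $\pm\pi/d$, cite the preceding lemma for $c_0$, and estimate $\Re g$ on the arcs—is exactly the paper's. You have correctly flagged the remaining gap in your final paragraph; here is how the paper closes it. For the vertical connectors at $\Re\xi=\pm\pi/d$, the paper proves the \emph{global} inequality $\Re\alpha(\pm\pi/d+it)\le\alpha(\pi/d)$ for all real $t$ (not merely a local maximum at $t=0$) by comparing $(\pm\pi/d+it)^2-k^2$ with $(\alpha(\pi/d)+it)^2$ and invoking the mapping properties of the square root. On the side where $-\cos\theta\,\Im\xi$ can be positive (say $c_+$ when $\cos\theta>0$), the paper bounds the vertical extent of the descent curve via $\Im\bigl(t\sqrt{t^2-2i}\,\bigr)>-1$, giving $\Im\xi(t;\theta)>-k\sin\theta$; combined with $\Re\alpha\le\alpha(\pi/d)$ this yields $\Re g\le\sin\theta\bigl(\tfrac{k^2d}{\pi}-\sqrt{(\pi/d)^2-k^2}\bigr)$, and the hypothesis $\sqrt2\,k<\pi/d$ of Proposition~\ref{prop:asym_rho} is precisely what makes this negative. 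Finally—and this is an obstruction you do not mention—for $|\cos\theta|$ large the descent curve can pass on the wrong side of the real poles $\pm\tilde\xi_j\in(k,\pi/d)$ of $f_0$, so Cauchy's theorem does not directly apply; the paper therefore truncates $c_0$ as soon as $|\Im\xi|=\epsilon$ with $|\Re\xi|>k$ and replaces the vertical connector by a near-real arc in $V_{\gamma_L,\epsilon}$, along which only the weaker bound $\Re\alpha\le\alpha(k/|\cos\theta|-\epsilon)$ is available, producing the degraded rate $\tilde\eta_L(\theta)\to0$ as $\sin\theta\to0$.
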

\begin{proof}
    If~$\theta\approx\pi/2$, we shall split the integral defining~$v_0$ into an integral over the piece of the steepest descent contour~$c_0(\theta)$ with $|\Re \xi(t,\theta)|<\frac{\pi}{d}$ and the two vertical line segments~$c_{\pm}(\theta)$ that connect~$c_0(\theta)$ to the points~$\xi =\pm \pi/d$. If~$|\cos \theta|<k\pi/d,$ then $c_0(\theta)$ may go on the wrong side of the poles~$\pm\tilde \xi_j$. To avoid this, we truncate~$c_0$ at the point that~$|\Im \xi(t;\theta)| = \epsilon$ but~$|\Re \xi(t;\theta)|>k$. We then replace the contour~$c_+(\theta)$ by a contour~$\tilde c(\theta)\subset V_{\gamma_L,\epsilon}$ that connects this point to~$\xi=(\opnm{sign}\cos\theta) \pi/d$ with imaginary part within~$2\epsilon$ of zero.

    The previous lemma bounds the integral over~$c_0(\theta)$.
    To understand the behavior of~$\alpha$ on~$c_-$, we look at the curve $\alpha\lp - \frac \pi d + it\rp$ in the complex plane for~$t\in\bbR$. Indeed the square of this curve is given by
\begin{equation}
   \lp \alpha\lp - \frac \pi d + it\rp \rp^2 =  \lp - \frac \pi d + it \rp^2 - k^2 = \frac{\pi^2}{d^2} -t^2 - k^2 - 2i \frac\pi d,
\end{equation}
which is to the right of the curve
\begin{equation}
     \lp \alpha\lp -\frac\pi d\rp + it\rp^2 =  \frac{\pi^2}{d^2} -t^2 - k^2 - 2it \sqrt{\lp \frac\pi d \rp^2 - k^2}.
\end{equation}
Our choice of branch cut and the mapping properties of the square root therefore give that
\begin{equation}
    \Re \alpha\lp - \frac \pi d + it\rp  \leq \Re \alpha\lp -\frac\pi d\rp.
\end{equation}
We can repeat the same argument for~$\alpha(\pi/d+it)$ to see that
\begin{equation}
    \Re \alpha(\xi) \leq \Re \alpha(\pm \pi/d)= \Re \alpha( \pi/d)
\end{equation}
for all~$\xi\in c_\pm$.

    For simplicity, we shall assume that~$\cos\theta \geq 0$. In this case, it is easy to bound the integral over~$c_-(\theta)$ by using the fact that
    \begin{equation}
        \Re g(\xi,\theta) \leq \Re \alpha(\xi) \sin\theta \leq \alpha(\pi/d) \sin\theta . 
    \end{equation}
    If we let
    \begin{equation}
        v_{0-}(r,\theta):= \int_{c_-(\theta)} e^{rg(\xi,\theta)}f_0(\xi) \, \opd \xi,
    \end{equation}
    then this observation allows us to bound
    \begin{equation}
        |(\partial_r - ik) v_{0-}(r,\theta)| \leq |c_-(\theta)| \max|f_0(\xi)| e^{r\alpha(\pi/d) \sin\theta}.
    \end{equation}

The integral over~$c_+$ is more subtle because~$\Re \xi \cos\theta>0$.  In order to bound it, we must control the extent of~$c_+$. To do this, we note 
\begin{equation}
    \Im t\sqrt{t^2-2i} > -1,
\end{equation}
so~$\Im \xi(t;\theta) > -k \sin\theta$, since~$\cos\theta\geq0$. To bound the integral over~$c_+$, we note that it is only required for $\cos\theta < \frac{kd}{\pi}$.  Under these conditions, we have that
\begin{multline}
    \max_{\xi\in c_+(\theta)}\Re g(\xi,\theta) \leq \frac{kd}{\pi}\max_{\xi\in c_+(\theta)} \Im\xi - \sin\theta\min_{\xi\in c_+(\theta)}  \Re \sqrt{\xi^2-k^2}\\
    = \sin\theta\lp \frac{k^2d}{\pi} - \sqrt{\lp\frac{\pi}d\rp^2-k^2}\rp
    = \sin\theta \frac{\pi}d\lp \frac{k^2d^2}{\pi^2} - \sqrt{1-\lp \frac{kd}{\pi}\rp^2}\rp\leq  -\sin\theta\tilde\eta,
\end{multline}
where
\begin{equation}
    \tilde\eta = \min \left[ -\frac{\pi}d\lp \frac{k^2d^2}{\pi^2} - \sqrt{1-\lp \frac{kd}{\pi}\rp^2}\rp,\-\alpha\lp \frac\pi d\rp \right].
\end{equation}
This constant is positive provided
\begin{equation}
    \frac{k^2d^2}{\pi^2} < \frac12 (\sqrt{5}-1) ,
\end{equation}
which is equivalent to the requirement
\begin{equation}
    k < \sqrt{\frac12 (\sqrt{5}-1) }\frac{\pi} d \approx 0.79 \frac{\pi}d.
\end{equation}
Using these estimate, it is easy to see that 
    \begin{equation}
        |(\partial_r - ik) v_{0+}(r,\theta)| \leq |c_+(\theta)| \max|f_0(\xi)| e^{r\tilde\eta \sin\theta}.
    \end{equation}
    where
        \begin{equation}
        v_{0+}(r,\theta):= \int_{c_+(\theta)} e^{rg(\xi,\theta)}f_0(\xi) \, \opd \xi.
    \end{equation}
The integral over~$\tilde c_+(\theta)$ can be bounded in the same manner as the integral in Lemma~\ref{lem:vn_asym}, except that now~$\Re\alpha(\xi)$ is bounded by~$\tilde\eta_L(\theta)=\Re\alpha(k/\cos\theta -\epsilon)$, which will in general be smaller than~$\eta$. The properties of~$\tilde\eta_L$ follow from the properties of~$\alpha$.
\end{proof}
\begin{remark}
    In general, the behavior of~$v_0(r\hat\theta)$ for~$\theta \approx 0,\pi$ will be better behaved than the previous lemma would indicate, since the above analysis picks contours that avoids dealing with with poles ($\pm\tilde\xi_j$) and branch cuts ($\pm k)$ of~$f_0$. If the residues of~$f_0$ were known, then we could push the steepest descent contour out to the boundary of the Brillouin zone and replace $\tilde\eta_L(\theta)$ by~$\alpha(\pi/d)$ or~$\alpha(\tilde\xi_j)$, depending on the angle. A better understanding of $f_0$ at the branch cuts would allow us to control $b_L(\theta)$ as $\theta\to0,\pi$.
\end{remark}
Proposition~\ref{prop:asym_rho} follows directly from Lemmas~\ref{lem:vn_asym} and \ref{lem:v0_asym}.

\end{document}